\colorlet{Green}{black!20!green}
 \theoremstyle{plain}
\newtheorem{thm}{Theorem}[section]
\newtheorem{lemma}[thm]{Lemma}
\newtheorem{prop}[thm]{Proposition}
\newtheorem{cor}[thm]{Corollary}
\theoremstyle{definition}
\newtheorem{example}[thm]{Example}
\newtheorem{defn}[thm]{Definition}
\newtheorem{remark}[thm]{Remark}
\theoremstyle{remark}
\numberwithin{equation}{section}
\def\cA{\mathcal{A}}
\def\cB{\mathcal{B}}
\def\cD{\mathcal{D}}
\def\cF{\mathcal{F}}
\def\cG{\mathcal{G}}
\def\cN{\mathcal{N}}
\def\cS{\mathcal{S}}
\def\cT{\mathcal{T}}
\def\EE{\mathbb{E}}
\def\HH{\mathbb{H}}
\def\NN{\mathbb{N}}
\def\PP{\mathbb{P}}
\def\QQ{\mathbb{Q}}
\def\RR{\mathbb{R}}
\def\SS{\mathbb{S}}
\def\ZZ{\mathbb{Z}}
\def\vrho{\varrho}
\newcommand{\bT}{\mathbf{T}}
\newcommand{\bW}{\mathbf{W}}
\newcommand{\1}{\mathbf{1}}
\newcommand{\x}{\mathbf{x}}
\colorlet{lightgray}{black!20}
\renewcommand{\@makefnmark}{\mbox{\textsuperscript{}}}
\title{Limit theorems for random walks on Fuchsian buildings and Kac-Moody groups}
\author{
L. Gilch \and S. M\"{u}ller\footnote{Research partly supported under CIRM Research in Pairs program} \and J. Parkinson\footnote{Research partly supported under the Australian Research Council (ARC) discovery grant DP110103205, Austrian Science Fund W1230 and P24028, and CIRM Research in Pairs program}
}
\begin{document}

\maketitle

\begin{abstract}
In this paper we prove a rate of escape theorem and a central limit theorem for isotropic random walks on Fuchsian buildings, giving formulae for the speed and asymptotic variance. In particular, these results apply to random walks induced by bi-invariant measures on Fuchsian Kac-Moody groups, however they also apply to the case where the building is not associated to any reasonable group structure. Our primary strategy is to construct a renewal structure of the random walk. For this purpose we define cones and cone types for buildings and prove that the corresponding automata in the building and the underlying Coxeter group are strongly connected. The limit theorems are then proven by adapting the techniques in~\cite{HMM:13}. The moments of the renewal times are controlled via the retraction of the walks onto an apartment of the building.
~\\
\newline{\scshape Keywords:} random walk, limit theorems, Fuchsian building, Kac-Moody group, Cannon automaton
\newline \scshape 2010 Mathematics Subject Classification: 60G50, 60F05, 60B15, 20E42, 51E24 
\end{abstract}
\section{Introduction}

Let $(X_{n})_{n\geq 1}$ be i.i.d.~random variables taking values in $\ZZ^{d}$. Under a second moment condition the classical central limit theorem gives  
$$\frac{\sum_{i=1}^{n} X_{i}-nv}{\sqrt n } \stackrel{\mathcal{D}}{\longrightarrow} \mathcal{N}(0,\sigma^{2}),$$  
where $v=\EE[X_{1}]$ is the rate of escape (or drift) and $\sigma^{2}$ the asymptotic variance. A natural and influential question, dating back to Bellman~\cite{bellman} and Furstenberg and Kesten~\cite{FK}, is to what extent this phenomenon generalizes to the situation where $(X_{n})_{n\geq 1}$ takes values in a group, or more generally, the situation where $(X_n)_{n\geq 1}$ is a random walk on a graph.

There are various settings in which central limit theorems have been
established, with key results in the contexts of Lie groups and hyperbolic
groups. In the hyperbolic setting, Sawyer and Steger~\cite{ST} studied the case
of the free group $F_{d}$ with~$d$ standard generators and the corresponding
word distance $d(\cdot,\cdot)$. Under a technical moment condition they show,
using analytic extensions of Green functions, that $(d(e,X_n)-nv)/\sqrt{n}$
converges in law to some non-degenerate Gaussian distribution, where $e$ is the group
identity in $F_{d}$. Another proof was given by Lalley~\cite{La:93} using
algebraic function theory and Perron-Frobenius theory, and a geometric proof
was later presented by Ledrappier~\cite{Le:01}. A generalization to trees with
finitely many cone types can be found in Nagnibeda and Woess~\cite{NW}. Another
generalization to free products of graphs was given by Gilch \cite{gilch:PhD}.

More recently Bj\"{o}rklund \cite{bjorklund} proved a central limit theorem for hyperbolic groups with respect to the Green metric, and this was pushed forward by Benoist and Quint~\cite{BeQu:14} for random walks on hyperbolic groups with respect to the word metric under the optimal second moment condition.

Another approach to the central limit theorem for surface groups has been developed by Haissinski, Mathieu, and M\"{u}ller~\cite{HMM:13}, where the planarity and hyperbolicity of the Cayley graph are employed to develop a renewal theory for random walks on these groups. The resulting central limit theorem comes complete with formulae for the speed and variance of the walk. 

Central limit theorems for semisimple real Lie groups were established by Wehn~\cite{wehn:62}, Tutubalin~\cite{Tutubalin:65}, Virtser~\cite{virtser}, Stroock and Varadhan \cite{Stroock:73}, and Guivarc'h~\cite{Gui:80} in a variety of contexts, using a wide range of techniques. There is an extensive literature on this subject, with further limit theorems for real Lie groups given in \cite{BeQu:13, bougerol, GoGu:96, guivarc'h2,GLeP:04,kaimanovich,LePage}. 

The case of $p$-adic Lie groups is also rather well understood, with central limit theorems established by Lindlbauer and Voit~\cite{LV}, Cartwright and Woess~\cite{CW}, and Parkinson~\cite{P3}. Further limit theorems for $p$-adic Lie groups are given in~\cite{BS,PS,tolli}. Many of these papers employ a remarkable geometric object called the \textit{affine building} associated to the $p$-adic group, and utilize the rich representation theory available in the $p$-adic setting. 

The current paper lies at the confluent of the hyperbolic and Lie theoretic settings. Here we prove limit theorems for random walks on \textit{Fuchsian buildings} and the \textit{Kac-Moody groups} associated to them (see below for some descriptions). From the point of view of Lie theory, this is a natural next step in the progression from `spherical-type' Lie groups (the semisimple real Lie groups) and `affine-type' Lie groups (the $p$-adic case) to a theory for random walks on buildings and Kac-Moody groups of arbitrary type. From the hyperbolic point of view, the buildings that we consider contain many copies of the hyperbolic disc tessellated using a `Fuchsian Coxeter group', and thus while the buildings are certainly not planar, some of the renewal theory techniques from the planar surface group case~\cite{HMM:13} can be pushed through.

Before stating our main results, let us give a brief description of the objects
involved in this paper. Buildings are geometric/combinatorial objects that can
be defined axiomatically. Initial data required to define a building includes a
Coxeter system~$(W,S)$, and then a \textit{building $(\Delta,\delta)$ of type
  $(W,S)$} consists of a set $\Delta$ (whose elements are the \textit{chambers}
of the building) along with a ``generalised distance function''
$\delta:\Delta\times\Delta\to W$ satisfying various axioms  (see
Definition~\ref{defn:building}). Thus the ``distance'' between chambers
$x,y\in\Delta$ is an element $\delta(x,y)$ of the Coxeter group~$W$, and by taking word length in~$W$ this
gives rise to a metric $d(\cdot,\cdot)$ on the building. We fix a base chamber $o\in\Delta$. The `spherical' buildings are those with $|W|<\infty$, and the `affine' buildings are those where $W$ is a Euclidean reflection group.


The theory of spherical and affine buildings has been extensively studied. However there are many Coxeter systems which are neither finite nor affine. Examples of buildings of these more `exotic types' arise naturally in Kac-Moody theory. These groups can be seen as generalisations of the classical `groups of Lie type', since they admit presentations reminiscent of the Chevalley presentations of the classical groups based around an associated Coxeter system~$(W,S)$ (see \cite{titskac}). To each Kac-Moody group of type $(W,S)$ there is naturally associated a building of type $(W,S)$, and the Kac-Moody group acts highly transitively on this building. 

While the above construction produces a lot of very interesting buildings, it is certainly not true that all buildings arise in this way (see~\cite{ronanconstruction}, for example). Thus in this paper we consider the building as the primary object of interest. Results concerning groups may then be deduced as corollaries, although it is important to note that our results apply equally well to the situation where there is no underlying group. (Indeed the building may have trivial automorphism group!).

In this paper we consider the natural class of \textit{isotropic random walks} $(X_n)_{n\geq 0}$ on buildings, where the transition probabilities $p(x,y)$ of the random walk depend only on the generalised distance~$\delta(x,y)$. If the building comes from a Kac-Moody group, then isotropic random walks are induced by measures on the group which are bi-invariant with respect to the `Borel subgroup'~$B$. The results of much of the preliminary sections are valid for buildings of any type, however our main results concern the class of \textit{Fuchsian buildings}. These are buildings whose Coxeter groups are discrete subgroups of $PGL_2(\mathbb{R})$, and since they are neither spherical nor affine they are an interesting ``non-classical'' class of buildings. 

We use a mixture of algebraic, geometric and probabilistic techniques. We observe that the transition operator of an isotropic random walk can naturally be regarded as an element of a \textit{Hecke algebra}, and we use this result to show that our buildings are nonamenable. Next we develop the theory of cones, cone types, and automata for buildings, and we show that the Cannon automaton of a Fuchsian building is \textit{strongly connected}. Connectivity properties of automata have various applications and are interesting in their own right. To our knowledge our results on the strong connectivity of the Cannon automaton are the first besides the trivial cases of free groups and surface groups. There are also some interesting features of cones in buildings that are in contrast to the theory of cones in groups. For example cones of the same type in the building are not necessarily isomorphic as graphs (see Remark~\ref{rem:noniso}).

We use our the theory of cones in buildings to develop a \textit{renewal theory} for the isotropic random walks on Fuchsian buildings. The idea here is to find a decomposition of the trajectory of the walk into aligned pieces in such a way that these pieces are identically and independently distributed. To do this we define renewal times $(R_n)_{n\geq 1}$ for the walk as follows. Fix a (recurrent) cone type $\bT$ and let $R_1$ be the first time that the walk visits a cone of type~$\bT$ and never leaves this cone again. Inductively define $R_{n+1}$ to be the first time after $R_n$ that the 
walk visits a cone of type $\bT$ and never leaves it again (see~(\ref{eq:renewal}) for a more formal definition). Our main results are as follows.

\begin{thm}\label{thm:LLNbuilding}
Let $(\Delta, \delta)$ be a regular Fuchsian building and let $(X_n)_{n\geq 0}$
be an isotropic random walk on~$\Delta$ with bounded range. Then,
\begin{align}\label{eq:v} 
\frac{1}{n} d(o,X_{n}) \stackrel{a.s.}{\longrightarrow} v=  \frac{\EE[ d(X_{ R_{2}}, X_{ R_{1}})]}{\EE[  R_{2}- R_{1}]}>0~\mbox{ as }  n\to\infty.
\end{align}
\end{thm}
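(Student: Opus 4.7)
\emph{Renewal decomposition.} The strategy is a renewal argument built on the times $(R_n)_{n\geq 1}$. By the strong Markov property applied at $R_n$, together with the fact that the transition probabilities of an isotropic walk depend only on the generalised distance~$\delta$, the continuation of the walk after $R_n$ conditioned on being trapped in a cone of type $\mathbf{T}$ based at $X_{R_n}$ has a distribution that depends only on the cone type $\mathbf{T}$, not on the basepoint. Consequently the increments
\[
\xi_n := \bigl(R_{n+1}-R_n,\; d(X_{R_n}, X_{R_{n+1}})\bigr), \qquad n\geq 1,
\]
form an i.i.d.\ sequence. Almost-sure finiteness of each $R_n$ uses the strong connectivity of the Cannon automaton established earlier in the paper: the walk passes through cones of type $\mathbf{T}$ infinitely often, and at each such passage there is a uniform positive probability of never leaving the cone again, so a Borel--Cantelli argument forces the trapping to occur in finite time.

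\emph{Additive structure of the distance.} The cone construction is set up so that any chamber $c$ in the cone based at $x$ satisfies $d(o,c)=d(o,x)+d(x,c)$. Hence once the walk is trapped at $X_{R_k}$ one has $d(o,X_n)=d(o,X_{R_k})+d(X_{R_k},X_n)$ for every $n\geq R_k$, and iteration telescopes into
\[
d(o, X_{R_n}) \;=\; d(o, X_{R_1}) \;+\; \sum_{k=1}^{n-1} d\bigl(X_{R_k}, X_{R_{k+1}}\bigr).
\]
Moreover, the sequence of cones $(\text{cone at } X_{R_k})_k$ is strictly nested, so distinct renewal epochs yield distinct chambers and $d(X_{R_k}, X_{R_{k+1}}) \geq 1$.

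\emph{SLLN and interpolation.} Granting the finite first moments $\EE[R_2-R_1]<\infty$ and $\EE[d(X_{R_1},X_{R_2})]<\infty$ (see the next paragraph), the classical strong law applied to the i.i.d.\ sequence $\xi_n$ gives
\[
\frac{R_n}{n} \to \EE[R_2-R_1], \qquad \frac{d(o,X_{R_n})}{n} \to \EE[d(X_{R_1},X_{R_2})] \qquad \text{almost surely,}
\]
so that $d(o,X_{R_n})/R_n \to v$ almost surely. Bounded range of the walk gives $|d(o,X_{m+1})-d(o,X_m)|\leq C$ for some $C$, hence for $R_n\leq m<R_{n+1}$ one has $|d(o,X_m)-d(o,X_{R_n})|\leq C(R_{n+1}-R_n)$, and the latter is $o(n)$ by the SLLN applied to the inter-arrival times. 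Interpolating between renewal epochs promotes the convergence to the full statement $d(o,X_n)/n \to v$. Positivity $v>0$ is immediate from the lower bound $d(X_{R_1},X_{R_2})\geq 1$ noted above.

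\emph{Main obstacle: integrability of the renewal times.} The central difficulty is the finiteness $\EE[R_2-R_1]<\infty$ (indeed the full CLT of the paper will require exponential tails). The plan, as announced in the introduction, is to push the walk down to a single apartment via the retraction centred at~$o$. Since the apartment of a Fuchsian building is a tessellation of the hyperbolic disc by a cocompact discrete subgroup of $\mathrm{PGL}_2(\RR)$, the retracted process can be analysed using the hyperbolic geometry of the apartment combined with the planar renewal techniques of Ha\"issinsky--Mathieu--M\"uller~\cite{HMM:13}; this is designed to yield the required exponential-type tail bounds on $R_2-R_1$. This retraction-plus-hyperbolicity estimate, which ties together the algebraic (Hecke-algebra nonamenability), combinatorial (strong connectivity of the Cannon automaton), and geometric (Fuchsian apartment) ingredients of the paper, is the technical heart of the argument.
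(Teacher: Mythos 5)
Your proposal follows the same renewal-theoretic architecture as the paper: i.i.d.\ renewal increments, telescoping of $d(o,X_{R_n})$ along the nested cones, SLLN plus interpolation between renewal epochs, with the hard work consisting of moment bounds on $R_2-R_1$ obtained via the retraction onto an apartment. The overall plan is right, and your identification of the moment bound as the technical heart matches the paper's structure.

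There is, however, one step that would fail as written. You invoke ``the strong Markov property applied at $R_n$'' to deduce that the post-$R_n$ law depends only on the cone type of $X_{R_n}$. But $R_n$ is \emph{not} a stopping time: its defining condition requires the walk to stay inside $\mathrm{Int}_{L_1}C(X_{R_n})$ \emph{forever}, which refers to the entire future trajectory. The strong Markov property is therefore not available, and a naive application of it would be incorrect. The paper circumvents this by conditioning on the event $\{D_x=\infty\}$, introducing the auxiliary measure $\QQ_{\bT}[\,\cdot\,]=\PP_x[\,\cdot\mid D_x=\infty]$ for chambers $x$ of cone type $\bT$, and then verifying by a direct check of conditional probabilities the regeneration-type identity $\PP_x[(X_{R_n+k})_{k\geq 0}\in\psi_{oX_{R_n}}(A)\mid\cG_n]=\QQ_{\bT}[(X_k)_{k\geq 0}\in A]$, where $\cG_n=\sigma(R_1,\ldots,R_n,X_0,\ldots,X_{R_n})$. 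It is this hand-made ``Markov property at non-stopping renewal times'' (together with the cone-invariance bijections $\psi_{xy}$ from Proposition~\ref{prop:pathbijection}) that delivers the i.i.d.\ structure of $(d(X_{R_{i-1}},X_{R_i}))_{i\geq 2}$. Relatedly, your ``Borel--Cantelli argument'' for $R_n<\infty$ a.s.\ needs care since the trapping events are not independent; the paper instead uses a sequential geometric-trial bound (with uniform constants $p_h$ and $p_{esc}$), which is the right mechanism. Once these two points are repaired, the rest of your sketch — including the alternative interpolation via bounded range and $R_{n+1}-R_n=o(n)$, and the positivity $v>0$ from $d(X_{R_1},X_{R_2})\geq 1$ — matches the paper's reasoning.
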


\begin{thm}\label{thm:CLTbuilding}
Let $(\Delta, \delta)$ be a regular Fuchsian building and let $(X_n)_{n\geq 0}$ be an isotropic random walk on~$\Delta$ with bounded range. Then,
$$ \frac{d(o,X_{n}) -nv }{\sqrt{n}} \stackrel{\cD}{\longrightarrow} \cN (0, \sigma^2),$$
with $v$ as in (\ref{eq:v}) and 
\begin{align*}
\sigma^{2}=\frac{\EE[(d(X_{R_{2}}, X_{R_{1}}) - (R_{2}- R_{1})v)^{2}]}{\EE[R_{2}-R_{1}]}.
\end{align*}
\end{thm}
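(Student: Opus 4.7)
The plan is to exploit the renewal times $(R_n)_{n\geq 1}$ in order to express $d(o, X_n)$, up to negligible errors, as a sum of i.i.d.\ random variables and then to invoke the classical CLT, following the strategy of \cite{HMM:13}. The first step is to establish that the sequence of pairs $\bigl(R_{n+1}-R_n,\, d(X_{R_n}, X_{R_{n+1}})\bigr)_{n\geq 1}$ is i.i.d. By construction, at time $R_n$ the walk has entered a cone of the fixed recurrent type $\bT$ and never leaves it; by isotropy and the fact that the cone type determines the local transition structure, the strong Markov property at $R_n$ produces a walk in the new cone whose law depends only on $\bT$. Hence the successive renewal increments are i.i.d., and moreover the random variable $(R_1, X_{R_1})$ is independent of them.

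The second step is the additive distance decomposition
\begin{equation*}
d(o, X_{R_n}) = d(o, X_{R_1}) + \sum_{k=1}^{n-1} d(X_{R_k}, X_{R_{k+1}}).
\end{equation*}
The geometric content is that a chamber lying inside the cone at $X_{R_k}$ has a minimal gallery to $o$ passing through $X_{R_k}$, so distances compose additively along the sequence of renewal points. This is precisely the alignment property that the cone construction is engineered to deliver, and is where the building geometry (as opposed to a purely combinatorial renewal scheme) is used.

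The main technical obstacle is to show $\EE[(R_2-R_1)^2]<\infty$. Since the walk has bounded range we have $d(X_{R_1},X_{R_2})\leq C(R_2-R_1)$ for some constant $C$, so a second moment on the renewal gap immediately transfers to a second moment on the displacement. The strategy, as previewed in the abstract, is to retract the walk onto a fixed apartment of $\Delta$, reducing to a walk on the Fuchsian Coxeter group $W$. Combining the strong connectivity of the Cannon automaton established earlier in the paper with nonamenability (via the Hecke algebra realisation of the transition operator), one can obtain that the retracted walk has positive linear drift, which in turn yields exponential tail estimates on the first time the walk enters a cone of type $\bT$ and stays there. This gives $\EE\bigl[e^{s(R_2-R_1)}\bigr]<\infty$ for some $s>0$, which is more than enough.

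With these ingredients assembled, the classical CLT applies to the i.i.d.\ sums $\sum_{k=1}^{n}(R_{k+1}-R_k)$ and $\sum_{k=1}^{n} d(X_{R_k}, X_{R_{k+1}})$. An Anscombe-type argument then handles the random number $N(n)$ of renewals up to time $n$, and a Borel--Cantelli estimate absorbs the difference $d(o, X_n) - d(o, X_{R_{N(n)}})$ into an $o(\sqrt n)$ error using the exponential tail on $R_{n+1}-R_n$. This produces
\begin{equation*}
\frac{d(o, X_n) - nv}{\sqrt n} \xrightarrow{\cD} \cN(0, \sigma^2).
\end{equation*}
Theorem~\ref{thm:LLNbuilding} and the identification $v=\EE[d(X_{R_1},X_{R_2})]/\EE[R_2-R_1]$ give $\EE[d(X_{R_2}, X_{R_1}) - (R_2-R_1)v] = 0$, so the limiting variance of the centred increments equals $\EE[(d(X_{R_2}, X_{R_1}) - (R_2-R_1)v)^2]$, and dividing by $\EE[R_2-R_1]$ recovers the stated formula for $\sigma^2$.
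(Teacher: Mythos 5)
Your overall strategy is the one the paper follows: establish the alignment identity $d(o,X_{R_n})=\sum_{i=1}^n d(X_{R_{i-1}},X_{R_i})$, prove the pieces are i.i.d.\ with exponential moments via retraction onto an apartment, then apply the classical CLT and an Anscombe/last-renewal argument to convert from the stopped walk $X_{R_n}$ to $X_n$. This matches the paper's proof, which proceeds via Theorem~\ref{thm:reg} and the argument of~\cite{HMM:13}.

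There is one point where your sketch glosses over a real subtlety that the paper treats with care. You write that ``the strong Markov property at $R_n$ produces a walk in the new cone whose law depends only on~$\bT$.'' But $R_n$ is \emph{not} a stopping time: its definition (\ref{eq:renewal}) looks into the infinite future (``stays in the cone afterwards forever''), so the strong Markov property cannot be invoked as stated. The paper explicitly flags this (``Although the $R_n$'s are not stopping times, a check of the definition of conditional probability yields the following `Markov property'...''), introduces the auxiliary measure $\QQ_{\bT}$, and verifies the required conditional independence by hand. A related point your phrase ``the cone type determines the local transition structure'' elides: in a building, two cones of the same type need not be isomorphic as graphs (Remark~\ref{rem:noniso}), so the i.i.d.\ structure is obtained not from a graph isomorphism but from the measure-preserving path bijection $\psi_{xy}$ of Proposition~\ref{prop:pathbijection} together with the cone-invariance of the retracted transition probabilities (Proposition~\ref{prop:invariance}). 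Finally, your claim that ``positive linear drift ... yields exponential tail estimates on the first time the walk enters a cone of type $\bT$ and stays there'' compresses the real argument: the paper obtains exponential moments for the renewal gaps by combining (a) a uniform lower bound $p_{esc}>0$ on the probability of never leaving a type-$\bT$ cone (Lemma~\ref{lem:stayinconebuilding}), (b) a uniform bound on the time to reach such a cone, and (c) exponential moments on the conditional exit time $D_x$ of a cone, the latter using both the spectral-radius bound $\vrho(\overline{P})<1$ and the exponential decay of hitting probabilities $F(u,v)$ deep inside a cone (Proposition~\ref{lem:expdecay}), then summing a geometric number of such exit times. None of these gaps change the overall route, but each needs to be filled for the proof to stand.
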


When a group acts suitably transitively on a regular Fuchsian building the above theorems give limit theorems for random walks associated to these groups. For example, suppose that $G$ is a Kac-Moody group over a finite field with Coxeter system~$(W,S)$ (see \cite{titskac}). Let~$B$ be the positive root subgroup of~$G$. Then $\Delta=G/B$ is the set of chambers of a locally finite regular building of type~$(W,S)$, where $\delta(gB,hB)=w$ if and only if $g^{-1}hB\subseteq BwB$. Then we have the following corollary.

\begin{cor}\label{cor:Kac}
Let $G$ be a Kac-Moody group over a finite field with Fuchsian Coxeter system~$(W,S)$, and let $(\Delta,\delta)$ be the associated Fuchsian building. Let $\varphi$ be the density function of a $B$-bi-invariant probability measure on~$G$, and assume that $\varphi$ is supported on finitely many $BwB$ double cosets. Then the assignment
$$
p(go,ho)=\varphi(g^{-1}h)
$$
defines an isotropic random walk on~$(\Delta,\delta)$, and Theorems~\ref{thm:LLNbuilding} and~\ref{thm:CLTbuilding} provide a rate of escape theorem and a central limit theorem for this random walk.
\end{cor}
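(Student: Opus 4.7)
The plan is to verify that $p(go,ho)=\varphi(g^{-1}h)$ genuinely defines an isotropic random walk of bounded range on $\Delta=G/B$, at which point Theorems~\ref{thm:LLNbuilding} and~\ref{thm:CLTbuilding} apply directly without further work. I would carry out four short checks in order: well-definedness on cosets, stochasticity, isotropy, and bounded range.

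For well-definedness, if $g'\in gB$ and $h'\in hB$, write $g'=gb_1$ and $h'=hb_2$ with $b_1,b_2\in B$, so $(g')^{-1}h'=b_1^{-1}(g^{-1}h)b_2$; the two-sided $B$-invariance of $\varphi$ then gives $\varphi((g')^{-1}h')=\varphi(g^{-1}h)$, so $p$ descends to a function on $\Delta\times\Delta$. For stochasticity, right-$B$-invariance makes $\varphi(g^{-1}\,\cdot\,)$ constant on each right coset $hB$ with common value $p(go,ho)$, and summing over $hB\in G/B$ recovers $\int_G \varphi=1$ in the normalization giving $B$ mass one. Concretely, writing $\varphi=\sum_w c_w\mathbf{1}_{BwB}$ and using that each $BwB$ decomposes into $q^{\ell(w)}$ right cosets over $\FF_q$, this reduces to the identity $\sum_w c_w\, q^{\ell(w)}=1$, which is the probability-density hypothesis.

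Isotropy follows from the Bruhat decomposition $G=\bigsqcup_w BwB$: the condition $\delta(go,ho)=w$ is equivalent to $g^{-1}h\in BwB$, and since $\varphi$ is constant on $BwB$ by bi-invariance, $p(go,ho)$ depends only on the Weyl element $\delta(go,ho)$, which is the defining property of an isotropic walk. Finally, the support assumption on $\varphi$ restricts $\delta(go,ho)$ to a finite set $\{w_1,\dots,w_k\}\subseteq W$, so $d(go,ho)=\ell(\delta(go,ho))$ is uniformly bounded, giving the bounded-range property. I do not anticipate a real obstacle here: the corollary is essentially a translation between bi-$B$-invariant measures on $G$ and isotropic walks on $G/B$, and every verification hinges on the Bruhat decomposition together with the bi-invariance of $\varphi$.
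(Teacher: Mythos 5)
Your verification is correct, and it reaches the same conclusion as the paper; but you take a somewhat more direct route. The paper proves the corollary by invoking a more general auxiliary result (Proposition~\ref{prop:Kac}): for \emph{any} locally compact group $G$ acting transitively on a regular building with chamber stabiliser $B$, if $B$ acts transitively on each sphere $\Delta_w(o)$ and $\varphi$ is a $B$-bi-invariant probability density (Haar measure normalised so that $B$ has mass one), then $p(go,ho)=\varphi(g^{-1}h)$ is isotropic. The paper's proof of that proposition checks well-definedness exactly as you do, establishes stochasticity via $\sum_y p(x,y)=\int_G \varphi\,dg=1$, and deduces isotropy from $G$-invariance of $p$ combined with transitivity of $B$ on each $\Delta_w(o)$. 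The Kac--Moody corollary then falls out by specialisation, with the bounded-range observation made verbatim as in your last paragraph.

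Your argument instead works directly in the Kac--Moody setting, where the Bruhat decomposition $G=\bigsqcup_w BwB$ and the defining relation $\delta(gB,hB)=w \iff g^{-1}h\in BwB$ give isotropy immediately: $\varphi$ is constant on double cosets, hence $p(gB,hB)$ is literally a function of $\delta(gB,hB)$. This bypasses the separate $B$-transitivity-on-spheres hypothesis (which is automatic in the Bruhat picture, since $hB\subseteq BwB$ implies $hB=bwB$ for some $b\in B$). Your concrete normalisation identity $\sum_w c_w q^{\ell(w)}=1$, using that $BwB$ decomposes into $q^{\ell(w)}$ right $B$-cosets, is the combinatorial shadow of the paper's Haar-integral computation, and is equally valid once one has fixed the convention that $B$ has measure one. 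Both routes are correct; the paper's buys a reusable proposition applicable beyond Kac--Moody groups (any $G$ acting strongly transitively on the building), while yours buys a shorter, self-contained derivation from the Bruhat decomposition alone.

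One minor caveat shared by both your argument and the paper's corollary statement: to genuinely apply Theorems~\ref{thm:LLNbuilding} and~\ref{thm:CLTbuilding} one needs the walk to be irreducible (this is assumed throughout Section~\ref{sect:renewal}); neither the corollary nor your write-up makes this explicit, and it corresponds to the support of $\varphi$ generating $W$. This is an imprecision inherited from the statement rather than a gap specific to your proof.
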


To conclude this introduction, let us outline the structure of this paper. Section~\ref{sect:2} gives definitions and examples of Coxeter groups and buildings. In Section~\ref{sect:3} we develop the theory of automata for buildings (and Coxeter groups). In Section~\ref{sect:4} we introduce isotropic random walks on buildings, and use algebraic techniques to prove general results on irreducibility and the spectral radius. We also introduce the retracted walk in this section, which is a main tool in our investigations. In Section~\ref{sect:renewal} we restrict our attention to Fuchsian buildings, and develop renewal theory for isotropic random walks on these buildings. We prove our main theorems in this section, following the general proof strategy of~\cite{HMM:13}. Finally, in Appendix~\ref{app:A} we explicitly construct the automaton for each Fuchsian Coxeter system, and deduce that these automata are \textit{strongly connected} (a property that was useful in the work of Section~\ref{sect:renewal}).

\section{Coxeter groups and buildings}\label{sect:2}

\subsection{Coxeter systems}

A \textit{Coxeter system} $(W,S)$ is a group $W$ generated by a finite set~$S$ with relations
$$
s^2=1\quad\textrm{and}\quad (st)^{m_{st}}=1\quad\textrm{for all $s,t\in S$ with $s\neq t$},
$$
where $m_{st}=m_{ts}\in\ZZ_{\geq 2}\cup\{\infty\}$ for all $s\neq t$ (if $m_{st}=\infty$ then it is understood that there is no relation between $s$ and $t$). The \textit{rank} of $(W,S)$ is $|S|$. The \textit{length} of $w\in W$ is
$$
\ell(w)=\min\{n\geq 0\mid w=s_1\cdots s_n\textrm{ with }s_1,\ldots,s_n\in S\},
$$
and an expression $w=s_1\cdots s_n$ with $n=\ell(w)$ is called a \textit{reduced expression} for~$w$. If $w\in W$ and $s\in S$ then $\ell(ws)\in\{\ell(w)-1,\ell(w)+1\}$. In particular, $\ell(ws)=\ell(w)$ is impossible. The \textit{distance} between elements $u\in W$ and $v\in W$ is 
$$
d(u,v)=\ell(u^{-1}v).
$$
The \textit{ball} of radius $R\geq 0$ with centre $u\in W$ is $\cB(u,R)=\{v\in W\mid d(u,v)\leq R\}$ and the \textit{sphere} of radius $R\geq 0$ with centre $u\in W$ is $\cS(u,R)=\{v\in W\mid d(u,v)=R\}$. 

If $I\subseteq S$ let $W_I$ be the subgroup of $W$ generated by~$I$. Then
$(W_I,I)$ is a Coxeter system. The subgroup $W_I$ is called the
\textit{standard parabolic subgroup of type~$I$}. A Coxeter system $(W,S)$ is \textit{irreducible} if there is no partition of the generating set $S$ into disjoint nonempty sets $S_1$ and $S_2$ such that $s_1s_2=s_2s_1$ for all $s_1\in S_1$ and all $s_2\in S_2$. We will always assume that $(W,S)$ is irreducible. 

\subsection{Fuchsian Coxeter groups}

We now define a special class of Coxeter groups that are discrete subgroups of~$PGL_2(\mathbb{R})$, called \textit{Fuchsian Coxeter groups}. 
Let $n\geq 3$ be an integer, and let $k_1,\ldots,k_n\geq 2$ be integers satisfying 
\begin{align}\label{eq:hyperbolic}
\sum_{i=1}^n\frac{1}{k_i}<n-2.
\end{align}
Assign the angles $\pi/k_i$ to the vertices of a combinatorial $n$-gon~$F$. There is a convex realisation of $F$ (which we also call~$F$) in the hyperbolic disc~$\mathbb{H}^2$, and the subgroup of $PGL_2(\mathbb{R})$ generated by the reflections in the sides of~$F$ is a Coxeter group $(W,S)$ (see \cite[Example~6.5.3]{davis}). If $s_1,\ldots,s_{n}$ are the reflections in the sides of~$F$ (arranged cyclically), then the order of $s_is_j$ is 
\begin{align}\label{eq:hyperbolic2}
\begin{aligned}
m_{ij}=\begin{cases}k_i&\textrm{if $j=i+1$}\\
\infty&\textrm{if $|i-j|>1$},
\end{cases}
\end{aligned}
\end{align}
where the indices are read cyclically with $n+1\equiv 1$. 

A Coxeter system~$(W,S)$ given by data~(\ref{eq:hyperbolic})
and~(\ref{eq:hyperbolic2}) is called a \textit{Fuchsian Coxeter
  system}. Observe that these
systems are always infinite. The group $W$ acts on $\mathbb{H}^2$ with fundamental domain~$F$. Note that this action does not preserve orientation, however the index~$2$ subgroup~$W'$ generated by the even length elements of~$W$ is orientation preserving. Thus $W'$ is a discrete subgroup of $PSL_2(\mathbb{R})$, and so is a `Fuchsian group' in the strictest sense of the expression.

The Fuchsian Coxeter system $(W,S)$ induces a tessellation of $\mathbb{H}^2$ by isometric polygons $wF$, $w\in W$. The polygons $wF$ are called \textit{chambers}, and we usually identify the set of chambers with~$W$ by $wF\leftrightarrow w$. We call this the \textit{hyperbolic realisation} of the Coxeter system~$(W,S)$ (it is closely related to the \textit{Davis complex} from~\cite{davis}, see the discussion in \cite[Example~12.43]{AB}).

\begin{example}\label{ex:21} (a) Let $a,b,c\geq 2$ be integers, and let $W_{abc}$ be the group generated by $S=\{s,t,u\}$ subject to the relations
$$
s^2=t^2=u^2=1\quad\textrm{and}\quad (st)^a=(tu)^b=(us)^c=1.
$$
These Coxeter groups are called \textit{triangle groups}, for they can be realised as groups generated by the reflections in the sides of a triangle on the sphere $\SS^2$ (when $\frac{1}{a}+\frac{1}{b}+\frac{1}{c}>1$), the Euclidean plane $\RR^2$ (when $\frac{1}{a}+\frac{1}{b}+\frac{1}{c}=1$), or the hyperbolic disc $\HH^2$ (when $\frac{1}{a}+\frac{1}{b}+\frac{1}{c}<1$). In the latter case the Coxeter group is Fuchsian. Up to permutation of the triple $(a,b,c)$, the irreducible spherical triangle groups are given by $(a,b,c)=(3,3,2),(4,3,2),(5,3,2)$, and the Euclidean triangle groups are given by $(a,b,c)=(3,3,3),(4,4,2),(6,3,2)$.

(b) Let $k_i=2$ for each~$1\leq i\leq n$ in~(\ref{eq:hyperbolic}). Thus each internal angle of the $n$-gon~$F$ is a right angle, and the corresponding Coxeter group is called a \textit{right angled polygon group} (by (\ref{eq:hyperbolic}) this group is Fuchsian if and only if~$n\geq 5$). 
\end{example}
\begin{figure}[!ht]
\centering
\subfigure[Affine triangle group $(3,3,3)$]{
\includegraphics[totalheight=4.85cm]{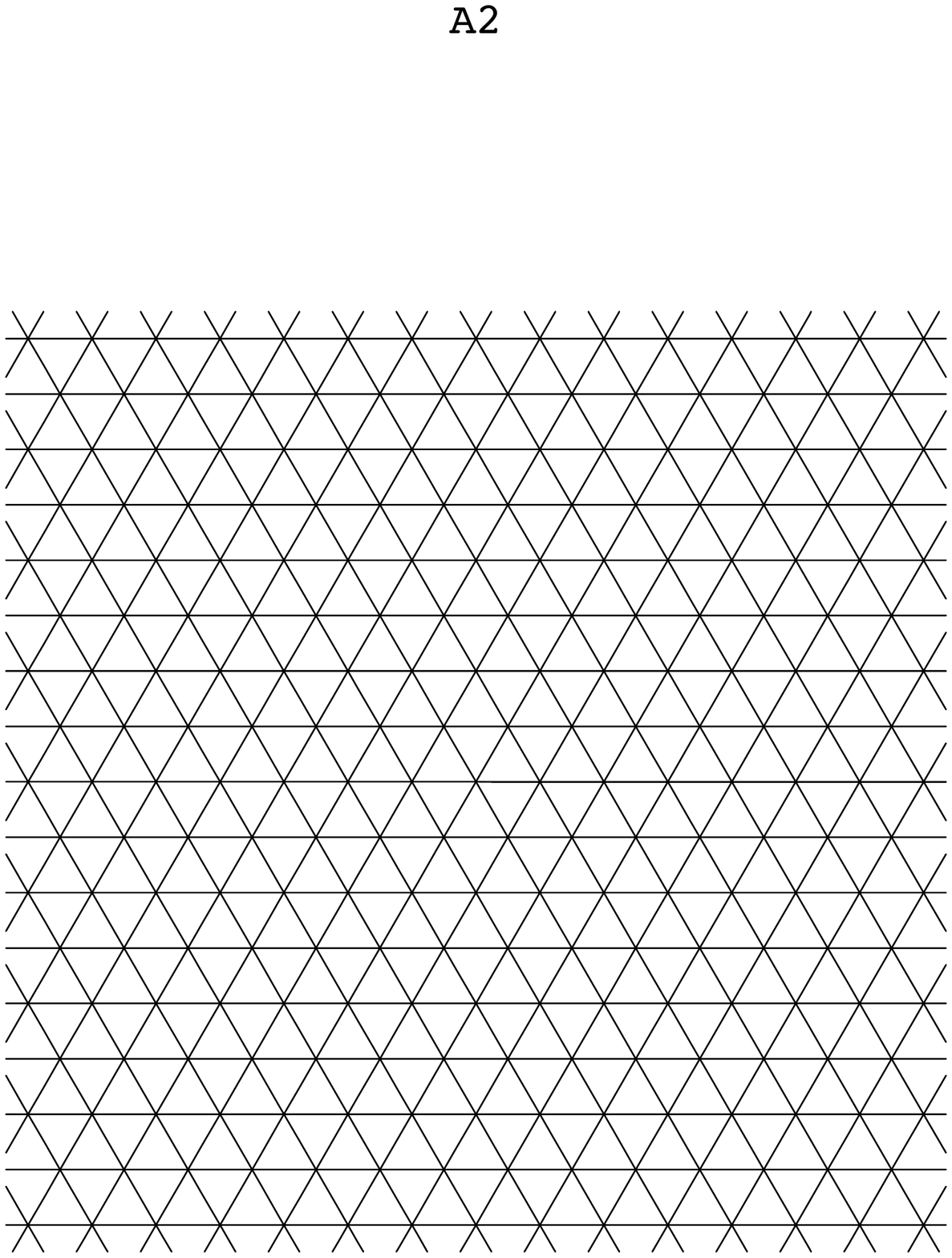}}\quad
\subfigure[\mbox{Fuchsian triangle group $(3,3,4)$}]{
\includegraphics[totalheight=4.85cm]{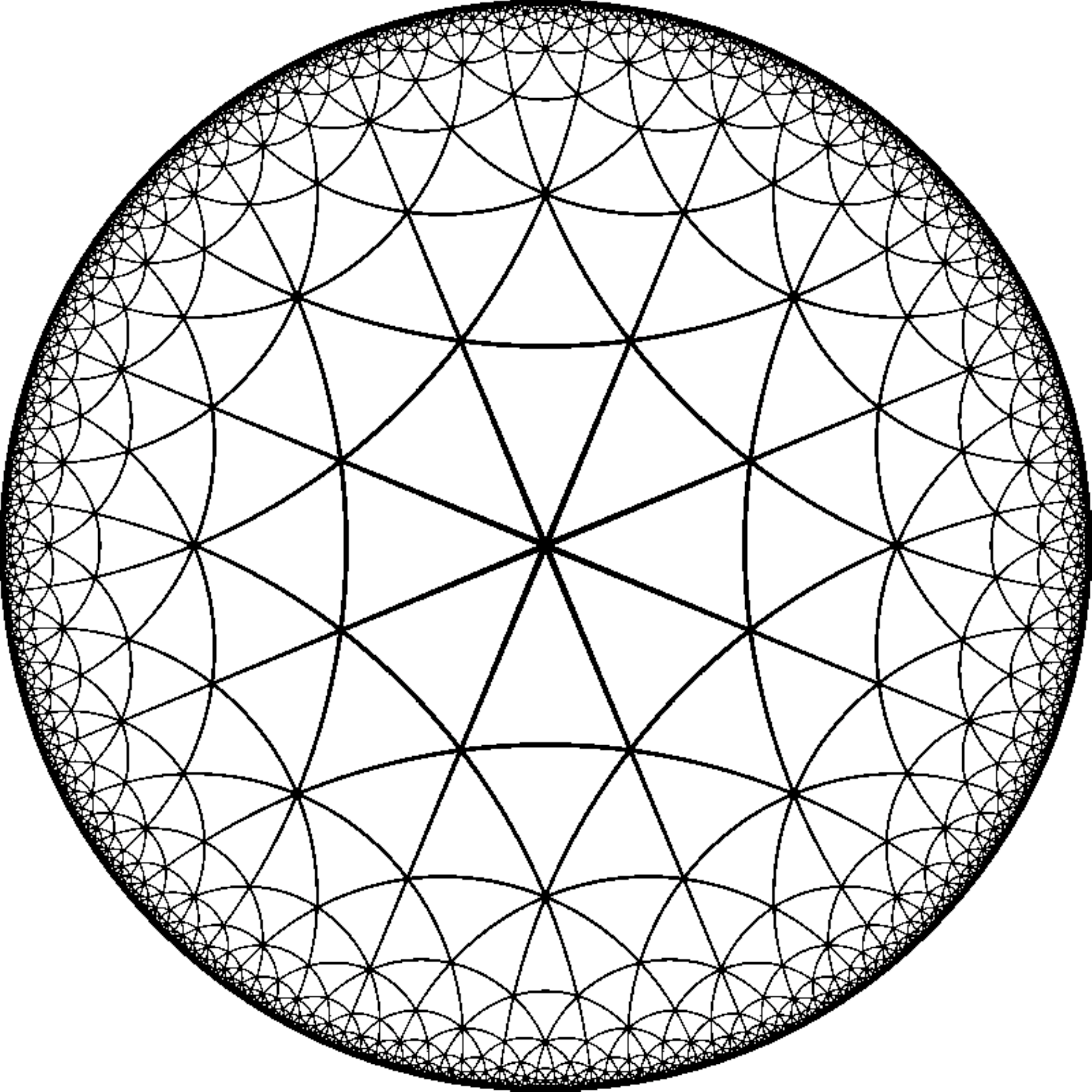}}\quad
\subfigure[Right angled polygon group]{
\includegraphics[totalheight=4.85cm]{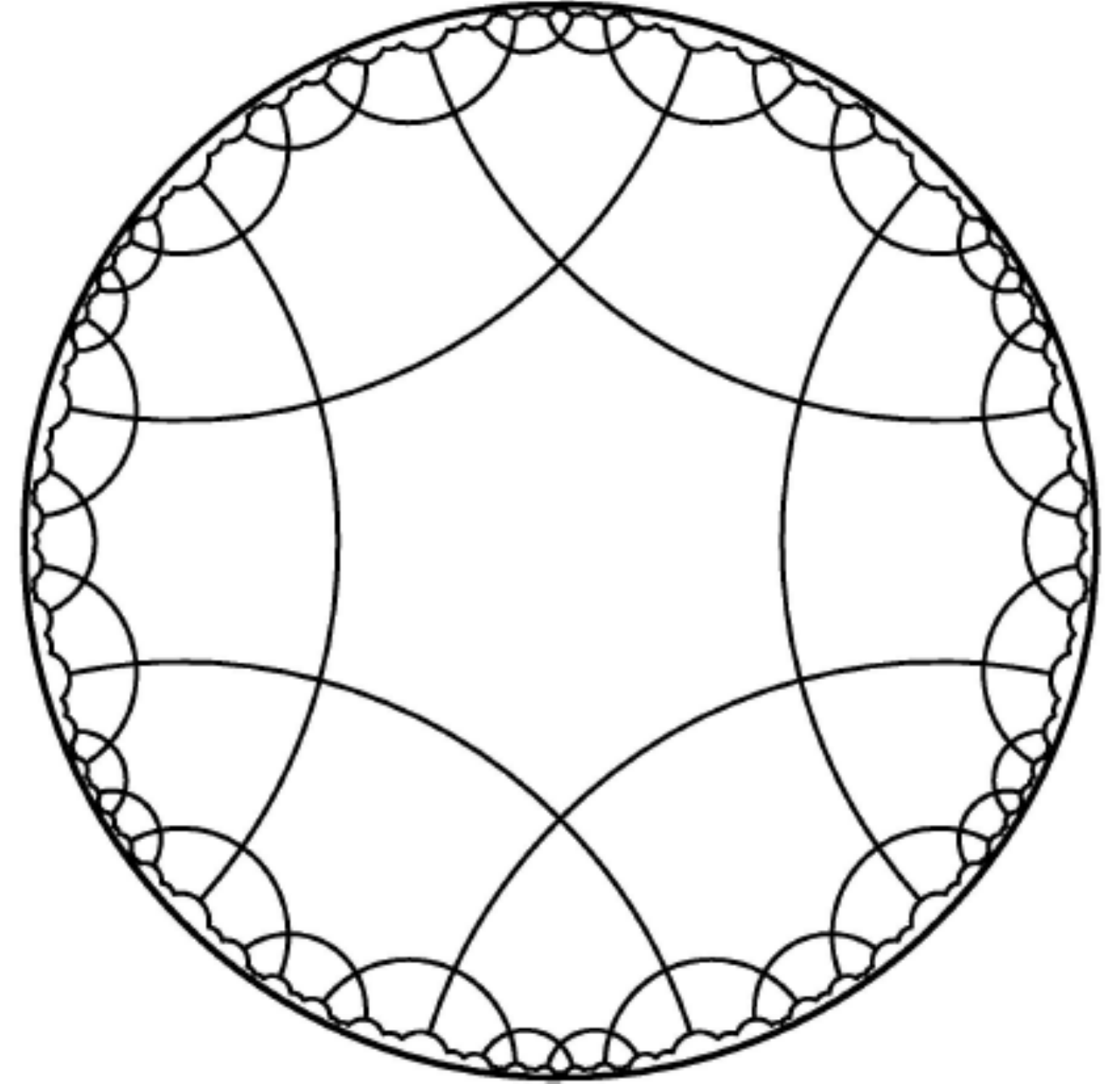}}
\caption{Triangle and polygon groups (pictures adapted from wikipedia)}\label{fig:triangle}
\end{figure}

\subsection{Definition of buildings}

We now give an axiomatic definition of buildings, following~\cite{AB}.

\begin{defn}\label{defn:building}
Let $(W,S)$ be a Coxeter system. A \textit{building of type $(W,S)$} is a pair $(\Delta,\delta)$ where~$\Delta$ is a nonempty set (whose elements are called \textit{chambers}) and $\delta:\Delta\times\Delta\to W$ is a function (called the \textit{Weyl distance function}) such that if $x,y\in\Delta$ then the following conditions hold:
\begin{enumerate}
\item[(B1)] $\delta(x,y)=1$ if and only if $x=y$.
\item[(B2)] If $\delta(x,y)=w$ and $z\in\Delta$ satisfies $\delta(y,z)=s$ with $s\in S$, then $\delta(x,z)\in\{w,ws\}$. If, in addition, $\ell(ws)=\ell(w)+1$, then $\delta(x,z)=ws$.
\item[(B3)] If $\delta(x,y)=w$ and $s\in S$, then there is a chamber $z\in\Delta$ with $\delta(y,z)=s$ and $\delta(x,z)=ws$. 
\end{enumerate}
\end{defn}

Let $(\Delta,\delta)$ be a building of type $(W,S)$ and let $s\in S$. Chambers
$x,y\in\Delta$ are \textit{$s$-adjacent} (written $x\sim_s y$) if
$\delta(x,y)=s$. One useful way to visualise a building is to imagine an
$|S|$-gon with edges labelled by the generators $s\in S$ (think of the edges as
being coloured by $|S|$ different colours). Call this $|S|$-gon the
\textit{base chamber} which we denote by $o$. Now take one copy of the base chamber for each element $x\in\Delta$, and glue these chambers together along edges so that $x\sim_s y$ if and only if the chambers are glued together along their $s$-edges.

A \textit{gallery of type $(s_1,\ldots,s_n)$} joining $x\in\Delta$ to $y\in\Delta$ is a sequence $x_0,x_1,\ldots,x_n$ of chambers with
$$
x=x_0\sim_{s_1}x_1\sim_{s_2}\cdots\sim_{s_n}x_n=y. 
$$
This gallery has \textit{length $n$}. 

The Weyl distance function $\delta$ has a useful description in terms of minimal length galleries in the building: If $s_1\cdots s_n$ is a reduced expression in~$W$ then $\delta(x,y)=s_1\cdots s_n$ if and only if there is a minimal length gallery in $\Delta$ from $x$ to $y$ of type $(s_1,\ldots,s_n)$. The \textit{(numerical) distance} between chambers $x,y\in\Delta$ is
$$
d(x,y)=(\textrm{length of a minimal length gallery joining $x$ to $y$})=\ell(\delta(x,y)),
$$
Note that we use the same notation $d(\cdot,\cdot)$ for distance in both the Coxeter system and the building.

A building $(\Delta,\delta)$ is called \textit{thick} if $|\{y\in\Delta\mid x\sim_s y\}|\geq 2$ for all chambers $x\in\Delta$, and \textit{thin} if $|\{y\in\Delta\mid x\sim_s y\}|=1$ for all chambers $x\in\Delta$. A building $(\Delta,\delta)$ is \textit{regular} if 
$$
q_s:=|\{y\in\Delta\mid x\sim_s y\}|\quad\text{is finite and does not depend on $x\in\Delta$}.
$$
For the remainder of this paper we will assume that $(\Delta,\delta)$ is regular. The numbers $(q_s)_{s\in S}$ are called the \textit{thickness parameters} of the building.

If $I\subseteq S$ and $x\in\Delta$ then the set $R_I(x)=\{y\in\Delta\mid \delta(x,y)\in W_I\}$ (called the \textit{$I$-residue of $x$}) is a building of type~$(W_I,I)$ with thickness parameters $(q_s)_{s\in I}$ (see \cite[Corollary~5.30]{AB}).

For each $x\in\Delta$ and each $w\in W$, let
$$
\Delta_w(x)=\{y\in\Delta\mid \delta(x,y)=w\}\quad\text{be the ``sphere of radius $w$'' centred at $x$.}
$$
 By \cite[Proposition~2.1]{P1} the cardinality $q_w=|\Delta_w(x)|$ does not depend on $x\in \Delta$, and is given by
$$
q_w=q_{s_1}\cdots q_{s_{\ell}}\quad\textrm{whenever $w=s_1\cdots s_{\ell}$ is a reduced expression.}
$$

We call $(\Delta,\delta)$ a \textit{Fuchsian building} if $(W,S)$ is a Fuchsian Coxeter system, and we call $(\Delta,\delta)$ a \textit{triangle building} if $W$ is an infinite triangle group.

Finally, a word about notation. Typically the letters $u,v,w$ will be used for elements of a Coxeter group~$W$, and the letters $x,y,z$ will be used for chambers of a building~$(\Delta,\delta)$. 

\subsection{Examples of buildings}

We now give some examples of buildings that are relevant to this paper. We also show that the class of locally finite thick Fuchsian buildings is sufficiently rich by proving existence of many such buildings.

\begin{example}[Thin buildings]
Let $(W,S)$ be a Coxeter system. Let $\Delta=W$, and define $\delta:\Delta\times \Delta\to W$ by $\delta(u,v)=u^{-1}v$. It is immediate that $(\Delta,\delta)$ is a building of type $(W,S)$. This rather simple example is called the~\textit{Coxeter complex} of~$(W,S)$. It is a thin building, because $\{v\in W\mid u\sim_s v\}=\{us\}$ for each $u\in W$. Conversely it is not difficult to see that every thin building is isomorphic to a Coxeter complex. 
\end{example}

\begin{example}[Generalised polygons]
If $(W,S)$ is a dihedral group of order $2m$ (that is, $S=\{s,t\}$ with $s^2=t^2=(st)^m=1$) then buildings of type $(W,S)$ are called \textit{generalised $m$-gons}. These `basic building blocks' play an important role in the theory (see the monograph~\cite{HVM} which is devoted to the study of generalised $m$-gons). The Feit-Higman Theorem~\cite{feithigman} implies that locally finite thick generalised $m$-gons only exist for $m\in\{2,3,4,6,8,\infty\}$. 
\end{example}

If $(\Delta,\delta)$ is a locally finite thick regular building of general type $(W,S)$, then the ``rank~$2$'' residues $R_{st}(x)=R_{\{s,t\}}(x)$ are generalised $m_{st}$-gons, and so necessarily 
\begin{align}\label{eq:feithigman}
m_{st}\in\{2,3,4,6,8,\infty\}\quad\text{for all $s,t\in S$ with $s\neq t$}.
\end{align}
A sufficient condition for the existence of a locally finite thick regular building of type~$(W,S)$ is that $m_{st}\in\{2,3,4,6,\infty\}$ for all $s,t\in S$ (see Example~\ref{ex:kac}). Allowing $m_{st}=8$ introduces some complications, see Proposition~\ref{prop:existence}). 

\begin{example}[Buildings from groups with $BN$-pairs]\label{ex:kac} 
Let $G$ be a group with a $BN$-pair $(B,N)$ and Coxeter system~$(W,S)$ (see \cite[\S~6.2.6]{AB} for the definition of $BN$-pairs). An instructive example is $G=GL_n(\mathbb{F})$ where $\mathbb{F}$ is a field, with $B$ the upper triangular invertible matrices, $N$ the monomial matrices (matrices with exactly one nonzero entry in each row and column) and $W=N/(N\cap B)$ the symmetric group on $n$ letters (represented as permutation matrices) with $S$ being the elementary transpositions. 

The group $G$ admits a \textit{Bruhat decomposition}
$G=\bigsqcup_{w\in W}BwB$.
Let $\Delta=G/B$, and define $\delta:\Delta\times\Delta\to W$ by
\begin{align*}
\delta(gB,hB)=w\quad\textrm{if and only if}\quad g^{-1}hB\subseteq BwB.
\end{align*}
Then $(\Delta,\delta)$ is a thick building of type $(W,S)$ (see \cite[Theorem~6.56]{AB}).

All \textit{groups of Lie type} (classical groups, Chevalley groups, Steinberg groups, Suzuki-Ree groups) admit a $BN$-pair. More generally, every ``Kac-Moody group'' admits a $BN$-pair. A \textit{Kac-Moody algebra} (cf. \cite{kac}) is a generalisation of the more familiar semisimple Lie algebras. These algebras share many properties with their finite dimensional counterparts, for example, Cartan subalgebras, root space decompositions, and Weyl groups. However in contrast to the semisimple Lie algebra case, the root systems and Weyl groups for infinite dimensional Kac-Moody algebras are infinite. There are Kac-Moody algebras associated to each \textit{crystallographic} Coxeter system (that is, $m_{st}\in\{2,3,4,6,\infty\}$ for all $s,t\in S$). To each such algebra, and for each choice of ground field~$\mathbb{F}$, one can define a \textit{Kac-Moody group} $G=G(\mathbb{F})$ by generators and relations in an analogous way to the construction of Chevalley groups in the finite dimensional setting (see \cite{steinberg} for the finite dimensional theory, and~\cite{titskac} for the Kac-Moody case). The group $G$ has a $BN$-pair, with Coxeter system $(W,S)$. The associated building $(G/B,\delta)$ has uniform thickness parameter $|\mathbb{F}|$, and so taking $\mathbb{F}=\mathbb{F}_q$ to be the finite field with $q$ elements yields a regular building of type~$(W,S)$ with thickness~$q$.
\end{example}

\begin{example}[Ronan's free construction]\label{ex:ronan}
Suppose that $(W,S)$ is a Coxeter system such that every irreducible rank~$3$ parabolic subgroup is infinite. Suppose that $(q_s)_{s\in S}$ is a sequence of integers, and that for each pair $s,t\in S$ with $s\neq t$ there exists a generalised $m_{st}$-gon $\Gamma_{st}$ with parameters $(q_s,q_t)$. Then Ronan's free construction~\cite{ronanconstruction} implies that there exists a locally finite thick regular building $(\Delta,\delta)$ of type $(W,S)$ with thickness parameters $(q_s)_{s\in S}$. 
\end{example}

It is obvious that every irreducible rank~$3$ parabolic subgroup of a Fuchsian Coxeter system~$(W,S)$ is infinite, and thus Ronan's free construction applies to Fuchsian buildings. Thus to exhibit the existence of a thick regular Fuchsian building~$(\Delta,\delta)$ of type $(W,S)$ with thickness parameters $(q_s)_{s\in S}$ it is sufficient to exhibit the existence of a family $\{\Gamma_{st}\mid s,t\in S, s\neq t\}$ of generalised $m_{st}$-gons $\Gamma_{st}$ with thickness parameters~$(q_s,q_t)$. In the following proposition we use this idea to classify those infinite triangle Coxeter systems admitting locally finite thick regular buildings.  This is elementary, although we have been unable to find a reference in the literature.

\begin{prop}\label{prop:existence} Let $(W,S)$ be an infinite triangle Coxeter system, with the generators $s,t,u$ arranged so that $m_{st}\geq m_{tu}\geq m_{us}$. A locally finite thick triangle building of type $(W,S)$ exists if and only if
$$
(m_{st},m_{tu},m_{us})\in\{(a,b,c)\mid a,b,c\in\{2,3,4,6,8\}\}\setminus \{(8,3,3),(8,6,3),(8,6,6),(8,8,8)\}
$$
Thus there are precisely~$24$ infinite triangle Coxeter systems (up to permuting the generators) admitting locally finite thick triangle buildings. Moreover, for each of these infinite triangle Coxeter systems $(W,S)$ there are infinitely many pairwise nonisomorphic buildings of type $(W,S)$.
\end{prop}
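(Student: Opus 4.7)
The claim splits into three assertions: (a) any tuple $(m_{st},m_{tu},m_{us})$ supporting a locally finite thick triangle building satisfies $m_{ij}\in\{2,3,4,6,8\}$; (b) the four tuples $(8,3,3),(8,6,3),(8,6,6),(8,8,8)$ are additionally ruled out; and (c) each of the remaining $24$ tuples admits infinitely many pairwise nonisomorphic buildings of the prescribed type. I would prove these in turn. Assertion (a) is immediate from~(\ref{eq:feithigman}): every rank-$2$ residue $R_{\{i,j\}}(x)$ is itself a locally finite thick generalised $m_{ij}$-gon with thickness parameters $(q_i,q_j)$, so the Feit-Higman theorem restricts $m_{ij}$ to the stated set.

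For (b) I would invoke the classical integrality restrictions on the parameters $(s,t)$ of a finite thick generalised $m$-gon (see~\cite{HVM}): $m=3$ forces $s=t$, $m=6$ forces $st$ to be a perfect square, and $m=8$ forces $2st$ to be a perfect square. Writing these conditions for the three rank-$2$ residues of a putative triangle building and multiplying them yields a parity/divisibility contradiction in each of the four forbidden cases. For instance, the tuple $(8,6,6)$ gives $q_sq_t=2a^2$, $q_tq_u=b^2$, $q_uq_s=c^2$, and multiplying produces $(q_sq_tq_u)^2=2(abc)^2$, which has no nonzero integer solutions. The other three tuples reduce similarly to the irrationality of $\sqrt{2}$: $(8,3,3)$ forces $q_s=q_t=q_u=q$ and hence $q^2=2a^2$; $(8,6,3)$ forces $q_uq_s=q_tq_s$ to be both a perfect square and twice a square; and $(8,8,8)$ gives $(q_sq_tq_u)^2=8(a_{st}a_{tu}a_{us})^2$.

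For (c) I would apply Ronan's free construction (Example~\ref{ex:ronan}), noting that the unique rank-$3$ parabolic subgroup of a triangle Coxeter system is $W$ itself, which is infinite by hypothesis. It therefore suffices to exhibit, for each allowed tuple, a triple $(q_s,q_t,q_u)$ of integers at least $2$ realised by three compatible generalised polygons of the prescribed Weyl-types. When no $m_{ij}$ equals $8$, the choice $q_s=q_t=q_u=q$ for any prime power $q$ works, using Desarguesian projective planes, symplectic quadrangles $W(q)$, and split Cayley hexagons $H(q)$; varying $q$ through prime powers produces pairwise nonisomorphic buildings (distinguished by their thickness). When some $m_{ij}$ equals $8$ one combines Ree-Tits octagons of order $(2^{2k+1},2^{4k+2})$ with compatible octagons, hexagons, quadrangles, projective planes, or digons at the other residues (e.g.\ for $(8,8,6)$ one takes $(q_s,q_t,q_u)=(2^{2k+1},2^{4k+2},2^{2k+1})$ paired with $H(2^{2k+1})$ at the $\{u,s\}$-residue); varying $k$ again yields an infinite family of nonisomorphic buildings.

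The main technical work lies in the case analysis for the ten allowed tuples containing an $8$: for each such tuple one must verify that the octagonal ``twice-a-square'' condition can be simultaneously reconciled with the hexagonal ``square'', projective-plane ``equal'', quadrangle, and digon constraints imposed by the other two residues, and that finite thick generalised polygons of the resulting parameters are available in the literature (essentially the Ree-Tits octagon plus the classical low-order examples of the other residue types).
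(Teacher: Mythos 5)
Your proposal is correct and follows essentially the same route as the paper's proof: Feit--Higman restricts the $m_{ij}$ to $\{2,3,4,6,8\}$, the integrality constraints from~\cite{HVM} (equality for $m=3$, $\sqrt{qq'}\in\ZZ$ for $m=6$, $\sqrt{2qq'}\in\ZZ$ for $m=8$) rule out the four exceptional tuples via an $\sqrt{2}$-irrationality contradiction, and Ronan's free construction together with known families of generalised polygons (with the Ree--Tits octagons of order $(2^{2k+1},2^{4k+2})$ handling the $m=8$ cases) yields infinitely many pairwise nonisomorphic buildings for each of the remaining $24$ types. Two small slips worth fixing: in the $(8,6,3)$ case you wrote ``$q_uq_s=q_tq_s$'' where you mean $q_tq_u=q_tq_s$ (from $q_u=q_s$); and in the $(8,8,8)$ case the product of the three conditions gives $8(q_sq_tq_u)^2=(a_{st}a_{tu}a_{us})^2$ rather than the reversed identity you wrote, although either form delivers the same contradiction.
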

\begin{proof} Suppose that a locally finite thick regular building $(\Delta,\delta)$ of type $(W,S)$ exists. By (\ref{eq:feithigman}) we have $m_{st}\in\{2,3,4,6,8,\infty\}$, and the case $m_{st}=\infty$ is excluded for triangle groups by definition. Since infinite triangle groups have $m_{st}^{-1}+m_{tu}^{-1}+m_{us}^{-1}\leq 1$ this leaves precisely $28$ infinite triangle groups with $m_{st}\geq m_{tu}\geq m_{us}$ and $m_{st},m_{tu},m_{us}\in\{2,3,4,6,8\}$. We now show that the four cases $(m_{st},m_{tu},m_{us})=(8,3,3)$, $(8,6,3)$, $(8,6,6)$, or $(8,8,8)$ do not admit locally finite thick buildings. We recall from \cite[\S1.7]{HVM} that in a finite thick generalised $m$-gon with parameters $(q,q')$ we necessarily have that $q=q'$ if $m=3$, $\sqrt{qq'}\in\mathbb{Z}$ if $m=6$, and $\sqrt{2qq'}\in\mathbb{Z}$ if $m=8$. For example, consider the $(m_{st},m_{tu},m_{us})=(8,6,6)$ case. If a locally finite thick building with parameters $q_s,q_t,q_u$ exists, then $R_{st}(o)$ is a generalised $8$-gon with parameters $(q_s,q_t)$, and $R_{tu}(o)$ and $R_{us}(o)$ are generalised $6$-gons with parameters $(q_t,q_u)$ and $(q_u,q_s)$ (respectively). This implies that $\sqrt{2q_sq_t}\in\mathbb{Z}$, and $\sqrt{q_tq_u},\sqrt{q_uq_s}\in\mathbb{Z}$, a contradiction. The remaining cases are similar.

We now show that there exist locally finite thick regular buildings for each of the remaining~$24$ infinite triangle Coxeter systems, and moreover, that for each of these triangle Coxeter systems there are infinitely many buildings. For this we recall some known examples of generalised $m$-gons (see \cite{HVM} for details). If $m=2,3,4$ or $6$ then there is a generalised $m$-gon with parameters $(q,q)$ for each prime power~$q$. Thus if $m_{st},m_{tu},m_{us}\in\{2,3,4,6\}$ we can take generalised $m$-gons with parameters~$(q,q)$ as the basic building blocks, verifying the claim in this case. The cases where at least one of the $m$'s is $8$ require a little more care. We recall that there are generalised $4$-gons with parameters $(q,q^2)$ and $(q^2,q)$ for each prime power~$q$, and that for each $r=2^{2k+1}$ there are generalised $8$-gons with parameters $(r,r^2)$ and $(r^2,r)$ (in fact, these are the only known examples of finite thick generalised $8$-gons). For example, consider the $(8,6,4)$ triangle group. For each $r=2^{2k+1}$ there exists a generalised $8$-gon with parameters $(r^2,r)$, a generalised $6$-gon with parameters $(r,r)$, and a generalised $4$-gon with parameters $(r,r^2)$, and so there is a thick regular triangle building of type $(W,S)$ with parameters $(r^2,r,r)$. By varying~$k$ we obtain infinitely many buildings (pairwise non-isomorphic because they have different thicknesses). The remaining examples are similar. 
\end{proof}

Similar ideas show that there are infinitely many Fuchsian Coxeter systems $(W,S)$ with $|S|\geq 4$ for which locally finite thick regular buildings of type $(W,S)$ exist, and therefore the class of Fuchsian buildings is reassuringly rather large.

\subsection{Apartments and retractions}\label{sect:aptret}

Let $(\Delta,\delta)$ be a building of type $(W,S)$. The thin sub-buildings of
$(\Delta,\delta)$ of type $(W,S)$ are called the \textit{apartments} of $(\Delta,\delta)$. Thus each apartment is isomorphic to the Coxeter complex of~$(W,S)$. Two key facts concerning apartments are as follows:
\begin{enumerate}
\item[(A1)] If $x,y\in\Delta$ then there is an apartment $A$ containing both $x$ and $y$.
\item[(A2)] If $A$ and $A'$ are apartments containing a common chamber $x$ then there is a unique isomorphism $\theta:A'\to A$ fixing each chamber of the intersection $A\cap A'$.
\end{enumerate}
In fact conditions~(A1) and (A2) can be taken as an alternative, equivalent definition of buildings (see \cite[Definition~4.1]{AB} for the precise statement, and \cite[Theorem~5.91]{AB} for the equivalence of the two axiomatic systems).

Given chambers $x,y\in\Delta$, the \textit{convex hull} $[x,y]$ of $x$ and $y$ is the union of all chambers on minimal length galleries from $x$ to~$y$. That is,
$
[x,y]=\{z\in\Delta\mid d(x,y)=d(x,z)+d(z,y)\}.
$
Another useful fact about apartments is:
\begin{enumerate}
\item[(A3)] If $A$ is an apartment containing $x$ and $y$ then $[x,y]\subseteq A$.
\end{enumerate}
In fact, if $\Delta$ is thick then $[x,y]$ is the intersection of all apartments $A$ containing~$x$ and~$y$.

The hyperbolic realisation of each apartment of a Fuchsian building is a tesselation of the hyperbolic disc, as in Figure~\ref{fig:triangle}(b) and~(c). Roughly speaking, the properties (A1) and (A2) ensure that the hyperbolic metric on each apartment can be coherently `glued together' to make $(\Delta,\delta)$ a $\mathrm{CAT}(-1)$ space (see \cite[Theorem~18.3.9]{davis} for details). 

\textit{Retractions} play an important role in building theory, and indeed in this current work. Let $A$ be an apartment, and let $x$ be a chamber of $A$. The \textit{retraction $\rho_{A,x}$ of $\Delta$ onto $A$ with centre~$x$} is defined as follows: For each chamber $y\in\Delta$,
$$
\rho_{A,x}(y)=z,\quad\textrm{where $z$ is the unique chamber of $A$ with $\delta(x,z)=\delta(x,y)$}.
$$
Alternatively, let $A'$ be any apartment containing $x$ and $y$ (using (A1)) and let $\theta:A'\to A$ be the isomorphism from (A2) fixing $A\cap A'$. Then
$$
\rho_{A,x}(y)=\theta(y).
$$
Thus $\rho_{A,x}:\Delta\to A$ ``radially flattens'' the building onto $A$, with centre~$x\in A$. 

Fix, once and for all, an apartment $A_0$ and a chamber $o\in A_0$. Canonically identify $A_0$ with the Coxeter complex of $(W,S)$ such that $o$ is identified with $1$, the neutral element of~$W$. Thus we regard $W=A_0$ as a ``base apartment'' of~$\Delta$. To simplify notation, we write
$
\rho=\rho_{W,o}
$ for the retraction of $\Delta$ onto the apartment $W$ with centre~$o$. Thus
\begin{align}\label{eq:canonicalretraction}
\rho:\Delta\to W\quad\textrm{is given by}\quad \rho(x)=\delta(o,x).
\end{align}
We also note that in the apartment $A_0=W$ the Weyl distance function is given by
$$
\delta(u,v)=u^{-1}v\quad\text{for all $u,v$ in the base apartment~$W$}.
$$

\section{Automata for Coxeter groups and buildings}\label{sect:3}

The notions of \textit{cones}, \textit{cone types}, and \textit{automata} are well established for finitely generated groups, with \cite{cannon} being a standard reference. Let us briefly recall these notions in the specific context of Coxeter groups, and then extend the ideas into the (non-group) realm of buildings.  

Let $(W,S)$ be a Coxeter system. Let $w\in W$. The \textit{cone of $(W,S)$ with root $w$} is the set
$$
C_W(w)=\{v\in W\mid d(1,v)=d(1,w)+d(w,v)\}.
$$
Thus $C_W(w)$ is the set of all elements $v\in W$ such that there exists a geodesic from $1$ to $v$ passing through $w$. The \textit{cone type} of the cone $C_W(w)$ is
$$
T_W(w)=\{w^{-1}v\mid v\in C(w)\}=w^{-1}C_W(w).
$$

Let $\mathcal{T}(W,S)$ be the set of cone types of $(W,S)$. By \cite[Theorem~2.8]{howlett} there are only finitely many cone types in a Coxeter system $(W,S)$, and so $|\mathcal{T}(W,S)|<\infty$. 

\begin{defn}\label{defn:cannon} The \textit{Cannon automaton} of the Coxeter system $(W,S)$ is the directed graph $\mathcal{A}(W,S)$ with vertex set $\mathcal{T}(W,S)$ and with labelled edges defined as follows. There is a directed edge with label $s\in S$ from cone type $\bT$ to cone type $\bT'$ if and only if there exists $w\in W$ such that $\bT=T_W(w)$ and $\bT'=T_W(ws)$ and $d(1,ws)=d(1,w)+1$.
\end{defn}

A cone type $\bT'$ is \textit{accessible} from the cone type $\bT$ if there is a path from $\bT$ to $\bT'$ in the (directed) graph $\cA(W,S)$. In this case we 
write $\bT\to \bT'$. A cone type $\bT$ is called \textit{recurrent} if $\bT\to\bT$, and otherwise it is called \textit{transient}. The set of 
recurrent vertices induces a (directed) subgraph $\cA_R(W,S)$ of $\cA(W,S)$.
We call the automaton $\cA(W,S)$ \textit{strongly connected} if each recurrent cone type is accessible from any other recurrent cone type in the subgraph~$\cA_R(W,S)$.  

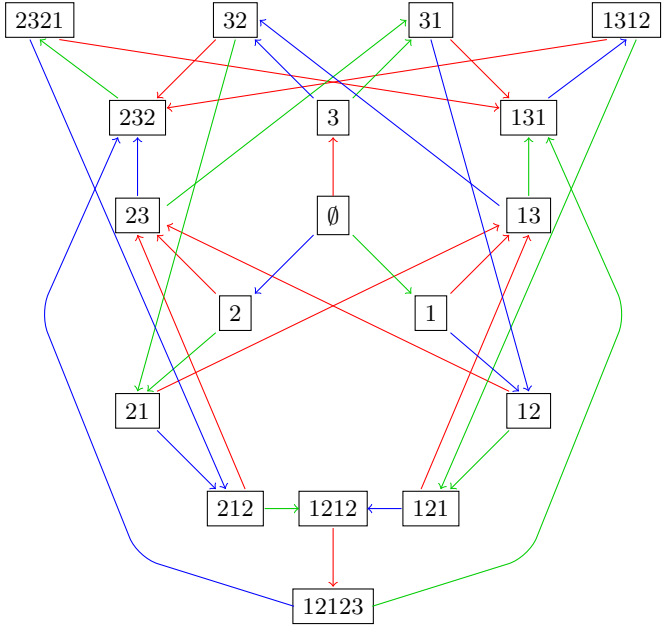
\begin{figure}[!h]
\begin{minipage}{0.6\textwidth}
\centering
\begin{tikzpicture} [scale=1.3]
\path (0,0) node [shape=rectangle,draw] {\footnotesize{$\emptyset$}}
         (1,-1) node [shape=rectangle,draw] {\footnotesize{$1$}}
         (2,-2) node [shape=rectangle,draw] {\footnotesize{$12$}}
         (1,-3) node [shape=rectangle,draw] {\footnotesize{$121$}}
         (0,-3) node [shape=rectangle,draw] {\footnotesize{$1212$}}
         (-1,-3) node [shape=rectangle,draw] {\footnotesize{$212$}}
         (-2,-2) node [shape=rectangle,draw] {\footnotesize{$21$}}
         (-1,-1) node [shape=rectangle,draw] {\footnotesize{$2$}}
         (2,0) node [shape=rectangle,draw] {\footnotesize{$13$}}
         (2,1) node [shape=rectangle,draw] {\footnotesize{$131$}}
         (1,2) node [shape=rectangle,draw] {\footnotesize{$31$}}
         (3,2) node [shape=rectangle,draw] {\footnotesize{$1312$}}
         (0,1) node [shape=rectangle,draw] {\footnotesize{$3$}}
         (-1,2) node [shape=rectangle,draw] {\footnotesize{$32$}}
         (-2,1) node [shape=rectangle,draw] {\footnotesize{$232$}}
         (-2,0) node [shape=rectangle,draw] {\footnotesize{$23$}}
         (-3,2) node [shape=rectangle,draw] {\footnotesize{$2321$}}
         (0,-4) node [shape=rectangle,draw] {\footnotesize{$12123$}};
\draw [->,red] (0,-3.2) -- (0,-3.8);
\draw [->,red] (-1.2,1.8) -- (-1.8,1.2);
\draw [->,red] (1.2,1.8) -- (1.8,1.2);
\draw [->,red] (-1.2,-0.8) -- (-1.8,-0.2);
\draw [->,red] (1.2,-0.8) -- (1.8,-0.2);
\draw [->,red] (-1.8,-1.8) -- (1.7,-0.1);
\draw [->,red] (1.8,-1.8) -- (-1.7,-0.1);
\draw [->,red] (-0.9,-2.8) -- (-2,-0.2);
\draw [->,red] (0.9,-2.8) -- (2,-0.2);
\draw [->,red] (0,0.2) -- (0,0.8);
\draw [->,red] (-2.8,1.8) -- (1.7,1.1);
\draw [->,red] (2.8,1.8) -- (-1.7,1.1);
\draw [->,Green] (0.2,-0.2) -- (0.8,-0.8);
\draw [->,Green] (-1.2,-1.2) -- (-1.9,-1.8);
\draw [->,Green] (-0.7,-3) -- (-0.35,-3);
\draw [->,Green] (1.8,-2.2) -- (1.2,-2.8);
\draw [->,Green] (0.2,1.2) -- (0.8,1.8);
\draw [->,Green] (2,0.2) -- (2,0.8);
\draw [->,Green] (-2.2,1.2) -- (-3,1.8);
\draw [->,Green] (-1,1.8) -- (-2,-1.8);
\draw [->,Green] (3.1,1.8) -- (1.1,-2.8);
\draw [->,Green, rounded corners=8pt] (0.4,-4) -- (2,-3.5) -- (3,-1) -- (2.2,0.8); 
\draw [->,Green] (-1.7,0.1) -- (0.75,2);
\draw [->,blue] (-0.2,-0.2) -- (-0.8,-0.8);
\draw [->,blue] (1.2,-1.2) -- (1.9,-1.8);
\draw [->,blue] (0.7,-3) -- (0.35,-3);
\draw [->,blue] (-1.8,-2.2) -- (-1.2,-2.8);
\draw [->,blue] (-0.2,1.2) -- (-0.8,1.8);
\draw [->,blue] (-2,0.2) -- (-2,0.8);
\draw [->,blue] (2.2,1.2) -- (3,1.8);
\draw [->,blue] (1,1.8) -- (2,-1.8);
\draw [->,blue] (-3.1,1.8) -- (-1.1,-2.8);
\draw [->,blue, rounded corners=8pt] (-0.4,-4) -- (-2,-3.5) -- (-3,-1) -- (-2.2,0.8); 
\draw [->,blue] (1.7,0.1) -- (-0.75,2);
\end{tikzpicture}
\caption{The Cannon automaton for~$W_{(3,3,4)}$}\label{fig:automata}
\end{minipage}
\begin{minipage}{0.4\textwidth}
Figure~\ref{fig:automata} shows the Cannon automaton for the (Fuchsian) triangle group $W_{(3,3,4)}$ (see Appendix~\ref{app:A} for details). The generators are labelled $1$, $2$, and~$3$, and the labels on the edges are indicated by colours (green, blue, red respectively). The cone types are given by the base element of a representative cone of that type. Thus the vertex $131$ is the cone type $T(131)$. All cone types, except for $\emptyset$, $1$, $2$, and $3$, are recurrent. This automaton is strongly connected. For example, the sequence $121\to 1212\to 12123\to 232\to 2321\to 212\to 23$ shows that $121\to 23$.  
\end{minipage}
\end{figure}

The existence of a strongly connected Cannon automaton is important for our renewal theory arguments in Section~\ref{sect:renewal}, thus in Appendix~\ref{app:A} we prove:

\begin{thm}\label{thm:stronglyconnected} The 
Cannon automaton of a Fuchsian Coxeter system is strongly connected. 
\end{thm}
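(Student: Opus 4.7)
The plan is to give an explicit combinatorial description of the Cannon automaton of a Fuchsian Coxeter system $(W,S)$ and then verify strong connectivity of $\cA_R(W,S)$ by exhibiting directed paths between each pair of recurrent vertices. Since a Fuchsian Coxeter diagram is a cycle with labels $m_{s_i,s_{i+1}}=k_i<\infty$ and $m_{s_i,s_j}=\infty$ for non-adjacent $i,j$, every reduced expression for an element $w\in W$ decomposes into a sequence of maximal \emph{dihedral runs}: alternating words in some $\langle s_i,s_{i+1}\rangle$, concatenated along the cycle. A first key point, which I would prove by checking the effect of right-multiplication by each $s\in S$, is that the cone type $T_W(w)$ depends only on a bounded amount of suffix information, namely which dihedral subgroup the trailing run lies in and how long that run is (capped by $k_i-1$). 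Combined with Howlett's theorem \cite[Theorem~2.8]{howlett}, which guarantees finiteness of $\mathcal{T}(W,S)$, this yields an enumeration of cone types that can be carried out by hand for each choice of $(n,k_1,\ldots,k_n)$.

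Having parameterised the cone types, I would classify them as transient or recurrent. The transient types are the "early" ones encountered on the way out from the identity, typically $T_W(1)=\emptyset$ and $T_W(s)$ for $s\in S$, corresponding to elements whose suffix pattern has not yet stabilised. All remaining cone types should be recurrent: for a suffix pattern given by a dihedral run in $\langle s_i,s_{i+1}\rangle$ of length $j<k_i-1$, there is a cycle in $\cA(W,S)$ obtained either by continuing the current run (if $j+1<k_i$) or by switching to the neighbouring dihedral subgroup $\langle s_{i+1},s_{i+2}\rangle$ via the generator $s_{i+2}$. The fact that $m_{s_i,s_j}=\infty$ for non-adjacent generators is what guarantees that such extensions are reduced, and indeed that the new suffix has the predicted form.

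For strong connectivity, given two recurrent cone types $\bT_1,\bT_2$ with representatives $w_1,w_2$, I would concatenate carefully chosen letters onto $w_1$ so that the resulting sequence of cone types traces a path in $\cA_R(W,S)$ terminating at $\bT_2$. The strategy is to switch out of the current dihedral run by appending the neighbouring generator, then rotate around the cycle of the Coxeter diagram by applying $s_{i+2},s_{i+3},\ldots$ one letter at a time until arriving at the correct dihedral region, and finally build up a run of the target length there. Each intermediate element has one of the recurrent suffix patterns classified above, so the whole path lies in $\cA_R(W,S)$.

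The main obstacle is uniformity: the Fuchsian family is parameterised by $n\geq 3$ and $(k_1,\ldots,k_n)$ with $\sum 1/k_i<n-2$, giving infinitely many distinct systems, so the explicit analysis illustrated by Figure~\ref{fig:automata} must be carried out in a way that does not depend on the particular values. The hyperbolicity inequality is what ensures that the cyclic rotation procedure above always has enough room: in the spherical or affine regimes ($\sum 1/k_i\geq n-2$) the analogous automaton can genuinely fail to be strongly connected due to finite or linear geometric constraints, so at some point the argument must use this hyperbolicity in an essential way (for instance, to guarantee that no "shortcut" relation between non-adjacent generators collapses the suffix analysis).
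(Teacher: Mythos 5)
Your proposal follows the same overall strategy as the paper---an explicit, combinatorial construction of the automaton followed by a by-hand verification of strong connectivity---but as it stands it contains imprecisions and genuine gaps that would prevent it from being completed without essentially redoing the work the paper does.

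The most serious issue is the central claim that the cone type of $w$ is determined by ``which dihedral subgroup the trailing run lies in and how long that run is (capped by $k_i-1$).'' This is not accurate for the triangle groups ($n=3$), which are the hardest cases. In the paper's Class~III case ($a\geq 6$, $b=3$, $c=2$), one finds identifications like $T(utstuts)=T(tuts)$ and $T(stut)=T(tut)$ (Lemma~\ref{lem:automaton3}), where the relevant suffix crosses between two dihedral subgroups and the small value $m_{us}=2$ causes interference; there is no single ``trailing dihedral run'' that encodes the cone type. Even for Class~I the precise statement is subtler than you describe: when the trailing run equals the longest element of a rank-$2$ parabolic $W_{st}$, the cone type depends on the \emph{first} letter after that longest element rather than on the run's length (Lemma~\ref{lem:automaton1} part 2). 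Establishing these identifications requires real work: the paper uses a hyperbolic-geometric input, namely that two walls intersect if and only if the associated reflections generate a finite group (Lemma~\ref{lem:intersect}), together with specific infiniteness claims such as that $\langle u, tst\rangle$ is infinite whenever $m_{st},m_{tu},m_{us}\geq 3$ (Lemma~\ref{lem:reduction}). None of this machinery appears in your sketch, and ``checking the effect of right-multiplication by each $s\in S$'' does not substitute for it, because without the wall argument one cannot rule out hidden Coxeter reductions.

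The second gap is that your strong-connectivity argument---``rotate around the cycle of the Coxeter diagram'' using the freeness coming from $m_{s_i,s_j}=\infty$ for non-adjacent $i,j$---only genuinely applies when $|S|\geq 4$. For $n=3$ there are no non-adjacent pairs of generators: all three $m_{ij}$ are finite, so there is no ``free'' direction in which to rotate, and indeed the connectivity paths the paper exhibits for Classes I--III are quite intricate (for instance the concrete concatenation $\gamma_1\gamma_2\gamma_3\gamma_1\gamma_4$ in the $(4,5,2)$ case). Relatedly, you correctly name the uniformity-over-all-parameters issue as the main obstacle, but then do not resolve it; the paper handles it by partitioning into Classes I--IV and proving the cone type identifications in Lemmas~\ref{lem:automaton1}--\ref{lem:automaton4} in a form that is uniform over each class. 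Without those lemmas, the verification in, say, the $(a,3,2)$ family with $a$ ranging over all integers $\geq 7$ is not a finite check. In short, the approach is the right one, but you need the wall/reflection lemmas and a class-by-class treatment of the triangle groups to make it a proof.
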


\begin{remark}
It does not appear to be known in the literature which Coxeter systems have strongly connected automata. For example, our direct calculations in Appendix~\ref{app:A} show that affine triangle groups do not have strongly connected Cannon automata, and we suspect that no affine Coxeter group has a strongly connected Cannon automaton. 
\end{remark}

We now extend the above concepts to buildings. Let $(\Delta,\delta)$ be a building of type $(W,S)$ with fixed base chamber $o$. Let $x\in \Delta$ be a chamber. The \textit{cone of $(\Delta,\delta)$ with root $x$} is the set
$$
C_{\Delta}(x)=\{y\in \Delta\mid d(o,y)=d(o,x)+d(x,y)\}.
$$
Thus $C_{\Delta}(x)$ is the set of all chambers $y\in \Delta$ such that there exists a geodesic from $o$ to $y$ passing through $x$. The \textit{cone type} of the cone $C_{\Delta}(x)$ is
$$
T_{\Delta}(x)=\{\delta(x,y)\mid y\in C_{\Delta}(x)\}.
$$

If $A$ is an apartment of $\Delta$ containing $o$ and $x\in A$ we write
$$
C_A(x)=\{y\in A\mid d(o,y)=d(o,x)+d(x,y)\}.
$$
We collect together some useful facts about cones and cone types in buildings, and the connection with cones and cone types in Coxeter systems. Recall the definition of the canonical retraction $\rho:\Delta\to W$ from~(\ref{eq:canonicalretraction}). 

\begin{prop}\label{prop:conetypes}
Let $(\Delta,\delta)$ be a building of type $(W,S)$. 
\begin{enumerate} 
\item If $A$ is an apartment containing the chambers $o$ and $x$ then the isomorphism $\rho|_A:A\to W$ maps $C_A(x)$ onto $C_W(\rho(x))$.
\item $\rho(C_{\Delta}(x))=C_W(\rho(x))$ for all $x\in \Delta$.
\item $T_{\Delta}(x)=T_W(\rho(x))$ for all $x\in\Delta$.
\item $\rho^{-1}(C_W(w))=\bigsqcup_{x\in\Delta_w(o)}C_{\Delta}(x)$ for all $w\in W$.
\end{enumerate}
\end{prop}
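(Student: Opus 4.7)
The linchpin of the whole proposition is the observation that whenever an apartment~$A$ contains~$o$, the restriction $\rho|_A\colon A\to W$ is an isomorphism of thin buildings; in particular it preserves both Weyl distances and numerical distances. To see this, apply~(A2) to the apartments $A$ and $A_0=W$, which share the chamber~$o$, to obtain a unique isomorphism $\theta\colon A\to W$ fixing~$o$. For $y\in A$ we then have $\delta(o,y)=\delta(\theta(o),\theta(y))=\delta(o,\theta(y))=\theta(y)$, the last equality using the identification of $A_0$ with~$W$ (so that $\delta(o,\theta(y))=o^{-1}\theta(y)=\theta(y)$). Hence $\rho|_A=\theta$, and part~(i) is immediate since cones are defined purely by the distance identity $d(o,y)=d(o,x)+d(x,y)$, which is preserved by the isomorphism.

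For part~(ii), the containment~$\supseteq$ is obtained by picking (via~(A1)) any apartment~$A$ containing $o$ and~$x$ and using~(i) to lift each $w\in C_W(\rho(x))$ to a chamber $y\in C_A(x)\subseteq C_\Delta(x)$ with $\rho(y)=w$. For~$\subseteq$, let $y\in C_\Delta(x)$ and pick an apartment~$A$ containing~$o$ and~$y$; the cone condition forces $x\in[o,y]$, and $[o,y]\subseteq A$ by~(A3), so $A$ contains all of $o,x,y$, and~(i) yields $\rho(y)\in C_W(\rho(x))$. Part~(iii) is then a short corollary: for $y\in C_\Delta(x)$ choose an apartment~$A$ as above, so that $\delta(x,y)=\delta(\rho(x),\rho(y))=\rho(x)^{-1}\rho(y)\in T_W(\rho(x))$ by (ii); the reverse inclusion uses the same lifting trick as in~(ii).

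For part~(iv), the inclusion~$\supseteq$ is a direct consequence of~(ii): if $x\in\Delta_w(o)$ and $y\in C_\Delta(x)$, then $\rho(y)\in C_W(\rho(x))=C_W(w)$. For~$\subseteq$, take $y\in\rho^{-1}(C_W(w))$ and choose an apartment~$A$ containing~$o$ and~$y$; let $x\in A$ be the unique chamber with $\rho|_A(x)=w$. Since $\rho(y)\in C_W(w)$ says that $w$ lies on a minimal gallery from~$1$ to~$\rho(y)$ in~$W$, pulling this back through the isomorphism~$\rho|_A$ places $x$ on a minimal gallery from~$o$ to~$y$ in~$A$; hence $y\in C_A(x)\subseteq C_\Delta(x)$ with $x\in\Delta_w(o)$.

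Disjointness is the one step I expect to require most care. If $y\in C_\Delta(x_1)\cap C_\Delta(x_2)$ with $x_1,x_2\in\Delta_w(o)$, the cone condition forces $x_1,x_2\in[o,y]$, and~(A3) places $[o,y]$ inside any apartment~$A$ containing~$o$ and~$y$. Inside~$A$ the restriction $\rho|_A$ is a bijection, and $\rho|_A(x_1)=w=\rho|_A(x_2)$ forces $x_1=x_2$. The only real obstacle is to arrange the apartment choices so that the arguments go through independently of which apartment is picked; this is automatic from~(A3), which guarantees that the relevant convex hull lies inside every apartment we are willing to consider, so the constructed chamber~$x$ is intrinsic to the data $(o,y,w)$.
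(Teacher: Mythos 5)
Your proof is correct and follows essentially the same route as the paper's: part~(i) rests on $\rho|_A$ being an isomorphism of thin buildings (hence preserving minimal galleries/distances), part~(ii) uses~(A3) to force $x$ into any apartment containing $o$ and $y$, part~(iii) reduces to~(ii) via the identity $\rho(y)=\rho(x)\delta(x,y)$, and the disjointness in~(iv) again comes from~(A3) plus injectivity of $\rho|_A$. The only differences are cosmetic (you spell out the verification that $\rho|_A=\theta$ and the lifting step for the $\subseteq$ inclusion of~(iv), which the paper states as ``immediate'').
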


\begin{proof}
If $A$ is an apartment containing $o$ and $x$ then the restriction
$\rho|_A:A\to W$ is an isomorphism. Thus $\rho|_A$ and $\rho|_A^{-1}$ map
minimal galleries to minimal galleries, and hence part~$1$ follows. 

Next we claim that $C_{\Delta}(x)=\bigcup_{A}C_A(x)$ where the union is over all apartments $A$ containing $o$ and~$x$. It is clear that $C_A(x)\subseteq C_{\Delta}(x)$ for each apartment $A$ containing $o$ and $x$, and thus $\bigcup_{A}C_A(x)\subseteq C_{\Delta}(x)$. On the other hand, suppose that $y\in C_{\Delta}(x)$. Let $A$ be an apartment containing $o$ and $y$. Then $A$ contains $x$ by~(A3), and so $y\in C_A(x)$, completing the proof of the claim. Part~$2$ follows using part~$1$, since
$
\rho(C_{\Delta}(x))=\bigcup_{A}\rho(C_A(x))=C_W(\rho(x)).
$

To prove part~$3$, note that by part~$2$,
\begin{align*}
T_W(\rho(x))&=\rho(x)^{-1}C_W(\rho(x))=\rho(x)^{-1}\rho(C_{\Delta}(x))=\rho(x)^{-1}\{\rho(y)\mid y\in C_{\Delta}(x)\}.
\end{align*}
If $y\in C_{\Delta}(x)$ then $\delta(o,y)=\delta(o,x)\delta(x,y)$. Thus $\rho(y)=\rho(x)\delta(x,y)$, and so
$
T_W(\rho(x))=T_{\Delta}(x)
$.

From part~$2$ it is immediate that $\rho^{-1}(C_W(w))=\bigcup_{x\in\Delta_w(o)}C_{\Delta}(x)$ for all $w\in W$. To see that the union is disjoint, suppose that $y\in C_{\Delta}(x)\cap C_{\Delta}(x')$ with $x,x'\in\Delta_w(o)$. Let $A$ be an apartment of $\Delta$ containing $o$ and $y$. Since $x$ and $x'$ are both on minimal galleries from $o$ to $y$, (A3) implies that $x,x'\in A$. Since $\rho|_A:A\to W$ is an isomorphism, and since $\rho(x)=w=\rho(x')$, we have $x=x'$. 
\end{proof}

We make a completely analogous definition to Definition~\ref{defn:cannon} for the Cannon automaton $\cA(\Delta,\delta)$ of a building~$(\Delta,\delta)$.

\begin{defn}\label{defn:cannon2} Let $(\Delta,\delta)$ be a building of type $(W,S)$. The \textit{Cannon automaton} of $(\Delta,\delta)$ is the directed graph $\mathcal{A}(\Delta,\delta)$ with vertex set $\mathcal{T}(\Delta,\delta)$ and with labelled edges defined as follows. There is a directed edge with label $s\in S$ from cone type $\bT$ to cone type $\bT'$ if and only if there exists $x\in \Delta$ and $y\in\Delta_s(x)$ such that $\bT=T_{\Delta}(x)$ and $\bT'=T_{\Delta}(y)$ and $d(o,y)=d(o,x)+1$.
\end{defn}

\begin{prop}
Let $(\Delta,\delta)$ be a building of type $(W,S)$. Then $\cA(\Delta,\delta)\cong\cA(W,S)$. 
\end{prop}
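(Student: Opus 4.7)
The plan is to show that the identity map on sets of cone types is an isomorphism of labelled directed graphs, using Proposition~\ref{prop:conetypes}(3) as the crux and the fact that $W = A_0$ sits inside $\Delta$ as the base apartment.

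First I would check that the vertex sets agree. By Proposition~\ref{prop:conetypes}(3), every building cone type is a Coxeter cone type: $T_\Delta(x) = T_W(\rho(x))$. For the reverse inclusion, given any $w \in W$, view $w$ as a chamber of the base apartment $A_0 \subseteq \Delta$; then $\rho(w) = \delta(o,w) = w$ (using that in $A_0$ the Weyl distance is $\delta(u,v) = u^{-1}v$), so $T_W(w) = T_\Delta(w)$. Hence $\mathcal{T}(\Delta,\delta) = \mathcal{T}(W,S)$ as sets of subsets of $W$.

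Next I would verify that the labelled edges coincide. Suppose there is an edge with label $s$ from $\bT$ to $\bT'$ in $\mathcal{A}(\Delta,\delta)$, witnessed by $x\in\Delta$ and $y\in\Delta_s(x)$ with $d(o,y) = d(o,x)+1$. Set $w = \rho(x) = \delta(o,x)$. By axiom (B2), $\delta(o,y) \in \{w, ws\}$; since $\ell(\delta(o,y)) = d(o,y) = \ell(w)+1$, we must have $\delta(o,y) = ws$ and $\ell(ws) = \ell(w)+1$. Therefore $\bT = T_\Delta(x) = T_W(w)$ and $\bT' = T_\Delta(y) = T_W(ws)$, producing the corresponding edge in $\mathcal{A}(W,S)$. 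Conversely, given an edge with label $s$ from $\bT$ to $\bT'$ in $\mathcal{A}(W,S)$ witnessed by $w \in W$ with $\ell(ws) = \ell(w)+1$, take $x = w$ and $y = ws$ in the base apartment $A_0$. Then $\delta(x,y) = w^{-1}(ws) = s$ and $d(o,y) = \ell(ws) = \ell(w)+1 = d(o,x)+1$, and $\bT = T_W(w) = T_\Delta(x)$, $\bT' = T_W(ws) = T_\Delta(y)$, giving the required edge in $\mathcal{A}(\Delta,\delta)$.

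There is essentially no obstacle here; the proposition is a direct structural corollary of Proposition~\ref{prop:conetypes}(3) together with the embedding $A_0 \cong W \hookrightarrow \Delta$. The only point requiring mild care is the use of axiom (B2) to identify $\delta(o,y)$ with $ws$ in the first direction, and the symmetric observation that within the base apartment the Weyl distance function reduces to $(u,v) \mapsto u^{-1}v$ so that $\rho$ acts as the identity.
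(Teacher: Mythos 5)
Your proof is correct and follows the same approach the paper indicates: the bijection on vertex sets comes from Proposition~\ref{prop:conetypes}, and the paper then simply asserts that checking edge preservation is elementary. You have supplied exactly that elementary check (via axiom (B2) for the forward direction and the embedding $A_0\cong W\hookrightarrow\Delta$ for the converse), so this is a fleshed-out version of the paper's proof rather than a genuinely different route.
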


\begin{proof}
By Proposition~\ref{prop:conetypes} there is a bijection between the vertex sets of $\cA(\Delta,\delta)$ and $\cA(W,S)$, and it is elementary to check that this bijection preserves labelled oriented edges.
\end{proof}

For the remainder of this paper, when it is clear from context we will typically write $C(\cdot)$ and $T(\cdot)$ for cones and cone types in either Coxeter groups or buildings.

The \textit{boundary} of a cone $C$ of $(\Delta,\delta)$ is
$$
\partial C=\{y\in C\mid \text{ there exists $z\in \Delta\setminus C$ with $d(y,z)=1$}\}.
$$
If $L\geq 1$, the \textit{$L$-boundary} of a cone $C$ of $(\Delta,\delta)$ is defined to be
\begin{align}\label{eq:boundaryofcone}
\partial_L C=\{y\in C\mid \text{there exists $z\in \Delta\setminus C$ with $d(y,z)\leq L$}\}.
\end{align}
In particular, $\partial_1C=\partial C$. We call $\mathrm{Int}_{L} C=C\setminus \partial_{L} C$ the \textit{$L$-interior} of $C$.  We make analogous definitions for the boundary, $L$-boundary and $L$-interior of a cone $C$ of $(W,S)$. 

The \textit{$L$-boundary of a cone type} $\bT$ (of $\Delta$ or $W$) is defined  by
$$
\partial_L\bT=\{w\in\bT\mid\text{ there exists $v\in W\setminus\bT$ with $d(w,v)\leq L$}\},
$$
and the \textit{$L$-interior of the cone type} $\bT$ is $\mathrm{Int}_L \bT=\bT\setminus \partial_L\bT$.

\begin{lemma}\label{lem:onestep}
Let $x\in\Delta$ and $y\in C_{\Delta}(x)$. If there is a chamber $z\in\Delta$ with $d(y,z)=1$ and $z\notin C_{\Delta}(x)$ then there is a chamber $z'\in\Delta$ with $d(y,z')=1$ and $\rho(z')\notin C_W(\rho(x))$.  
\end{lemma}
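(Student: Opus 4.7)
The plan is to produce $z'$ via axiom (B3) applied at $o$: let $s := \delta(y, z)$ and take $z'$ to be the chamber with $\delta(y, z') = s$ and $\delta(o, z') = \rho(y) s$. Then $d(y, z') = 1$ and $\rho(z') = \rho(y) s$ automatically, so the lemma reduces to showing $\rho(y) s \notin C_W(\rho(x))$. Setting $w = \delta(x, y)$, the hypothesis $y \in C_\Delta(x)$ says $\rho(y) = \rho(x) w$ with $\ell(\rho(x) w) = \ell(\rho(x)) + \ell(w)$, so the question is whether $\ell(\rho(x) w s) = \ell(\rho(x)) + \ell(ws)$. I will derive a contradiction with the hypothesis (that some $s$-neighbor of $y$ lies outside $C_\Delta(x)$) in each case where this equality holds.

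Case $\ell(ws) = \ell(w) + 1$ with $\ell(\rho(x) w s) = \ell(\rho(x)) + \ell(w) + 1$: axiom (B2) applied at $x$ forces $\delta(x, z) = ws$ for every $s$-neighbor $z$ of $y$, and applied at $o$ forces $\delta(o, z) = \rho(y) s$ (both triggered because the longer alternative is mandatory). Hence $d(o, z) = \ell(\rho(x)) + \ell(w) + 1 = d(o, x) + d(x, z)$, so every $s$-neighbor lies in $C_\Delta(x)$, contradicting the hypothesis.

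Case $\ell(ws) = \ell(w) - 1$: the inequality $\ell(\rho(x) w s) \leq \ell(\rho(x)) + \ell(w) - 1$, combined with the fact that multiplication by $s$ changes length by exactly one, forces $\ell(\rho(x) w s) = \ell(\rho(x)) + \ell(w) - 1 = \ell(\rho(x)) + \ell(ws)$ automatically; I must argue that every $s$-neighbor of $y$ still lies in $C_\Delta(x)$. Applying (B3) at $x$ and at $o$, obtain the unique $s$-neighbors $z^*$ (with $\delta(x, z^*) = ws$) and $z^{**}$ (with $\delta(o, z^{**}) = \rho(y) s$) of $y$; the automatic length identity $\ell(\rho(x)ws) = \ell(\rho(x)) + \ell(ws)$ forces $z^* = z^{**}$. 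A direct tally then places $z^*$, and each remaining ``parallel'' $s$-neighbor (necessarily carrying $\delta(x, \cdot) = w$ and $\delta(o, \cdot) = \rho(y)$), inside $C_\Delta(x)$.

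The only remaining possibility is therefore $\ell(ws) = \ell(w) + 1$ together with $\ell(\rho(x) w s) = \ell(\rho(x)) + \ell(w) - 1 < \ell(\rho(x)) + \ell(ws)$, which gives $\rho(y) s \notin C_W(\rho(x))$, completing the proof with the $z'$ chosen above. The main obstacle will be the panel analysis in the descending case, where the careful simultaneous use of (B2) and (B3) at both basepoints $x$ and $o$ is needed to pin down how the $s$-panel of $y$ decomposes into chambers inside $C_\Delta(x)$, and in particular to recognize that the two projections onto this panel must coincide.
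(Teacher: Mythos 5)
Your proposal is correct and provides a valid, apartment-free alternative to the paper's argument. The paper constructs $z'$ as the unique $s$-neighbour of $y$ inside an apartment $A$ containing $o$ and $y$ (and hence $x$ by (A3)), and then, assuming $\rho(z')\in C_W(\rho(x))$, extends a minimal gallery from $o$ through $x$ to $z'$ by the step $z'\sim_s z$ to conclude that the specific $z$ from the hypothesis lies in $C_\Delta(x)$. Your construction of $z'$ via (B3) applied at $o$ gives the same chamber whenever $\ell(\rho(y)s)<\ell(\rho(y))$, but you never invoke apartments or Proposition~3.5; instead you use only (B2), (B3) and the triangle inequality. Your three-case split on $\ell(ws)$ versus $\ell(w)$ and $\ell(\rho(x)ws)$ versus $\ell(\rho(x))+\ell(ws)$ is a refinement of the paper's binary split on $\ell(\rho(y)s)$ (your Cases 2 and 3 together form the paper's decreasing case). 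In your two ``bad'' cases you show the stronger statement that \emph{every} $s$-neighbour of $y$ lies in $C_\Delta(x)$, whereas the paper just needs the particular $z$; in the descending case this requires the key observation (via the bound $d(o,z^*)\le d(o,x)+d(x,z^*)$) that the two projections $z^*$ and $z^{**}$ coincide, and this is exactly where you said the work would be — well spotted. The trade-off is that the paper's apartment picture makes the choice of $z'$ geometrically transparent, while your route is more uniform and stays entirely inside the building axioms, which some readers will find more elementary.
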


\begin{proof}
Since $d(y,z)=1$ we have $\delta(y,z)=s$ for some $s\in S$. If $\ell(\delta(o,y)s)=\ell(\delta(o,y))+1$ then every minimal gallery from $o$ to $y$ can be extended to a minimal gallery from $o$ to $z$, and thus since $y\in C_{\Delta}(x)$ there is a minimal gallery from $o$ to $z$ passing through $x$, a contradiction. 

Thus $\ell(\delta(o,y)s)=\ell(\delta(o,y))-1$. Let $A$ be an apartment containing $o$ and $y$, and hence $A$ contains $x$ by~(A3). Let $z'\in A$ be the unique chamber of $A$ with $\delta(y,z')=s$. We claim that $\rho(z')\notin C_W(\rho(x))$. Suppose, for a contradiction, that $\rho(z')\in C_W(\rho(x))$. Then part 1 of Proposition~\ref{prop:conetypes} gives $z'\in C_A(x)$, and hence $z'\in C_{\Delta}(x)$. In particular $z'\neq z$ and so since both $z$ and $z'$ are $s$-adjacent to $y$ we have $\delta(z',z)=s$. 
Since $z'\in C_{\Delta}(x)$ there is a minimal gallery from $o$ to $z'$ passing through~$x$, and since $\delta(o,z')=\delta(o,y)s$ we have $\ell(\delta(o,z')s)=\ell(\delta(o,z'))+1$ and so we can extend this minimal gallery to give a minimal gallery from $o$ to $z$ passing through~$x$. Thus $z\in C_{\Delta}(x)$, a contradiction, and so $\rho(z')\notin C_W(\rho(x))$.
\end{proof}

\begin{prop}\label{prop:Lboundaries}
Let $(\Delta,\delta)$ be a building of type $(W,S)$. Then for each $x\in \Delta$ and each $L\geq 1$ we have 
\begin{equation*}
\rho(\partial_LC_{\Delta}(x))=\partial_LC_{W}(\rho(x)).
\end{equation*}
\end{prop}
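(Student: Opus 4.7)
The plan is to prove the two inclusions separately, with the ``lift'' direction $\partial_L C_W(\rho(x)) \subseteq \rho(\partial_L C_\Delta(x))$ handled by working inside a single apartment, and the ``push-forward'' direction handled via Lemma~\ref{lem:onestep}.

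For $\partial_L C_W(\rho(x)) \subseteq \rho(\partial_L C_\Delta(x))$, I would use~(A1) to pick an apartment $A$ containing $o$ and $x$. Since $o \in A$, the restriction $\rho|_A \colon A \to W$ is an isomorphism of Coxeter complexes. Given $w \in \partial_L C_W(\rho(x))$ with witness $v \in W \setminus C_W(\rho(x))$ satisfying $d(w,v) \leq L$, set $y = (\rho|_A)^{-1}(w)$ and $z = (\rho|_A)^{-1}(v)$. Part~1 of Proposition~\ref{prop:conetypes} then yields $y \in C_A(x) \subseteq C_\Delta(x)$ and $z \notin C_A(x) = C_\Delta(x) \cap A$; since $z \in A$, in fact $z \notin C_\Delta(x)$. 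Because $\rho|_A$ is an isomorphism, $d(y,z) = d(w,v) \leq L$, so $y \in \partial_L C_\Delta(x)$ and $\rho(y) = w$.

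The reverse inclusion is more delicate: given $y \in \partial_L C_\Delta(x)$, Proposition~\ref{prop:conetypes}(2) gives $\rho(y) \in C_W(\rho(x))$, but the obvious choice $v = \rho(z)$ for a witness $z \in \Delta \setminus C_\Delta(x)$ with $d(y,z)\leq L$ may fail, since by Proposition~\ref{prop:conetypes}(4) the preimage $\rho^{-1}(C_W(\rho(x)))$ decomposes as a disjoint union of cones $C_\Delta(x')$ over $x' \in \Delta_{\rho(x)}(o)$, and $z$ could sit in a ``parallel'' cone while still satisfying $\rho(z) \in C_W(\rho(x))$. To repair this, I would choose a minimal gallery $y = y_0, y_1, \ldots, y_m = z$ of length $m \leq L$ and let $k \geq 1$ be the least index with $y_k \notin C_\Delta(x)$; then Lemma~\ref{lem:onestep} applied to the pair $(y_{k-1}, y_k)$ produces $z' \in \Delta$ with $d(y_{k-1}, z') = 1$ and $\rho(z') \notin C_W(\rho(x))$. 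Because adjacent chambers of $\Delta$ retract to equal or adjacent chambers of $W$ (so $\rho$ is distance non-increasing), a telescoping estimate yields $d(\rho(y), \rho(z')) \leq (k-1) + 1 = k \leq L$, so $v = \rho(z')$ is the required witness.

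The main obstacle is exactly the ``folding'' phenomenon above: $\rho$ can send chambers lying outside $C_\Delta(x)$ back into $C_W(\rho(x))$, so one cannot simply transport a witness $z$ down to the apartment. Lemma~\ref{lem:onestep} is tailor-made to defeat this by manufacturing a genuine exit chamber $z'$ at the first boundary crossing of a minimal gallery from $y$ to any external witness; once that lemma is in hand, the extension from $L = 1$ to arbitrary $L$ is just a bookkeeping argument with gallery lengths.
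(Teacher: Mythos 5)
Your proof is correct and follows essentially the same two-inclusion strategy as the paper's: lift witnesses into an apartment containing $o$ and $x$ for the ``$\supseteq$'' direction, and invoke Lemma~\ref{lem:onestep} at a boundary crossing of a minimal gallery for the ``$\subseteq$'' direction. The only cosmetic difference is in how you locate the crossing: the paper first replaces $z$ by a witness minimizing $d(y,z)$, which forces the gallery to leave $C_\Delta(x)$ only at its final step, whereas you keep an arbitrary witness and take the first index $k$ at which the gallery exits the cone; both reduce to applying Lemma~\ref{lem:onestep} to the pair $(y_{k-1},y_k)$ and then using that $\rho$ is distance non-increasing, so the two are interchangeable. (One small point of care you handled correctly and the paper glosses over: the paper writes $d(y,z')=d(y,z)$, which need not hold, but only $d(y,z')\le d(y,z)$ is needed, which your triangle-inequality estimate $d(\rho(y),\rho(z'))\le (k-1)+1=k\le L$ gives directly.)
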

\begin{proof}
Suppose that $v\in \partial_LC_W(\rho(x))$. Thus there is an element $v'\in W$ with $d(v,v')\leq L$ and $v'\notin C_W(\rho(x))$. Choose any apartment $A$ containing $o$ and~$x$. Let $y$ be the unique chamber of $A$ with $\delta(o,y)=v$, and let $y'$ be the unique chamber of~$A$ with $\delta(o,y')=v'$. Since $\rho(y)=v$ and $\rho(y')=v'$ and since $\rho|_A:A\to W$ is an isomorphism we have $d(y,y')=d(v,v')$. Moreover, from part~$1$ of Proposition~\ref{prop:conetypes} we have $y'\notin C_A(x)$ and it follows, using~(A3), that $y'\notin C_{\Delta}(x)$. Thus $y\in\partial_L C_{\Delta}(x)$ and so $v=\rho(y)\in\rho(\partial_LC_{\Delta}(x))$, giving $\partial_L C_W(\rho(x))\subseteq \rho(\partial_LC_{\Delta}(x))$.

Suppose that $y\in \partial_LC_{\Delta}(x)$, and so there is a chamber $z$ with $d(y,z)\leq L$ such that $z\notin C_{\Delta}(x)$. Choose this chamber~$z$ with $d(y,z)$ minimal, and let $y=y_0\sim y_1\sim\cdots\sim y_{k-1}\sim y_k=z$ be a minimal length gallery from~$y$ to $z$. By minimality of $d(y,z)$ we have that $y_{k-1}\in C_{\Delta}(x)$. Since $z\notin C_{\Delta}(x)$ Lemma~\ref{lem:onestep} implies that there is a chamber $z'$ adjacent to $y_{k-1}$ such that $\rho(z')\notin C_W(\rho(x))$. Since $d(\rho(y),\rho(z'))\leq d(y,z')=d(y,z)\leq L$ we have $\rho(y)\in\partial_LC_W(\rho(x))$, and hence $\rho(\partial_LC_{\Delta}(x))\subseteq \partial_LC_W(\rho(x))$. 
\end{proof}

\begin{remark}\label{rem:noniso} In the traditional setup of cones in groups, two cones with the same cone type are necessarily isomorphic since there is a group element taking one cone to the other. In the context of buildings the situation is quite different, for it follows from Ronan's free construction~\cite{ronanconstruction} of buildings with no rank~$3$ residues of spherical type that two cones in $\Delta$ of the same type are not necessarily isomorphic as graphs. In fact one can construct buildings in which there are \textit{infinitely many} pairwise non-isomorphic cones of a fixed type. However we note that Proposition~\ref{prop:conetypes} still guarantees that there will be only finitely many distinct cone types for the building.
\end{remark}




\section{Isotropic random walks on regular buildings}\label{sect:4}

In this section we investigate the structure of isotropic random walks in the general context of a regular building (not necessarily Fuchsian). 

\subsection{Definitions and transition operators}\label{subsec:prelim}

We will henceforth write $(\Delta,\delta)$ for a thick regular building of type $(W,S)$. A random walk $(X_n)_{n\geq 0}$ on the set $\Delta$ of chambers of the building $(\Delta,\delta)$ is \textit{isotropic} if the transition probabilities \mbox{$p(x,y)=\mathbb{P}[X_{n+1}=y\mid X_n=x]$} of the walk satisfy
$$
p(x,y)=p(x',y')\quad\textrm{whenever $\delta(x,y)=\delta(x',y')$}. 
$$
In other words, the probability of jumping from $x$ to $y$ in one step depends only on the Weyl distance $\delta(x,y)$. Thus an isotropic random walk is determined by the probabilities
\begin{align}\label{eq:transitionprob}
p_w=\mathbb{P}[X_1\in \Delta_w(x)\mid X_0=x],\quad\textrm{so that}\quad p(x,y)=p_w/q_{w}\quad\textrm{if $\delta(x,y)=w$},
\end{align}
and the transition operator of an isotropic random walk $(X_n)_{n\geq 0}$ on $\Delta$ with governing probabilities~(\ref{eq:transitionprob}) is given by
\begin{align}\label{eq:transitionop}
P=\sum_{w\in W}p_wP_w,
\end{align}
where for each $w\in W$, the operator $P_w$ acts on the space of all functions $f:\Delta\to\mathbb{C}$ by
$$
P_wf(x)=\frac{1}{q_w}\sum_{y\in\Delta_w(x)}f(y).
$$

For each $n\geq 0$ let
$$
p^{(n)}(x,y)=\mathbb{P}[X_n=y\mid X_0=x].
$$
Then $P^n=\sum_{w\in W}p_w^{(n)}P_w$, where $p^{(n)}(x,y)=p^{(n)}_w/q_w$ whenever $\delta(x,y)=w$. 

The random walk~$(X_n)_{n\geq 0}$ is \textit{irreducible} if for every pair $x,y\in\Delta$ there is an integer $n\geq 1$ such that $p^{(n)}(x,y)>0$. The spectral radius of an irreducible random walk~$(X_n)_{n\geq 0}$ with transition operator~$P$ is
$$
\varrho(P)=\limsup_{n\to\infty}p^{(n)}(x,y)
$$
(by irreducibility this value does not depend on the pair $x,y\in\Delta$). 

We will assume that the random walk has bounded range (although most of this section only requires a finite first moment assumption). Let $L_0=\max\{\ell(w)\mid p_w>0\}$, and so the largest possible jump of the random walk has length~$L_0$.

There is a beautiful algebraic structure underlying isotropic random walks. In particular the geometry of the building implies that (see \cite[Theorem~3.4]{P1})
\begin{align}\label{eq:algebra}
\begin{aligned}
P_wP_s=\begin{cases}P_{ws}&\textrm{if $\ell(ws)=\ell(w)+1$}\\
q_s^{-1}P_{ws}+(1-q_s^{-1})P_w&\textrm{if $\ell(ws)=\ell(w)-1$},
\end{cases}
\end{aligned}
\end{align}
from which it immediately follows that the vector space $\mathscr{A}$ over $\mathbb{C}$ with basis $\{P_w\mid w\in W\}$ is an algebra under composition (called the \textit{Hecke algebra} of the building, cf. \cite{P1}). The transition operator $P$ of a bounded range isotropic random walk is an element of the Hecke algebera~$\mathscr{A}$.

The following interpretation of the structure constants in the Hecke algebra, and the ``distance regularity'' statement~(\ref{eq:distreg}) that follows from this interpretation, will be crucial to our investigations.

\begin{prop}\label{prop:distanceregular}
Let $(\Delta,\delta)$ be a regular locally finite building of type $(W,S)$ and let $u,v\in W$. Then
$$
P_uP_v=\sum_{w\in W}\alpha_{u,v}^wP_w,\quad\textrm{where}\quad \alpha_{u,v}^w=\frac{q_w}{q_uq_v}|\Delta_u(x)\cap \Delta_{v^{-1}}(y)|
$$
for any pair of chambers $x,y\in\Delta$ with $\delta(x,y)=w$. In particular, the numbers 
\begin{align}\label{eq:distreg}
a_{u,v}^w=|\Delta_u(x)\cap \Delta_{v}(y)|\quad\textrm{with $\delta(x,y)=w$}
\end{align}
do not depend on the particular pair $x,y\in\Delta$ with $\delta(x,y)=w$.
\end{prop}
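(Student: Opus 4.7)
The plan is to first use the fact that $\scA$ is an algebra to expand $P_uP_v=\sum_w\beta_{u,v}^wP_w$ for some chamber-independent scalars $\beta_{u,v}^w\in\mathbb{C}$, and then to identify these scalars explicitly by evaluating both sides of this identity on a single indicator function. The distance-regularity claim~(\ref{eq:distreg}) will then drop out for free, precisely because the scalars $\beta_{u,v}^w$ are independent of any chamber choice by construction.

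For the first step, recall that~(\ref{eq:algebra}) asserts that $\scA$ is closed under composition, so a straightforward induction on $\ell(v)$ gives $P_uP_v\in\scA$. Hence there exist unique scalars $\beta_{u,v}^w\in\mathbb{C}$, only finitely many nonzero and depending only on the triple $(u,v,w)$, with $P_uP_v=\sum_{w\in W}\beta_{u,v}^wP_w$. For the second step, fix $w\in W$ and any pair $x,y\in\Delta$ with $\delta(x,y)=w$, and apply both sides of this identity to the indicator function $\mathbf{1}_{\{y\}}$, evaluated at~$x$. Expanding the left-hand side directly from the definitions of $P_u$ and $P_v$ yields
$$(P_uP_v\mathbf{1}_{\{y\}})(x)=\frac{1}{q_uq_v}\bigl|\{z\in\Delta_u(x):\delta(z,y)=v\}\bigr|=\frac{1}{q_uq_v}|\Delta_u(x)\cap\Delta_{v^{-1}}(y)|,$$
where the second equality uses the standard symmetry $\delta(z,y)=v\iff z\in\Delta_{v^{-1}}(y)$ for the Weyl distance function. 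Meanwhile the right-hand side collapses to $\beta_{u,v}^w/q_w$, since $y\in\Delta_{w'}(x)$ precisely when $w'=w$. Equating and rearranging gives
$$\beta_{u,v}^w=\frac{q_w}{q_uq_v}|\Delta_u(x)\cap\Delta_{v^{-1}}(y)|,$$
which is exactly the formula claimed for $\alpha_{u,v}^w$.

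The distance-regularity assertion~(\ref{eq:distreg}) is now immediate: because $\beta_{u,v}^w$ is independent of any chamber choice, the cardinality $|\Delta_u(x)\cap\Delta_{v^{-1}}(y)|$ depends on $(x,y)$ only through $w=\delta(x,y)$; substituting $v^{-1}$ for $v$ gives~(\ref{eq:distreg}) as written. There is no real obstacle once~(\ref{eq:algebra}) is in hand; the only point requiring care is the inverse appearing in $\Delta_{v^{-1}}(y)$, which reflects the convention $\delta(z,y)=v\iff\delta(y,z)=v^{-1}$ for Weyl distances.
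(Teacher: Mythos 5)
Your proof is correct and essentially identical to the paper's: both expand $P_uP_v$ in the Hecke algebra basis (valid since $\scA$ is closed under composition, by~(\ref{eq:algebra})) and then identify the coefficients by applying both sides to the Kronecker delta / indicator function at $y$ and evaluating at $x$. The only cosmetic difference is that you spell out the induction behind closedness and the $\delta(z,y)=v\iff z\in\Delta_{v^{-1}}(y)$ convention, which the paper leaves implicit.
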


\begin{proof} Since $\mathscr{A}$ is an algebra, we have $P_uP_v=\sum_w \alpha_{u,v}^wP_w$ for some numbers $\alpha_{u,v}^w\in\mathbb{C}$. Let $y\in\Delta$, and let $\delta_y:\Delta\to\mathbb{C}$ be the Kronecker delta function. Then $P_w\delta_y(x)=q_w^{-1}$ if $y\in\Delta_w(x)$ and $0$ otherwise, and a direct calculation shows that $P_uP_v\delta_y(x)=q_u^{-1}q_v^{-1}|\Delta_u(x)\cap\Delta_{v^{-1}}(y)|$, completing the proof (see also~\cite[Proposition~3.9]{P1}).
\end{proof}

\begin{lemma}\label{lem:add1} If $\alpha_{u,v}^w\neq 0$ then $w=uv'$ for some $v'\in W$ with $\ell(v')\leq \ell(v)$. 
\end{lemma}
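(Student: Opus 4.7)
The plan is to unpack Proposition~\ref{prop:distanceregular} and then trace the Weyl distance along a suitable gallery using axiom~(B2). By that proposition, $\alpha_{u,v}^w\neq 0$ is equivalent to the existence, for any (equivalently, some) pair $x,y\in\Delta$ with $\delta(x,y)=w$, of a chamber $z\in\Delta$ with $\delta(x,z)=u$ and $\delta(z,y)=v$ (the latter because $z\in\Delta_{v^{-1}}(y)$ means $\delta(y,z)=v^{-1}$, i.e.\ $\delta(z,y)=v$). So the question becomes: given chambers $x,z,y$ with $\delta(x,z)=u$ and $\delta(z,y)=v$, show that $\delta(x,y)=uv'$ for some $v'\in W$ with $\ell(v')\leq\ell(v)$.

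Next I would fix a reduced expression $v=t_1\cdots t_\ell$ and a minimal gallery $z=z_0\sim_{t_1}z_1\sim_{t_2}\cdots\sim_{t_\ell}z_\ell=y$ of type $(t_1,\ldots,t_\ell)$, which exists because $\delta(z,y)=v$ with $\ell=\ell(v)$. Set $w_i=\delta(x,z_i)$, so $w_0=u$ and $w_\ell=\delta(x,y)=w$. Axiom~(B2) applied at each step tells us
\[
w_i\in\{w_{i-1},\,w_{i-1}t_i\}\quad\text{for } i=1,\ldots,\ell.
\]
A straightforward induction then yields $w_i=u\cdot t_{j_1}\cdots t_{j_{m_i}}$, where $\{j_1<\cdots<j_{m_i}\}\subseteq\{1,\ldots,i\}$ is precisely the set of indices at which the distance actually jumped (i.e.\ $w_j\neq w_{j-1}$).

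Taking $i=\ell$, we obtain $w=u v'$ with $v'=t_{j_1}\cdots t_{j_m}$ a subword of the reduced expression $t_1\cdots t_\ell$. In particular $v'$ is a product of at most $\ell=\ell(v)$ generators, so $\ell(v')\leq\ell(v)$, which is the desired conclusion.

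There is no real obstacle here: the only subtlety is making sure one keeps the Weyl distances on the correct side (so that right multiplication by the $t_i$'s is what appears) and correctly translating $\Delta_{v^{-1}}(y)$ into $\delta(z,y)=v$; once that is set up, the argument is a single induction on the length of $v$ using axiom~(B2). The statement $\ell(v')\leq\ell(v)$ comes for free from the fact that $v'$ is written as a (not necessarily reduced) product of at most $\ell(v)$ generators from a reduced expression for $v$.
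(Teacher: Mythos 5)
Your proof is correct, but it takes a genuinely different route from the paper's. The paper argues entirely inside the Hecke algebra: it proceeds by induction on $\ell(v)$, writing $P_uP_{vs}=(P_uP_v)P_s$ for $s$ with $\ell(vs)=\ell(v)+1$, invoking the inductive hypothesis on $P_uP_v$, and then applying the multiplication rule~(\ref{eq:algebra}) to each $P_{uz}P_s$ to see that every surviving index has length at most $\ell(vs)$. Your argument instead uses the geometric interpretation of the structure constants from Proposition~\ref{prop:distanceregular} to reduce to the statement about three chambers $x,z,y$, and then tracks $\delta(x,z_i)$ along a minimal gallery from $z$ to $y$ of type $(t_1,\ldots,t_\ell)$ using axiom~(B2), concluding that $\delta(x,y)=u\,t_{j_1}\cdots t_{j_m}$ is $u$ times a subword of a reduced word for $v$. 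The two approaches are essentially dual: the relation~(\ref{eq:algebra}) the paper induces with is precisely the Hecke-algebra encoding of~(B2). Your version has the advantage of being directly geometric and transparent (the subword structure of $v'$ is visible), while the paper's stays within the algebraic calculus it has already set up and avoids needing the chamber-counting interpretation of $\alpha_{u,v}^w$ at all. Both are valid; your reduction $\Delta_{v^{-1}}(y)\ni z \Leftrightarrow \delta(z,y)=v$ and the one-step induction via~(B2) are handled correctly.
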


\begin{proof}
We prove the lemma by induction on $\ell(v)$, with the base case $\ell(v)=0$ being trivial. Suppose that the result is true for $\ell(v)=k$, and let $s\in S$ with $\ell(vs)=\ell(v)+1$. Then by~(\ref{eq:algebra}) and the induction hypothesis we have
$$
P_uP_{vs}=P_uP_vP_s=(P_uP_v)P_s=\sum_{z\in W\,:\,\ell(z)\leq \ell(v)}\alpha_{u,v}^{uz}P_{uz}P_s.
$$
By~(\ref{eq:algebra}) we have either $P_{uz}P_s=P_{uzs}$ (in the case that $\ell(uzs)=\ell(uz)+1$), or $P_{uz}P_s=q_s^{-1}P_{uzs}+(1-q_s^{-1})P_{uz}$ (in the case that $\ell(uzs)=\ell(uz)-1$). Since $\ell(z)\leq \ell(v)<\ell(vs)$ and $\ell(zs)\leq\ell(z)+1\leq \ell(v)+1=\ell(vs)$ we see that $P_uP_{vs}$ is a linear combination of the operators $P_{uz'}$ with $\ell(z')\leq \ell(vs)$, hence the result.
\end{proof}

\subsection{Irreducibility and aperiodicity}

Let $P=\sum_{w\in W}p_wP_w$ be the transition operator of an isotropic random walk~$(X_n)_{n\geq 0}$ on~$\Delta$. The \textit{support} of $P$ is $\mathrm{supp}(P)=\{w\in W\mid p_w>0\}$.

\begin{lemma}\label{lem:irreducible} Let $(X_n)_{n\geq 0}$ be an isotropic random walk on a thick regular building with transition operator~$P$ as in (\ref{eq:transitionop}), and write $P^n=\sum_wp_w^{(n)}P_w$.
\begin{enumerate}
\item If the support of~$P$ generates $W$ then $(X_n)_{n\geq 0}$ is irreducible. 
\item If $(X_n)_{n\geq 0}$ is irreducible then for each $k>0$ there is $M_k>0$ such that $p_w^{(M_k)}>0$ for all $w\in W$ with $\ell(w)\leq k$. 
\item If $(X_n)_{n\geq 0}$ is irreducible, then $(X_n)_{n\geq 0}$ is aperiodic.
\end{enumerate}
\end{lemma}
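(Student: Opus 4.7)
The plan is to tackle the three assertions in the order (1), (3), (2), since the proof of (2) will naturally rest on aperiodicity.

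For part (1), I will consider the set $R = \{w \in W : p_w^{(n)} > 0 \text{ for some } n \geq 0\}$, which automatically contains $1$ (from $n = 0$) and $\mathrm{supp}(P)$ (from $n = 1$). The central technical observation will be that the Hecke structure constant $\alpha_{u,v}^{uv}$ is strictly positive for every $u, v \in W$: given $x, y$ with $\delta(x,y) = uv$, any common apartment $A$ containing both (axiom (A1)) is isomorphic to the Coxeter complex of $(W,S)$, so the unique $z \in A$ with $\delta(x,z) = u$ automatically satisfies $\delta(z, y) = v$, contributing $1$ to the intersection $|\Delta_u(x) \cap \Delta_{v^{-1}}(y)|$ in Proposition~\ref{prop:distanceregular}. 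Combined with $P^{n+m} = P^n P^m$, this shows $R$ is closed under the group product of $W$ and hence is a submonoid containing $\mathrm{supp}(P)$. To upgrade this submonoid to all of $W$, I will use the additional Hecke identity $P_s^2 = q_s^{-1} P_1 + (1 - q_s^{-1}) P_s$ for $s \in S$ (whose coefficients are strictly positive because $q_s \geq 2$) to supply the ``missing inverses'' needed so that $R$ coincides with the full subgroup $\langle \mathrm{supp}(P)\rangle = W$.

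For part (3), I will let $d = \gcd\{n \geq 1 : p_1^{(n)} > 0\}$ and aim to show $d = 1$. A refinement of the Hecke closure argument from part (1) promotes the reachability relation to a class homomorphism $c : W \to \ZZ/d\ZZ$ sending each $u \in \mathrm{supp}(P)$ to $1 \bmod d$, so every null product $u_1 \cdots u_n = 1$ with $u_i \in \mathrm{supp}(P)$ must have $n$ divisible by $d$. The plan is then to produce two return paths to $o$ of consecutive lengths, forcing $d \mid 1$. Here thickness is essential: the identity $P_s^2 = q_s^{-1} P_1 + (1 - q_s^{-1}) P_s$ allows, inside any Hecke expansion of $P^N$ that supports a $P_1$ term, a parallel branch producing a $P_s$ term of the same length $N$; concatenating this $P_s$ term with a step available by irreducibility yields a second return of length $N + 1$.

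For part (2), aperiodicity from (3) yields $N_0$ with $p_1^{(n)} > 0$ for all $n \geq N_0$. Since $W$ is finitely generated the set $\{w \in W : \ell(w) \leq k\}$ is finite, so by irreducibility there exist $n_w \geq 1$ with $p_w^{(n_w)} > 0$ for each such $w$. Setting $M_k = N_0 + \max_{\ell(w) \leq k} n_w$, the Chapman--Kolmogorov bound
\[
p_w^{(M_k)} \;\geq\; p_1^{(M_k - n_w)}\, p_w^{(n_w)} \;>\; 0
\]
delivers the required uniform positivity. The hardest step will be part (3), especially the case $\mathrm{supp}(P) \cap S = \emptyset$, where the identity $P_s^2$ must be invoked at an interior position of an already reduced expression, requiring a careful combinatorial expansion in the Hecke algebra.
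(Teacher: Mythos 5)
Your overall strategy—proving (1), then (3), then (2)—inverts the paper's order: the paper proves (2) first and then deduces (3) \emph{from} (2), so your plan forces you to find an independent argument for aperiodicity. That alternative route is what needs scrutiny.

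For part (1), your submonoid argument rests on the same observation the paper uses implicitly (that $\alpha_{u,v}^{uv}>0$, via a chamber in a common apartment), so it is essentially the same proof in algebraic clothing; the paper simply writes down the gallery in an apartment directly, which avoids the algebra entirely. Part (2), once (3) is in hand, is routine Chapman--Kolmogorov as you say.

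The gap is in part (3). The plan to define a class function $c\colon W\to\ZZ/d\ZZ$ with $c(u)\equiv 1$ for $u\in\mathrm{supp}(P)$ is sound (it is the usual cyclic-class decomposition for the chain on $\Delta$, transported to $W$ using isotropy and the fact that for $\delta(x,y)=uv$ there always exists $z$ with $\delta(x,z)=u$, $\delta(z,y)=v$). But the concluding step fails as written: you propose to extract a $P_s$ term in $P^N$ from a $P_1$ term and then ``concatenate this $P_s$ term with a step available by irreducibility'' to obtain a return of length $N+1$. To move in a \emph{single} step from a chamber at Weyl distance $s$ back to $o$ requires $p_s>0$, which is not assumed—and the paper's own remark immediately following the lemma gives a concrete case with $p_s=0$ for every $s\in S$ (support concentrated on $\ell(w)=2$). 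Irreducibility only gives $p_s^{(m)}>0$ for \emph{some} $m$, producing a return at time $N+m$, which says nothing about the gcd. By contrast, the paper first proves (2), obtaining a single $M_k$ with $p_w^{(M_k)}>0$ for all $\ell(w)\le k$ \emph{simultaneously} (this is where the two positive terms in $P_s^2=q_s^{-1}I+(1-q_s^{-1})P_s$, i.e.\ thickness, actually enter), and then for any $w\in\mathrm{supp}(P)$ computes that $P_wP_{w^{-1}}$ has coefficient $q_w^{-1}>0$ on $I$, giving $p_1^{(M_k+1)}>0$ alongside $p_1^{(M_k)}>0$—returns at two \emph{consecutive} times come out automatically from that synchronisation, not from a one-step concatenation. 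Your class-homomorphism framework could in fact be pushed through (it gives $d\mid 2$, and $d=2$ can then be ruled out by noting that $p_{s_0}^{(n)}>0$ with $c(s_0)=1$ forces $n$ odd, while thickness gives $p_{s_0}^{(2n)}\ge (p_{s_0}^{(n)})^2(1-q_{s_0}^{-1})>0$ with $2n$ even, a contradiction), but that is a different closing argument than the one you describe, and the ``second return of length $N+1$'' claim as stated does not hold.
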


\begin{proof}
1. Let $x,y\in \Delta$, and let $A$ be an apartment containing~$x$ and~$y$. 
Since the support of $P$ generates~$W$ there are elements $w_1,\ldots,w_n\in
\mathrm{supp}(P)$ such that $\delta(x,y)=w_1w_2\cdots w_n$. Let $x_0=x$ and let $x_1,\ldots,x_n\in A$ be the unique chambers of the apartment~$A$ with $\delta(x,x_k)=w_1\cdots w_k$ for $k=1,\ldots,n$. In particular, $x_n=y$. Then, since $x_0,x_1,\ldots,x_k$ all lie in the apartment~$A$, we have $\delta(x_{k-1},x_k)=\delta(x,x_{k-1})^{-1}\delta(x,x_k)=w_k$. Thus $p(x_{k-1},x_k)=p_{w_k}/q_{w_k}>0$, and so
$$
p^{(n)}(x,y)\geq p(x,x_1)p(x_1,x_2)\cdots p(x_{n-1},y)>0,
$$
showing that $P$ is irreducible. 

2. Suppose that $(X_n)_{n\geq 0}$ is irreducible. Thus for each $s\in S$ there
is $N_s\geq 1$ such that $p_{s}^{(N_s)}>0$. The formula
$P_{s}^2=q_{s}^{-1}I+(1-q_s^{-1})P_{s}$ from (\ref{eq:algebra}) implies that
$p_1^{(2N_s)}>0$ and $p_{s}^{(2N_s)}>0$. Thus setting $N=2\sum_{s\in S} N_s$ we have $p_1^{(N)}>0$ and $p_s^{(N)}>0$ for all $s\in S$. Thus taking $M_k=kN$ gives $p_w^{(M_k)}>0$ for all $w\in W$ with $\ell(w)\leq k$.

 3. Suppose that $p_w>0$, and let $k=\ell(w)$. By the previous part we have $p_{w^{-1}}^{(M_k)}>0$. If $w=s_1\cdots s_{k}$ is reduced, then using (\ref{eq:algebra}) we have
\begin{align*}
P_wP_{w^{-1}}&=P_{s_1}\cdots P_{s_{k}}P_{s_{k}}\cdots P_{s_1}=q_w^{-1}I+\cdots,
\end{align*}
where ``$+\cdots$'' is a nonnegative linear combination of the $P_v$ with $v\in W$. Therefore
$$
P^{M_k+1}=PP^{M_k}=p_wp_{w^{-1}}^{(M_k)}P_wP_{w^{-1}}+\cdots=q_w^{-1}p_wp_{w^{-1}}^{(M_k)}I+\cdots,
$$
and so $p_1^{(M_k+1)}>0$. Since we also have $p_1^{(M_k)}>0$ the walk is aperiodic. 
\end{proof}

\begin{remark}
If $(X_n)_{n\geq 0}$ is irreducible then it is not necessarily true that
$\{w\in W\mid p_w>0\}$ generates~$W$. For example if $p_w>0$ if and only if
$\ell(w)=2$ then the random walk $(X_n)_{n\geq 0}$ is irreducible, yet $\{w\in
W \mid \ell(w)=2\}$ only generates the index~$2$ subgroup of all even length elements of~$W$. 
\end{remark}

\subsection{The retracted walk}

An indispensable technique in our analysis of isotropic random walks $(X_n)_{n\geq 0}$ on~$(\Delta,\delta)$ is to look at the image $\overline{X}_n=\rho(X_n)$ of the random walk under the canonical retraction $\rho:\Delta\to W$. In Proposition~\ref{prop:project} below we show that the stochastic process $(\overline{X}_n)_{n\geq 0}$ on $W$ is in fact a random walk on~$W$, which we call the \textit{retracted walk}. However we note in advance that the retracted walk is not $W$-invariant. That is, $\overline{p}(wu,wv)\neq \overline{p}(u,v)$ in general. However we we will prove a more delicate invariance property in Proposition~\ref{prop:invariance} later in this section.  

\begin{prop}\label{prop:project}
The isotropic random walk $(X_n)_{n\geq 0}$ is factorisable over $W$ with respect to the partition of $\Delta$ into sets $\Delta_w(o)$ with $w\in W$. Moreover, the transition probabilities $\overline{p}(u,v)$ of the factor walk $(\overline{X}_n)_{n\geq 0}$ (where $\overline{X}_n=\rho(X_n)$) on $W$ are given by
$$
\overline{p}(u,v)=\sum_{w\in W}a_{v,w}^uq_w^{-1}p_w=q_u^{-1}q_v\sum_{w\in W}\alpha_{v,w^{-1}}^up_w,
$$
where $a_{v,w}^u\geq 0$ and $\alpha_{v,w^{-1}}^u\geq 0$ are the numbers appearing in Proposition~\ref{prop:distanceregular}.
\end{prop}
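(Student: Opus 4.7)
The strategy is to show that for any chamber $x \in \Delta_u(o)$, the total one-step probability $\sum_{y \in \Delta_v(o)} p(x,y)$ depends only on $u = \rho(x)$ and $v$, not on the particular representative $x$. By the standard lumping criterion for Markov chains, this is precisely what it means for $(X_n)_{n\geq 0}$ to be factorisable over $W$ with respect to the partition $\{\Delta_w(o) \mid w \in W\}$, and the common value of this sum is the transition probability $\overline{p}(u,v)$ of the factor chain.

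The first formula is then a direct computation using Proposition~\ref{prop:distanceregular}. Writing $p(x,y) = p_w/q_w$ whenever $\delta(x,y) = w$, we have
\begin{align*}
\sum_{y \in \Delta_v(o)} p(x,y) = \sum_{w \in W} \frac{p_w}{q_w}\,\bigl|\Delta_v(o) \cap \Delta_w(x)\bigr|,
\end{align*}
and the key observation is that for $\delta(o,x) = u$, the cardinality $|\Delta_v(o) \cap \Delta_w(x)|$ is precisely the ``distance-regularity'' number $a_{v,w}^u$ from (\ref{eq:distreg}), which by Proposition~\ref{prop:distanceregular} depends only on $u, v, w$. This simultaneously proves factorisability and yields the first expression $\overline{p}(u,v) = \sum_w a_{v,w}^u q_w^{-1} p_w$. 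The non-negativity of $a_{v,w}^u$ is immediate as it is a cardinality.

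To pass to the second formula, I would use the identity $\alpha_{u,v}^w = \tfrac{q_w}{q_u q_v}\, a_{u,v^{-1}}^w$ provided in Proposition~\ref{prop:distanceregular}. Rearranging and substituting $(u,v,w) \mapsto (v, w^{-1}, u)$ gives
\begin{align*}
a_{v,w}^u = \frac{q_v\, q_{w^{-1}}}{q_u}\, \alpha_{v, w^{-1}}^u = \frac{q_v\, q_{w}}{q_u}\, \alpha_{v, w^{-1}}^u,
\end{align*}
where in the second equality I use $q_w = q_{w^{-1}}$, which follows at once from the product formula $q_w = q_{s_1}\cdots q_{s_\ell}$ applied to a reduced expression for $w$ and its reverse. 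Plugging this into the first formula immediately yields $\overline{p}(u,v) = q_u^{-1} q_v \sum_w \alpha_{v,w^{-1}}^u p_w$, and non-negativity of $\alpha_{v,w^{-1}}^u$ follows from that of $a_{v,w}^u$.

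There is no real obstacle here beyond keeping the indexing conventions straight; the content is essentially that distance-regularity of the building (Proposition~\ref{prop:distanceregular}) is exactly the combinatorial property needed to lump the walk along $\rho$-fibers, and the two equivalent expressions for $\overline{p}(u,v)$ reflect the two ways of writing structure constants of the Hecke algebra $\mathscr{A}$.
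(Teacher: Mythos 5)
Your argument is correct and follows essentially the same route as the paper: fix $x\in\Delta_u(o)$, decompose $\sum_{y\in\Delta_v(o)}p(x,y)$ over $w=\delta(x,y)$, recognize the cardinality as $a_{v,w}^u$ (which by Proposition~\ref{prop:distanceregular} is independent of the choice of $x$), and convert to the $\alpha$-form via the definitions. The only difference is that you spell out the substitution $(u,v,w)\mapsto(v,w^{-1},u)$ and the symmetry $q_w=q_{w^{-1}}$ explicitly, whereas the paper simply notes that the second equality "follows from the definitions."
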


\begin{proof}
Let $u,v\in W$, and let $x\in\Delta_u(o)$. Then by Proposition~\ref{prop:distanceregular},
\begin{align*}
\sum_{y\in\Delta_v(o)}p(x,y)&=\sum_{w\in W}\sum_{y\in\Delta_v(o)\cap \Delta_w(x)}p(x,y)=\sum_{w\in W}|\Delta_v(o)\cap \Delta_w(x)|q_w^{-1}p_w=\sum_{w\in W}a_{v,w}^uq_w^{-1}p_w.
\end{align*}
This proves the first equality, and the final equality follows from the definitions of the numbers $a_{v,w}^u$ and $\alpha_{v,w^{-1}}^u$. 
\end{proof}

The following proposition tells us that the return probabilities for the random walk $(X_n)_{n\geq 0}$ can be obtained from the return probabilities for the retracted walk $(\overline{X}_n)_{n\geq 0}$.

\begin{prop}\label{prop:retractedwalkspectralradius}
Let $P$ be an irreducible isotropic random walk on a regular building $(\Delta,\delta)$ of type $(W,S)$, and let $\overline{P}$ be the transition operator of the retracted walk on~$(W,S)$. Then
$$
p^{(n)}(o,o)=\overline{p}^{(n)}(1,1)\qquad\textrm{for all $n\geq 1$},
$$
and thus $\vrho(\overline{P})=\vrho(P)$.
\end{prop}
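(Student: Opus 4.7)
The plan is to exploit a single-point fibre property of the retraction. By axiom (B1), $\delta(o,x)=1$ if and only if $x=o$, so the canonical retraction $\rho:\Delta\to W$ defined by $\rho(x)=\delta(o,x)$ satisfies $\rho^{-1}(\{1\})=\{o\}$. This is what makes the transfer of return probabilities at the base point lossless: the event that the retracted walk returns to $1$ coincides with the event that the original walk returns to~$o$.

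First I would record that the retracted walk starts at the identity, since $\overline{X}_0=\rho(X_0)=\delta(o,o)=1$. The crucial input, namely that $(\overline{X}_n)_{n\geq 0}=(\rho(X_n))_{n\geq 0}$ is an honest Markov chain on $W$ with transition operator $\overline{P}$, is already supplied by Proposition~\ref{prop:project}. Granted this, I would compute, for every $n\geq 1$,
\[
\overline{p}^{(n)}(1,1)\;=\;\PP\bigl[\overline{X}_n=1 \,\bigm|\, \overline{X}_0=1\bigr]\;=\;\PP\bigl[\rho(X_n)=1 \,\bigm|\, X_0=o\bigr]\;=\;\PP\bigl[X_n=o \,\bigm|\, X_0=o\bigr]\;=\;p^{(n)}(o,o),
\]
where the third equality is exactly the identification $\rho^{-1}(\{1\})=\{o\}$.

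For the spectral radius equality, I would simply take $\limsup_{n\to\infty}$ of both sides of the identity above (interpreting the definition of $\vrho$ in the natural way, with or without the customary $1/n$-th root, since both are preserved termwise). This yields $\vrho(\overline{P})=\vrho(P)$.

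There is essentially no obstacle to overcome: the nontrivial content has already been extracted in Proposition~\ref{prop:project} (which is what makes the projected process Markov), and the remaining argument is a one-line observation about the fibre of $\rho$ over the identity. The conceptual significance, however, is worth highlighting: it means that questions about return probabilities and amenability/nonamenability for the isotropic walk on the building can be reduced to the analogous questions for a (generally non-$W$-invariant) walk on the Coxeter group, which will be exploited in the subsequent sections.
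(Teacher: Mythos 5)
Your proof is correct and takes essentially the same route as the paper's. The paper applies Proposition~\ref{prop:project} to $P^n$ and evaluates the structure constant $a_{1,w}^1=|\Delta_1(o)\cap\Delta_w(o)|=\delta_{w,1}$; your observation that $\rho^{-1}(\{1\})=\{o\}$ (a restatement of axiom~(B1)) is precisely the same single-fibre fact, phrased through the lumped chain's Markov property rather than through the explicit transition formula.
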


\begin{proof}
From Proposition~\ref{prop:project} (applied to $P^n$) we have
$$
\overline{p}^{(n)}(1,1)=\sum_{w\in W}a_{1,w}^1q_w^{-1}p_w^{(n)},
$$
and since $a_{1,w}^1=|\Delta_1(o)\cap\Delta_w(o)|=\delta_{w,1}$ we have $\overline{p}^{(n)}(1,1)=p_1^{(n)}=p^{(n)}(o,o)$. Since $P$ and $\overline{P}$ are irreducible it follows that $\varrho(P)=\limsup_{n\to\infty}p^{(n)}(o,o)=\limsup_{n\to\infty}\overline{p}^{(n)}(1,1)=\varrho(\overline{P})$. 
\end{proof}

The retracted walk is not $W$-invariant, however we have the following weaker invariance property which roughly says that the transition probabilities of the retracted walk in two cones of the same type are the same.

\begin{prop}\label{prop:invariance}
Let $\bT$ be a cone type of $(W,S)$ and $w_{1},w_{2}\in W$ with $T(w_1)=T(w_2)=\bT$.
Then
$$
\overline{p}(w_1u,w_1v)=\overline{p}(w_2u,w_2v)\quad\textrm{for all $u\in \bT$ and all $v\in \mathrm{Int}_{L_0} \bT=\bT\setminus\partial_{L_0}\bT$}.
$$
\end{prop}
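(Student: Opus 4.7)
My plan is to work entirely in the Hecke algebra~$\scA$ and exploit Lemma~\ref{lem:add1}, together with the length-preservation property of elements of the cone type~$\bT$.

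Starting from Proposition~\ref{prop:project}, I would write
\begin{equation*}
\overline{p}(w_i u, w_i v) = q_{w_i u}^{-1}\, q_{w_i v} \sum_{w\in W} \alpha_{w_i v,\, w^{-1}}^{w_i u}\, p_w,
\end{equation*}
and show that, term-by-term, the summand and the prefactor do not depend on~$i\in\{1,2\}$. Since $u,v\in \bT = T(w_i)$, we have $\ell(w_i u)=\ell(w_i)+\ell(u)$ and $\ell(w_i v)=\ell(w_i)+\ell(v)$; by the multiplicativity of the $q_w$ along reduced expressions this yields $q_{w_i u}^{-1}q_{w_i v}=q_u^{-1}q_v$, independent of~$i$. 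This also implies (by iterating~(\ref{eq:algebra})) that $P_{w_i v}=P_{w_i}P_v$, so
\begin{equation*}
P_{w_i v}\,P_{w^{-1}} \;=\; P_{w_i}\,P_v\,P_{w^{-1}} \;=\; \sum_{z\in W}\alpha_{v,w^{-1}}^{z}\, P_{w_i}P_z.
\end{equation*}

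The crux is now to expand $P_{w_i}P_z$ cleanly. By Lemma~\ref{lem:add1}, if $\alpha_{v,w^{-1}}^{z}\neq 0$ then $z=vz''$ for some $z''\in W$ with $\ell(z'')\leq \ell(w^{-1})=\ell(w)\leq L_0$; thus $d(v,z)=\ell(v^{-1}z)=\ell(z'')\leq L_0$. This is precisely where the hypothesis $v\in\mathrm{Int}_{L_0}\bT$ kicks in: it forces $z\in\bT=T(w_i)$, so $\ell(w_i z)=\ell(w_i)+\ell(z)$, and hence another iterated application of~(\ref{eq:algebra}) gives $P_{w_i}P_z=P_{w_i z}$. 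Substituting back,
\begin{equation*}
P_{w_i v}\,P_{w^{-1}} \;=\; \sum_z \alpha_{v,w^{-1}}^{z}\, P_{w_i z}.
\end{equation*}
Since left multiplication by $w_i$ is injective on~$W$, the coefficient of~$P_{w_i u}$ on the right-hand side is $\alpha_{v,w^{-1}}^{u}$, a quantity that manifestly does not depend on~$i$. Therefore $\alpha_{w_i v,\, w^{-1}}^{w_i u}=\alpha_{v,w^{-1}}^{u}$ for $i=1,2$, and the proposition follows.

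The main obstacle is really a conceptual one: identifying that the proof should take place entirely algebraically in $\scA$ rather than combinatorially in $\Delta$, and pinpointing that the role of the $L_0$-interior assumption on~$v$ is exactly to guarantee that every intermediate $z$ appearing in the expansion $P_vP_{w^{-1}}=\sum_z\alpha_{v,w^{-1}}^z P_z$ stays inside the cone type~$\bT$, so that premultiplication by $P_{w_i}$ collapses to the simple rule $P_{w_i}P_z=P_{w_i z}$. Once that observation is in place, everything else is a routine consequence of~(\ref{eq:algebra}) and Lemma~\ref{lem:add1}.
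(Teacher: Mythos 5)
Your proof is correct and follows essentially the same route as the paper's: both reduce via Proposition~\ref{prop:project} to comparing the $q$-prefactor and the Hecke-algebra structure constant $\alpha$, both use $u,v\in\bT$ to factorize $q_{w_iu}^{-1}q_{w_iv}=q_u^{-1}q_v$ and $P_{w_iv}=P_{w_i}P_v$, and both invoke Lemma~\ref{lem:add1} together with $v\in\mathrm{Int}_{L_0}\bT$ to ensure each intermediate $z$ stays in $\bT$ so that $P_{w_i}P_z=P_{w_iz}$, from which the $i$-independence of the relevant structure constant drops out by linear independence. The only cosmetic difference is that you keep $P_{w^{-1}}$ throughout rather than substituting $w\mapsto w^{-1}$ at the outset as the paper does.
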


\begin{proof} By the formula for $\overline{p}(\cdot,\cdot)$ in Proposition~\ref{prop:project} it is sufficient to show that
$$
q_{w_1u}^{-1}q_{w_1v}\alpha_{w_1v,w^{-1}}^{w_1u}=q_{w_2u}^{-1}q_{w_2v}\alpha_{w_2v,w^{-1}}^{w_2u}\quad\textrm{whenever $w\in W$ is such that $p_w>0$}.
$$
First note that since $u,v\in \bT$ we have $\ell(w_iu)=\ell(w_i)+\ell(u)$ and $\ell(w_iv)=\ell(w_i)+\ell(v)$ for each $i=1,2$, and therefore $q_{w_iu}=q_{w_i}q_u$ and $q_{w_iv}=q_{w_i}q_v$ for each $i=1,2$. Therefore it is sufficient to show that 
\begin{align}\label{eq:alphaeq}
\alpha_{w_1v,w}^{w_1u}=\alpha_{w_2v,w}^{w_2u}\quad\text{whenever $\ell(w)\leq L_0$}
\end{align}
(we have replaced $w$ by $w^{-1}$, and noted that $p_{w^{-1}}>0$ implies that $\ell(w)\leq L_0$). 

Since $\ell(w_1v)=\ell(w_1)+\ell(v)$ it follows from (\ref{eq:algebra}) and Proposition~\ref{prop:distanceregular} that
\begin{align*}
P_{w_1v}P_{w}&=P_{w_1}P_vP_{w}=P_{w_1}\sum_{w'\in W}\alpha_{v,w}^{w'}P_{w'}=\sum_{w'\in W}\alpha_{v,w}^{w'}P_{w_1}P_{w'}.
\end{align*}
By Lemma~\ref{lem:add1} we see that if $\alpha_{v,w}^{w'}\neq 0$ then $w'=v\tilde{w}$ for some $\tilde{w}$ with $\ell(\tilde{w})\leq \ell(w)$, and therefore 
$$
d(v,w')=\ell(v^{-1}w')=\ell(\tilde{w})\leq\ell(w)\leq L_0.
$$
Thus $w'\in \bT$ (since $v\in \bT\setminus\partial_{L_0} \bT$), and therefore $\ell(w_1w')=\ell(w_1)+\ell(w')$, giving $P_{w_1}P_{w'}=P_{w_1w'}$. Thus
\begin{align}\label{eq:psum1}
P_{w_1v}P_{w}=\sum_{w'\in W}\alpha_{v,w}^{w'}P_{w_1w'}.
\end{align}
On the other hand we have
\begin{align}\label{eq:psum2}
P_{w_1v}P_{w}=\sum_{w''\in W}\alpha_{w_1v,w}^{w''}P_{w''}=\sum_{w'\in W}\alpha_{w_1v,w}^{w_1w'}P_{w_1w'}.
\end{align}
Comparing~(\ref{eq:psum1}) and (\ref{eq:psum2}) and using the linear independence of the operators gives
$$
\alpha_{w_1v,w}^{w_1w'}=\alpha_{v,w}^{w'}\qquad\textrm{for all $w,w'\in W$ with $\ell(w)\leq L_0$}.
$$
The same formula holds with $w_1$ replaced by $w_2$, and (\ref{eq:alphaeq}) follows by taking $w'=u$.
\end{proof}

\subsection{The spectral radius}

In the following theorem we give a sufficient condition for the spectral radius of an isotropic random walk on a regular building to have spectral radius strictly less than~$1$. 

Let $(W,S)$ be a Coxeter system, and let $\cF=\{I\subseteq S\mid \text{$W_I$ is finite}\}$. For each $w\in W$, let $R(w)=\{s\in S\mid \ell(ws)=\ell(w)-1\}$ be the \textit{right descent set of~$w$}. By \cite[Corollary~2.18]{AB} we have that $R(w)\in\cF$ for all $w\in W$. 

\begin{thm}\label{thm:spectralradius} Let $(W,S)$ be a Coxeter system with $W$ infinite and let $(\Delta,\delta)$ be a regular building of type $(W,S)$. Let $P$ be the transition operator of an irreducible isotropic random walk on~$(\Delta,\delta)$. If
\begin{align}\label{eq:Iineq}
\sum_{s\in S\setminus I}q_s\geq |I|\quad\text{for all $I\in \cF$}
\end{align}
then the spectral radius $\vrho(P)$ is strictly less than~$1$. In particular, if $q_s\geq |S|-1$ for all $s\in S$ then~$\vrho(P)<1$.
\end{thm}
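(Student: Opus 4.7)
The proof strategy is to leverage Proposition~\ref{prop:retractedwalkspectralradius} to reduce to the retracted walk on $W$, and then produce a strictly positive superharmonic function for $\overline{P}$ whose multiplicative decay rate $\lambda<1$ is controlled by condition~(\ref{eq:Iineq}). By Proposition~\ref{prop:retractedwalkspectralradius}, $\varrho(P)=\varrho(\overline{P})$, so it is enough to show $\varrho(\overline{P})<1$. For this we invoke the standard superharmonic criterion: if $\psi:W\to(0,\infty)$ and $\lambda\in(0,1)$ satisfy $\overline{P}\psi\leq\lambda\psi$ pointwise, then iterating and comparing to the Dirac mass at $w$ yields $\overline{p}^{(n)}(1,w)\leq\lambda^n\psi(w)/\psi(1)$, hence $\varrho(\overline{P})\leq\lambda$.

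The combinatorial content of condition~(\ref{eq:Iineq}) has a clean interpretation. At a chamber $x$ with $\delta(o,x)=u$ and right descent set $I=R(u)\in\mathcal{F}$ (using \cite[Corollary~2.18]{AB}), the building contains exactly $\sum_{s\notin I}q_s$ chambers $y$ adjacent to $x$ with $d(o,y)=d(o,x)+1$, exactly $|I|$ chambers $y$ adjacent to $x$ with $d(o,y)=d(o,x)-1$, and $\sum_{s\in I}(q_s-1)$ chambers at the same distance (this mirrors Proposition~\ref{prop:project} at the one-step level). Condition~(\ref{eq:Iineq}) is then the statement that outward neighbours weakly dominate inward neighbours at every chamber, yielding an outward drift for the retracted walk.

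To exploit this drift, take the ansatz $\psi(w)=q_w^{-\alpha}$ for a small $\alpha>0$ to be chosen. Expanding $\overline{p}(u,v)$ via Proposition~\ref{prop:project} and Lemma~\ref{lem:add1}, and using Proposition~\ref{prop:invariance} together with the finiteness of the set of cone types (Section~\ref{sect:3}), the ratio $\overline{P}\psi(w)/\psi(w)$ takes only finitely many values as $w$ varies over $W$. Since $\psi\equiv 1$ is $\overline{P}$-harmonic, this ratio equals $1$ identically at $\alpha=0$. Differentiating in $\alpha$ at $\alpha=0$ produces, up to positive weights coming from the probabilities $(p_u)_{u\in W}$, a nonnegative linear combination of the quantities $\sum_{s\notin I}q_s-|I|$ indexed by the descent sets $I\in\mathcal{F}$ encountered by the bounded-range walk. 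Condition~(\ref{eq:Iineq}) guarantees that all of these quantities are nonnegative, with strict positivity of the combined derivative forced by irreducibility (Lemma~\ref{lem:irreducible}); consequently for $\alpha$ small and positive, $\lambda:=\sup_{w\in W}\overline{P}\psi(w)/\psi(w)<1$.

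The ``in particular'' assertion is elementary: since $W$ is infinite, $S\notin\mathcal{F}$, so every $I\in\mathcal{F}$ satisfies $|S|-|I|\geq 1$; the hypothesis $q_s\geq|S|-1$ therefore gives $\sum_{s\notin I}q_s\geq (|S|-|I|)(|S|-1)\geq|I|$. The principal difficulty lies in the derivative calculation for the full bounded-range walk rather than only nearest-neighbour contributions: each $P_u=P_{s_1}\cdots P_{s_{\ell(u)}}$ in the support of $P$ generates multiple forward/backward substitutions in any reduced expression, and these must be carefully tracked using the quadratic Hecke relation~(\ref{eq:algebra}) to confirm that the derivative decomposes exactly into nonnegative multiples of the expressions $\sum_{s\notin I}q_s-|I|$, so that condition~(\ref{eq:Iineq}) is precisely the input required.
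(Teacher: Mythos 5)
Your overall plan --- reduce to the retracted walk, exhibit a strictly $\lambda$-superharmonic function with $\lambda<1$, and invoke the finiteness of cone types so that the supremum of the ratio $\overline{P}\psi/\psi$ is attained --- is a legitimate route and is genuinely different from the paper, which works directly with the submartingale $Y_n=d(o,X_n)$ and applies Azuma's inequality, and then extends from the simple random walk to general walks via an operator-norm argument on $P^N=b\tilde P+\sum b_wP_w$. However your proposal contains two concrete gaps that prevent it from closing.

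First, the specific ansatz $\psi(w)=q_w^{-\alpha}$ does not produce the combination $\sum_{s\notin I}q_s-|I|$ upon differentiating at $\alpha=0$. Since $\ln q_{ws}-\ln q_w=\pm\ln q_s$ according to whether $s\notin I$ or $s\in I$, the $\alpha$-derivative of $\overline{P}\psi(w)/\psi(w)$ at $\alpha=0$ is (for the one-step simple random walk case)
\[
-\frac{1}{Q}\Bigl(\sum_{s\in S\setminus I}q_s\ln q_s-\sum_{s\in I}\ln q_s\Bigr),\qquad I=R(\delta(o,x)),
\]
a \emph{log-weighted} combination, not $\sum_{s\notin I}q_s-|I|$. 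These can have opposite signs. For instance take $S=\{s,t,u\}$ with $m_{st}=3$, $m_{tu}=m_{su}=\infty$ (an infinite Coxeter group with a valid locally finite thick building, since the rank-two residues are a generalized triangle with $q_s=q_t$ and two trees), and put $q_s=q_t=100$, $q_u=2$. For $I=\{s,t\}\in\mathcal{F}$ the hypothesis $\sum_{s\notin I}q_s=2=|I|$ holds, but $q_u\ln q_u-\ln q_s-\ln q_t=2\ln 2-2\ln 100<0$, so $\overline{P}\psi/\psi>1$ near $w$ with $R(w)=I$ for all small $\alpha>0$; the function is not superharmonic. The quantity in \eqref{eq:Iineq} arises instead if one takes $\psi(w)=\lambda^{\ell(w)}$, and you appear to have conflated $\ell(w)$ with $\log q_w$.

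Second, and more fundamentally, even with the corrected ansatz $\psi(w)=\lambda^{\ell(w)}$ the argument fails when equality holds in \eqref{eq:Iineq} for some $I\in\mathcal{F}$: in that case the one-step ratio at $w$ with $R(w)=I$ equals $\bigl(|I|(\lambda+\lambda^{-1})+\sum_{s\in I}(q_s-1)\bigr)/Q\geq 1$ for every $\lambda>0$, with equality only at $\lambda=1$, by convexity of $\lambda+\lambda^{-1}$. No function depending only on $\ell(w)$ can be strictly superharmonic at such $w$ in a single step. Your appeal to ``strict positivity of the combined derivative forced by irreducibility'' does not repair this: irreducibility ensures the walk visits everywhere, but it does not make the one-step $\ell$-drift strictly positive at these equality vertices. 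The paper handles precisely this boundary case by passing to a bounded number of steps, $P^{K+1}$, chosen so that from any chamber the walk reaches, within $K$ steps, a vertex whose descent set yields strict inequality, and then summing the drift over the block. Some analogue of this multi-step device (or a superharmonic function that depends on more than $\ell(w)$) is unavoidable when equality occurs in \eqref{eq:Iineq}, and your write-up does not supply it.

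Your verification of the ``in particular'' clause is correct.
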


\begin{proof}
Suppose first that $P$ is the simple random walk on~$\Delta$. Furthermore, suppose first that strict inequality holds in~(\ref{eq:Iineq}) for all $I\in \cF$, and let $C=\min_{I\in \cF}(\sum_{s\in S\setminus I}q_s-|I|)/Q>0$ where $Q=\sum_{s\in S}q_s$ is the total number of chambers adjacent to any given chamber. Let $x\in\Delta$ and $w=\rho(x)$. Let $I=R(w)\in\cF$. Let $Y_n=d(o,X_n)$. Then
\begin{align*}
\mathbb{E}[Y_{n+1}-Y_n\mid X_n=x]&=\mathbb{P}[Y_{n+1}-Y_n=1\mid X_n=x]-\mathbb{P}[Y_{n+1}-Y_n=-1\mid X_n=x]\\
&=\frac{\sum_{s\in S\setminus I}q_s-|I|}{Q}.
\end{align*}
Thus $\mathbb{E}[Y_{n+1}-Y_n\mid X_n]\geq C$, and so the sequence $Z_n=Y_n-Cn$ is a submartingale with respect to $(X_n)_{n\geq 0}$. We have
$$
p^{(n)}(o,o)=\mathbb{P}[Y_n=0\mid X_0=o]\leq \mathbb{P}[Z_n\leq -Cn\mid X_0=o]\leq e^{-C^2n/2}
$$
where the last inequality is Azuma's Inequality. Thus $\vrho(P)\leq e^{-C^2/2}<1$. 

We now briefly sketch the proof in the more general case where we do not assume strict inequality in~(\ref{eq:Iineq}), with $P$ still the simple random walk on~$\Delta$. Note that the singleton $I=\{s'\}$ is in~$\cF$, and that $\sum_{s\neq s'}q_s>|I|$. It can be seen that there is a number $K>0$ such that for each chamber $x\in\Delta$ there is an element $x'$ with $d(x,x')\leq K$ such that $R(\rho(x'))=\{s'\}$. Using this fact, and looking at the $(K+1)$-step walk $P^{K+1}$, an argument analogous to the above, using a telescoping sum, shows that $\mathbb{E}[Y_{n+K+1}-Y_n\mid X_n]\geq C(1/Q)^{K+1}$, where $C=\sum_{s\neq s'}q_s-1>0$. The result now follows as above.

Now let $P=\sum_{w\in W} p_wP_w$ be an arbitrary isotropic random walk on $\Delta$. By Lemma~\ref{lem:irreducible} there is $N>0$ such that $p_s^{(N)}>0$ for all $s\in S$. Thus, writing $\tilde{P}$ for the simple random walk operator on~$\Delta$, we have
$$
P^N=b\tilde{P}+\sum_{w\in W}b_wP_w\qquad\textrm{where $b>0$ and $b_w\geq 0$ for all $w\in W$}.
$$
The condition $\sum_{w\in W} p_w^{(N)}=1$ gives $b+\sum b_w=1$. Since $\tilde{P}$ is symmetric we have~$\|\tilde{P}\|=\vrho(\tilde{P})<1$ by the above argument (where $\|P\|$ is the operator norm of $P:\ell^2(\Delta)\to\ell^2(\Delta)$). Thus, since $\|P_w\|\leq 1$ for all $w$, we see that\begin{align*}
\vrho(P)^N=\vrho(P^N)&\leq\|P^N\|\leq b\|\tilde{P}\|+\sum_{w\in W}b_w\|P_w\|< b+\sum_{w\in W}b_w=1.
\end{align*}
(The first equality holds since $P$ is irreducible and aperiodic, see \cite[Exercise~1.10]{woessbook}). 
\end{proof}

\begin{cor}\label{cor:spectralradiusbuilding} An isotropic random walk on any regular thick Fuchsian building has spectral radius strictly less than~$1$. 
\end{cor}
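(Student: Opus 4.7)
The plan is to reduce the corollary to a direct application of Theorem~\ref{thm:spectralradius}. Concretely, I will identify the family $\cF=\{I\subseteq S\mid W_I\text{ is finite}\}$ for a Fuchsian Coxeter system $(W,S)$, and then verify that the inequality~(\ref{eq:Iineq}) is automatic under the standing thickness assumption $q_s\geq 2$ for all $s\in S$.

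First I would determine $\cF$ explicitly. By the definition of a Fuchsian Coxeter system in~(\ref{eq:hyperbolic2}), the generators $S=\{s_1,\dots,s_n\}$ with $n\geq 3$ satisfy $m_{ij}=k_i<\infty$ only when $j\equiv i\pm 1\pmod n$, and $m_{ij}=\infty$ otherwise. In particular, whenever $I$ contains two generators $s_i,s_j$ that are not cyclically adjacent, the parabolic subgroup $W_I$ already contains an infinite dihedral subgroup and is therefore infinite. Similarly, if $I$ contains three cyclically consecutive generators $s_i,s_{i+1},s_{i+2}$, then $s_i$ and $s_{i+2}$ are not adjacent in $(W,S)$ (since $n\geq 3$ and we are working modulo~$n$ with $n\geq 4$; for $n=3$ the triangle case is handled by Proposition~\ref{prop:existence}), so $W_I$ is again infinite. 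Thus the only finite standard parabolic subgroups come from
\[
I\in\cF=\{\emptyset\}\cup\bigl\{\{s_i\}:1\leq i\leq n\bigr\}\cup\bigl\{\{s_i,s_{i+1}\}:1\leq i\leq n\bigr\},
\]
with indices read cyclically.

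Next I would check the hypothesis~(\ref{eq:Iineq}) case by case. For $I=\emptyset$ the inequality is trivial. For $I=\{s_i\}$ we need $\sum_{s\neq s_i}q_s\geq 1$, which holds because each $q_s\geq 2$ by thickness and $|S\setminus I|=n-1\geq 2$. For $I=\{s_i,s_{i+1}\}$ we need $\sum_{s\notin I}q_s\geq 2$; since $n\geq 3$ we have $|S\setminus I|=n-2\geq 1$, and combined with $q_s\geq 2$ this gives $\sum_{s\notin I}q_s\geq 2$. (In the borderline case $n=3$, where $|S\setminus I|=1$, thickness gives exactly $q_s\geq 2$, which is tight but sufficient.) Hence~(\ref{eq:Iineq}) holds for every $I\in\cF$, and Theorem~\ref{thm:spectralradius} yields $\vrho(P)<1$.

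The only substantive step is the combinatorial identification of $\cF$; the verification of~(\ref{eq:Iineq}) is then immediate. I do not expect any real obstacle, except perhaps to be careful with the small Fuchsian triangle cases ($n=3$), where $|S\setminus I|$ can be as small as~$1$ and the bound becomes tight; one must explicitly note that thickness ($q_s\geq 2$) is exactly what is required to close the argument there.
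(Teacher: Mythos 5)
Your argument is correct and is essentially the paper's own proof: you reduce to Theorem~\ref{thm:spectralradius}, observe that every finite standard parabolic subgroup has rank at most~$2$, and then check~(\ref{eq:Iineq}) using thickness $q_s\geq 2$, with the $n=3$, $|I|=2$ case being the tight one. The only difference is cosmetic — you identify $\cF$ precisely (singletons and cyclically adjacent pairs) whereas the paper only needs $|I|\leq 2$, which is all the verification of~(\ref{eq:Iineq}) actually uses.
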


\begin{proof}
Let $(W,S)$ be a Fuchsian Coxeter system. Any three distinct elements of $S$ generate an infinite group, and so $|I|\leq 2$ for all $I\in \cF=\{I\subseteq S\mid \text{$W_I$ is finite}\}$. Thus if $|S|\geq 4$ we have 
$$
\sum_{s\in S\setminus I}q_s\geq 2|S\setminus I|=2(|S|-|I|)\geq 2(4-2)=4> |I|\quad\textrm{for all $I\in \cF$},
$$
and so the result follows from Theorem~\ref{thm:spectralradius}. If $|S|=3$ and $|I|=1$ then $\sum_{s\in S\setminus I}q_s\geq 4>|I|$, and if $|S|=3$ and $|I|=2$ then $\sum_{s\in S\setminus I}q_s\geq 2=|I|$, completing the proof.
\end{proof}

\begin{remark}
We do not think that Theorem~\ref{thm:spectralradius} is optimal. In fact, we believe that every thick regular building $(\Delta,\delta)$ of type $(W,S)$ with $W$ infinite has $\vrho(P)<1$ for all irreducible isotropic random walks. However the conclusion of Corollary~\ref{cor:spectralradiusbuilding} is sufficient for our purposes.
\end{remark}

\subsection{The path space}

Let $\cT_{x}\subset \Delta^{\mathbb{N}}$ denote the space of all paths in~$\Delta$ starting at~$x\in\Delta$ (with jumps of any length allowed). More formally, the path space is defined as the inverse limit
$$
\cT_x=\varprojlim\cT_x^n=\bigg\{\gamma\in\prod_{n\geq 0}\cT_x^n\,\big|\,\gamma_i=\pi_{ij}(\gamma_j)\text{ for all $i\leq j$}\bigg\}
$$
where $\cT_x^n=\{x\}\times \Delta^{n-1}\subset\Delta^n$ is the space of all paths in~$\Delta$ of length $n-1$ starting at~$x\in\Delta$, and $\pi_{ij}:\cT_x^j\to \cT_x^i$ are the natural projections. From this description we see (from Tychonoff's Theorem) that $\cT_x$ is a compact Hausdorff topological space.  

In the case of a Cayley graph of a group, there is naturally an automorphism of the graph taking any given vertex $x$ to any other vertex $y$, and thus there is a bijection $\psi_{xy}:\cT_x\to\cT_y$ mapping paths based at $x$ to ``isomorphic'' paths based at~$y$. In effect, this gives the intuition that random walks starting at~$x$ ``behave the same as'' random walks starting at~$y$. In our context there is typically not an automorphism of~$\Delta$ taking $x$ to $y$, and so we need to work a little harder to construct a suitable bijection $\psi_{xy}:\cT_x\to\cT_y$. The distance regularity of Proposition~\ref{prop:distanceregular} plays a crucial role here. 

\begin{prop}\label{prop:pathbijection}
For each $x,y\in \Delta$ there is a bijection $\psi_{xy}:\cT_x\to\cT_y$ such that:
\begin{enumerate}
\item If $\gamma=(x_0,x_1,x_2,\ldots)\in\cT_x$ and $\psi_{xy}(\gamma)=(y_0,y_1,y_2,\ldots)$, then $\delta(x_{i},x_{i+1})=\delta(y_{i},y_{i+1})$ and $\delta(x,x_i)=\delta(y,y_i)$ for all $i\geq 0$. 

\item For all $L\geq 0$ and for all $x,y$ of same cone type, if $\gamma=(x=x_0,x_1,x_2,\ldots)$ and $\psi_{xy}(\gamma)=(y=y_0,y_1,y_2,\ldots)$ and if $x_j\in\mathrm{Int}_{L} C(x)$ for some $j\geq 0$, then $y_j \in \mathrm{Int}_{L} C(y)$.

\item We have $\mathbb{P}_x[ (X_{n})_{n\geq 0}\in A]=\mathbb{P}_y[  (X_{n})_{n\geq 0}\in \psi_{xy}(A)]$ for all measurable $A\subseteq \cT_x$, where $\PP_{x}$ denotes the distribution of the isotropic random walk $(X_{n})_{n\geq 0}$  started at $X_0=x\in\Delta$.
\end{enumerate}
\end{prop}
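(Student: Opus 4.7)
The plan is to build $\psi_{xy}$ as the inverse limit of a coherent family of bijections $\psi_{xy}^n:\cT_x^n\to\cT_y^n$, constructed by induction on~$n$. The key input at each inductive step is the distance regularity of Proposition~\ref{prop:distanceregular}, which turns the combinatorial obstruction into a cardinality identity.

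For the base case set $\psi_{xy}^1(x)=y$. For the inductive step, assume $\psi_{xy}^n$ has been defined so that the image $(y_0,\ldots,y_{n-1})=\psi_{xy}^n(x_0,\ldots,x_{n-1})$ satisfies $\delta(x_i,x_{i+1})=\delta(y_i,y_{i+1})$ for $0\leq i\leq n-2$ and $\delta(x,x_i)=\delta(y,y_i)$ for $0\leq i\leq n-1$. To extend, I would partition the fibre above $(x_0,\ldots,x_{n-1})$ in $\cT_x^{n+1}$ into the sets $F_{x_{n-1}}(u,v)=\Delta_u(x_{n-1})\cap\Delta_v(x)$ indexed by pairs $(u,v)\in W\times W$, and likewise partition the fibre above $(y_0,\ldots,y_{n-1})$ into $F_{y_{n-1}}(u,v)=\Delta_u(y_{n-1})\cap\Delta_v(y)$. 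Since $\delta(x,x_{n-1})=\delta(y,y_{n-1})$ by the induction hypothesis, Proposition~\ref{prop:distanceregular} gives $|F_{x_{n-1}}(u,v)|=|F_{y_{n-1}}(u,v)|$ for every $(u,v)$, so any choice of bijections between corresponding fibres (depending only on the data $(x_{n-1},y_{n-1},u,v)$) yields an extension $\psi_{xy}^{n+1}$ which is a bijection compatible with the projection $\pi_{n,n+1}$. Passing to the inverse limit produces the required bijection $\psi_{xy}:\cT_x\to\cT_y$.

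Property~$1$ is immediate from the construction. Property~$3$ then follows because the cylinder probabilities $\mathbb{P}_x[X_0=x_0,\ldots,X_n=x_n]=\prod_{i=1}^n p_{\delta(x_{i-1},x_i)}/q_{\delta(x_{i-1},x_i)}$ depend only on the step Weyl distances, which $\psi_{xy}$ preserves; the equality of the two pushforward measures extends from cylinders to the full Borel $\sigma$-algebra by Dynkin's lemma. For Property~$2$, assume $T_\Delta(x)=T_\Delta(y)$. Combining the defining equivalence $x_j\in C_\Delta(x)\Leftrightarrow\delta(x,x_j)\in T_\Delta(x)$ with Proposition~\ref{prop:conetypes}(3) and Proposition~\ref{prop:Lboundaries}, I would verify the translation $x_j\in\mathrm{Int}_LC_\Delta(x)\Leftrightarrow\delta(x,x_j)\in\mathrm{Int}_LT_\Delta(x)$ (where $\mathrm{Int}_L T$ is intrinsic to the cone type as a subset of~$W$), using an apartment containing $o$, $x$ and $x_j$ to lift witnesses of the $L$-boundary condition between~$\Delta$ and~$W$, and analogously for~$y_j$. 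Since $T_\Delta(x)=T_\Delta(y)$ and $\delta(x,x_j)=\delta(y,y_j)$ by Property~$1$, the conclusion $y_j\in\mathrm{Int}_LC_\Delta(y)$ follows.

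The main obstacle is identifying distance regularity as the correct tool for the fibrewise cardinality match; once this is in hand the inductive construction becomes essentially bookkeeping, and Properties~$2$ and~$3$ are automatic consequences of Property~$1$ together with the results of Section~\ref{sect:3}.
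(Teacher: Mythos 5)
Your proposal is correct and follows essentially the same route as the paper: the inductive construction of compatible bijections $\psi_{xy}^n$ via the fibrewise partition into sets $\Delta_u(\cdot)\cap\Delta_v(\cdot)$, the appeal to the distance-regularity numbers $a_{u,v}^w$ of Proposition~\ref{prop:distanceregular} to match fibre cardinalities, the inverse-limit passage, and the derivation of parts~2 and~3 from part~1 together with Propositions~\ref{prop:conetypes} and~\ref{prop:Lboundaries} and the isotropy of the cylinder probabilities. The only cosmetic difference is that you index fibres by $(\delta(x_{n-1},x_n),\delta(x,x_n))$ rather than $(\delta(x,x_{n+1}),\delta(x_n,x_{n+1}))$, which is the same partition up to relabelling.
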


\begin{proof}
We inductively build bijections $\psi_{xy}^n:\cT_x^n\to\cT_y^n$ satisfying part 1 of the proposition (for $0\leq i\leq n$). The case $n=0$ is trivial. Suppose that $\psi_{xy}^n:\cT_x^n\to\cT_y^n$ has been constructed. For any finite path $\gamma=(x_{1},\ldots, x_{n})$ and any point $x_{n+1}$ we define  $\gamma\circ x_{n+1}=(x_{1},\ldots, x_{n}, x_{n+1})$.

Write
\begin{align*}
\cT_x^{n+1}&=\{\gamma_n\circ x_{n+1}\mid \gamma_n\in\cT_x^n,\,x_{n+1}\in\Delta\}\\
&=\bigsqcup_{u,v\in W}\{\gamma_n\circ  x_{n+1}\mid\gamma_n\in\cT_x^n,\,x_{n+1}\in\Delta_u(x)\cap\Delta_v(x_n)\},
\end{align*}
where $\gamma_n=(x_0,\ldots,x_n)$. For each $\gamma_n\in\cT_x^n$, the set $\gamma_n\circ \{x_{n+1}\mid x_{n+1}\in\Delta_u(x)\cap\Delta_v(x_n)\}$ has cardinality $a_{uv}^w$, where $w=\delta(x,x_n)$ (see Proposition~\ref{prop:distanceregular}). We also have
$$
\cT_y^{n+1}=\bigsqcup_{u,v\in W}\{\psi_{xy}^n(\gamma_n)\cdot y_{n+1}\mid \gamma_n\in\cT_x^n,\,y_{n+1}\in \Delta_u(y)\cap \Delta_v(y_n)\}
$$
where $\psi_{xy}^n(\gamma_n)=(y_0,\ldots,y_n)$. For each $\gamma_n\in\cT_x^n$ the set $\psi_{xy}^n(\gamma_n)\circ\{ y_{n+1}\mid y_{n+1}\in \Delta_u(y)\cap \Delta_v(y_n)\}$ also has cardinality $a_{uv}^w$ since $\delta(y,y_n)=\delta(x,x_n)=w$ (by the induction hypothesis). Thus for each fixed $\gamma_n\in\cT_x^n$ and each $u,v\in W$ we can choose a bijection
$$
\theta_{xy}^{uv}[\gamma_n]:\{x_{n+1}\mid x_{n+1}\in\Delta_u(x)\cap\Delta_v(x_n)\}\to \{y_{n+1}\mid y_{n+1}\in \Delta_u(y)\cap \Delta_v(y_n)\}.
$$
Thus for each $\gamma_n\in\cT_x^n$ we obtain a bijection $\theta_{xy}[\gamma_n]:\Delta\to\Delta$ (depending on $\gamma_n$) by the rule
$$
\theta_{xy}[\gamma_n](x_{n+1})=\theta_{xy}^{uv}[\gamma_n](x_{n+1})\quad\text{if $x_{n+1}\in\Delta_u(x)\cap\Delta_v(x_n)$}.
$$
Then define $\psi_{xy}^{n+1}:\cT_x^{n+1}\to\cT_y^{n+1}$ by
\begin{align}\label{eq:project}
\psi_{xy}^{n+1}(\gamma_{n+1})=\psi_{xy}^n(\gamma_n)\circ \theta_{xy}[\gamma_n](x_{n+1}).
\end{align}
By construction this bijection satisfies the conditions in part~$1$ of the proposition for $0\leq i\leq n+1$.

We now construct a bijection $\psi_{xy}:\cT_x\to\cT_y$ satisfying part~1 of the proposition. For $\gamma=(\gamma_0,\gamma_1,\ldots)\in\cT_x$ let
$$
\psi_{xy}(\gamma)=(\psi_{xy}^0(\gamma_0),\psi_{xy}^1(\gamma_1),\ldots).
$$
By (\ref{eq:project}) we see that $\psi_{xy}(\gamma)\in\cT_y$ for all $\gamma\in\cT_x$, and hence $\psi_{xy}:\cT_x\to\cT_y$. If $\psi_{xy}(\gamma)=\psi_{xy}(\gamma')$ then $\gamma_i=\gamma_i'$ for all $i\geq 0$, and hence $\gamma=\gamma'$ and so $\gamma$ is injective. To check surjectivity, if $\gamma=(\gamma_0,\gamma_1,\ldots)\in\cT_y$ then let $\gamma'=((\psi_{xy}^0)^{-1}(\gamma_0),(\psi_{xy}^1)^{-1}(\gamma_1),\ldots)$. Then $\gamma'\in \cT_x$ (using (\ref{eq:project})), and hence $\psi_{xy}$ is  surjective. 

It follows that $\psi_{xy}:\cT_x\to\cT_y$ is a bijection satisfying the conditions in part~1 of the proposition. Then from Proposition~\ref{prop:conetypes} and Proposition~\ref{prop:Lboundaries} we have
$$
\partial_{L}C(x)=\{z\in C(x)\mid \delta(x,z)\in \partial_{L}T(\rho(x))\},
$$
and part~2 of the proposition follows from this description. Since $(X_n)_{n\geq 0}$ is an isotropic random walk part 1 of the proposition implies part~3. 
\end{proof}

On occasion we will consider $\psi_{xy}$ as a bijection $\psi_{xy}:\cT_x^n\to\cT_y^n$ for each fixed $n\geq 0$ (that is, we write $\psi_{xy}$ in place of $\psi_{xy}^n$).

\subsection{Isotropic random walks and groups}\label{sect:6}

The following proposition (cf. \cite[Lemma~8.1]{CW}) illustrates how isotropic random walks naturally arise from bi-invariant probability measures on groups acting on buildings.

\begin{prop}\label{prop:Kac}
Let $G$ be a locally compact group acting transitively on a regular building $(\Delta,\delta)$, and let $B$ be the stabiliser in~$G$ of a fixed base chamber~$o$. Normalise the Haar measure on $G$ so that $B$ has measure~$1$. Let $\varphi$ be the density function of a $B$-bi-invariant probability measure on~$G$. If the group~$B$ acts transitively on each set $\Delta_w(o)$ with $w\in W$, then the assignment
$$
p(go,ho)=\varphi(g^{-1}h)
$$
for $g,h\in G$ defines an isotropic random walk on~$(\Delta,\delta)$.
\end{prop}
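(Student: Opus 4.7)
The plan is to verify three things in turn: that $p(go,ho)$ is well defined as a function on $\Delta\times\Delta$, that it forms a Markov transition kernel, and that it depends on $(go,ho)$ only through the Weyl distance $\delta(go,ho)$.

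First, for well-definedness, I would observe that if $go = g'o$ and $ho = h'o$, then $g' = gb_1$ and $h' = hb_2$ with $b_1,b_2 \in B$ (since $B$ is the stabiliser of $o$). Then $(g')^{-1}h' = b_1^{-1}g^{-1}h\, b_2$, and the $B$-bi-invariance of $\varphi$ gives $\varphi((g')^{-1}h') = \varphi(g^{-1}h)$.

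Second, to show that $p(go,\cdot)$ is a probability distribution on $\Delta$, I would use $\Delta = G/B$ (by transitivity of the $G$-action) and write
\[
\sum_{y \in \Delta} p(go,y) \;=\; \sum_{hB \in G/B} \varphi(g^{-1}h).
\]
The translation $h \mapsto g^{-1}h$ is a bijection of $G/B$, so this equals $\sum_{kB \in G/B}\varphi(k)$. Since $\varphi$ is right $B$-invariant it is constant on each coset $kB$, and the Haar measure is normalised so that $B$ has measure~$1$; hence
\[
1 \;=\; \int_G \varphi(x)\,dx \;=\; \sum_{kB \in G/B} \int_{kB} \varphi(x)\,dx \;=\; \sum_{kB \in G/B} \varphi(k),
\]
which completes this step. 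Non-negativity is immediate since $\varphi\geq 0$.

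The main step is to show isotropy. Suppose $\delta(g_1o,h_1o) = \delta(g_2o,h_2o) = w$. Since $G$ acts by automorphisms of the building (preserving $\delta$), applying $g_i^{-1}$ we get $\delta(o, g_i^{-1}h_i o) = w$, so $g_i^{-1}h_i o \in \Delta_w(o)$ for $i=1,2$. By hypothesis, $B$ acts transitively on $\Delta_w(o)$, so there exists $b \in B$ with $b \cdot (g_1^{-1}h_1 o) = g_2^{-1}h_2 o$. This says $(g_2^{-1}h_2)^{-1}(b g_1^{-1}h_1)$ stabilises $o$, hence lies in $B$, so we can write
\[
g_2^{-1}h_2 \;=\; b\, g_1^{-1}h_1\, b_0^{-1}
\]
for some $b_0 \in B$. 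By $B$-bi-invariance of $\varphi$,
\[
p(g_2o,h_2o) = \varphi(g_2^{-1}h_2) = \varphi(b g_1^{-1}h_1 b_0^{-1}) = \varphi(g_1^{-1}h_1) = p(g_1o,h_1o).
\]

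I expect the only real subtlety here to be the isotropy step, specifically the use of transitivity of the $G$-action on $\Delta$ (to move one endpoint to $o$) combined with $B$-transitivity on the Weyl sphere $\Delta_w(o)$ (to match up the other endpoints), and the implicit requirement that the $G$-action preserves $\delta$ (which is what ``acting on $(\Delta,\delta)$'' means, but is worth stating explicitly). The well-definedness and the normalisation are essentially bookkeeping using the two sides of $B$-bi-invariance.
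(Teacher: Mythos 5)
Your proof is correct and follows essentially the same route as the paper: well-definedness from $B$-bi-invariance, normalisation by decomposing $G$ into $B$-cosets and using that $\varphi$ is constant on cosets with $B$ of measure $1$, and isotropy by combining $G$-equivariance (moving one endpoint to $o$) with $B$-transitivity on $\Delta_w(o)$. You simply spell out the isotropy step more explicitly than the paper's one-line version.
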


\begin{proof}
To check that $p(\cdot,\cdot)$ is well defined, suppose that $g_1o=go$ and $h_1o=ho$. Then $g_1^{-1}g\in B$ and $h^{-1}h_1\in B$, and thus
$g_1^{-1}h_1\in Bg^{-1}hB$, and so $\varphi(g_1^{-1}h_1)=\varphi(g^{-1}h)$. 

For each $x\in\Delta$ use transitivity to fix an element $g_x\in G$ with $g_xo=x$. Then $G$ is the disjoint union of cosets $g_xB$, $x\in\Delta$, and thus
$$
\sum_{y\in\Delta}p(x,y)=\sum_{y\in\Delta}\varphi(g_x^{-1}g_y)=\sum_{y\in\Delta}\int_{g_x^{-1}g_yB}\varphi(g)\,dg=\int_G\varphi(g)\,dg=1.
$$

Clearly $p(gx,gy)=p(x,y)$ for all $g\in G$ and all $x,y\in\Delta$, and since $B$ is transitive on each set $\Delta_w(o)$ it follows that $p(\cdot,\cdot)$ is isotropic.
\end{proof}

Thus Theorems~\ref{thm:LLNbuilding} and~\ref{thm:CLTbuilding} give a rate of escape theorem and a central limit theorem (with formulas for the speed and variance) for random walks induced by $B$-bi-invariant measures on groups acting, as in Proposition~\ref{prop:Kac}, on Fuchsian buildings, where $B$ is the stabiliser of a chamber. The finite range assumption amounts to assuming that the density function of the $B$-bi-invariant measure is supported on finitely many $B$ double cosets. An important example is the case where $G=G(\mathbb{F}_q)$ is a Fuchsian Kac-Moody group over a finite field $\mathbb{F}_q$, acting on its natural building $G/B$ (as in Example~\ref{ex:kac}), and thus Corollary~\ref{cor:Kac} follows from Proposition~\ref{prop:Kac} and Theorems~\ref{thm:LLNbuilding} and~\ref{thm:CLTbuilding}.

\section{Isotropic random walks on regular Fuchsian buildings}\label{sect:renewal}

We now restrict our attention to irreducible isotropic random walks on a thick regular Fuchsian building. Thus in this section $(W,S)$ denotes a Fuchsian Coxeter system, $(\Delta,\delta)$ is a thick regular Fuchsian building of type~$(W,S)$, and $P=\sum_{w\in W}p_wP_w$ is the transition operator of an isotropic random walk $(X_n)_{n\geq 0}$ on~$\Delta$. For the remainder of this section we fix a recurrent cone type~$\bT$. 

We will assume that $(X_n)_{n\geq 0}$ has bounded range. Thus there is a minimal number $L_0\geq 0$ such that 
\begin{align}
\label{eq:L0}\text{$p_w\neq 0$ implies that $\ell(w)\leq L_0$}.
\end{align}

It is sufficient to prove Theorems~\ref{thm:LLNbuilding} and~\ref{thm:CLTbuilding} under the assumption that $p_s>0$ for all $s\in S$, and so there is an $\varepsilon>0$ such that
\begin{align}
\label{eq:support}\text{$p(x,y)>\varepsilon$ whenever $d(x,y)=1$}.
\end{align}
To see this, note that by Lemma~\ref{lem:irreducible}.2 there is an $M\geq 1$ such that the $M$-step walk $(X_{nM})_{n\geq 0}$ satisfies $p_s^{(M)}>0$ for all $s\in S$, and by the bounded range assumption proving Theorems~\ref{thm:LLNbuilding} and~\ref{thm:CLTbuilding} for the $M$-step walk implies the theorems for the $1$-step walk~$(X_n)_{n\geq 0}$. Thus, without loss of generality we will assume~(\ref{eq:support}) throughout this section.

\subsection{Renewal times and the proofs of Theorems~\ref{thm:LLNbuilding} and~\ref{thm:CLTbuilding}}\label{sect:yT}

In this section we setup a renewal structure for isotropic random walks on Fuchsian buildings. The main result is Theorem~\ref{thm:reg}, which is the key ingredient in the proofs of our rate of escape and central limit theorems. The proof of Theorem~\ref{thm:reg} will occupy Sections~\ref{sec:proofreg1} and~\ref{sec:proofreg2}. 

We note that Theorems~\ref{thm:LLNbuilding} and~\ref{thm:CLTbuilding} can be proven by only developing a renewal structure for the retracted random walk~$(\overline{X}_n)_{n\geq 0}$ on~$W$. However here we will develop a more satisfying picture by proving a renewal structure for the walk~$(X_n)_{n\geq 0}$ on the building. This only requires a small amount more work, and in our opinion is more natural.

We start by recalling the crucial fact that geodesics in a hyperbolic group
either stay within bounded distance of each other or diverge
exponentially. More precisely, there exists some exponential divergence
function $e: \NN_0 \rightarrow \RR$ such that the following holds: for all $u\in W$ and all geodesics
$\gamma_{1}$ from $u$ to any $v_{1}\in W$ and $\gamma_{2}$ from $u$ to any
$v_{2}\in W$ and all $r,R\in \NN_0$ with $R+r\leq \min\{d(u,v_{1}), d(u,v_{2})\}$
and $d(\gamma_{1}(R), \gamma_{2}(R))\geq e(0)$, all paths starting in $\gamma_{1}(R+r)$, visiting only vertices in $W\setminus
\cB(u,R+r)$ and ending in $\gamma_{2}(R+r)$ have length of
at least $e(r)$. Here $\gamma_i(n)$ is the point on $\gamma_i$ at distance
$n\in\NN_0$ to $u$. In particular, two geodesics that have been at least $e(0)$ apart can never
intersect again. 

\begin{lemma}\emph{(c.f. \cite[Lemma~2.4]{HMM:13})}\label{lem:boundary-rays}
Let be $u\in W\setminus \{e\}$. Then the boundary $\partial_1 C_W(u)$ is contained in the union of two geodesic rays starting at~$u$ in the Coxeter complex.
\end{lemma}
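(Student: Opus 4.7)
The plan is to work in the hyperbolic realization of $(W,S)$, where the Coxeter complex is a planar $\mathrm{CAT}(-1)$ tessellation of $\mathbb{H}^2$ by isometric polygons. The idea is to identify $C_W(u)$ with a convex subset of $\mathbb{H}^2$ and then use the geometry of its topological boundary to produce the two rays. Let $\Phi(u)$ denote the set of reflection walls separating the base chamber $1$ from $u$ in the apartment. A chamber $v$ lies in $C_W(u)$ iff every wall separating $1$ from $u$ also separates $1$ from $v$, iff $v$ lies on the $u$-side of each wall in $\Phi(u)$. Hence $C_W(u)$ corresponds to the convex region $\Omega(u)=\bigcap_{H\in\Phi(u)} H_u^+\subseteq\mathbb{H}^2$, where $H_u^+$ is the closed half-plane bounded by $H$ containing $u$. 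A wall-crossing argument analogous to Lemma~\ref{lem:onestep} then shows that $v\in\partial_1 C_W(u)$ iff the polygon of $v$ has an edge on some $H\in\Phi(u)$, equivalently iff the polygon of $v$ meets the topological boundary $\partial\Omega(u)$.

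The main geometric step is to analyze $\partial\Omega(u)$. Since $\Omega(u)$ is a nonempty unbounded convex subset of $\mathbb{H}^2$ cut out by finitely many half-planes, its boundary is a connected piecewise-geodesic curve whose asymptotic trace is a closed arc on the circle at infinity. The two endpoints of this arc correspond to two unbounded geodesic rays $\rho_1,\rho_2$ that form the noncompact part of $\partial\Omega(u)$, and the remainder of $\partial\Omega(u)$ is a finite union of bounded geodesic arcs contained in a bounded neighborhood of $u$ (the ``apex'' of the cone). From this data I would construct two geodesic rays $\gamma_1,\gamma_2$ in the Coxeter complex, each starting at $u$, by listing in order the chambers on the $u$-side of $\partial\Omega(u)$ (inserting intermediate chambers where needed to obtain an edge-adjacent gallery): $\gamma_i$ first passes through the chambers adjacent to the bounded arcs of $\partial\Omega(u)$ near $u$ on its side, and then traverses the chambers adjacent to $\rho_i$.

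Finally, any $v\in\partial_1 C_W(u)$ meets $\partial\Omega(u)$, which lies in $\rho_1\cup\rho_2$ together with a bounded neighborhood of $u$, so $v$ is captured by $\gamma_1\cup\gamma_2$ --- either on the initial segment (if $v$ touches a bounded arc or $v=u$) or further out (if $v$ touches one of the rays $\rho_i$). The main obstacle will be verifying that the $\gamma_i$ can be taken to be genuine geodesic rays in the chamber metric, i.e.\ galleries whose gallery distance coincides with their number of steps. This is where the Fuchsian (planar, $\mathrm{CAT}(-1)$) structure is essential: one uses the convexity of $\Omega(u)$ together with the curvature of the apartment to show that a sequence of chambers tracking a hyperbolic geodesic along the convex boundary of $\Omega(u)$ is quasi-isometric to a chamber geodesic, and that this quasi-isometry can be promoted to an actual geodesic gallery via a careful choice of the intermediate chambers used to connect chambers along the boundary walls that share only a vertex (rather than an edge) in the underlying tessellation.
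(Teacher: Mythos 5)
Your opening reductions are correct and cleanly stated: a chamber $v$ lies in $C_W(u)$ iff every wall of $\Phi(u)$ has $v$ on its $u$-side, so $C_W(u)$ is realized as the convex polygonal region $\Omega(u)=\bigcap_{H\in\Phi(u)}H_u^+\subset\mathbb{H}^2$, and the wall-crossing observation correctly identifies $\partial_1 C_W(u)$ with the chambers whose polygons meet $\partial\Omega(u)$. This is a genuinely different route from the paper: the paper never passes through the wall-theoretic description of the cone as an intersection of half-planes, but instead starts directly from geodesic rays $c_1,c_2$ in the \emph{chamber metric} emanating from $u$ (extracted from geodesics through $e$ and $u$), uses planarity and Jordan's theorem to show every component of $W\setminus(c_1\cup c_2)$ not containing $e$ lies in $C_W(u)$, and then invokes Arzel\`a--Ascoli to extract extremal rays $c_\ell,c_r$ that bound the cone exactly; the containment $\partial_1 C_W(u)\subseteq c_\ell\cup c_r$ then falls out because chambers strictly inside a Jordan component have no neighbours outside the cone.

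The reason the paper organizes the proof this way is precisely the obstacle you flag at the end, and as written your proposal does not overcome it. Having a piecewise-geodesic curve $\partial\Omega(u)$ in $\mathbb{H}^2$ is not the same as having a geodesic gallery in the chamber graph: the chambers whose polygons meet $\partial\Omega(u)$ need not be pairwise adjacent along the curve (they may share only a vertex of the tessellation), and even after interpolating, there is no reason the resulting gallery is a geodesic in the gallery metric rather than merely a quasi-geodesic. ``Quasi-isometric to a chamber geodesic'' and ``can be promoted to an actual geodesic gallery via a careful choice'' are exactly the assertions that require proof here, and they are the content of the lemma. The same issue also threatens your treatment of the bounded arcs near $u$: to fit those boundary chambers into the \emph{initial segments} of two geodesic rays starting at $u$ you would need to know that at each gallery distance $k$ from $u$ there are at most two such chambers and that they link up consecutively, which is not established by the convexity of $\Omega(u)$ alone. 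In short, the first three steps are correct and give a nice alternative description of the cone, but the passage from the hyperbolic boundary curve to two combinatorial geodesic rays is a genuine gap, and the paper's strategy of building the rays inside the chamber metric from the outset (so that being geodesic is built in) is what closes it.
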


\begin{proof}
Since the Coxeter complex is homeomorphic to the hyperbolic disc, it can be endowed with an orientation. Let $r_1, r_2: \mathbb{N}_0\to W$ be two infinite geodesic rays in the Coxeter complex going through $u$ which coincide up to $u$. Let $c_1$ and $c_2$ be the geodesic rays extracted from $r_1,r_2$ starting at~$u$. Let $V$ be a component of  $W\setminus \{c_1 \cup c_2\}$ which does not contain~$e$; here we identify $c_1$ and $c_2$ as the sets of vertices which lie on the geodesics. Let us prove that $V$ is contained in $C_{W}(u)$: let $v\in V$, and let us consider a geodesic segment $c_v$ joining $e$ to $v$. Since the Coxeter complex is planar, JordanÕs Theorem implies that $c_v$ intersects $\partial V=\{ w_1\in V \mid \exists w_2\in W\setminus V: d(w_1,w_2)=1\}$ at some point $w$, hence $c_1$ or $c_2$. Let us assume that it intersects $c_1$. Since $c_1$ is geodesic, we may replace the portion of $c_v$ before $w$ by $c_1$: it follows that the concatenation of $c_1$ up to $w$ and $c_v$ from $w$ to $v$ is geodesic; this implies that $v\in C_W(u)$.

By Arzela-AscoliÕs theorem and the planarity of the Coxeter complex, we may find two rays $c_\ell$ and $c_r$ going through $u$ such that $C_W(u)$ is the union of those rays with all the components of their complement which do not contain~$e$.
\end{proof}

Recall that we have fixed a recurrent cone type~$\bT$, and we now fix some $x_{\bT}\in \Delta$ with $T(x_{\bT})=\bT$. Set
$L_1=\max \{L_0,e(0)\}$. In the following we will construct some $y_{\bT}\in \mathrm{Int}_{L_1} C_{\Delta}(x_{\bT})$ such that $C_{\Delta}(y_{\bT})\subset \mathrm{Int}_{L_1} C_{\Delta}(x_{\bT})$. Let us remark that $\mathrm{Int}_{L_1} C_{\Delta}(x_{\bT})$ always contains at least one infinite connected component.

\begin{lemma}\label{lem:equal-distance}
Let be $L\geq 1$ and $y\in \mathrm{Int}_{L} C_{\Delta}(x_{\bT})$. Then:
$$
d(y,\partial_{L} C_{\Delta}(x_{\bT}) )= d(\rho(y), \partial_{L} C_{W}(\rho(x_{\bT}))).
$$
\end{lemma}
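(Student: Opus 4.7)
The plan is to prove the two inequalities separately, using the fact that the retraction $\rho$ is distance-non-increasing in general, but restricts to an isometry on any apartment through~$o$.

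\textbf{Upper bound.} I would first show
\[
d(\rho(y), \partial_{L} C_{W}(\rho(x_{\bT}))) \leq d(y, \partial_{L} C_{\Delta}(x_{\bT})).
\]
Pick a chamber $z \in \partial_{L} C_{\Delta}(x_{\bT})$ realising $d(y,z) = d(y, \partial_{L} C_{\Delta}(x_{\bT}))$, and fix a minimal gallery $y = y_0 \sim y_1 \sim \cdots \sim y_k = z$. Because $\rho$ sends $s$-adjacent chambers to $s$-adjacent or equal chambers, the sequence $\rho(y_0), \rho(y_1), \ldots, \rho(y_k)$ is a (possibly stuttering) gallery from $\rho(y)$ to $\rho(z)$, so $d(\rho(y), \rho(z)) \leq k$. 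By Proposition~\ref{prop:Lboundaries}, $\rho(z) \in \partial_{L} C_{W}(\rho(x_{\bT}))$, yielding the desired inequality.

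\textbf{Lower bound.} Next I would show
\[
d(y, \partial_{L} C_{\Delta}(x_{\bT})) \leq d(\rho(y), \partial_{L} C_{W}(\rho(x_{\bT}))).
\]
Since $y \in C_{\Delta}(x_{\bT})$, there is a minimal gallery from $o$ through $x_{\bT}$ to $y$, so $x_{\bT} \in [o,y]$. By (A1), choose an apartment $A$ containing $o$ and $y$; by (A3), $A$ also contains $x_{\bT}$. The restriction $\rho|_A : A \to W$ is an isomorphism fixing $o$. Pick $v \in \partial_{L} C_{W}(\rho(x_{\bT}))$ realising the distance $d(\rho(y), v) = d(\rho(y), \partial_{L} C_{W}(\rho(x_{\bT})))$, and let $\tilde{z} = (\rho|_A)^{-1}(v) \in A$. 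The argument used in the proof of Proposition~\ref{prop:Lboundaries} shows that $\tilde{z} \in \partial_{L} C_{\Delta}(x_{\bT})$: indeed, using part~1 of Proposition~\ref{prop:conetypes}, any $v' \in W \setminus C_{W}(\rho(x_{\bT}))$ at distance $\leq L$ from $v$ pulls back in $A$ to a chamber outside $C_{\Delta}(x_{\bT})$ (by (A3)) at the same distance from $\tilde z$. Finally, since $\rho|_A$ is an isomorphism,
\[
d(y, \tilde{z}) = d(\rho(y), v) = d(\rho(y), \partial_{L} C_{W}(\rho(x_{\bT}))),
\]
which gives the lower bound.

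The two inequalities combine to give the claimed equality. The only delicate point is the lower bound, where one must use property (A3) to ensure that the apartment $A$ chosen to contain $o$ and~$y$ also contains $x_{\bT}$, so that $\rho|_A$ may be exploited as a genuine isometry between $A$ and~$W$ identifying the two cones and their $L$-boundaries.
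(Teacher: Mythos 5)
Your proof is correct and follows essentially the same approach as the paper: the upper bound is the observation that retractions are distance non-increasing together with Proposition~\ref{prop:Lboundaries}, and the lower bound chooses an apartment $A$ through $o$ and $y$ (hence through $x_{\bT}$ by (A3)), uses that $\rho|_A$ is an isometry identifying $C_A(x_{\bT})$ with $C_W(\rho(x_{\bT}))$, and pulls the minimizer $v$ back to $\tilde z\in\partial_L C_\Delta(x_{\bT})$. The only cosmetic difference is that the paper transports an entire shortest path under $(\rho|_A)^{-1}$ while you transport only the endpoint and invoke the isometry property directly, which is equivalent.
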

\begin{proof}
Since retractions decrease the distance, and since $\rho(\partial_{L}C_{\Delta}(x_{\bT}))=\partial_{L}C_W(\rho(x_{\bT}))$ (see Proposition~\ref{prop:Lboundaries}) we have $d(y,\partial_{L} C_{\Delta}(x_{\bT}) )\geq  d(\rho(y), \partial_{L} C_{W}(\rho(x_{\bT})))$. It remains to show the other inequality to finish the proof. For this purpose, take a path of length $K=d(\rho(y), \partial_{L} C_{W}(\rho(x_{\bT})))$ from $\rho(y)$ to some $v\in \partial_{L} C_{W}(\rho(x_{\bT}))$, say the path $(w_0=\rho(y),w_1,\dots,w_K=v)$.
 We now want to construct a path of length $K$ from $y$ to $\partial_{L} C_{\Delta}(x_{\bT})$. Let $A$ be an apartment which contains $o$ and $y$ (and thus $x_{\bT}$ by (A3)). By Proposition~\ref{prop:conetypes} the retraction $\rho$ maps $C_A(x_{\bT})$ isometrically onto $C_W(\rho(x_{\bT}))$. Therefore, 
 $$
 \pi=((\rho|_A)^{-1}(w_0)=y,(\rho|_A)^{-1}(w_1),\dots,(\rho|_A)^{-1}(v))
 $$ 
 is a path of length $K$ from $y$ to $z=(\rho|_A)^{-1}(v) \in \partial_{L} C_A(x_{\bT})$. Now choose any $z'\in A$ with $d(z,z')=L$ and $z'\notin C_A(x_{\bT})$. Then $\rho(z')\notin \partial_{L} C_W(\rho(x_{\bT}))=\rho (\partial_{L} C_{\Delta}(\x_{\bT}))$, and hence $z'\notin \partial_{L} C_{\Delta}(\x_{\bT})$. That is, $\pi$ is a path of length $K$ in $A\subset \Delta$ which connects $y$ with $\partial_{L} C_{\Delta}(x_{\bT})$. This finishes the proof.
\end{proof}

The following lemma and its corollary will be used to construct~$y_{\bT}$. 

\begin{lemma}
Let $L\geq e(0)$ and let $u\in W\setminus \{e\}$ be such that $T(u)=\bT$. Then there is some $v\in \mathrm{Int}_{L} C_W(u)$ such that $T(v)=\bT$ and $C_W(v)\subset \mathrm{Int}_{1} C_W(u)$.
\end{lemma}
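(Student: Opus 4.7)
The plan is to find $v\in C_W(u)$ with cone type $\bT$ that sits deep inside $C_W(u)$, well away from its two boundary geodesic rays, and then to use the exponential divergence of geodesics to conclude that all of $C_W(v)$ avoids $\partial_1 C_W(u)$. The key geometric input is Lemma~\ref{lem:boundary-rays}: $\partial_1 C_W(u)\subseteq c_\ell\cup c_r$ for two geodesic rays $c_\ell, c_r$ emanating from $u$, so it suffices to keep both $v$ and $C_W(v)$ far from these two rays.

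\textbf{Construction of $v$.} First, I would pick an infinite geodesic ray $\gamma$ in $C_W(u)$ starting at $u$ and distinct from both $c_\ell$ and $c_r$; existence follows from the wedge-like structure of $C_W(u)$ in the hyperbolic realization of the Fuchsian Coxeter complex. The exponential divergence property then gives $d(\gamma(n), c_\ell)\to\infty$ and $d(\gamma(n), c_r)\to\infty$ as $n\to\infty$. Since transient cone types can appear only finitely often along an infinite path in the Cannon automaton, I may choose $n_0$ large enough that $T(\gamma(n_0))$ is recurrent and $d(\gamma(n_0), c_\ell), d(\gamma(n_0), c_r)\geq L+D+1$, where $D$ is the diameter of the recurrent part of the automaton. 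By Theorem~\ref{thm:stronglyconnected} there is a directed path in the recurrent automaton from $T(\gamma(n_0))$ to $\bT$ with some labels $t_1,\ldots,t_m$, $m\leq D$. Setting $v=\gamma(n_0)t_1\cdots t_m$ then gives $v\in C_W(u)$, $T(v)=\bT$, and $d(v,c_\ell), d(v,c_r)\geq L+1$.

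\textbf{Verification.} The inclusion $v\in\mathrm{Int}_L C_W(u)$ is immediate: since $\partial_1 C_W(u)\subseteq c_\ell\cup c_r$, any path from $v$ to an element of $W\setminus C_W(u)$ must pass through some chamber of $c_\ell\cup c_r$, so $d(v, W\setminus C_W(u))\geq\min(d(v,c_\ell), d(v,c_r))\geq L+1>L$. For $C_W(v)\subseteq\mathrm{Int}_1 C_W(u)$, additivity of distances gives $C_W(v)\subseteq C_W(u)$, so by Lemma~\ref{lem:boundary-rays} it suffices to show $C_W(v)\cap(c_\ell\cup c_r)=\emptyset$. Suppose for contradiction that $w\in C_W(v)\cap c_\ell$. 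Then $d(e,w)=d(e,u)+d(u,v)+d(v,w)=d(e,u)+d(u,w)$, forcing $d(u,w)=d(u,v)+d(v,w)$, so there are two geodesics from $u$ to $w$: one passing through $v$ and the segment of $c_\ell$ itself. These are distinct since $v\notin c_\ell$ (as $v\in\mathrm{Int}_L C_W(u)$ while $c_\ell\subseteq\partial_1 C_W(u)$). At parameter $R':=d(u,v)$ they lie at $v$ and $c_\ell(R')$, with $d(v,c_\ell(R'))\geq d(v,c_\ell)\geq L+1\geq e(0)$. The exponential divergence property then forbids these two geodesics from meeting at $w$, a contradiction. The symmetric argument excludes $c_r$.

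\textbf{Main obstacle.} The principal subtlety is the existence of the interior ray $\gamma$: I need to know that the cone $C_W(u)$ in a Fuchsian Coxeter complex contains a geodesic ray distinct from its two boundary rays. This rests on the non-elementary hyperbolic geometry of Fuchsian Coxeter groups, for which the cone is a wedge of positive angle and must therefore contain interior directions; I would make this rigorous via exponential growth of the spheres $C_W(u)\cap\cS(u,n)$, which dwarfs the linear contribution of $c_\ell\cup c_r$.
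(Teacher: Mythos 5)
Your proof is correct, and the verification step takes a genuinely different route from the paper's. Both proofs start the same way: invoke Lemma~\ref{lem:boundary-rays} for $C_W(u)$, choose a geodesic ray inside the cone distinct from the two boundary rays (the existence of which both you and the paper leave somewhat terse; the paper appeals to $T(u)$ being recurrent to produce a third end in $\partial_\infty C_W(u)$, you appeal to exponential growth of spheres), and use exponential divergence to land deep inside the cone at a chamber of recurrent type. The divergence appears afterward. The paper first lands on a $v'$ of merely recurrent type, applies Lemma~\ref{lem:boundary-rays} a \emph{second} time to $C_W(v')$ to obtain its boundary rays $\gamma_1',\gamma_2'$, argues from divergence of $\gamma_i$ and $\gamma_j'$ together with planarity that $C_W(v')\subset\mathrm{Int}_1 C_W(u)$, and finally picks some $v\in C_W(v')$ of type $\bT$ (in passing, the last sentence of the paper's proof does not actually confirm that this final $v$ lies in $\mathrm{Int}_L C_W(u)$, only that $C_W(v)\subset\mathrm{Int}_1 C_W(u)$). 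Your route instead uses the strong connectivity and finite diameter of the recurrent Cannon automaton (Theorem~\ref{thm:stronglyconnected}) to navigate from $\gamma(n_0)$ directly to a chamber $v$ with $T(v)=\bT$ while controlling $d(v,c_\ell)$ and $d(v,c_r)$, and then shows $C_W(v)\cap(c_\ell\cup c_r)=\emptyset$ by a geodesic-collision contradiction: a point $w\in C_W(v)\cap c_\ell$ would yield two geodesics from $u$ to $w$ that are at least $e(0)$ apart at parameter $d(u,v)$ and yet reconverge at $w$, which exponential divergence forbids. This avoids invoking Lemma~\ref{lem:boundary-rays} for the inner cone, is cleaner than the paper's appeal to planarity at that point, and delivers $v\in\mathrm{Int}_L C_W(u)$ with $T(v)=\bT$ in one step. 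The one gap you flag yourself (existence of the interior ray $\gamma$) is equally present in the paper's proof and can be closed using recurrence of $\bT$ together with the strong connectivity of the automaton, so I would not regard it as a defect peculiar to your argument.
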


\begin{proof}
Let $u\in W\setminus \{e\}$ with $T(u)=\bT$. By Lemma \ref{lem:boundary-rays} there are two geodesic rays $\gamma_1,\gamma_2$ starting from $e$ whose union contains $\partial_1 C_W(u)$ and which coincide up to $u$. Since $T(u)$ is recurrent we can choose any end $\xi\in \partial_\infty C_W(u)$ which is different from the ends described by $\gamma_1$ and $\gamma_2$. Let $\pi$ be any geodesic ray which starts at $e$, follows $\gamma_1$ up to $u$  and describes $\xi$. It follows that, for every $i\in\{1,2\}$, the distance $d(\pi(t),\gamma_i(t))$  cannot be bounded for $t\geq 0$. Hence, there are $t_1,t_2\in\mathbb{N}$ such that $d(\pi(t_i),\gamma_i(t_i))\geq L+1\geq e(0)+1$ implying that $d(\pi(t),\gamma_i(t))\geq e(0)+1$ for all $t\geq \max\{t_1,t_2\}$ and all $i\in\{1,2\}$. That is, $\pi$ and $\gamma_i$, $i\in\{1,2\}$, diverge exponentially. In particular, there must be some $t_0\geq \max\{t_1,t_2\}$ such that $v'=\pi(t_0)\in \mathrm{Int}_{L} C_W(u)$ with $T(v')$ being recurrent.
Denote by $\gamma_1'$ and $\gamma_2'$ the geodesic rays starting at $e$ whose union contains $\partial_1 C_W(v')$ and pass through $v'$. Due to exponential divergence of $\gamma_i$ and $\gamma_j'$, where $i,j\in \{1,2\}$, we have that $\gamma_i(t)\neq \gamma_j'(t)$ for all $t\geq t_0$. This yields $C_W(v')\subset \mathrm{Int}_1(u)$

The cone $C_W(v')$ contains an element $v$ with $T(v)=\bT$, and since $C_W(v)\subset C_W(v')$ the result follows.
\end{proof}

We can iterate the last step by replacing the role of $u$ by $v$. This leads then to the following corollary:

\begin{cor}\label{cor:find-v}
Let be $u\in W\setminus \{e\}$ such that $T(u)=\bT$. Then there is some $v\in \mathrm{Int}_{L_1} C_W(u)$ such that $T(v)=\bT$ and $C_W(v)\subset \mathrm{Int}_{L_1}C_W(u)$.
\end{cor}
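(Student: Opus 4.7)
The plan is to iterate the preceding lemma $L_1$ times. Set $v_0 = u$, and for each $k \geq 0$ apply the preceding lemma to $v_k$ (valid because $T(v_k) = \bT$ is recurrent, $v_k \neq e$ since $v_k \in C_W(u)$ and $e \notin C_W(u)$, and $L_1 \geq e(0)$), producing $v_{k+1} \in \mathrm{Int}_{L_1} C_W(v_k)$ with $T(v_{k+1}) = \bT$ and $C_W(v_{k+1}) \subset \mathrm{Int}_{1} C_W(v_k)$. I would then take $v := v_{L_1}$.

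The key intermediate observation, which I would state as a short sublemma, is a \emph{transitivity of interiors}: if $A \subset \mathrm{Int}_{j} B$ and $B \subset \mathrm{Int}_{k} C$, then $A \subset \mathrm{Int}_{j+k} C$. This is a one-line geodesic argument in the Coxeter complex: given $a \in A$ and $c \in W \setminus C$, pick any geodesic from $a$ to $c$ and let $b$ be its last vertex lying in $B$. The successor of $b$ on the geodesic is outside $B$, so $b \in \partial_{1} B$; then $a \in \mathrm{Int}_{j} B$ forces $d(a,b) \geq j$ (since $b$ has a neighbor outside $B$), while $b \in B \subset \mathrm{Int}_{k} C$ and $c \notin C$ force $d(b,c) \geq k+1$. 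Adding gives $d(a,c) \geq j + k + 1$.

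An easy induction on $k$ then shows $C_W(v_k) \subset \mathrm{Int}_{k} C_W(u)$ for all $k \geq 1$: the base case $k = 1$ is the preceding lemma, and combining the inductive hypothesis $C_W(v_k) \subset \mathrm{Int}_{k} C_W(u)$ with $C_W(v_{k+1}) \subset \mathrm{Int}_{1} C_W(v_k)$ via the transitivity yields $C_W(v_{k+1}) \subset \mathrm{Int}_{k+1} C_W(u)$. Taking $k = L_1$ gives $C_W(v) \subset \mathrm{Int}_{L_1} C_W(u)$, and since $v \in C_W(v)$ trivially, the desired $v \in \mathrm{Int}_{L_1} C_W(u)$ follows for free, while $T(v) = \bT$ holds by construction.

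The main thing to notice is that a single application of the preceding lemma is insufficient, since it only provides the weaker inclusion into $\mathrm{Int}_{1} C_W(u)$; the real content is that ``depth to the boundary'' accumulates additively under nested inclusions. Once the transitivity of interiors is extracted, the iteration is entirely routine and the obstacle is reduced to the short distance estimate in the sublemma.
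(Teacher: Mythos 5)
Your proof is correct and matches the paper's approach: the paper's entire argument for this corollary is the one-line remark that one iterates the preceding lemma with the role of $u$ replaced by $v$. The transitivity-of-interiors sublemma you isolate (that $A\subset\mathrm{Int}_j B$ and $B\subset\mathrm{Int}_k C$ imply $A\subset\mathrm{Int}_{j+k}C$) is precisely the detail that makes the iteration accumulate depth, and your geodesic bookkeeping for it is sound.
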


We now show how to construct $y_{\bT}$: take an apartment $A$ which contains $o$ and $x_{\bT}$ and recall that $\rho|_A$ denotes the restriction of $\rho$ to $A$ which becomes an isomorphism mapping $A$ onto $W$. We apply Corollary \ref{cor:find-v} on $u=\rho|_A(x_{\bT})$
 and find some $v\in W$ such that $C_W(v)\subset  \mathrm{Int}_{L_1} C_W(u)$. Due to Proposition \ref{prop:Lboundaries} and Lemma \ref{lem:equal-distance} we then must have $C_{\Delta}((\rho|_A)^{-1}(v))\subset  \mathrm{Int}_{L_1} C_{\Delta}(x_{\bT})$. Fix now for the rest of this section such a chamber $y_{\bT}=(\rho|_A)^{-1}(v)$ in dependence of $x_{\bT}$, $v$ and $A$.
Furthermore, fix a shortest path $\pi_{\bT}=[x_{\bT}, x_{\bT}^{(1)},\dots,x_{\bT}^{(k-1)},y_{\bT}]$ from $x_{\bT}$ to
$y_{\bT}$ contained in $C(x_{\bT})$. Note that for $x\in\Delta$ with $T(x)=\bT$
 the bijection $\psi_{x_{\bT}x}$ maps $\pi_{\bT}$ onto a path from $x$ to some $y\in
 \mathrm{Int}_{L_1} C(x)$ contained in $C_{\Delta}(x)$ (see Proposition \ref{prop:pathbijection}.2).

We now give the definition of \textit{renewal times}. For each $x\in\Delta$ with $T(x)=\bT$ let $\widehat{\mathcal{T}}_{x}$ be the set of all paths which start at $x$, initially follow
$$
\pi_x=\left( \psi_{x_{\bT}x}(x_{\bT}^{(1)}),\psi_{x_{\bT}x}(x_{\bT}^{(2)}), \dots,\psi_{x_{\bT}x}(y_{\bT})\right)
$$  
and stay in $\mathrm{Int}_{L_1} C(x)$ afterwards forever.
We define $ R_{0}=0$ and  let
$$
R_{1}=\inf\{{k\geq 0}\mid (X_{i})_{i\geq k} \in \widehat{\mathcal{T}}_{X_k},~T( X_{k})=\bT \}
$$ 
be the first time $k\in\mathbb{N}$ that the random walk hits the root of a cone of type $\bT$, visits consecutively $\psi_{x_{\bT}X_k}(x_{\bT}^{(1)}),\psi_{x_{\bT}X_k}(x_{\bT}^{(2)}),\dots,\psi_{x_{\bT}X_k}(y_{\bT})$  and
stays in $\mathrm{Int}_{L_1} C(X_k)$ afterwards forever. Inductively, 
\begin{align}\label{eq:renewal}
 R_{n}=\inf\{k> R_{n-1}\mid (X_{i})_{i\geq k} \in \widehat{\mathcal{T}}_{X_k},~T( X_{k})=\bT\}.
 \end{align}

Recall the notion of random variables with exponential moments. A real valued random variable $Y$ has \textit{exponential moments} if 
$\EE[\exp(\lambda Y)]<\infty$ for some $\lambda>0$, or equivalently, if  there
are positive constants $C>0$ and $c<1$ such that $\PP[Y=n]\leq C c^{n} $ for
all $n\in \NN_0$.

\begin{thm}\label{thm:reg}
Let $(X_n)_{n\geq 0}$ be an isotropic random walk on a thick regular Fuchsian building~$(\Delta, \delta)$ with bounded range. 
\begin{enumerate}
\item The renewal times~$R_{n}$ are almost surely finite, $d(o,X_{R_{n}})=\sum_{i=1}^{n} d( X_{R_{i-1}},  X_{R_{i}})$, and $(d( X_{R_{i-1}},  X_{R_{i}}))_{i\geq 2}$ are i.i.d.~random variables. 
\item The renewal time $ R_{1}$ and the increments $(R_{i+1}- R_{i})$ for $i\geq 1$ have exponential moments. The same holds true for $d(o,X_{ R_{1}})$ and $d( X_{ R_{i}}, X_{ R_{i+1}})$ for $i \geq 1$.
\end{enumerate}
\end{thm}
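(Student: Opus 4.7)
The plan is to prove the two parts in sequence, establishing first the structural renewal properties and then the moment bounds. The key ingredient throughout is the uniformity of the renewal construction across cones of a fixed type, provided by the path bijections $\psi_{x,y}$ of Proposition~\ref{prop:pathbijection} together with the cone-type-preserving properties of the retraction $\rho$ from Proposition~\ref{prop:conetypes}.

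For Part~1, I would first establish a uniform lower bound $p_0 > 0$ for the conditional probability
$$
\PP\bigl[(X_{i})_{i \geq k} \in \widehat{\mathcal{T}}_{X_k} \,\big|\, X_k = x\bigr] \geq p_0
$$
valid for every $x$ with $T(x) = \bT$. This decomposes into two contributions: (i) the probability that the walk follows the finite path $\pi_x$ of length $k$ from $x$ to $y := \psi_{x_\bT,x}(y_\bT)$, which by (\ref{eq:support}) is at least $\varepsilon^k$; and (ii) the conditional probability, given arrival at $y$, that the walk remains in $C_\Delta(y) \subseteq \mathrm{Int}_{L_1} C_\Delta(x)$ forever. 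By Proposition~\ref{prop:pathbijection}.3 the latter depends only on $T(y) = \bT$, and using the transience implied by $\vrho(P) < 1$ (Corollary~\ref{cor:spectralradiusbuilding}) together with the exponential divergence of geodesics in the Fuchsian apartment, one obtains a uniform lower bound $\beta > 0$. Combined, these give $p_0 \geq \varepsilon^k\beta$. Next, to show that cones of type $\bT$ are visited infinitely often almost surely, I invoke the strong connectivity of the Cannon automaton (Theorem~\ref{thm:stronglyconnected}) and (\ref{eq:support}): from any chamber $x$ there is a bounded-length word of generators leading to a chamber of cone type $\bT$, and each such attempt succeeds with positive probability. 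Finiteness of $R_1$ (and inductively each $R_n$) then follows from Borel-Cantelli applied at the visits to cone roots of type $\bT$. The decomposition $d(o,X_{R_n}) = \sum_i d(X_{R_{i-1}},X_{R_i})$ is an immediate consequence of the fact that the walk after $R_i$ remains in $C_\Delta(X_{R_i})$, so that $X_{R_{i+1}} \in C_\Delta(X_{R_i})$ and the cone identity $d(o,\cdot) = d(o,X_{R_i}) + d(X_{R_i},\cdot)$ applies. The i.i.d.\ property for $i \geq 2$ then follows from the strong Markov property at each $R_i$ combined with the identity in law provided by Proposition~\ref{prop:pathbijection}.3 via $\psi_{x_\bT,X_{R_i}}$, noting that at $R_i$ we also condition on the (equally structured) event that the walk henceforth stays in the cone.

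For Part~2, the task is to control the tail $\PP[R_1 > n]$, and analogously $\PP[R_{i+1} - R_i > n]$. The strategy is to combine two geometric estimates. First, at each visit to a cone root of type $\bT$ the renewal occurs with probability at least $p_0$, so by the strong Markov property the number of failed attempts before a successful renewal is stochastically dominated by a geometric random variable, which has exponential moments. Second, I need that the waiting time between consecutive visits to cone roots of type $\bT$ also has exponential moments. This is obtained by the following argument: by Theorem~\ref{thm:stronglyconnected} there exist a uniform $N \geq 1$ and $\eta > 0$ such that from any chamber $x$ the probability of reaching a cone root of type $\bT$ within $N$ steps is at least $\eta$ (using strong connectivity of the automaton and (\ref{eq:support}) to lift an automaton path to a probabilistic path through the base apartment). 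Iterating by the Markov property then gives $\PP[R_1 > n] \leq C(1-\eta p_0)^{\lfloor n/N \rfloor}$, hence exponential moments. Exponential moments of $d(o,X_{R_1})$ and $d(X_{R_i}, X_{R_{i+1}})$ then follow from those of $R_1$ and $R_{i+1} - R_i$ together with the bounded-range assumption~(\ref{eq:L0}), since each single step of the walk satisfies $d(X_j, X_{j+1}) \leq L_0$, so distances are dominated by $L_0$ times the number of steps.

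The main obstacle I anticipate is the uniform lower bound $\beta > 0$ for the ``stay in $C_\Delta(y)$ forever'' probability. Invariance (Proposition~\ref{prop:pathbijection}.3) reduces the estimate to the single chamber $y_\bT$, but one still needs a genuine escape estimate: the walk started at $y_\bT$ must have positive probability of never exiting $C_\Delta(y_\bT)$. The careful choice of $y_\bT$ via Corollary~\ref{cor:find-v}, which places $C_\Delta(y_\bT)$ well inside the interior of the parent cone $C_\Delta(x_\bT)$, is precisely what allows local fluctuations to be absorbed. The essential geometric input is the exponential divergence of geodesics in the Fuchsian Coxeter complex, encoded in the function $e(\cdot)$ introduced before Lemma~\ref{lem:boundary-rays}, combined with transience $\vrho(P)<1$ from Corollary~\ref{cor:spectralradiusbuilding}, yielding that the retracted walk has positive probability of eventually remaining beyond distance $L_1$ of the cone boundary, which lifts back to the building via Proposition~\ref{prop:Lboundaries}.
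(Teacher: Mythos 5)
Your Part~1 argument is broadly in the spirit of the paper's, but two points are glossed over. First, the uniform escape bound $\beta>0$ for ``walk started at $y_\bT$ stays in $C_\Delta(y_\bT)$ forever'' is not a direct consequence of $\vrho(P)<1$ and geodesic divergence; the paper proves it by first showing the harmonic measure of the retracted walk is not supported on finitely many atoms (Lemma~\ref{lem:nonatomic}, relying on strong connectivity of the automaton) and then locating a cone of type $\bT$ carrying positive harmonic mass (Lemma~\ref{lem:stayincone}), which is then transported to an arbitrary root of type $\bT$ by invariance. Second, the renewal times $R_i$ are not stopping times --- $\{R_i=k\}$ depends on the whole future of the walk --- so the strong Markov property cannot be invoked as you do; the paper instead introduces $\cG_n=\sigma(R_1,\ldots,R_n,X_0,\ldots,X_{R_n})$ and verifies a substitute Markov property by a direct conditional-probability computation. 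Both fixes are implementable and your outline anticipates the first, so Part~1 is salvageable.

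The genuine gap is in Part~2. The iteration $\PP[R_1>n]\leq C(1-\eta p_0)^{\lfloor n/N\rfloor}$ cannot be obtained ``by the Markov property'' as you claim, because $\{R_1>jN\}$ is not $\cF_{jN}$-measurable: at time $jN$ the walk may already be deep inside $\mathrm{Int}_{L_1}C_\Delta(X_k)$ for some earlier $\bT$-root $X_k$, and conditioning on $\{R_1>jN\}$ then forces the walk to eventually exit that cone, which (since nested cones satisfy $\mathrm{Int}_{L_1}C_\Delta(X_\tau)\subseteq\mathrm{Int}_{L_1}C_\Delta(X_k)$ for $X_\tau\in\mathrm{Int}_{L_1}C_\Delta(X_k)$) rules out a renewal at any $\bT$-root encountered inside it; the lower bound $\eta p_0$ is not available interval-by-interval. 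Equivalently: a failed renewal attempt terminates only when the walk exits $\widehat{\cT}_{x}$, which takes an \emph{unbounded} random time $D_x$, and the ``geometric number of trials'' argument only yields exponential moments of $R_1$ if one additionally shows $D_x$ has exponential moments on $\{D_x<\infty\}$. This is precisely what the bulk of Section~\ref{sec:proofreg2} accomplishes and what your proposal omits: the planar barrier construction in the Coxeter complex (Lemma~\ref{lem:connected_sphere}, the Jordan curve theorem, and the disjointness of the $\partial B_i$ via exponential divergence) yields Proposition~\ref{lem:expdecay}, $F(u,v)\leq C\lambda^{d(u,v)}$ for $v\in C(u)$, which together with $\vrho(\overline{P})<1$ and the linear growth of $|\partial_{2L_1}C(u)\cap\cB(v,\delta n)|$ proves Lemma~\ref{lem:expmomentsDretract}, and hence Lemma~\ref{lem:expmomentsD}. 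Without that control the exponential-moment claims in Part~2 remain unproven.
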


The proof of Theorem~\ref{thm:reg} will be given in Sections~\ref{sec:proofreg1} and~\ref{sec:proofreg2}. Assuming Theorem~\ref{thm:reg} for the moment, one can now argue
as in \cite{HMM:13} (verbatim modulo some notations) to prove our law of large
numbers and central limit theorem.

\begin{proof}[Proof of Theorems~\ref{thm:LLNbuilding} and~\ref{thm:CLTbuilding}]
We content ourselves with giving the main
idea and refer to \cite{HMM:13} for the technical details. The role of the
cones in the definition of the renewal times was that the trajectory of the
walk observed at renewal times is the ``aligned'' sum of i.i.d.~pieces, Theorem
\ref{thm:reg}; that is,
\begin{equation*}
d(o,X_{R_{n}})=\sum_{i=1}^{n} d(R_{i-1}, R_{i}).
\end{equation*}
Now, the law of large numbers and central limit theorem for real-valued random variables apply and the statements in Theorems~\ref{thm:LLNbuilding} and~\ref{thm:CLTbuilding} follow for the process $d(o,X_{R_{n}})$. It remains therefore to control the  distance or ``error'' between $X_{n}$ and the position of the last renewal before time $n$. More precisely, we define the last renewal time before time $n$:
\begin{equation*}
k(n)=\sup\{k\mid R_{k}\le n\}.
\end{equation*}
We have that 
\begin{equation*}
\frac{n}{k(n)}= \frac{n}{R_{k(n)}}\frac{R_{k(n)}}{k(n)}.
\end{equation*}
By the strong law of large numbers  the second factor tends a.s.~to $\EE[R_2-R_1]$.
For the first factor we observe  that $R_{k(n)}\le n \le R_{k(n)+1}$,
hence
\begin{equation*}
\limsup_{n\to\infty} \frac{R_{k(n)}}{n}\le 1.
\end{equation*}
On the other hand, since $n\ge k(n)$ and  $(R_{k(n)}-R_{k(n)+1})$ have finite first moments,
\begin{equation*}
\lim_{n\to\infty} \frac{R_{k(n)}-R_{k(n)+1}}{n}=0\quad \hbox{a.s.}
\end{equation*}
and hence
\begin{equation*}
\liminf_{n\to\infty}\frac{R_{k(n)}}{n}\ge \liminf_{n\to\infty}\left( \frac{R_{k(n)}-R_{k(n)+1}}{n}\right)+ \frac{R_{k(n)+1}}{n}\ge 1.
\end{equation*}
Eventually, we have that
\begin{equation*}
\frac{n}{k(n)} \xrightarrow[n\to \infty]{a.s.} \EE[R_{2}-R_{1}]<\infty.
\end{equation*}
Denote
\begin{equation*}
M_{k}=\sup\{d(Z_{n},Z_{R_{k}})\mid R_{k}\le n \leq R_{k+1}\}, ~k\geq 1.
\end{equation*}
The random variables $(M_{k})_{k\geq 1}$ form an i.i.d.~sequence of random variables with exponential moments. This is a consequence of the fact that  $d( X_{ R_{i+1}}, X_{ R_{i}})$ have exponential moments, see \cite[Corollary 4.2]{HMM:13}. As a consequence we have that
\begin{equation*}
\lim_{n\to\infty}\frac{d(Z_{n},e)-d(Z_{R_{k(n)}},e)}n  \le \lim_{k\to\infty}\frac{M_k}k=0 \quad\hbox{a.s.}.
\end{equation*}
Since the strong law of large numbers guarantees that
\begin{equation*}
\frac{d(Z_{R_{k(n)}},e)}{k(n)}   \xrightarrow[n\to \infty]{a.s.} \EE[ d(Z_{ R_{2}}, Z_{ R_{1}})]
\end{equation*}
we can conclude the proof of Theorem~\ref{thm:LLNbuilding}:
\begin{eqnarray*}
\frac{d(Z_{n},e)}n& =& \frac{d(Z_{n},e)-d(Z_{R_{k(n)}},e)}n + \frac{d(Z_{R_{k(n)}},e)}{k(n)} \frac{k(n)}{n}\cr
&  \xrightarrow[n\to \infty]{a.s.}  & 0  + \frac{\EE[ d(Z_{ R_{2}}, Z_{ R_{1}})]}{\EE[ R_{2}- R_{1}]}.
\end{eqnarray*} 
The proof of Theorem~\ref{thm:CLTbuilding} is more involved; we refer  \cite[Section 4.2]{HMM:13} for the remaining details.
\end{proof}

\subsection{Proof of Theorem~\ref{thm:reg}.1}\label{sec:proofreg1}

Recall that  $\overline{X}_{n}=\rho(X_n)$ denotes the retracted walk on $W$ and its  transition probabilities are given by  $\overline{p}(u,v)$ and its transition operator is~$\overline{P}$. This retracted walk is necessarily irreducible and aperiodic ($P$ is irreducible and thus aperiodic by Lemma~\ref{lem:irreducible}). Proposition~\ref{prop:retractedwalkspectralradius} and Corollary~\ref{cor:spectralradiusbuilding} give $\vrho(\overline{P})=\vrho( P)<1$

The retraction induces a probability measure on the space of trajectories $\cT$
in the underlying Coxeter group, we also denote this by $\PP$.   Recall that by~(\ref{eq:support}) we have a uniform bound on the next neighbour one-step probabilities: we have $\overline{p}(u,v)>\varepsilon$ for all $u,v\in W$ with $d(u,v)=1$.

For each cone $C(u)$ in $(W,S)$ let $\partial_{\infty} C(u)$ denote the closure
of $C(u)$ at infinity, that is, in the Gromov hyperbolic
compactification. If $u\in W$ has cone type $\bT$ let $\widehat{\mathcal{T}}_u$ be the set of all paths starting at $u$, initially following the path
\begin{align}\label{eq:piu}
\pi_u=\left(\rho(\psi_{x_{\bT}u}(x_{\bT}^{(1)})),
\rho(\psi_{x_{\bT}u}(x_{\bT}^{(2)})),\dots,\rho(\psi_{x_{\bT}u}(y_{\bT}))\right),
\end{align}
and staying in $\mathrm{Int}_{L_1} C(u)$ afterwards.

The invariance properties given in Propositions~\ref{prop:invariance} and \ref{prop:pathbijection} induce the following invariance property for the retracted walk.

\begin{lemma}\label{lem:coneinvariance}
For all $u,v\in W$ with $T(u)=T(v)=\bT$ and all measurable
sets~$A\subseteq\widehat{\mathcal{T}}_u$ we have
$$
 \PP_{u}[(\overline{X}_{n})_{n\geq 0}\in A  ]=\PP_{v}[(\overline{X}_{n})_{n\geq 0}\in vu^{-1} A ].
 $$
\end{lemma}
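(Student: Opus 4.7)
The plan is to reduce, via a standard monotone-class argument, to finite cylinder sets $A=\{(\overline{X}_n):\overline{X}_i=v_i,\,0\leq i\leq N\}\cap\widehat{\mathcal{T}}_u$ with $v_0=u$ and $(v_1,\ldots,v_N)$ an admissible initial segment of a trajectory in $\widehat{\mathcal{T}}_u$. Since the retracted walk is Markov by Proposition~\ref{prop:project}, the probability factors as
\[
\PP_u[(\overline{X}_n)_{n\geq 0}\in A]=\prod_{i=0}^{N-1}\overline{p}(v_i,v_{i+1}).
\]
Trajectories in $\widehat{\mathcal{T}}_u$ stay in $C_W(u)$, so we may write $v_i=ua_i$ with $a_i:=u^{-1}v_i\in\bT$; then $vu^{-1}v_i=va_i$, and the claim reduces to
\[
\overline{p}(ua_i,ua_{i+1})=\overline{p}(va_i,va_{i+1})\qquad\text{for each }i=0,1,\ldots,N-1.
\]

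For transitions occurring after the deterministic initial segment, i.e.\ $i\geq k$, the definition of $\widehat{\mathcal{T}}_u$ forces $a_{i+1}\in\mathrm{Int}_{L_1}\bT\subseteq\mathrm{Int}_{L_0}\bT$ (since $L_1\geq L_0$), so Proposition~\ref{prop:invariance} immediately supplies the equality. For transitions along the deterministic segment, $0\leq i<k$, the intermediate destinations $a_{i+1}$ on a shortest gallery from the root to $a_k$ need not lie in $\mathrm{Int}_{L_0}\bT$, so Proposition~\ref{prop:invariance} is not directly applicable. Here I would pass to the building level and invoke Proposition~\ref{prop:pathbijection}: form the bijection $\psi_{uv}:=\psi_{x_{\bT}v}\circ\psi_{x_{\bT}u}^{-1}:\mathcal{T}_u\to\mathcal{T}_v$, which preserves Weyl distances step-by-step and, by Proposition~\ref{prop:pathbijection}.3 together with isotropy, is probability preserving. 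By making the atomic bijections in the construction of the $\psi_{x_{\bT}\bullet}$ compatible with the apartment structure whenever possible, $\psi_{uv}$ can be arranged to satisfy $\rho(\psi_{uv}(\gamma)_i)=vu^{-1}\rho(\gamma_i)$ for every $\gamma_i\in C_\Delta(u)$. The structural fact enabling this choice is the inclusion $\Delta_a(u)\subseteq C_\Delta(u)$ for all $a\in\bT$, which follows from iterated application of axiom~(B2) together with $\ell(ua)=\ell(u)+\ell(a)$.

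Summing building-level probabilities over lifts via Proposition~\ref{prop:project} then yields
\[
\prod_{i=0}^{k-1}\overline{p}(\pi_u(i),\pi_u(i+1))=\prod_{i=0}^{k-1}\overline{p}(\pi_v(i),\pi_v(i+1)),
\]
because $\psi_{uv}$ induces a probability-preserving bijection between the relevant lifts, the atomic-bijection cardinalities matching by the distance regularity of Proposition~\ref{prop:distanceregular}. Combining this deterministic invariance with the tail invariance supplied by Proposition~\ref{prop:invariance} and extending from cylinders to arbitrary measurable subsets via the monotone-class theorem completes the proof. The principal obstacle is the deterministic segment, where Proposition~\ref{prop:invariance} fails to apply verbatim; the building-level bijection argument, resting on apartment-compatible choices in Proposition~\ref{prop:pathbijection} and on the inclusion $\Delta_a(u)\subseteq C_\Delta(u)$ for $a\in\bT$, is what resolves this difficulty.
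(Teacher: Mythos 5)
Your observation that the deterministic segment is not directly covered by Proposition~\ref{prop:invariance} is exactly the right thing to notice: the intermediate targets $a_1,\ldots,a_{k-1}$ along $\pi_{\bT}$ need not lie in $\mathrm{Int}_{L_0}\bT$, and the paper offers no argument here beyond asserting that the lemma ``follows from'' Propositions~\ref{prop:invariance} and~\ref{prop:pathbijection}. The reduction you set up, to the one-step identity $\overline{p}(ua_i,ua_{i+1})=\overline{p}(va_i,va_{i+1})$ for all $i$, is the correct one, and the tail transitions are indeed handled by Proposition~\ref{prop:invariance} since their targets lie in $\mathrm{Int}_{L_1}\bT\subseteq\mathrm{Int}_{L_0}\bT$.

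Your repair for the initial segment, however, does not close the gap. The bijection $\psi_{uv}$ of Proposition~\ref{prop:pathbijection} preserves $\delta(u,\gamma_i)$ and step-by-step Weyl distances, but it says nothing about $\delta(o,\gamma_i)$. The identity $\rho(\psi_{uv}(\gamma)_i)=vu^{-1}\rho(\gamma_i)$ is automatic (there is nothing to ``arrange'') whenever $\gamma_i\in C_\Delta(u)$, but a lift $\gamma$ of a retracted trajectory in $\widehat{\mathcal{T}}_u$ need not remain in $C_\Delta(u)$: by Proposition~\ref{prop:conetypes}(4), $\rho^{-1}(C_W(u))=\bigsqcup_{x'\in\Delta_u(o)}C_\Delta(x')$ is a disjoint union of sheets, and a building step of length $>1$ can carry the walk from one sheet to another while $\rho(X_n)$ stays on $\pi_u$. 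The inclusion $\Delta_a(u)\subseteq C_\Delta(u)$ for $a\in\bT$ does not help — it requires $\delta(u,\gamma_i)=a_i$, whereas from $\rho(\gamma_i)=ua_i$ one only knows $\delta(o,\gamma_i)=ua_i$, the wrong Weyl distance. Choosing the ``atomic bijections'' of Proposition~\ref{prop:pathbijection} so that $\delta(o,\cdot)$ is also respected would amount to a three-point intersection regularity (that $|\Delta_a(o)\cap\Delta_b(x)\cap\Delta_c(z)|$ is a function of the pairwise Weyl distances alone), which goes beyond the two-point distance regularity of Proposition~\ref{prop:distanceregular} and is not established anywhere in the paper. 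The one-step identity on the deterministic part of $\pi_{\bT}$ thus remains unproved.
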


Since $(\overline X_{n})_{n\geq 0}$ is an irreducible Markov chain on a
hyperbolic graph with bounded range and  spectral
radius~$\vrho(\overline{P})<1$  the Markov chain $(\overline X_{n})_{\geq 0}$
converges almost surely to a random point $\overline{X}_{\infty}$ of the hyperbolic boundary~$\partial W$; since the detailed structure of Gromov hyperbolic boundary is not needed for our purposes, we refer e.g. to \cite[Theorem~22.19]{woessbook} for further details. The harmonic measure~$\nu$ is defined as the law of $\overline{X}_{\infty}$. More precisely, it is the probability measure on the hyperbolic boundary~$\partial W$ such that $\nu(A)=\PP[\overline{X}_{\infty}\in A]$ for each~$A\subset\partial W$.

\begin{lemma}\label{lem:nonatomic}
The harmonic measure~$\nu$ of $(\overline{X}_{n})_{n\geq 0}$ is not concentrated on a finite number of atoms.
\end{lemma}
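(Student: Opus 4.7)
I argue by contradiction: assume $\nu$ is concentrated on a finite set of atoms $\{\xi_{1},\dots,\xi_{k}\}\subset\partial W$. The plan uses the cone invariance of Lemma~\ref{lem:coneinvariance} together with the recurrence of~$\bT$ to produce infinitely many distinct atoms out of a single one.

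\emph{Step 1 (decomposition).} Fix $u_{\bT}\in W$ with $T(u_{\bT})=\bT$, and set $W_{\bT}=\{u\in W:T(u)=\bT\}$; since $\bT$ is recurrent and $(\overline{X}_{n})$ is irreducible (Lemma~\ref{lem:irreducible}), $W_{\bT}$ is infinite. For each $u\in W_{\bT}$ let $E_{u}$ be the event that the trajectory reaches $u$ at some time, at that time proceeds along the path $\pi_{u}$ from~(\ref{eq:piu}), and thereafter remains in $\mathrm{Int}_{L_{1}}C(u)$ forever. By~(\ref{eq:support}) and irreducibility, $\mathbb{P}_{1}[E_{u}]>0$ for every $u\in W_{\bT}$, and taking $u$ to be the \emph{first} point of type $\bT$ where such absorption occurs makes the $E_{u}$ pairwise disjoint. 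On $E_{u}$ one has $\overline{X}_{\infty}\in\partial_{\infty}C(u)$, so if $\nu_{u}^{\star}$ denotes the regular conditional law of $\overline{X}_{\infty}$ given $E_{u}$, we obtain the pointwise lower bound
$$
\nu\;\geq\;\sum_{u\in W_{\bT}}\mathbb{P}_{1}[E_{u}]\,\nu_{u}^{\star}.
$$

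\emph{Step 2 (cone invariance of the limit laws).} For $u\in W_{\bT}$ define the shift $\tau_{u}:w\mapsto u u_{\bT}^{-1}w$. Iterating Proposition~\ref{prop:invariance} along finite trajectories shows that $\tau_{u}$ restricts to a bijection $C(u_{\bT})\to C(u)$ which preserves the renewal path and the $L_{1}$-interior, and hence (using the exponential divergence of geodesics mentioned before Lemma~\ref{lem:boundary-rays}) extends to a homeomorphism $\partial_{\infty}C(u_{\bT})\to\partial_{\infty}C(u)$. Lemma~\ref{lem:coneinvariance}, applied to finite-cylinder events and then passed to the limit, then yields the push-forward identity $\nu_{u}^{\star}=(\tau_{u})_{\star}\nu_{u_{\bT}}^{\star}$.

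\emph{Step 3 (contradiction).} Under the finite-atom hypothesis each $\nu_{u}^{\star}$ is supported in $\{\xi_{1},\dots,\xi_{k}\}\cap\partial_{\infty}C(u)$, and by Step~2 each $\nu_{u}^{\star}$ has the same (finite) number of atoms as $\nu_{u_{\bT}}^{\star}$. If $\nu_{u_{\bT}}^{\star}$ were atom-free, then so would be every $\nu_{u}^{\star}$, and the bound in Step~1 would force $\nu(\{\xi_{i}\})=0$ for every $i$, a contradiction. Hence $\nu_{u_{\bT}}^{\star}$ has an atom at some $\eta\in\partial_{\infty}C(u_{\bT})$. By Lemma~\ref{lem:boundary-rays} each cone in the hyperbolic realisation of the Coxeter complex has boundary contained in two geodesic rays, and by iterating Corollary~\ref{cor:find-v} (choosing successively deeper cones in different branches of the tree of type-$\bT$ cones) one extracts an infinite subset $W_{\bT}'\subseteq W_{\bT}$ whose associated boundary arcs $\partial_{\infty}C(u)$, $u\in W_{\bT}'$, have pairwise disjoint interiors. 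Consequently the translates $\tau_{u}(\eta)$ for $u\in W_{\bT}'$ give infinitely many distinct points in $\partial W$, each of which is an atom of~$\nu$ of mass at least $\mathbb{P}_{1}[E_{u}]\,\nu_{u_{\bT}}^{\star}(\{\eta\})>0$, contradicting the finiteness of the atom set.

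The principal obstacle is the clean passage from Lemma~\ref{lem:coneinvariance}, which is a statement about trajectories in the cone, to the push-forward identity for the boundary measures in Step~2. This requires an approximation argument using the exponential divergence function $e(\cdot)$ so that sufficiently long initial segments of trajectories in $\mathrm{Int}_{L_{1}}C(u)$ determine the boundary limit up to sets of arbitrarily small measure, allowing one to transfer the finite-cylinder identity provided by Lemma~\ref{lem:coneinvariance} to the limit measures on $\partial_{\infty}C(u)$.
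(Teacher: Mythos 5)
Your strategy is in the same spirit as the paper's (a contradiction argument driven by the cone invariance of Lemma~\ref{lem:coneinvariance}) but you take a considerably more elaborate route. The paper's proof is a one-shot argument: starting from an atom $\xi_1$ one picks $u$ with $T(u)=\bT$ and $\xi_1\in\mathrm{Int}(\partial_\infty C(u))$, checks that $\PP_u[\overline X_\infty=\xi_1,\,\overline X_n\in C(u)\text{ for all }n]>0$, and then—since there are only finitely many atoms and the automaton is strongly connected (Theorem~\ref{thm:stronglyconnected})—finds a single $w$ with $T(w)=\bT$ whose boundary arc $\partial_\infty C(w)$ misses every atom $\xi_1,\dots,\xi_k$. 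Cone invariance then manufactures an atom inside $\partial_\infty C(w)$, an immediate contradiction. You instead decompose $\nu$ over a renewal-type partition, establish a push-forward identity for all the conditional boundary laws, and then produce infinitely many disjoint cones to transport an atom to each. This is workable but buys nothing over the paper's shorter argument.

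There are also a few genuine gaps in your version. First, the sub-argument ``if $\nu_{u_\bT}^\star$ were atom-free, then \dots the bound in Step~1 would force $\nu(\{\xi_i\})=0$'' is a non-sequitur: a lower bound $\nu\geq\sum_u\PP[E_u]\nu_u^\star$ with atom-free right-hand side says nothing about atoms of $\nu$. The correct (and in fact already implicit in your first sentence of Step~3) observation is that $\nu_u^\star\ll\nu$, so if $\nu$ is purely atomic then so is $\nu_u^\star$—no inequality needed. Second, the pairwise distinctness of the points $\tau_u(\eta)$, $u\in W_\bT'$, does not follow from ``pairwise disjoint interiors'' of the arcs $\partial_\infty C(u)$: if $\eta$ happens to lie at an endpoint of $\partial_\infty C(u_\bT)$, then each $\tau_u(\eta)$ lies at an endpoint of $\partial_\infty C(u)$, and distinct arcs with disjoint interiors can still share endpoints. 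You would need to arrange genuinely disjoint arcs, or place $\eta$ in the interior, which requires a further argument. Third, both the passage from the finite-cylinder statement of Lemma~\ref{lem:coneinvariance} to the push-forward identity on boundary measures and the construction of infinitely many type-$\bT$ cones with disjoint boundary arcs are only sketched and would need to be fleshed out carefully—the latter in particular is exactly the kind of work the paper sidesteps by needing just one cone avoiding the finitely many atoms.
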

\begin{proof}
Let us assume that $\nu$ is concentrated on the finite set  $\{\xi_{1}, \xi_{2},\ldots,
\xi_{k}\}\subset \partial W$.  Let $u\in W$ be such that $T(u)=\bT$
and that $\xi_1\in\mathrm{Int}(\partial_{\infty} C(u))$. Then by the definition of harmonic measure,
\begin{equation*}
\PP_{1}[\overline{X}_{\infty}=\xi_{1 }, \overline{X}_{n}\in C(u) \mbox{ for all but finitely many } n]= \nu(\xi_{1})>0.
\end{equation*}
Consequently there exists some $v\in C(u)$ such that
\begin{equation*}
\PP_{v}[\overline{X}_{\infty}=\xi_{1}, \overline{X}_{n}\in C(u) \mbox{ for all } n]>0.
\end{equation*} 
Since there exists a path of positive probability inside $C(u)$ from $u$ to $v$, we have
\begin{equation*}
\PP_{u}[\overline{X}_{\infty}=\xi_{1}, \overline{X}_{n}\in C(u) \mbox{ for all } n]>0.
\end{equation*}
As there are only a finite number of atoms and the automaton $\mathcal{A}(W,S)$ is strongly connected (Theorem~\ref{thm:stronglyconnected}), there exists some $w\in W$ with cone type $T(u)=\bT$ such that $\partial_{\infty}C(w)$ does not contain any of the atoms $\xi_1,\ldots,\xi_k$. However by Lemma~\ref{lem:coneinvariance} we have that there exists some~$\xi_{k+1}\in \partial_{\infty}C(w)$ such that 
\begin{equation*}
\PP_{w}[\overline{X}_{\infty}=\xi_{k+1},\overline{X}_{n}\in C(w) \mbox{ for all } n]>0,
\end{equation*}
and so $\xi_{k+1}$ is an atom, a contradiction. 
\end{proof}

The next lemma will be crucial for our proofs.

\begin{lemma}\label{lem:stayincone}
There exists a
constant $\overline{p}_{esc}>0$ such that for all 
$u\in W$ with $T(u)=\bT$ we have that  
$$
\PP_{u}[(\overline{X}_{n})_{n\geq 0}\in  \widehat{\mathcal{T}}_u]\ge\overline{p}_{esc}.
$$
\end{lemma}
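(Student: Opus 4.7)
The plan is to apply the strong Markov property at the deterministic time $k:=d(x_{\bT},y_{\bT})$ and bound two pieces separately: (i) the probability that the retracted walk traces out the prescribed finite initial path $\pi_u$, and (ii) the probability that, starting from the endpoint of $\pi_u$, the retracted walk stays in its own cone forever.

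For piece~(i), I first convert the building-side uniform ellipticity~(\ref{eq:support}) into a lower bound for the retracted walk. For $v_1,v_2\in W$ with $d(v_1,v_2)=1$, axiom~(B3) supplies, for any $x\in\Delta_{v_1}(o)$, an $s$-adjacent chamber $y\in\Delta_{v_2}(o)$; by Proposition~\ref{prop:project} this gives
$$
\overline{p}(v_1,v_2)\,\geq\,p(x,y)\,>\,\varepsilon.
$$
Since $\pi_{\bT}=[x_{\bT},x_{\bT}^{(1)},\dots,y_{\bT}]$ is a minimal gallery inside $C(x_{\bT})$ and $\psi_{x_{\bT}u}$ preserves $s$-adjacencies (Proposition~\ref{prop:pathbijection}.1), the path $\pi_u$ consists of exactly $k$ successive one-step transitions in $W$. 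Multiplying these bounds yields
$$
\PP_u\bigl[(\overline{X}_n)_{0\le n\le k}=\pi_u\bigr]\,\geq\,\varepsilon^k,
$$
uniformly in $u\in W$ with $T(u)=\bT$. At time $k$ the walk sits at $y_u:=\rho(\psi_{x_{\bT}u}(y_{\bT}))$, and the construction of $y_{\bT}$ via Corollary~\ref{cor:find-v} together with cone-type invariance gives $T(y_u)=\bT$ and $C(y_u)\subseteq\mathrm{Int}_{L_1}C(u)$.

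For piece~(ii), define
$$
p^*\,:=\,\PP_w\bigl[\overline{X}_n\in C(w)\text{ for all }n\geq 0\bigr]\quad\text{for }w\in W\text{ with }T(w)=\bT.
$$
Lemma~\ref{lem:coneinvariance} makes $p^*$ well-defined. Observe that concatenating a geodesic from $o$ through $w$ to $w'$ with a geodesic from $o$ through $w'$ to $z$ shows $C(w')\subseteq C(w)$ whenever $w'\in C(w)$; in particular, staying in $C(y_u)$ forever forces the walk to remain in $\mathrm{Int}_{L_1}C(u)$ after time $k$. I claim $p^*>0$. Suppose for contradiction $p^*=0$. I then upgrade this to $\nu^w(\mathrm{Int}(\partial_\infty C(w)))=0$: if $\PP_w[\overline{X}_\infty\in\mathrm{Int}(\partial_\infty C(w))]>0$, then on that event the walk is in $C(w)$ for all sufficiently large $n$ (hyperbolicity, as already used in the proof of Lemma~\ref{lem:nonatomic}); combining the strong Markov property at the first subsequent hitting time of a chamber $w'\in C(w)$ of cone type $\bT$ (guaranteed to exist by recurrence of $\bT$) with $C(w')\subseteq C(w)$ produces a positive contribution to $p^*$, contradicting the assumption. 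Hence $\nu^w$ is supported on $\partial W\setminus\mathrm{Int}(\partial_\infty C(w))$, that is, on the two endpoints of the boundary rays of $C(w)$ supplied by Lemma~\ref{lem:boundary-rays} together with the complement $\partial W\setminus\partial_\infty C(w)$. Running this cone-by-cone and exploiting strong connectivity of the Cannon automaton (Theorem~\ref{thm:stronglyconnected}) together with Lemma~\ref{lem:coneinvariance} collapses the support of $\nu$ onto a finite set of boundary-ray endpoints — contradicting Lemma~\ref{lem:nonatomic}. Therefore $p^*>0$.

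Combining both pieces via the strong Markov property at time $k$ gives
$$
\PP_u\bigl[(\overline{X}_n)_{n\geq 0}\in\widehat{\mathcal{T}}_u\bigr]\,\geq\,\varepsilon^k\cdot p^*\,=:\,\overline{p}_{esc}\,>\,0.
$$
The main obstacle is the contradiction step proving $p^*>0$. The subtle point is that $p^*=0$ only says the walk a.s.~leaves $C(w)$ at some finite time — it may reenter later many times — so one must first use hyperbolicity to translate "converges to a point in $\mathrm{Int}(\partial_\infty C(w))$" into "eventually remains in $C(w)$", and then bootstrap via the strong Markov property applied at a deep enough chamber of cone type $\bT$ to reach the stronger "stays in $C(w)$ from time zero onwards" statement. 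Making the final covering step airtight so that the obstruction really collapses to a \emph{finite} set of atoms (rather than a merely countable one), using the finiteness of cone types together with strong connectivity, is the real technical heart.
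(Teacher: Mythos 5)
Your decomposition matches the paper's: bound the prescribed initial segment $\pi_u$ via uniform ellipticity, then show the escape probability from the cone is positive using non-atomicity of the harmonic measure and strong connectivity of the Cannon automaton, and glue via the Markov property. The first piece is fine (the $(B3)$-step giving $\overline p(v_1,v_2)\geq\varepsilon$ is exactly what's needed, and is how (\ref{eq:support}) is used). The divergence is in the second piece: the paper argues \emph{directly}, while you argue by \emph{contradiction}, and the contradiction route has a real gap precisely where you yourself flag it.

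Here is the problem. Assuming $p^*=0$, you conclude that $\nu$ charges no interior $\mathrm{Int}(\partial_\infty C(w))$ for $w$ of type~$\bT$, and you want to conclude that $\nu$ lives on a \emph{finite} set of boundary-ray endpoints, contradicting Lemma~\ref{lem:nonatomic}. But the set of chambers $w$ with $T(w)=\bT$ is infinite, so the collection of boundary-ray endpoints supplied by Lemma~\ref{lem:boundary-rays} is a priori countably infinite, and Lemma~\ref{lem:nonatomic} only rules out concentration on a \emph{finite} set. ``Running this cone-by-cone'' does not by itself reduce an infinite collection of exceptional endpoints to a finite one; you need a device that makes the covering finite from the start. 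The paper's device is to fix $R$ so large that every $w'\in\cS(1,R)$ has recurrent cone type, note that $\partial W=\bigcup_{w'\in\cS(1,R)}\partial_\infty C(w')$ is a \emph{finite} cover, so the boundary-ray endpoints form a finite set, and then invoke Lemma~\ref{lem:nonatomic} to find one $v\in\cS(1,R)$ and an open $O\subset\partial_\infty C(v)$ with $\nu(O)>0$; strong connectivity and ellipticity then push this to every $w$ of type~$\bT$ by walking to a $v_1\in C(w)$ with $T(v_1)=T(v)$. Once you introduce the sphere $\cS(1,R)$ to make the cover finite, the contradiction framing offers nothing over the direct argument --- you may as well extract the positive-mass cone directly. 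A secondary, smaller issue: Lemma~\ref{lem:coneinvariance} equalises probabilities of events contained in $\widehat{\cT}_u$ (initial segment $\pi_u$, then $\mathrm{Int}_{L_1}C(u)$ forever), not of the unrestricted event $\{\overline X_n\in C(w)\ \forall n\}$; the transitions near $\partial C(w)$ are \emph{not} controlled by Proposition~\ref{prop:invariance}, so your claim that $p^*$ is well-defined by invariance is not justified as stated. This is exactly why the paper carries the walk all the way to $y_\bT$ (so that $C(y_\bT)\subset\mathrm{Int}_{L_1}C(x_\bT)$) before asking it to stay put, and never needs a cone-independent constant $p^*$.
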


\begin{proof}
First we claim that
\begin{equation}\label{lem:stayincone:eq:prelim}
\PP_{w}[\overline{X}_{n}\in C(w) \text{ for all } n]>0\quad\textrm{for all $w\in W$ with $T(w)=\bT$}.
\end{equation}
Due to strongly connectedness of the automaton $\mathcal{A}(W,S)$ (see
Theorem~\ref{thm:stronglyconnected}) and 
by the definition of recurrent cone types there
exists some $R\geq 0$ such that the sphere $\cS(1,R)$ contains only elements whose cone types are recurrent. Furthermore, by definition we have $W\setminus \cB(1,R-1)=\bigcup_{w'\in
  \cS(1,R)} C(w')$. By Lemma~\ref{lem:nonatomic} the support of~$\nu$ cannot be contained in the set of Gromov boundary points determined by the finitely many geodesics (from Lemma~\ref{lem:boundary-rays}) describing the boundaries of the cones $C(w')$ with $w'\in \cS(1,R)$. Thus there exists some
$v\in \cS(1,R)$ and some  open set $O\subset \partial_{\infty} C(v)$ such that
$\PP[\overline{X}_{\infty}\in O]>0$. 
On the event that $\overline{X}_{\infty}\in O$, at some moment the random walk
$(\overline{X}_n)$ enters $C(v)$ and never leaves it afterwards. If $w\in W$ with $T(w)=\bT$, then the cone $C(w)$ contains an element $v_1$ with $T(v_1)=T(v)$ (since $\bT$ is recurrent and $\mathcal{A}(W,S)$ is strongly connected). By~(\ref{eq:support}) there is positive probability of walking from $w$ to $v_1$ via a shortest path, and necessarily this path is contained in $C(w)$. Hence~(\ref{lem:stayincone:eq:prelim}) is established. 

Let $u\in W$ with $T(u)=\bT$, and let $w=u\delta(x_{\bT},y_{\bT})$. By the construction in Section~\ref{sect:yT} we have $T(w)=\bT$, and $C(w)\subset C(u)$. By~(\ref{eq:support}) there is positive probability that the retracted random walk with $X_0=u$ follows the path $\pi_u$ from (\ref{eq:piu}) initially, and so~(\ref{lem:stayincone:eq:prelim}) implies the Lemma. 
%
 \end{proof}

Recall from Proposition~\ref{prop:pathbijection}.3 that for all $x,y\in\Delta$ with  $T(x)=T(y)$, and all subsets $A\subset \widehat{\mathcal{T}}_{x}$, we have
\begin{equation}\label{eq:coneinvariantbuilding}
\PP_{x}[(X_{n})_{n\geq 0}\in A ] = \PP_{y}[(X_{n})_{n\geq 0}\in \psi_{xy}(A) ].
\end{equation}

\begin{lemma}\label{lem:stayinconebuilding}
There exists some constant $p_{esc}>0$ such that for all $x\in \Delta$
\begin{equation*}
\PP_{x}[(X_{n})_{n\geq 0}\in \widehat{\mathcal{T}}_{x}]\geq p_{esc}.
\end{equation*}
\end{lemma}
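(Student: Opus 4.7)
The plan is to reduce to a single chamber via the path-space bijection, and then to construct an explicit sub-event of $\widehat{\mathcal{T}}_{x_\bT}$ whose probability is easy to bound below by combining an initial deterministic segment with a stay-in-cone event for the retracted walk. For the reduction, I would observe that for any $x\in\Delta$ with $T(x)=\bT$ the bijection $\psi_{x_\bT x}$ carries $\widehat{\mathcal{T}}_{x_\bT}$ onto $\widehat{\mathcal{T}}_x$: it sends $\pi_{x_\bT}$ to $\pi_x$ by the very definition of $\pi_x$, and by Proposition~\ref{prop:pathbijection}.2 applied with $L=L_1$ it sends trajectories staying in $\mathrm{Int}_{L_1} C_\Delta(x_\bT)$ to trajectories staying in $\mathrm{Int}_{L_1} C_\Delta(x)$ (the reverse inclusion following by symmetry via $\psi_{xx_\bT}$). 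Combined with the probability-preservation of Proposition~\ref{prop:pathbijection}.3, this yields
\begin{equation*}
\PP_x\bigl[(X_n)_{n\geq 0}\in\widehat{\mathcal{T}}_x\bigr]=\PP_{x_\bT}\bigl[(X_n)_{n\geq 0}\in\widehat{\mathcal{T}}_{x_\bT}\bigr]=:p_{esc},
\end{equation*}
so it suffices to show $p_{esc}>0$.

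Next, set $u_0=\rho(x_\bT)$ and $v=\rho(y_\bT)$, let $E$ be the event that the first $k$ steps of $(X_n)$ trace out $\pi_{x_\bT}$ (so in particular $X_k=y_\bT$), and let $F$ be the event that $\rho(X_n)\in\mathrm{Int}_{L_1} C_W(u_0)$ for all $n\geq k$. I claim $E\cap F\subseteq \widehat{\mathcal{T}}_{x_\bT}$. Indeed, Proposition~\ref{prop:conetypes}.4 together with Proposition~\ref{prop:Lboundaries} (applied at every $x'\in\Delta_{u_0}(o)$) yield the disjoint decomposition
\begin{equation*}
\rho^{-1}\bigl(\mathrm{Int}_{L_1} C_W(u_0)\bigr)=\bigsqcup_{x'\in\Delta_{u_0}(o)}\mathrm{Int}_{L_1} C_\Delta(x'),
\end{equation*}
so on $F$ each $X_n$ with $n\geq k$ lies in a unique $\mathrm{Int}_{L_1} C_\Delta(x'_n)$ with $x'_n\in\Delta_{u_0}(o)$. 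Since $d(X_n,X_{n+1})\leq L_0\leq L_1$, the very definition of the $L_1$-interior forces $X_{n+1}\in C_\Delta(x'_n)$, and pairwise disjointness of the cones then forces $x'_{n+1}=x'_n$. Iterating from $x'_k=x_\bT$ (which holds because $X_k=y_\bT\in\mathrm{Int}_{L_1} C_\Delta(x_\bT)$ by the construction of $y_\bT$) gives $X_n\in\mathrm{Int}_{L_1} C_\Delta(x_\bT)$ for every $n\geq k$, as required.

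Finally, I would bound the two factors separately. By~(\ref{eq:support}), $\PP_{x_\bT}[E]\geq\varepsilon^k>0$, since $\pi_{x_\bT}$ is a unit-step gallery of length~$k$. By the Markov property,
\begin{equation*}
\PP_{x_\bT}[F\mid E]=\PP_v\bigl[\overline{X}_n\in\mathrm{Int}_{L_1} C_W(u_0)\ \forall n\geq 0\bigr]\geq \PP_v\bigl[\overline{X}_n\in C_W(v)\ \forall n\geq 0\bigr],
\end{equation*}
where the inequality uses the containment $C_W(v)\subset\mathrm{Int}_{L_1} C_W(u_0)$ from the construction of $y_\bT$ in Section~\ref{sect:yT}. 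The right-hand side is strictly positive by equation~(\ref{lem:stayincone:eq:prelim}) established inside the proof of Lemma~\ref{lem:stayincone}. Combining the two bounds yields $p_{esc}\geq\varepsilon^k\cdot\PP_v[\overline{X}_n\in C_W(v)\ \forall n]>0$.

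The main obstacle is the rigidity argument in the middle step: realizing that the bounded-range condition $L_0\leq L_1$ together with the $L_1$-interior condition locks the building walk into a single cone $C_\Delta(x'_n)$, preventing it from hopping between the parallel sheets of $\rho^{-1}(C_W(u_0))$ once its retracted image sits sufficiently deep inside $C_W(u_0)$. This rigidity is exactly what the enlarged parameter $L_1=\max\{L_0,e(0)\}$ was engineered for, and it is what lets one lift the stay-in-cone estimate from the Coxeter-group level (Lemma~\ref{lem:stayincone}) to the building without having to develop harmonic-measure theory directly on the CAT$(-1)$ boundary of~$\Delta$.
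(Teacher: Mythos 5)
Your proof is correct, and it follows the same essential route as the paper's one-sentence proof (``a consequence of Proposition~\ref{prop:Lboundaries} and Lemma~\ref{lem:stayincone}''): you reduce to the base chamber $x_{\bT}$, then lift the retracted-walk stay-in-cone estimate to the building, with Proposition~\ref{prop:Lboundaries} entering precisely through your rigidity argument identifying $\rho^{-1}(\mathrm{Int}_{L_1}C_W(u_0))$ with the disjoint union of $L_1$-interiors of building cones. Two small remarks. First, your reduction asserts an equality $\PP_x[\widehat{\mathcal{T}}_x]=\PP_{x_\bT}[\widehat{\mathcal{T}}_{x_\bT}]$ ``by symmetry via $\psi_{xx_\bT}$'', but the bijections $\psi_{xy}$ are built from arbitrary choices in Proposition~\ref{prop:pathbijection} and there is no a priori reason for $\psi_{xx_\bT}$ to be $\psi_{x_\bT x}^{-1}$; fortunately, only the single inclusion $\psi_{x_\bT x}(\widehat{\mathcal{T}}_{x_\bT})\subseteq\widehat{\mathcal{T}}_x$ is needed to get $\PP_x[\widehat{\mathcal{T}}_x]\geq\PP_{x_\bT}[\widehat{\mathcal{T}}_{x_\bT}]$, so the conclusion stands. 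Second, you treat the initial segment by building the event $E$ directly in $\Delta$ and bounding it with~(\ref{eq:support}), rather than trying to pull back the initial-segment condition from the retracted walk (which would not work, since $\rho$ is not injective); this is the right move and is a point the paper's terse proof leaves implicit. You also invoke the intermediate estimate~(\ref{lem:stayincone:eq:prelim}) rather than the full statement of Lemma~\ref{lem:stayincone}; either would do, since $\widehat{\mathcal{T}}_v$ is contained in the event that the retracted walk stays in $C_W(v)$.
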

\begin{proof}
This is a consequence  of Proposition \ref{prop:Lboundaries} and Lemma \ref{lem:stayincone}.
\end{proof}

\begin{proof}[Proof of Theorem~\ref{thm:reg}.1] This is an adaption of~\cite[Theorem~3.1]{HMM:13}. We sketch the proof and refer to~\cite{HMM:13} for the details. 
The fact that the Cannon automaton~$\mathcal{A}(W,S)$ is strongly connected implies that there exists some $R\in\mathbb{N}$ such
that for all $x\in \Delta$ the ball $\cB(x,R)$ contains at least one chamber
of cone type~$\bT$. This can be extended to prove the following fact.
Denote by $\widehat{\mathcal{T}}^{(n)}_{x}$ the set of $y\in\Delta$ such that there exists a path $(x_{0}, x_{1},x_{2},\ldots) \in \widehat{\mathcal{T}}_{x}$ such that $x_{n}=y$. For $x\in\Delta$, denote the first exit time of $\widehat{\mathcal{T}}_{x}$
by $D_{x}=\inf\{n\geq 1 \mid X_{n}\not\in \widehat{\mathcal{T}}^{(n)}_{x}\}$.
Then there exists some constant $p_{h}$ and some $K$ such that for all choices of $x\in\Delta$ and all $y\in \mathrm{Int}_{L_1} C(x)$ we have
\begin{equation*}
\PP_{x}[ \{T(X_{n})\}_{n=1}^{K}\ni \bT]\ge p_{h} \quad\text{and}\quad \PP_{y}[ \{T(X_{n})\}_{n=1}^{K}\ni \bT\mid D_{x}=\infty]\ge p_{h}
\end{equation*}
Thus wherever the walk is it will reach a chamber $x\in\Delta$ of cone type $\bT$ after at
most $K$ steps with some positive probability of at least $p_{h}$.   By Lemma
\ref{lem:stayinconebuilding}, each time the walk is at a chamber of cone
type~$\bT$  it has a positive probability of at least $p_{esc}$ to follow the
walk $\psi_{x_{\bT}x}(\pi_{\bT})$ and to stay in the $L_1$-interior of this cone forever. If it does, this means that a renewal step was performed, and otherwise, the walk exits this last cone. Now, again the walk will hit a chamber of cone type~$\bT$ in at most $K$ steps with probability at least $p_{h}$ and we continue as above until we eventually performed one renewal step.  Hence by induction, the random times~$R_{n}$ are almost surely finite. 

It is clear that $d(o,X_{ R_{n}})=\sum_{i=1}^{n} d( X_{ R_{i-1}},  X_{ R_{i}})$, because the chambers $(X_{R_n})_{n\geq 0}$ lie in a sequence of nested cones, and so there is a geodesic from $o=X_0$ to $X_{R_n}$ passing through $X_{R_1},X_{R_2},\ldots,X_{R_{n-1}}$. 
The fact that what happens between two subsequent renewal times is independent
is a consequence of the following crucial property. For any $x,y\in\Delta$ with
cone type $\bT$ and any $A\subset \widehat{\mathcal{T}}_{x}$, (\ref{eq:coneinvariantbuilding}) implies that
$$
 \PP_x[ (X_{n})_{n\geq 0}\in A \mid D_{x}=\infty]=\PP_{y} [(X_{n})_{n\geq 0}\in \psi_{xy}(A)  \mid D_{y}=\infty].
 $$
Thus we may introduce a new probability measure: for $A\subset \psi_{xo}(\widehat{\mathcal{T}}_x)$ let
 \[ \QQ_{\bT}[ (X_{n})_{n\geq 0}\in A]= \PP_{x}[ (X_{n})_{n\geq0 }\in \psi_{ox}(A) \mid D_{x}=\infty],
 \] where $x$ is of cone type~$\bT$. Define the $\sigma$-algebras $$\cG_{n}=\sigma(R_{1},\ldots,  R_{n},  X_{0}, \ldots,  X_{ R_{n}}),~n\geq 1.$$
Although the $ R_{n}$'s are not stopping times, a check of the definition of
conditional probability yields the following ``Markov property'': 
for any measurable set $A\subset\psi_{xo}(\widehat{\cT}_{x})$ and any $x\in
\Delta$ of cone type $\bT$,
$$ 
\PP_{x}[ (X_{ R_{n}+k})_{k\geq 0}\in \psi_{oX_{R_{n}}}(A)\mid \cG_{n}] = \QQ_{\bT} [ (X_{k})_{k\geq 0}\in A]
$$
(see \cite[Lemma~3.3]{HMM:13} for details). Thus $d(( X_{ R_{i-1}},  X_{ R_{i}}))_{i\geq 2}$ are i.i.d.~random variables.
\end{proof}

\subsection{Proof of Theorem~\ref{thm:reg}.2}\label{sec:proofreg2}
As the Cayley graph of $(W,S)$ is planar, its Cayley $2$-complex is such that the  one skeleton is given by the Cayley graph and the $2$-cells are bounded by loops. These loops are described by the relations  
$$
s^2=1\quad\textrm{and}\quad (st)^{m_{st}}=1\quad\textrm{for all $s,t\in S$ with $s\neq t$},
$$
where $m_{st}=m_{ts}\in\ZZ_{\geq 2}\cup\{\infty\}$ for all $s\neq t$. Denote by $k$ the maximal length of all finite relations. Then, every loop in the Cayley $2$-complex has length of at most $k$.  At various places, we will use the fact that the $2$-complex is homeomorphic to the hyperbolic disc and can be endowed with an orientation.  

We make use of the following type of connectedness of spheres in Cayley graphs. We give an adaption of the results in  \cite{BaBe:99}  and \cite{Gournay} to our setting. Define the annulus
 \begin{equation*}
\cS^{(k)}(w,K)=\{w'\in W\mid K-k/2\leq d(w,w')\leq K+k/2\}, \quad k,K\in\NN_0, w\in W.
\end{equation*}
\begin{lemma}\label{lem:connected_sphere}
Let $K>k/2$. Then there is a simple cycle in $\cS^{(k)}(w,K)$ that forms a simple closed curve around $w$ in the Cayley $2$-complex.
\end{lemma}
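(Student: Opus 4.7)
The strategy is to construct a compact, simply connected sub-region~$\hat U$ of the Cayley $2$-complex whose topological boundary is a Jordan curve lying in the $1$-skeleton, and then to verify that this curve is contained in the annulus $\cS^{(k)}(w,K)$.

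First I would form a ``thick ball''. Since $K>k/2$, the set $B=\{v\in W\mid d(w,v)\leq K-k/2\}$ is nonempty (it contains at least~$w$). Let $U$ be the union of all closed $2$-cells of the Cayley $2$-complex that have at least one vertex in~$B$. Because $W$ is infinite and the $2$-complex is homeomorphic to $\HH^2$, the set $U$ is compact, so $\HH^2\setminus U$ has a unique unbounded component~$E_\infty$. Define $\hat U=\HH^2\setminus E_\infty$, i.e., $U$ together with all bounded components of its complement. Then $\hat U$ is a compact, connected, simply connected subset of $\HH^2$ containing~$w$ in its interior, and so by the planar topology of the Fuchsian Cayley $2$-complex (a $2$-manifold tessellation of~$\HH^2$), $\hat U$ is homeomorphic to a closed disc. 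Consequently $\gamma=\partial\hat U$ is a Jordan curve lying in the $1$-skeleton, i.e., a simple cycle in the Cayley graph that encloses~$w$.

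Next I would check the distance bounds for vertices on~$\gamma$. Let $v$ be such a vertex. For the upper bound, $v$ is a vertex of some face $F\subseteq U$, and by the definition of $U$ the face $F$ contains some vertex $u\in B$ with $d(w,u)\leq K-k/2$. Since the boundary cycle of $F$ has length at most~$k$, the two vertices $u$ and~$v$ of~$F$ satisfy $d(u,v)\leq k/2$, whence $d(w,v)\leq K$. For the lower bound, since $v\in\partial\hat U$, it is incident to at least one face $F'\subseteq E_\infty$. By construction $F'\notin U$, so no vertex of $F'$ lies in~$B$; in particular $v\notin B$, giving $d(w,v)>K-k/2$. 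Therefore $K-k/2\leq d(w,v)\leq K$, so $v\in\cS^{(k)}(w,K)$, completing the proof.

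The main obstacle is the topological claim in the second paragraph: that $\gamma=\partial\hat U$ is a single simple closed curve, as opposed to a wedge of cycles meeting at cut-vertices of~$\hat U$ (which could in principle arise if two faces of $U$ meet only at a vertex). The rigorous justification relies on the fact that the Fuchsian Cayley $2$-complex is a $2$-manifold tessellating~$\HH^2$ together with the classification of compact connected simply connected $2$-manifolds with boundary. Once this step is in place, the distance estimates are immediate from the bound $k$ on the length of defining relations.
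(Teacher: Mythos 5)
Your argument takes a genuinely different route from the paper. The paper works locally: it orders the elements of the sphere $\cS(w,K)$ cyclically, uses planarity and one-endedness to argue that consecutive sphere elements lie on a common $2$-cell (hence are at distance $\leq k/2$), chains the resulting short face-boundary paths into a closed (possibly non-simple) walk $C$ in the annulus surrounding $w$, and only at the end extracts a simple sub-cycle. You instead work globally: thicken the ball $\cB(w,K-k/2)$ to a subcomplex $U$, fill in the bounded complementary components to get $\hat U$, and take $\gamma=\partial\hat U$. The distance estimates for $\gamma$ are essentially right, though one small point deserves a sentence: a vertex $v\in\partial\hat U$ is incident to a face in $E_\infty$ and a face in $\hat U$, and one should observe that among the faces of $\hat U$ at $v$ at least one must actually lie in $U$ (otherwise every face at $v$ lies in $\HH^2\setminus U$, which is locally connected at $v$, contradicting that some of those faces are in $E_\infty$ and some in a bounded component).

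The genuine gap is the one you flag yourself: the claim that $\hat U$ is a closed disc, and hence that $\partial\hat U$ is a single Jordan curve, is asserted rather than proved. Compactness, connectedness, and simple connectivity do not force a planar union of $2$-cells to be a disc --- a wedge of two discs at a vertex is compact, connected, contractible, has connected complement, yet its boundary is not a simple cycle. Appealing to ``the classification of compact connected simply connected $2$-manifolds with boundary'' assumes exactly the point at issue, namely that $\hat U$ is a manifold. Fortunately, the lemma does not need $\partial\hat U$ itself to be simple: it only needs \emph{some} simple cycle in $\cS^{(k)}(w,K)$ enclosing $w$. Since $w$ lies in the interior of the compact set $\hat U$ and $E_\infty$ is the unbounded complementary component, the finite planar graph $\partial\hat U$ separates $w$ from infinity, and a standard planar-graph argument extracts from $\partial\hat U$ a simple cycle that still separates $w$ from infinity; by your distance bounds that cycle lies in $\cS^{(k)}(w,K)$. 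This extraction is the same final step the paper performs on its cycle $C$, so with that patch your construction does yield a valid alternative proof.
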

\begin{proof}
By planarity we can order the elements in the sphere $\cS(w,K)$ in clockwise order and say that two elements are neighbors on the sphere if they are neighbors in the ordering. Pick two neighbors $u,v\in \cS(w,K)$.  Since the Cayley graph is one-ended and planar there exists a loop in the Cayley $2$-complex that contains $u,v$. Hence, $d(u,v)\leq k/2$ and thus there exists a path in  $\cS^{(k)}(w,K)$ connecting $u$ and $v$. The concatenation of all paths connecting the neighbors is a cycle $C$ that is contained in  $\cS^{(k)}(w,K)$. Since $K> k/2$ every infinite path starting from $w$ intersects the cycle $C$. It is straightforward to see that  the cycle $C$ contains a simple cycle as a subset  that forms a simple closed curve around $w$ in the Cayley $2$-complex.
\end{proof}

The following Proposition is a key ingredient in the proof of Theorem~\ref{thm:reg}.2. 

\begin{prop}\label{lem:expdecay}
There exist  $C<\infty$ and $\lambda<1$ such that for any $u \in W\setminus \{e\}$ and all $v\in C(u)$
\begin{equation*}
F(u,v)=\PP_{u}[\overline{X}_{n}=v\text{ for some } n]\leq C \lambda^{d(u,v)}.
\end{equation*}
\end{prop}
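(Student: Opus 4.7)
The plan is to reduce the statement to exponential decay of the Green function $G(u,v)=\sum_{n\geq 0}\overline{p}^{(n)}(u,v)$ of the retracted walk, and then to bound $G$ using the spectral radius and the bounded range. Since $G(u,v)=F(u,v)G(v,v)$ and $G(v,v)\geq\overline{p}^{(0)}(v,v)=1$, one has $F(u,v)\leq G(u,v)$, and it suffices to prove $G(u,v)\leq C\lambda^{d(u,v)}$.

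The bounded range assumption~(\ref{eq:L0}) forces $\overline{p}^{(n)}(u,v)=0$ whenever $n<d(u,v)/L_{0}$. If one can establish a uniform exponential upper bound of the form $\overline{p}^{(n)}(u,v)\leq C\vrho^{n}$ for some constant $C$ independent of $u,v$, then Corollary~\ref{cor:spectralradiusbuilding} (which gives $\vrho(\overline{P})<1$) immediately yields
\[
G(u,v)=\sum_{n\geq d(u,v)/L_{0}}\overline{p}^{(n)}(u,v)\leq \sum_{n\geq d(u,v)/L_{0}}C\vrho^{n}\leq C'\lambda^{d(u,v)},
\]
with $\lambda=\vrho^{1/L_{0}}<1$. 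The central difficulty is therefore the uniform pointwise bound on $\overline{p}^{(n)}$.

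For this bound, my plan is to exploit the algebraic and combinatorial structure developed in Sections~\ref{sect:3} and~\ref{sect:4}. First, Proposition~\ref{prop:retractedwalkspectralradius} together with the Hecke algebra identity $P^{n}=\sum_{w}p_{w}^{(n)}P_{w}$ controls $\overline{p}^{(n)}(1,w)=p_{w}^{(n)}$ via $\vrho(\overline{P})=\vrho(P)<1$. Second, the invariance of the retracted walk on cone interiors, Proposition~\ref{prop:invariance}, together with the finiteness of the set of cone types (Howlett's theorem cited after Definition~\ref{defn:cannon}), should make it possible to transfer the estimate at the base chamber to an arbitrary pair $(u,v)$ of the same cone type, at the price of a uniform multiplicative constant.

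The hardest part will be this transfer, because Proposition~\ref{prop:invariance} holds only on $\mathrm{Int}_{L_{0}}\bT$ and breaks down near cone boundaries. To handle it, I would use Lemma~\ref{lem:connected_sphere} and the strongly connected Cannon automaton (Theorem~\ref{thm:stronglyconnected}) to cut any trajectory from $u$ to $v$ into pieces that lie well inside a cone and to which the invariance applies. A more geometric alternative, if the direct transfer proves too delicate, is an Ancona-type iterative argument using separating simple closed curves $C_{1},\dots,C_{N}$ around $u$ provided by Lemma~\ref{lem:connected_sphere} (with $N\sim d(u,v)/R$): planarity forces every path from $u$ to $v$ to cross each $C_{j}$, so that the strong Markov property gives
\[
F(u,v)\leq\prod_{j=1}^{N}\sup_{w\in C_{j}}\PP_{w}[\text{hit }C_{j+1}],
\]
and each factor is then bounded by some $\alpha<1$ using $\vrho(\overline{P})<1$, the uniform ellipticity~(\ref{eq:support}), and the cone-type invariance. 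The uniformity of $\alpha$ over $j$ and over the reference chamber $u$ is the essential point and is the place where the finiteness of cone types and strong connectivity of $\mathcal{A}(W,S)$ enter decisively.
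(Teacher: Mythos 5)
Your proposal contains two alternative strategies, and unfortunately both have real gaps.

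\textbf{Plan A (Green function bound).} The reduction $F(u,v)\leq G(u,v)$ and the observation that $\overline{p}^{(n)}(u,v)=0$ for $n<d(u,v)/L_0$ are fine. The step that fails is the conjectured uniform bound $\overline{p}^{(n)}(u,v)\leq C\vrho^n$ with $C$ independent of $u,v$. What is actually available (the paper invokes it in the proof of Lemma~\ref{lem:expmomentsDretract}, citing \cite[Lemma~8.1]{woessbook}) is $\overline{p}^{(n)}(u,v)\leq A^{d(u,v)}\vrho^n$, with the exponential factor $A^{d(u,v)}$ unavoidable. Substituting this into your calculation gives $G(u,v)\leq A^{d(u,v)}\vrho^{d(u,v)/L_0}/(1-\vrho)$, which is exponential in $d(u,v)$ only if $A\vrho^{1/L_0}<1$, and there is no reason for that. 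Your proposed remedy — transfer from the base chamber via Proposition~\ref{prop:invariance} and the finiteness of cone types — cannot work as stated: that invariance only controls one-step probabilities on the $L_0$-interior of a cone, and says nothing about the $n$-step kernel once the trajectory leaves the cone; it does not give anything like $\overline{p}^{(n)}(u,v)\leq\overline{p}^{(n)}(1,u^{-1}v)$. (Note also that the retracted walk is explicitly \emph{not} $W$-invariant, and for a general isotropic walk there is no symmetry/reversibility to fall back on.)

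\textbf{Plan B (Ancona-type decomposition with separating curves around $u$).} The iterative idea is in the right spirit — this is indeed what the paper does — but the geometric setup you describe does not produce a factor $\alpha<1$. If the $C_j$ are simple closed curves around $u$ in the annuli given by Lemma~\ref{lem:connected_sphere}, with $C_{j+1}$ further out than $C_j$, then every escaping trajectory must cross $C_{j+1}$; since $\vrho(\overline{P})<1$ forces transience, $\PP_w[\text{hit }C_{j+1}]=1$ for every $w\in C_j$, and your product bound collapses to $F(u,v)\leq 1$. The blocking sets must be placed so that the walk has a genuine way to escape to infinity while \emph{missing} the next one. The paper achieves this by taking $B_i$ to be a union of cones $\bigcup_{w\in\cB(u_i,2e(0)+2k)}C(w)$ rooted near equally spaced points $u_i$ on the geodesic from $u$ to $v$. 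These sets are half-space-like, not spherical, and the key geometric input is the exponential divergence of hyperbolic geodesics, used to prove that the boundaries $\partial B_i$ are pairwise disjoint. Given that, once the walk enters $B_i$, with probability at least $c=\varepsilon^{e(0)+3k/2}\overline{p}_{esc}>0$ (via Lemma~\ref{lem:stayincone}) it slips into a cone $C(w_1)$ that is far enough from the geodesic that $C(w_1)\cap B_{i+1}=\emptyset$, and stays there forever. That is what produces the uniform $\alpha=1-c<1$ you need, and it is precisely the hyperbolicity-driven escape mechanism that is missing from your proposal. Lemma~\ref{lem:connected_sphere} is used in the paper, but only as an auxiliary tool to find a nearby vertex $w_1$ at controlled distance, not as the blocking sets themselves.
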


\begin{proof}
 Fix $K>k/2$ and let $w\in W$ be such that $d(1,w)>K+k/2$. Lemma \ref{lem:connected_sphere} and  the Jordan curve theorem yields now that  $\cS^{(k)}(1,d(1,w))\cap \cS^{(k)}(w,K)\neq \emptyset$. Let $\pi^{+}$ be a geodesic ray from $1$ passing through $w$ and $\pi^{-}$ be a geodesic ray starting  from $1$ and not passing through $\cS^{(k)}(w,K)$. Denote by $\overline\pi$ the bi-infinite path consisting of geodesics $\pi^{+}$ and $\pi^{-}$. There are at least two points $v_{1},v_{2}\in \cS^{(k)}(1,d(1,w))\cap \cS^{(k)}(w,K)$  on different sides of $\overline\pi$, i.e.~each path between $v_{1}$ and $v_{2}$ has to cross $\pi^{-}$ or $\pi^{+}$. We will make use of this fact later in the proof.

Let $\gamma$ be a geodesic from $1$ to $v$ passing through  $u$, and let $d=d(u,v)$.
Let $D\in\mathbb{N}$ be such that $D>2e(0)+2k$ and $e(D)>4e(0)+4k$. For $i\in\{1,\ldots,\lfloor d/D\rfloor\}$ denote by  $u_{i}=\gamma(d(1,u)+i\cdot D)$ and let $\cB(u_{i},2e(0)+2k)$  the ball of radius $2e(0)+2k$ around $u_{i}$.  We define
\begin{equation*}
B_{i}=\bigcup_{w\in \cB(u_{i},2e(0)+2k)  } C(w) 
\end{equation*}
The boundary of $B_i$ is given by $\partial B_i=\{w\in B_i \mid \exists w'\in
W\setminus B_i: w'\sim w\}$.

\noindent\underline{Claim:} The boundaries $\partial B_i$'s are disjoint.

\noindent\emph{Proof of the claim.}
The choice of  $D > 4e(0)+4k$  implies that the balls  $\cB(u_{i},2e(0)+2k)$
are disjoint. Let us assume that there exists some $w\in \partial
B_i\cap \partial B_{i+1}$ for some $i$ and show that this yields a
contradiction. Denote by $\gamma^{+}$ a geodesic continuation of $\gamma$ that
is contained in all $B_{i}$'s. Let $\gamma^{-}$ be a geodesic ray emanating from $1$ and not intersecting the $\partial B_{i}$'s, and let $\overline \gamma$ the bi-infinite path consisting of the elements of $\gamma^{+}$ and $\gamma^{-}$. 
Let $\gamma_{1}$ be the geodesic from $1$ to $w$ that maximizes $d(\gamma_{1}(t),\gamma^{+}(t))$ for all $t\leq d(1,w)$ among all geodesics from $1$ to $w$; this construction is well-defined since geodesics can only cross in vertices. 

Our next step is to show that $\gamma_{1}$ is sufficiently ``far away'' from $u_{i}$.
Due to planarity, there exists a geodesic ray $\gamma_{1}'$ starting from $1$, that passes through $\cS^{(k)}(1,d(1,u_{i}))\cap \cS^{(k)}(u_{i},2 e(0)+3k/2)$, and is between $\gamma^{+}$ and $\gamma_{1}$. 
To see this, take any $w_{1}'\in \cS^{(k)}(1,d(1,u_{i}))\cap \cS^{(k)}(u_{i},2 e(0)+3k/2)$ on the same side of $\overline \gamma$ as $\gamma_{1}$. By maximality of $\gamma_{1}$ the vertex $w_{1}'$ has to lie between $\gamma_{1}$ and $\gamma^{+}$.  First, choose a geodesic from $1$ to $w_{1}$ that lies in between $\gamma^{+}$ and $\gamma_{1}$. Now, augment this geodesic step by step (following some way in the automaton) till forever or until one  hits $\gamma_{1}$ in which case we follow $\gamma_{1}$ afterwards as long as we can, and then continue to follow some path in the automaton.
Since $d(1,u_{1})-k/2\leq d(1,w'_{1})\leq d(1,u_{1})+k/2$ and $ d(u_{1},w'_{1})\geq 2e(0)+k$ we have that  $ d( \gamma_{1}'(d(1,u_{i})),u_{i})\geq 2 e(0)+k/2 $.
The maximality of $\gamma_{1}$ implies now that
\begin{equation}\label{eq:lem:expdecay}
 d(\gamma_{1}(d(1,u_{i})),u_{i})\geq 2 e(0)+k/2.
\end{equation}

Since $w\in \partial B_{i+1}$ there exists a  geodesic $\gamma_{2}$  from $1$ to $w$ that passes through $\cB(u_{i},2e(0)+2k)$.  Denote by $v$ a point in the intersection of $\gamma_{2}$ and $\cB(u_{i+1},2e(0)+2k)$. We have
\begin{equation*}
d(1,u_{i+1})-2e(0)-2k \leq d(1, v)\leq d(1,u_{i+1})+2e(0)+2k \mbox{ and } d(v,u_{i+1})\leq 2e(0)+2k.
\end{equation*}
Therefore,
\begin{equation*}
d(\gamma_{2}(d(1,u_{i+1}), \gamma(d(1,u_{i+1})))\leq 4e(0)+4k.
\end{equation*}
Eventually, by the exponential divergence of geodesics and since $e(D)>4e(0)+4k$ we have that $d(\gamma(d(1,u_{i}), \gamma_{2}(d(1,u_{i})))<e(0)$. 
 Inequality (\ref{eq:lem:expdecay}) implies now that 
\begin{equation*}
d(\gamma_{1}(d(1,u_{i}), \gamma_{2}(d(1,u_{i})))>e(0),
\end{equation*}
and hence $\gamma_{1}$ and $\gamma_{2}$ diverge which contradicts the fact that they intersect in $w$. This proves the claim.

We define the  stopping times
\begin{equation*}
\tau_{i}=\inf\{n\geq 0: \overline{X}_{n}\in B_{i}\}.
\end{equation*}
In order to walk from $u$ to $v$ the walk has to enter each $B_{i}$, and so we find that
\begin{equation*}
F(u,v)\leq \PP_{u}[\tau_{1}<\infty, \tau_2<\infty,\ldots, \tau_{\lfloor d/D\rfloor}<\infty].
\end{equation*}
Our proof strategy is now to prove that 
\begin{equation}\label{eq:lem:decay}
\PP[\tau_{i+1}=\infty \mid \tau_{i}<\infty]\geq c\quad \textrm{for all } i\in\{1,\ldots,\lfloor d/D\rfloor\}
\end{equation}
for some constant $c>0$. An application of the strong Markov property  yields then  that 
\begin{equation*}
F(u,v)\leq \PP_{u}[\tau_{1}<\infty, \tau_2<\infty,\ldots, \tau_{\lfloor d/D\rfloor}<\infty]\leq (1-c)^{\lfloor d/D\rfloor},
\end{equation*}
which proves the claim. Therefore it remains to prove (\ref{eq:lem:decay}).
Assume $\tau_{i}<\infty$ and denote $w=\overline{X}_{\tau_{i}}$. 
Due to the connectivity of spheres there exists some $w_{1}$  such that $e(0)+k/2\leq d(w,w_{1})\leq e(0)+3k/2$, $d(1,w)-{k/2}\leq d(1, w_{1})\leq d(1, w)+k/2$ and $w_{1}\notin B_{i}$. Now, due to the fact that geodesics either stay at bounded distance at most $e(0)$ or diverge, we find that $C(w_{1})$ does not intersect $B_{i+1}$. Hence, a walk started in $w_{1}$ will stay with positive probability of at least $\overline{p}_{esc}$ in $C(w_{1})$ and therefore will never visit $B_{i+1}$. As the probability that a walk started in $w$ will visit $w_{1}$ is bounded below by $\varepsilon^{e(0)+3k/2}$  we obtain~(\ref{eq:lem:decay}) with $c=\varepsilon^{e(0)+3k/2}\overline{p}_{esc}$.
\end{proof}

For each $u\in W$ let
$$
D_{u}=\inf\{n\geq 1\mid \overline{X}_{n}\not\in \rho(\widehat{\mathcal{T}}^{(n)}_{x})\}.
$$ 
It is crucial to bound the moments of $ D_u$. In the following $\EE_v$ denotes the expectation given that $\overline{X}_0= v$, $v\in W$.

\begin{lemma}\label{lem:expmomentsDretract}
There exist constants $\lambda_{ D},K_{ D}$ such that for all  $u\in W$ with $T(u)=\bT$ we have 
\begin{equation*}
\EE_{v}[\exp(\lambda_{ D}  D_{u}) \1_{\{ D_{u}<\infty\}}]\leq K_{ D}\quad\text{for all $v\in C(u)$}.
\end{equation*}
\end{lemma}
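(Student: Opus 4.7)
The plan is to prove the stronger statement
\[
\PP_v[n \le D_u < \infty] \le C\,\mu^n \quad\text{for all } n\ge 0,
\]
with $C>0$ and $\mu<1$ uniform over $v\in C(u)$ and $u\in W$ with $T(u)=\bT$; this immediately implies the exponential-moment bound by taking any $\lambda_D<-\log\mu$. The strategy is to exhibit, at each time, a uniformly positive probability of a \emph{permanent} escape event (forcing $D_u=\infty$) within a bounded number of steps, and to use Proposition~\ref{lem:expdecay} to control the time spent near the boundary.

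First I would establish a uniform "deep-region escape" statement: there exist constants $K^{*}\in\NN$, $L^{*}:=L_1+K^{*}L_0$ and $p^{*}>0$ such that for every $u$ with $T(u)=\bT$ and every $w\in \mathrm{Int}_{L^{*}}C(u)$,
\[
\PP_w\bigl[(\overline{X}_n)_{n\ge 0}\text{ enters }\widehat{\mathcal{T}}_{w'}\text{ within }K^{*}\text{ steps for some }w'\text{ with }C(w')\subset\mathrm{Int}_{L_1}C(u)\bigr]\ge p^{*}.
\]
The chamber $w'$ is produced by applying the strong connectedness of $\mathcal{A}(W,S)$ (Theorem~\ref{thm:stronglyconnected}) to steer within a bounded number of steps to a cone type $\bT$, and then Corollary~\ref{cor:find-v} to push further into $C(u)$ until $C(w')\subset\mathrm{Int}_{L_1}C(u)$. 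The uniform lower bound~(\ref{eq:support}) makes the explicit bounded-length path from $w$ to $w'$ likely, and Lemma~\ref{lem:stayincone} gives the subsequent escape with probability at least $\overline{p}_{esc}$; since $C(w')\subset\mathrm{Int}_{L_1}C(u)$ implies $\mathrm{Int}_{L_1}C(w')\subset\mathrm{Int}_{L_1}C(u)$, this forces $D_u=\infty$. Second, for the walk starting at $v$ not yet in $\mathrm{Int}_{L^{*}}C(u)$, I would bound the probability of exiting $\mathrm{Int}_{L_1}C(u)$ without first reaching $\mathrm{Int}_{L^{*}}C(u)$ by summing Proposition~\ref{lem:expdecay} over the boundary-adjacent chambers, using Lemma~\ref{lem:boundary-rays} to note that $\partial_{L_1}C(u)$ sits in a bounded neighbourhood of two geodesic rays, giving only polynomially many boundary points at each distance and hence a summable geometric series. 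Combining the two via the strong Markov property at the successive visits $\sigma_1<\sigma_2<\cdots$ of the walk to chambers in $\mathrm{Int}_{L^{*}}C(u)$ of cone type $\bT$, the escape-or-exit dichotomy gives a geometrically tailed number of attempts, and standard renewal-type estimates yield the claimed $C\mu^n$ bound.

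The main obstacle is the uniformity in $v$ of the near-boundary control in the second step: Proposition~\ref{lem:expdecay} is formulated as a hitting-probability estimate for targets lying in the forward cone of the starting point, whereas we need to bound the probability of first hitting the thin strip $\{w\in C(u): d(w,\partial_{L_1}C(u))\le L^{*}\}$ from an arbitrary $v\in C(u)$. Handling the two sides of the cone boundary separately, and dealing with the fact that boundary targets need not lie in $C(v)$, requires re-running the barrier construction of Proposition~\ref{lem:expdecay} relative to each of the two boundary rays of $C(u)$ and summing; the exponential decay in the distance from $v$ to the boundary, combined with the at-most-linear growth of the number of candidate targets, is what ultimately yields the uniform constants $C$ and $\mu$.
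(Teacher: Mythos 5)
Your approach is genuinely different from the paper's: you set up a renewal scheme (escape attempts at successive visits $\sigma_1<\sigma_2<\cdots$ to deep $\bT$-chambers, a geometric number of attempts, then ``standard renewal-type estimates''), whereas the paper estimates the tail $\PP_v[D_u=n+1]$ directly. As sketched, your route has a gap, and it is a \emph{temporal} one. Your ``deep-region escape'' gives escape probability $\ge p^{*}$ within $K^{*}$ steps from a deep $\bT$-chamber, and your near-boundary step bounds the \emph{probability} of ever hitting a thin boundary strip; but nothing in the outline controls how long the walk may linger near the boundary between visits $\sigma_k$ and $\sigma_{k+1}$ (or, after the last attempt, before it exits). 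Proposition~\ref{lem:expdecay} is a purely spatial statement --- exponential decay of $F(\cdot,\cdot)$ in the \emph{distance} of the target --- and it says nothing about the time taken. To pass from a geometrically-tailed number of attempts to an exponential moment on $D_u$ you also need a uniform exponential moment on the inter-attempt times, which is essentially what you are trying to prove; so ``standard renewal-type estimates'' do not close the loop. Note that the spectral radius inequality $\vrho(\bar P)<1$ (Proposition~\ref{prop:retractedwalkspectralradius} together with Corollary~\ref{cor:spectralradiusbuilding}) appears nowhere in your outline, yet it is exactly the ingredient needed to rule out long sojourns near a bounded region.

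The paper's proof avoids the renewal structure entirely. Since steps have length $\le L_0\le L_1$, the event $\{D_u=n+1\}$ (for $n$ past the initial deterministic segment $\pi_u$) forces $\overline X_n\in\partial_{2L_1}C(u)$, a set that Lemma~\ref{lem:boundary-rays} and planarity confine to a neighbourhood of two geodesic rays, hence of linear growth. One then splits on whether $d(v,\overline X_n)\le\delta n$ or $\ge\delta n$: in the first case the off-diagonal estimate $\bar p^{(n)}(v,z)\le A^{d(v,z)}\vrho(\bar P)^{n}$ from \cite[Lemma~8.1]{woessbook}, summed over the at-most-linearly-many boundary chambers within distance $\delta n$, decays exponentially once $\delta$ is small; in the second case one sums $F(v,z)\le C\lambda^{d(v,z)}$ from Proposition~\ref{lem:expdecay} over the at-most-linearly-many boundary chambers at distance $\ge\delta n$. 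The far-field half overlaps with what you had in mind (and your reservation about Proposition~\ref{lem:expdecay} being stated only for targets in $C(v)$ is a fair point that applies to the paper's use of it as well), but the near-field half --- using $\vrho(\bar P)<1$ to control the \emph{time} index --- is the piece your proposal is missing, and without it the exponential-moment bound does not follow.
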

\begin{proof}
Since $p(v,z)\geq \varepsilon$ whenever $d(v, z)=1$,
\cite[Lemma~8.1]{woessbook} guarantees the existence of a constant $A$ such that for all $v,z\in W$ we have
\begin{equation}\label{eq:Arho}
\bar p^{(n)}(v,z)\leq A^{d(v,z)} \vrho(\bar P)^{n}.
\end{equation}
We proceed with the tails of $\PP_{v}[ D_{u}=n]$ for $u$ such that $T(u)=\bT$ and $v\in C(u)$. Let $\delta>0$ to be chosen later, then 
\begin{equation}\label{eq:lem:momentbound:1}
\PP_{v}[ D_{u}=n+1]\leq \PP_{v}[d(v,\overline{X}_{n})\leq \delta n,   D_{u}=n+1]+ \PP_{v}[d(v,\overline{X}_{n})\geq \delta n,   D_{u}=n+1].\end{equation}
We will make use of the fact that $ D_{u}=n+1$ implies $\overline X_{n} \in \partial_{2L_1} C(u)$ for $n$ sufficiently large.
Due to the planarity of $(W, S)$ we have that 
$ c(n)=|\partial_{2L_1} C(u)\cap \cB(v,\delta n)|$ grows linearly in $n$. 
The first summand in inequality (\ref{eq:lem:momentbound:1}) can be bounded as follows:
\begin{eqnarray*}
\PP_{v}\left[d(v,\overline{X}_{n}) \leq \delta n,  D_{u}=n+1\right] 
  &\leq &  \PP_{v}[ d(v,\overline{X}_{n}) \leq \delta n, \overline{X}_{n}\in \partial_{2L_1} C(u)] \cr 
  &\leq & \sum_{z\in  \partial_{2L_1} C(u)\cap \cB(v,\delta n)} \bar p^{(n)}(v,z)\cr
  &\leq & \sum_{z\in  \partial_{2L_1} C(u)\cap \cB(v,\delta n)}  A^{d(v,z)} \vrho(\bar P)^{n}\cr
  & \leq& c(n) \max\{1,A^{\delta n}\} \vrho(\bar P)^{n}.
 \end{eqnarray*}
Choose $\delta>0$ sufficiently small so that the latter sum  decays exponentially in $n$.

For the second summand in~(\ref{eq:lem:momentbound:1}) we have
\begin{equation*}
\PP_{v}[d(v,\overline{X}_{n})\geq \delta n,   D_{u}=n+1] \leq \sum_{z\in
 \partial_{2L_1} C(u)\setminus {\cB(v,\delta n)}} F(v,z)
 \leq      \sum_{k=\lceil \delta n \rceil}^{\infty} c(k) C \lambda^{k},
\end{equation*}
which decays exponentially in $n$. The result follows.
\end{proof}

It turns out that in order to give a good estimate  on the length of the time intervals between renewal times, it suffices to control the tails of $D_{x}=\inf\{n\geq 1 \mid X_{n}\not\in \widehat{\mathcal{T}}^{(n)}_{x}\}$ on the event that $D_{x}$ is finite.     However, this can be achieved by  comparison with the retracted walk. 

\begin{lemma}\label{lem:expmomentsD}
There exists constants $\lambda_{D}',K_{D}'$ such that for all  $x$ of type $\bT$ we have 
\begin{equation*}
\EE_{y}[\exp(\lambda_{D}' D_{x}) \1_{\{D_{x}<\infty\}}]\leq K_{D}'\quad\text{for all $y\in C(x)$}.
\end{equation*}
\end{lemma}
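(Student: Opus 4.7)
The plan is to transfer the exponential moment bound of Lemma \ref{lem:expmomentsDretract} to the building via the natural coupling $\overline X_n = \rho(X_n)$: viewing the retracted process as the retracted walk started at $\rho(y)\in C_W(\rho(x))$, I would show that $D_x = D_{\rho(x)}$ on the event $\{D_x<\infty\}$ outside a deterministic initial transient of length $m$, where $m$ is the length of $\pi_{\bT}$.

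The key point is that retraction sends admissible building-trajectories to admissible Coxeter-trajectories, i.e., $\rho(\widehat{\mathcal{T}}^{(n)}_x) \subseteq \widehat{\mathcal{T}}^{(n)}_{\rho(x)}$ for every $n$. This rests on two observations. First, $\rho(\pi_x) = \pi_{\rho(x)}$: indeed, for any $z\in C_{\Delta}(x)$ one has $\rho(z) = \rho(x)\,\delta(x,z)$, and $\pi_x = \psi_{x_{\bT} x}(\pi_{\bT})$ so the retraction depends only on $\rho(x)$. Second, $\rho(\mathrm{Int}_{L_1} C_{\Delta}(x)) \subseteq \mathrm{Int}_{L_1} C_W(\rho(x))$; this is immediate from Lemma \ref{lem:equal-distance}, since for $z$ in the interior the distance to the $L_1$-boundary is preserved by $\rho$. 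Together these give $\{D_x>n\}\subseteq\{D_{\rho(x)}>n\}$, whence $D_x\leq D_{\rho(x)}$. For the reverse inequality on $\{m<D_x<\infty\}$, I would invoke the bounded-range hypothesis $L_0\leq L_1$: a single jump cannot escape $C_{\Delta}(x)$ directly from $\mathrm{Int}_{L_1}C_{\Delta}(x)$, so $X_{D_x}\in\partial_{L_1}C_{\Delta}(x)$, and Proposition \ref{prop:Lboundaries} then forces $\overline X_{D_x}\in\partial_{L_1}C_W(\rho(x))$, giving $D_{\rho(x)}\leq D_x$. Hence $D_x = D_{\rho(x)}$ on $\{m<D_x<\infty\}$.

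The proof would then conclude by splitting $\EE_y[\exp(\lambda D_x)\mathbf{1}_{\{D_x<\infty\}}]$ at $D_x\leq m$ and $D_x>m$: the first piece is bounded by $e^{\lambda m}$, while the second equals $\EE_y[\exp(\lambda D_{\rho(x)})\mathbf{1}_{\{m<D_{\rho(x)}<\infty\}}]$, which is bounded by $\EE_{\rho(y)}[\exp(\lambda D_{\rho(x)})\mathbf{1}_{\{D_{\rho(x)}<\infty\}}]\leq K_D$ via Lemma \ref{lem:expmomentsDretract} (applicable since $\rho(y)\in C_W(\rho(x))$). Taking $\lambda_D':=\lambda_D$ and $K_D':=e^{\lambda_D m}+K_D$ then gives the lemma. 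The main subtlety — and the reason the initial phase must be handled separately — is the non-injectivity of $\rho$: during the first $m$ steps the event that the walk in $\Delta$ tracks $\pi_x$ exactly is strictly stronger than the corresponding event in $W$, so the identity $D_x = D_{\rho(x)}$ can fail for $D_x\leq m$. Since this transient has bounded length it costs only the harmless additive constant $e^{\lambda_D m}$.
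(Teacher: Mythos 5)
Your proof is correct and follows essentially the same route as the paper's: compare $D_x$ with the exit time $D_{\rho(x)}$ of the retracted walk using Proposition~\ref{prop:Lboundaries}, Lemma~\ref{lem:equal-distance}, and then invoke Lemma~\ref{lem:expmomentsDretract}. In fact you are somewhat more careful than the published argument, which asserts the inclusion $\{D_x=k\}\subseteq\{D_u=k\}$ for every $k$ without addressing the deterministic initial phase of length $m$, where the non-injectivity of $\rho$ can break this inclusion (two distinct chambers $s$-adjacent to $\pi_x(k-1)$ on the outward side retract to the same element); your observation that the transient $D_x\le m$ contributes only the bounded constant $e^{\lambda m}$ is precisely the needed repair, and in the remaining range the bounded-range and $L_1$-boundary arguments match the paper.
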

\begin{proof}
 Let $u=\rho(x)$, then due to Proposition \ref{prop:Lboundaries} and Lemma \ref{lem:equal-distance} we have
\begin{eqnarray*}
\{ D_{x}=k\}&=& \{X_{k}\notin \widehat{\mathcal{T}}^{(k)}_{x}, \forall m<k: X_{m}\in  \widehat{\mathcal{T}}^{(m)}_{x}) \}\cr
&\subseteq&  \{\overline{X}_{k}\notin \rho(\widehat{\mathcal{T}}^{(k)}_{x}), \forall m<k:  \overline{X}_{m}\in \rho(\widehat{\mathcal{T}}^{(m)}_{x}) \}\cr
&=& \{D_{u}=k\}.
\end{eqnarray*}
Hence, for all $y\in C(x)$ and $v=\rho(y)$ we have that $\PP_{y}[
 D_{x}=k]\leq \PP_{v}[ D_{u}=k]$ and the claim follows with Lemma \ref{lem:expmomentsDretract}.
\end{proof}

\begin{proof}[Proof of Theorem~\ref{thm:reg}.2]
The essential ingredients are Lemmata~\ref{lem:stayinconebuilding} and~\ref{lem:expmomentsD} and the fact that Cannon automaton is strongly connected.  Since the proof is analogous to the proof of   \cite[Lemma 4.1]{HMM:13} we only give a sketch of the arguments here. In fact, the proof is a more quantitative analysis of the arguments given in the proof of Theorem~\ref{thm:reg}.1. Recall that  wherever the walk is, it will reach a chamber  of cone type $\bT$ after at
most $K$ steps with probability of at least $p_{h}$.   By Lemma
\ref{lem:stayinconebuilding}, each time the walk is at a chamber (in the $L_{1}$-interior) of cone type~$\bT$  it has a positive probability of  at least $p_{esc}$ to perform a renewal step. If it does not perform a renewal step, it takes the walk a random time $D$ to exit  $\widehat{\mathcal{T}}$. Now, again the walk will hit a chamber of cone type~$\bT$ in at most $K$ steps with probability of at least $p_{h}$ and we continue as above until we eventually performed one renewal step. The time until the walk does a renewal step can therefore be bounded by $\sum_{i=1}^{G} D_{i}$ where the $D_{i}$ are i.i.d.~copies of $D$ and $G$ is a geometric random variable (independent of the $D_{i}$'s) with success probability $p_{h}p_{esc}$.
Since the $G$ and the $D_{i}$'s have exponential moments one proves  that  $\sum_{i=1}^{G} D_{i}$ has exponential moments, too.
\end{proof}

\begin{appendix}
\section{Automata and the proof of Theorem~\ref{thm:stronglyconnected}}\label{app:A}

The automatic structure of Coxeter groups was first proven by Brink and Howlett~\cite{howlett} (see also \cite[Chapter 4]{bjorner}). In this section we explicitly construct the Cannon automaton for each Fuchsian Coxeter system, and deduce that these automata are strongly connected (hence proving Theorem~\ref{thm:stronglyconnected}). We also envisage that our explicit description of the automata will be useful for future work where precise information regarding the automata is required. 

It is convenient to divide the set of all Fuchsian Coxeter systems into $4$ classes. First consider triangle groups. Let $(W,S)$ be a triangle group generated by $s,t,u$. Let $a=m_{st}$, $b=m_{tu}$, and $c=m_{us}$, and rename the generators if necessary so that $a\geq b\geq c\geq 2$. Then $W$ is infinite if and only if (see Example~\ref{ex:21})
\begin{align*}
(a,b,c)\in\{(k_1,k_2,k_3),(k_4,k_5,2),(k_6,3,2)\mid k_1\geq k_2\geq k_3\geq 3,\, k_4\geq k_5\geq 4,\, k_6\geq 6\}.
\end{align*}
We partition the infinite triangle groups into $3$ classes:
\begin{itemize}
\item Class I consists of those groups with $a\geq b\geq c\geq 3$.
\item Class II consists of those groups with $a\geq b\geq 4$ and $c=2$.
\item Class III consists of those groups with $a\geq 6$, $b=3$, and $c=2$.
\end{itemize}
Note that the ``root'' of each class (that is, the group with $a+b+c$ minimal) is an affine triangle group: $(a,b,c)=(3,3,3),(4,4,2),(6,3,2)$. All other infinite triangle groups are Fuchsian. For no extra work we will include the affine triangle groups in our considerations. 

The remaining Fuchsian Coxeter systems are those with $|S|\geq 4$. We call these Fuchsian Coxeter systems of Class IV. 

\subsection{Preliminaries}

Before constructing the automata, we give some general background (see \cite{humphreys} for details). Let $(W,S)$ be a Fuchsian Coxeter complex (or an affine triangle group). The conjugates of the generators $S$ are called \textit{reflections}. Thus the set of all reflections is
$
R=\{wsw^{-1}\mid s\in S,w\in W\}.
$
Each reflection $r\in R$ determines a \textit{wall} (also called a \textit{hyperplane}) $H_r=\{\zeta\in\mathbb{H}^2\mid r\zeta=\zeta\}$ in~$\mathbb{H}^2$ (or in $\mathbb{R}^2$ for Euclidean triangle groups). 

Let $\mathcal{H}=\{H_r\mid r\in R\}$ be the set of all walls. Given a wall $H\in \mathcal{H}$ we write $s_H$ for the reflection in the wall~$H$. Thus $s_H=r$ if $H=H_r$. More generally, if $H$ is the wall separating $w$ from $ws$ then $s_H=wsw^{-1}$. 

Each wall $H\in\mathcal{H}$ determines two (closed) \textit{half-spaces} of the hyperbolic disc~$\HH^2$. The \textit{positive side} of $H$ is the half-space $H^+$ which contains the identity chamber~$1$, and the \textit{negative side} of $H$ is the half space $H^-$ which does not contain~$1$. Alternatively, we have
\begin{align}\label{eq:halfspaces}
H^+=\{w\in W\mid \ell(s_Hw)>\ell(w)\}\quad\textrm{and}\quad H^-=\{w\in W\mid \ell(s_Hw)<\ell(w)\}.
\end{align}

If $w=s_1\cdots s_{\ell}$ is a reduced expression then for each $1\leq k\leq \ell$ the element $w$ is on the negative side of each wall~$H_{r_k}$, where $r_k$ is the reflection $r_k=s_1\cdots s_{k-1}s_ks_{k-1}\cdots s_1$, and  $\{H_{r_k}\mid k=1,\ldots,\ell\}$ is precisely the set of all walls separating~$1$ from~$w$.

\begin{lemma}\label{lem:intersect}
Walls $H,H'$ of a Fuchsian Coxeter system (or affine triangle group) intersect if and only if the corresponding reflections $s_H$ and $s_{H'}$ generate a finite group.
\end{lemma}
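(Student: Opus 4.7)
The plan is to exploit the faithful geometric realisation of $W$: when $(W,S)$ is a Fuchsian Coxeter system (resp.\ an affine triangle group), $W$ sits as a discrete subgroup of $PGL_2(\RR)$ acting on $\HH^2$ (resp.\ as a discrete group of isometries of $\RR^2$). In this realisation each wall $H$ is precisely the fixed-point set of the associated reflection $s_H$ (a hyperbolic geodesic, resp.\ an affine line), and the composition $s_Hs_{H'}$ is then an explicit isometry whose order can be read off from the relative position of $H$ and $H'$.

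The forward direction (intersecting walls give a finite subgroup) is the easy one. Suppose $p\in H\cap H'$. Then both $s_H$ and $s_{H'}$ fix $p$, so the entire group $\langle s_H,s_{H'}\rangle$ lies in the stabiliser $W_p$. Because the action of $W$ is properly discontinuous, $W_p$ is finite, and hence so is $\langle s_H,s_{H'}\rangle$.

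For the converse I would argue contrapositively, using the classification of compositions of two reflections. In the Euclidean case, if $H\cap H'=\emptyset$ then $H$ and $H'$ are parallel lines, so $s_Hs_{H'}$ is a nontrivial translation and has infinite order. In the hyperbolic case, if $H\cap H'=\emptyset$ then $H$ and $H'$ either share a common endpoint on $\partial\HH^2$ (making $s_Hs_{H'}$ parabolic) or are ultraparallel with a common perpendicular (making $s_Hs_{H'}$ a nontrivial hyperbolic translation along that perpendicular); in both sub-cases $s_Hs_{H'}$ has infinite order. Since $\langle s_H,s_{H'}\rangle=\langle s_H,s_Hs_{H'}\rangle$ contains an element of infinite order, it cannot be finite.

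The only real subtlety is correctly identifying the Coxeter-theoretic wall $H_r=\{\zeta\in\HH^2\mid r\zeta=\zeta\}$ with a geodesic in the hyperbolic model (resp.\ an affine line in the Euclidean model), so that the trichotomy \emph{intersecting / parallel / ultraparallel} is available; once this is in hand the classification of orientation-preserving isometries of $\HH^2$ (elliptic vs.\ parabolic vs.\ hyperbolic) and of $\RR^2$ (rotation vs.\ translation) delivers the result immediately. Everything else is formal, and the hypothesis that $(W,S)$ is Fuchsian (or affine triangle) is used exactly to guarantee the existence of the faithful discrete isometric action---the statement is false for general Coxeter systems.
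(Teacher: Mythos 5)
Your proof is correct, but for the backward direction you take a genuinely different route from the paper. The forward direction (intersecting walls give a finite group) is essentially the paper's argument: both observations reduce to the proper discontinuity of the action, which forces the stabiliser of an intersection point to be finite. For the converse, the paper invokes a general Coxeter-theoretic fact \cite[Proposition~2.87]{AB} — that a finite subgroup of $W$ lies in a conjugate of a finite parabolic subgroup — and concludes that the corresponding walls meet. You instead argue contrapositively inside the concrete realisation: if the walls are disjoint you classify the composition $s_Hs_{H'}$ as a nontrivial translation (Euclidean case) or as a parabolic or hyperbolic isometry (hyperbolic case), each of which has infinite order, so $\langle s_H,s_{H'}\rangle$ cannot be finite. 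The paper's argument is shorter and applies verbatim to any Coxeter group acting on its Tits/Davis realisation; yours is more elementary and self-contained, at the cost of being tied to the planar hyperbolic/Euclidean models (which is exactly the setting of the lemma, so this is no loss here). One small point worth making explicit in your write-up: the case $H=H'$ should be noted separately, as the trichotomy intersecting/asymptotic/ultraparallel is stated for distinct geodesics; in that degenerate case $s_H=s_{H'}$ generates a group of order $2$ and the walls trivially ``intersect.''
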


\begin{proof}
Suppose that the walls $H$ and $H'$ intersect. If $H=H'$ then $s_H$ generates a group of order~$2$, and if $H\neq H'$ then $H$ and $H'$ intersect at a single point $x\in \mathbb{H}^2$ (or $x\in\mathbb{R}^2$). By construction of the realisation there are only finitely many walls through~$x$, and the group generated by the reflections in these walls is a conjugate of a finite standard parabolic subgroup. The group generated by $s_H$ and $s_{H'}$ is a subgroup of this finite group, and is thus finite.

On the other hand, if the reflections $s_H$ and $s_{H'}$ generate a finite group then by \cite[Proposition~2.87]{AB} the reflections $s_H$ and $s_{H'}$ both lie in a conjugate of a finite parabolic subgroup. Therefore the walls $H$ and $H'$ intersect. 
\end{proof}

The \textit{left descent set} of $w\in W$ is
$$
L(w)=\{s\in S\mid \ell(sw)=\ell(w)-1\}.
$$
Equivalently, $L(w)$ is the set of generators $s\in S$ for which there is a reduced expression for $w$ starting with the letter~$s$. By \cite[Corollary~2.18]{AB} the subgroup of $W$ generated by $L(w)$ is finite for each fixed $w\in W$. Moreover, if $v$ is an element of the group generated by $L(w)$ then by \cite[Proposition~2.17]{AB} there exists an expression
\begin{align}\label{eq:startexpression}
w=vw'\quad\textrm{with $\ell(w)=\ell(v)+\ell(w')$}.
\end{align}

\subsection{Class I}

\begin{lemma}\label{lem:reduction} Let $(W,S)$ be a Coxeter system and let $s,t,u\in S$ be distinct generators. If $m_{st},m_{tu},m_{us}\geq 3$ then the subgroup of $W$ generated by $u$ and $tst$ is infinite.
\end{lemma}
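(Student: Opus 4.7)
The plan is to reduce to the rank-$3$ standard parabolic $W_{0} := W_{\{s,t,u\}}$ and then use the Tits geometric representation of $W_0$ to show $\langle u, tst\rangle$ is an infinite dihedral group. Standard parabolic subgroups of Coxeter systems embed as Coxeter subsystems, so $\langle u, tst\rangle \subseteq W_0$, and it suffices to prove infiniteness inside $W_0$.

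First I will set up the Tits reflection representation of $W_0$ on $V = \mathbb{R}\alpha_s \oplus \mathbb{R}\alpha_t \oplus \mathbb{R}\alpha_u$, where the symmetric bilinear form $B$ is given by $B(\alpha_r,\alpha_r) = 1$ and $B(\alpha_r,\alpha_{r'}) = -\cos(\pi/m_{rr'})$ for distinct generators $r,r' \in \{s,t,u\}$ (using the convention $\cos(\pi/\infty) = 1$). Each generator acts as the $B$-reflection $v \mapsto v - 2B(v,\alpha_r)\alpha_r$, and this representation is faithful. The reflection $tst$ then corresponds to the $B$-reflection in the unit vector
\[
\alpha' := t(\alpha_s) \;=\; \alpha_s + 2\cos(\pi/m_{st})\,\alpha_t,
\]
and so $\langle u, tst\rangle$ acts on $V$ as the dihedral group generated by the $B$-reflections in $\alpha_u$ and $\alpha'$.

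Second, I will invoke the standard dihedral criterion (see e.g. Humphreys, \S5.3): the subgroup generated by two $B$-reflections in unit vectors $\alpha,\beta$ is finite if and only if $|B(\alpha,\beta)| < 1$, and when $B(\alpha,\beta)\leq -1$ it is infinite dihedral. A direct bilinear calculation yields
\[
B(\alpha_u,\alpha') \;=\; -\cos(\pi/m_{us}) - 2\cos(\pi/m_{st})\cos(\pi/m_{tu}).
\]
The hypothesis $m_{st},m_{tu},m_{us} \geq 3$ gives $\cos(\pi/m_{rr'}) \geq \tfrac{1}{2}$ for each pair (and equal to $1$ in the $m_{rr'} = \infty$ case), whence
\[
|B(\alpha_u,\alpha')| \;\geq\; \tfrac{1}{2} + 2 \cdot \tfrac{1}{2} \cdot \tfrac{1}{2} \;=\; 1.
\]
Thus $\langle u, tst\rangle$ acts on $V$ as an infinite dihedral group and, by faithfulness, is infinite in $W_0$.

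The only delicate point -- the main obstacle -- is the equality case $|B(\alpha_u,\alpha')| = 1$, which occurs precisely when $m_{st}=m_{tu}=m_{us}=3$, i.e.\ in the affine $(3,3,3)$ instance. Here the two reflecting hyperplanes of $\alpha_u$ and $\alpha'$ are parallel in the Euclidean realisation and the pair generates an infinite Euclidean dihedral group of translations/reflections rather than a finite rotation group. I will take care to invoke the form of the dihedral criterion that covers this ``parabolic pair of reflections'' case (not only the familiar finite-order statement $B(\alpha,\beta) = -\cos(\pi/m)$), so that the argument treats the affine root case and the strictly hyperbolic cases $|B(\alpha_u,\alpha')|>1$ uniformly.
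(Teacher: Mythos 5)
Your proof is correct, but it takes a genuinely different route from the paper. The paper argues combinatorially: it considers the word $w_n=(tstu)^n$ and argues that $w_n$ is reduced in $W$ (checking first the generic case $m_{st},m_{tu}>3$, where no Coxeter moves apply, and then sketching the case where one or both equal~$3$, where braid moves exist but length cannot be shortened), so that $\langle u,tst\rangle$ contains elements of arbitrarily large length. Your proof instead reduces to $W_0=W_{\{s,t,u\}}$ and works in the Tits reflection representation, computing
$$
B(\alpha_u,\alpha')=-\cos(\pi/m_{us})-2\cos(\pi/m_{st})\cos(\pi/m_{tu})\le -1
$$
(with $\alpha'=t(\alpha_s)$) and invoking the fact that a pair of $B$-reflections with $B(\alpha,\beta)\le -1$ generates an infinite dihedral group. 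This is a clean, uniform argument that treats the affine equality case $(3,3,3)$ and the strictly hyperbolic cases at once, whereas the paper's combinatorial argument splits into cases and is somewhat terse on the $m=3$ case (and indeed contains a typo, saying ``finite'' where it means ``infinite''). One small caution in your write-up: the statement ``finite if and only if $|B(\alpha,\beta)|<1$'' is false in the converse direction for arbitrary unit vectors (a rotation by an irrational multiple of $\pi$ has $|B|<1$ but infinite order); you only use, and only need, the forward implication that $|B(\alpha,\beta)|\ge 1$ forces infiniteness, so the argument is unaffected, but the lemma should be stated in that one-sided form or restricted to roots of a Coxeter system.
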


\begin{proof}
Consider the word $w_n=(tstu)^n=tstutstutstu\cdots tstu$. If $m_{st},m_{tu}>3$ then $w_n$ has no available Coxeter moves, and is thus reduced, and so the subgroup generated by $u$ and $tst$ is infinite.
In the cases that one or both of $m_{st}$ and $m_{tu}$ are $3$ then there are
Coxeter moves available, although it is not hard to see that no reduction in the length of $w_n$ is possible, and so the subgroup is finite in these cases too.
\end{proof}

\begin{lemma}\label{lem:automaton1}
Let $(W,S)$ be a triangle group with $S=\{s,t,u\}$ and $3\leq
m_{st},m_{tu},m_{su}<\infty$. Let $x$ be the longest element of $W_{st}$. Let
$v\in W_{st}$ with $v\notin \{x,1\}$, and let $v=s_{\ell}\cdots s_1$ be the unique reduced expression for~$v$. Then
\begin{enumerate}
\item $T(vu)=T(s_1u)$,
\item $T(xus)=T(sus)$ and $T(xut)=T(tut)$.
\end{enumerate}
\end{lemma}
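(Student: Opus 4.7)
My plan is to use the wall characterisation of cone types: for $w \in W$, $T(w) = \bigcap_{H \in \mathcal{W}(w)} H^+$, where $\mathcal{W}(w)$ is the set of walls separating $1$ from $w^{-1}$ and $H^+$ is the half-plane containing $1$. This is a reformulation of the standard fact that $y \in C(w)$ iff $y$ and $w$ lie on the same side of every wall separating $1$ from $w$, applied after translating by $w^{-1}$.

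For part (1), use the unique reduced expression $v = s_\ell \cdots s_1$ in $W_{st}$. First observe that $u s_1 s_2 \cdots s_\ell$ is a reduced expression for $(vu)^{-1} = u v^{-1}$: only a single $u$ appears, so no $u^2$-cancellation is available, and no braid move can produce a second $u$ because $v$ contains none. Reading walls off this expression yields
\[
\mathcal{W}(vu) = \{H_u\} \cup \bigl\{ H_{u\,w_k\,s_k\,w_k^{-1}\,u} : 1 \leq k \leq \ell \bigr\}, \qquad w_k := s_1 s_2 \cdots s_{k-1},
\]
and the $k = 1$ term is $H_{u s_1 u}$. Specialising to $\ell = 1$ gives $\mathcal{W}(s_1 u) = \{H_u, H_{u s_1 u}\} \subseteq \mathcal{W}(vu)$, so $T(vu) \subseteq T(s_1 u)$ is immediate. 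The substance of the proof lies in the reverse inclusion, for which I show that the additional walls with $k > 1$ are redundant: each reflection $u w_k s_k w_k^{-1} u$ lies in $u W_{st} u$, which stabilises the vertex $u\zeta$ where $\zeta := H_s \cap H_t$, so every additional wall passes through $u\zeta$. As $k$ runs from $1$ to $\ell$, these walls sweep out $\ell$ consecutive walls of the dihedral pencil at $u\zeta$, beginning with $H_{u s_1 u}$ and rotating away from the direction of $1$; because $v \neq x$ forces $\ell < a := m_{st}$, this is a proper arc of the $a$-wall arrangement. Since $u\zeta \in H_u^-$ (being the image of $\zeta \in H_u^+$ under the reflection $u$), these walls enter $H_u^+$ as geodesic rays from their crossings with $H_u$, and their inward half-planes (all containing $1$) are nested so that their common part in $H_u^+$ equals $H_u^+ \cap H_{u s_1 u}^+$. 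This gives $T(s_1 u) \subseteq T(vu)$.

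Part (2) is handled by applying the same wall analysis to the boundary case $v = x$. Choosing the reduced expression of $x$ ending in the generator appended after $u$ (namely $s$ for $T(xus)$, $t$ for $T(xut)$), one identifies $\mathcal{W}(xus) = \{H_s, H_{sus}\}$ together with $a$ walls through the vertex $(su)\zeta$, which form the entire dihedral pencil at that vertex. The combined constraint $H_s^+ \cap H_{sus}^+$ confines the admissible region to a definite side of $(su)\zeta$, and within this region the analogous redundancy argument collapses the $a$ walls to the single wall $H_{susus}$; this matches $\mathcal{W}(sus) = \{H_s, H_{sus}, H_{susus}\}$, giving $T(xus) = T(sus)$, and the case $T(xut) = T(tut)$ is symmetric under $s \leftrightarrow t$. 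The central obstacle throughout is formalising the nested half-plane claim, which asserts that in a hyperbolic dihedral pencil at a vertex of angle $\pi/a$, a proper arc of consecutive walls with inward-facing positive sides reduces to its extreme wall when cut by a transverse wall not passing through the vertex. This can be verified either directly from the hyperbolic realisation or combinatorially by translating each wall constraint into an inequality on reduced expressions and exploiting the uniqueness of reduced expressions for non-longest elements of $W_{st}$.
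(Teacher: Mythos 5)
Your wall characterisation of cone types, $T(w)=\bigcap_{H\in\mathcal{W}(w)}H^+$ with $\mathcal{W}(w)$ the walls separating $1$ from $w^{-1}$, is correct, and the inclusion $T(vu)\subseteq T(s_1u)$ follows immediately from $\mathcal{W}(s_1u)\subseteq\mathcal{W}(vu)$. The reverse inclusion, which is where the whole content of the lemma lies, contains both a factual error and a genuine gap.

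\emph{Factual error.} You assert that the extra walls $H_{uw_ks_kw_k^{-1}u}$ with $k\geq 2$ ``enter $H_u^+$ as geodesic rays from their crossings with $H_u$.'' These walls do not cross $H_u$ at all. For $k=2$, with $s_1=s$, the wall in question is $uH_{sts}$; by Lemma~\ref{lem:intersect}, $H_u\cap uH_{sts}\neq\emptyset$ would force $\langle u,\,u(sts)u\rangle$ --- hence its $u$-conjugate $\langle u,sts\rangle$ --- to be finite, contradicting Lemma~\ref{lem:reduction}. So $uH_{sts}$ lies entirely in $H_u^-$ (being the $u$-image of $H_{sts}\subset H_u^+$), and likewise for the later walls. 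The redundancy you want actually holds for the \emph{opposite} reason from the one you give: each extra wall misses $H_u^+$ entirely, so $H_u^+$ lies wholly on one side of it, and since $1\in H_u^+$ lies on the positive side, $H_u^+\subseteq H_{uw_ks_kw_k^{-1}u}^+$.

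\emph{Gap.} The ``nested half-plane claim,'' which you yourself identify as the central obstacle, is never proved, and as a general statement about dihedral pencils in $\HH^2$ it is false: a transverse geodesic can perfectly well cross several walls of a pencil, depending on the configuration. What rules this out here is precisely the hypothesis $m_{tu},m_{su}\geq 3$, fed through Lemma~\ref{lem:reduction}; your argument never invokes that hypothesis, so as written it would equally ``prove'' the statement in Class~II/III cases where it is not claimed. Moreover, Lemma~\ref{lem:reduction} only gives $H_u\cap H_{sts}=\emptyset$ directly; you still need an additional trapping argument (once $H_u$ is confined to the wedge between $H_s$ and $H_{sts}$ at $\zeta_{st}$, it cannot reach $H_{ststs},\dots$) to handle all $k\geq 2$. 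The paper sidesteps this global statement entirely by taking the \emph{first} index $k$ at which $s_k\cdots s_1uw$ fails to be reduced, so that only one pair of walls --- and hence only the single fact that $\langle u,tst\rangle$ (or $\langle u,sts\rangle$) is infinite --- is ever needed. The same issues, compounded by the need to screen off a \emph{full} pencil with two constraints $H_s^+\cap H_{sus}^+$, affect your sketch of part~(2).
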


\begin{proof}
1. By symmetry we may suppose that $s_1=s$. We are required to show that for each fixed $w\in W$,
$$
\textrm{$vuw$ is reduced}\quad\textrm{if and only if}\quad \textrm{$suw$ is reduced}.
$$
It is clear that if $vuw$ is reduced then $suw$ is also reduced (since truncations of reduced expressions are reduced). Suppose, for a contradiction, that $suw$ is reduced and that $vuw$ is not reduced. Let $2\leq k\leq \ell$ be minimal subject to $s_{k-1}\cdots s_1uw$ is reduced and $s_k\cdots s_1uw$ is not reduced. Since $\{s_{k-1},s_k\}=\{s,t\}$ we see that $s,t\in L(s_{k-1}\cdots s_1uw)$. Thus by (\ref{eq:startexpression}) there is a reduced expression for $s_{k-1}\cdots s_1uw$ starting with any chosen reduced word in~$W_{st}$. Since $\ell(s_{k-1}\cdots s_1)\leq m_{st}-2$ the word $s_{k-1}\cdots s_1ts\in W_{st}$ is reduced, and thus $s_{k-1}\cdots s_1uw=s_{k-1}\cdots s_1tsv'$ for some $v'\in W$ (with the expressions on both sides being reduced) and so $uw=tsv'$ (again with both expressions reduced). Therefore the element $uw=tsv'$ lies on the negative side of the wall $H$ separating $1$ from $u$, and on the negative side of the wall $H'$ separating $t$ from $ts$, and so $H^-\cup(H')^-\neq\emptyset$. The reflection in the hyperplane $H$ is $s_H=u$, and the reflection in the hyperplane $H'$ is $s_{H'}=tut$. Either $H^-\subseteq (H')^-$, or $(H')^-\subseteq H^-$, or $H$ and $H'$ intersect. If $H^-\subseteq (H')^-$ then $u\in (H')^-$, and so $\ell(s_{H'}u)<\ell(u)=1$. But $s_{H'}u=tstu$ has length~$4$. Similarly, if $(H')^-\subseteq H^-$ then $ts\in H^-$ and so $\ell(s_Hts)<\ell(ts)=2$, but $s_Hts=uts$ has length~$3$. Therefore $H$ and $H'$ intersect, and so by Lemma~\ref{lem:intersect} the group generated by $s_H=u$ and $s_{H'}=tst$ is finite, contradicting Lemma~\ref{lem:reduction}. 

2. We show that $T(xus)=T(sus)$. We are required to show that for each fixed $w\in W$,
$$
\textrm{$xusw$ is reduced}\quad\textrm{if and only if}\quad \textrm{$susw$ is reduced}.
$$
Choose the reduced expression $x=s_{\ell}\cdots s_1$ with $s_1=s$. Thus the expression $susw$ can be regarded as a truncation of the expression $xusw$, and so it follows that if the latter is reduced then the former is also reduced. Suppose, for a contradiction, that $susw$ is reduced and that $xusw$ is not reduced. Let $2\leq k\leq \ell$ be minimal subject to $s_{k-1}\cdots s_1usw$ is reduced and $s_k\cdots s_1usw$ is not reduced. Then $\{s,t\}\in L(s_{k-1}\cdots s_1usw)$ and so there is a reduced expression $s_{k-1}\cdots s_1usw=s_{k-1}\cdots s_1tv'$ for some $v'\in W$, and so $usw=tv'$ with both sides reduced. Hence, as above, the group generated by $usu$ and $t$ is finite, contradicting Lemma~\ref{lem:reduction}.
\end{proof}

Lemma~\ref{lem:automaton1} completely determined the Cannon automaton for all triangle groups in Class~I. An illustration is given in Figure~\ref{fig:I}.

\begin{figure}[!h]
\begin{minipage}{0.65\textwidth}
\centering
\begin{tikzpicture} [scale=6]
\draw [->,red,-triangle 45] (0,0) -- (0,0.25); 
\draw [->,Green,-triangle 45] (0,0) -- ({0.25*cos(-30)},{0.25*sin(-30)});
\draw [->,blue,-triangle 45] (0,0) -- ({0.25*cos(210)},{0.25*sin(210)});
\draw [->,domain=135:170,Green,-triangle 45] plot ({0.3*cos(\x)},{0.3*sin(\x)-0.335});
\draw [->,domain=170:205,blue,-triangle 45] plot ({0.3*cos(\x)},{0.3*sin(\x)-0.335});
\draw [->,domain=205:240,Green,-triangle 45] plot ({0.3*cos(\x)},{0.3*sin(\x)-0.335});
\draw [->,gray,dashed,domain=240:270,-triangle 45] plot ({0.3*cos(\x)},{0.3*sin(\x)-0.335});
\draw [->,domain=45:10,blue,-triangle 45] plot ({0.3*cos(\x)},{0.3*sin(\x)-0.335});
\draw [->,domain=10:-25,Green,-triangle 45] plot ({0.3*cos(\x)},{0.3*sin(\x)-0.335});
\draw [->,domain=-25:-60,blue,-triangle 45] plot ({0.3*cos(\x)},{0.3*sin(\x)-0.335});
\draw [->,gray,dashed,domain=-60:-90,-triangle 45] plot ({0.3*cos(\x)},{0.3*sin(\x)-0.335});
\draw [->,red,-triangle 45] ({0.3*cos(-90)},{0.3*sin(-90)-0.335}) -- ({0.3*cos(-90)},{0.3*sin(-90)-0.335-0.16});
\draw [->,Green,-triangle 45] ({0.3*cos(-90)},{0.3*sin(-90)-0.335-0.16}) -- ({0.3*cos(-90)-0.16},{0.3*sin(-90)-0.335-0.16});
\draw [->,blue,-triangle 45] ({0.3*cos(-90)},{0.3*sin(-90)-0.335-0.16}) -- ({0.3*cos(-90)+0.16},{0.3*sin(-90)-0.335-0.16});
\draw [->,red,-triangle 45] ({0.3*cos(170)},{0.3*sin(170)-0.335}) -- ({0.2*cos(170)},{0.2*sin(170)-0.335});
\draw [->,red,-triangle 45] ({0.3*cos(205)},{0.3*sin(205)-0.335}) -- ({0.2*cos(205)},{0.2*sin(205)-0.335});
\draw [->,red,-triangle 45] ({0.3*cos(240)},{0.3*sin(240)-0.335}) -- ({0.2*cos(240)},{0.2*sin(240)-0.335});
\draw [->,red,-triangle 45] ({0.3*cos(10)},{0.3*sin(10)-0.335}) -- ({0.2*cos(10)},{0.2*sin(10)-0.335});
\draw [->,red,-triangle 45] ({0.3*cos(-25)},{0.3*sin(-25)-0.335}) -- ({0.2*cos(-25)},{0.2*sin(-25)-0.335});
\draw [->,red,-triangle 45] ({0.3*cos(-60)},{0.3*sin(-60)-0.335}) -- ({0.2*cos(-60)},{0.2*sin(-60)-0.335});
\path (0,-0.05) node {\footnotesize{$\emptyset$}}
 ({0.25*cos(-30)-0.02},{0.25*sin(-30)-0.04}) node {\footnotesize{$1$}}
  ({-0.25*cos(-30)+0.02},{0.25*sin(-30)-0.04}) node {\footnotesize{$2$}}
  (-0.04,0.23) node {\footnotesize{$3$}}
   ({0.3*cos(10)+0.05},{0.3*sin(10)-0.335}) node {\footnotesize{$12$}}
    ({0.3*cos(-25)+0.06},{0.3*sin(-25)-0.335}) node {\footnotesize{$121$}}
   ({0.3*cos(-60)+0.03},{0.3*sin(-60)-0.335-0.06}) node {\footnotesize{$1212$}}
    ({-0.3*cos(10)-0.05},{0.3*sin(10)-0.335}) node {\footnotesize{$21$}}
    ({-0.3*cos(-25)-0.06},{0.3*sin(-25)-0.335}) node {\footnotesize{$212$}}
    ({-0.3*cos(-60)-0.03},{0.3*sin(-60)-0.335-0.06}) node {\footnotesize{$2121$}}
    ({-0.3*cos(-90)},{0.3*sin(-90)-0.335+0.05}) node {\footnotesize{$x$}}
      ({-0.3*cos(-90)},{0.3*sin(-90)-0.335+0.05-0.25}) node {\footnotesize{$x3$}}
        ({-0.3*cos(-90)+0.21},{0.3*sin(-90)-0.335-0.16}) node [gray]{\footnotesize{$232$}}
        ({-0.3*cos(-90)-0.21},{0.3*sin(-90)-0.335-0.16}) node [gray]{\footnotesize{$131$}}
       ({0.2*cos(10)-0.04},{0.2*sin(10)-0.335}) node [gray]{\footnotesize{$23$}}
    ({0.2*cos(-25)-0.04},{0.2*sin(-25)-0.335}) node [gray]{\footnotesize{$13$}}
    ({0.2*cos(-60)-0.04+0.01},{0.2*sin(-60)-0.335+0.025}) node [gray]{\footnotesize{$23$}}
    ({-0.2*cos(10)+0.04},{0.2*sin(10)-0.335}) node [gray]{\footnotesize{$13$}}
    ({-0.2*cos(-25)+0.04},{0.2*sin(-25)-0.335}) node [gray]{\footnotesize{$23$}}
    ({-0.2*cos(-60)+0.04-0.01},{0.2*sin(-60)-0.335+0.025}) node [gray] {\footnotesize{$13$}};
\begin{scope}[rotate=120]
\draw [->,domain=135:170,red,-triangle 45] plot ({0.3*cos(\x)},{0.3*sin(\x)-0.335});
\draw [->,domain=170:205,Green,-triangle 45] plot ({0.3*cos(\x)},{0.3*sin(\x)-0.335});
\draw [->,domain=205:240,red,-triangle 45] plot ({0.3*cos(\x)},{0.3*sin(\x)-0.335});
\draw [->,gray,dashed,domain=240:270,-triangle 45] plot ({0.3*cos(\x)},{0.3*sin(\x)-0.335});
\draw [->,domain=45:10,Green,-triangle 45] plot ({0.3*cos(\x)},{0.3*sin(\x)-0.335});
\draw [->,domain=10:-25,red,-triangle 45] plot ({0.3*cos(\x)},{0.3*sin(\x)-0.335});
\draw [->,domain=-25:-60,Green,-triangle 45] plot ({0.3*cos(\x)},{0.3*sin(\x)-0.335});
\draw [->,gray,dashed,domain=-60:-90,-triangle 45] plot ({0.3*cos(\x)},{0.3*sin(\x)-0.335});
\draw [->,blue,-triangle 45] ({0.3*cos(-90)},{0.3*sin(-90)-0.335}) -- ({0.3*cos(-90)},{0.3*sin(-90)-0.335-0.16});
\draw [->,red,-triangle 45] ({0.3*cos(-90)},{0.3*sin(-90)-0.335-0.16}) -- ({0.3*cos(-90)-0.16},{0.3*sin(-90)-0.335-0.16});
\draw [->,Green,-triangle 45] ({0.3*cos(-90)},{0.3*sin(-90)-0.335-0.16}) -- ({0.3*cos(-90)+0.16},{0.3*sin(-90)-0.335-0.16});
\draw [->,blue,-triangle 45] ({0.3*cos(170)},{0.3*sin(170)-0.335}) -- ({0.2*cos(170)},{0.2*sin(170)-0.335});
\draw [->,blue,-triangle 45] ({0.3*cos(205)},{0.3*sin(205)-0.335}) -- ({0.2*cos(205)},{0.2*sin(205)-0.335});
\draw [->,blue,-triangle 45] ({0.3*cos(240)},{0.3*sin(240)-0.335}) -- ({0.2*cos(240)},{0.2*sin(240)-0.335});
\draw [->,blue,-triangle 45] ({0.3*cos(10)},{0.3*sin(10)-0.335}) -- ({0.2*cos(10)},{0.2*sin(10)-0.335});
\draw [->,blue,-triangle 45] ({0.3*cos(-25)},{0.3*sin(-25)-0.335}) -- ({0.2*cos(-25)},{0.2*sin(-25)-0.335});
\draw [->,blue,-triangle 45] ({0.3*cos(-60)},{0.3*sin(-60)-0.335}) -- ({0.2*cos(-60)},{0.2*sin(-60)-0.335});
\path 
   ({0.3*cos(10)+0.05},{0.3*sin(10)-0.335}) node {\footnotesize{$31$}}
    ({0.3*cos(-25)+0.06},{0.3*sin(-25)-0.335}) node {\footnotesize{$313$}}
    ({0.3*cos(-60)+0.03},{0.3*sin(-60)-0.335-0.06}) node {\footnotesize{$3131$}}
    ({-0.3*cos(10)-0.05},{0.3*sin(10)-0.335}) node {\footnotesize{$13$}}
    ({-0.3*cos(-25)-0.06},{0.3*sin(-25)-0.335}) node {\footnotesize{$131$}}
    ({-0.3*cos(-60)-0.03},{0.3*sin(-60)-0.335-0.06}) node {\footnotesize{$1313$}}
    ({-0.3*cos(-90)},{0.3*sin(-90)-0.335+0.05}) node {\footnotesize{$z$}}
      ({-0.3*cos(-90)},{0.3*sin(-90)-0.335+0.05-0.25}) node {\footnotesize{$z2$}}
        ({-0.3*cos(-90)+0.21},{0.3*sin(-90)-0.335-0.16}) node [gray] {\footnotesize{$121$}}
        ({-0.3*cos(-90)-0.21},{0.3*sin(-90)-0.335-0.16}) node [gray] {\footnotesize{$323$}}
       ({0.2*cos(10)-0.04},{0.2*sin(10)-0.335}) node [gray] {\footnotesize{$12$}}
    ({0.2*cos(-25)-0.04},{0.2*sin(-25)-0.335}) node [gray] {\footnotesize{$32$}}
    ({0.2*cos(-60)-0.04+0.01},{0.2*sin(-60)-0.335+0.025})  node [gray] {\footnotesize{$12$}}
    ({-0.2*cos(10)+0.04},{0.2*sin(10)-0.335}) node [gray]{\footnotesize{$32$}}
    ({-0.2*cos(-25)+0.04},{0.2*sin(-25)-0.335}) node [gray] {\footnotesize{$12$}}
    ({-0.2*cos(-60)+0.04-0.01},{0.2*sin(-60)-0.335+0.025})  node [gray] {\footnotesize{$32$}};
\end{scope}
\begin{scope}[rotate=-120]
\draw [->,domain=135:170,blue,-triangle 45] plot ({0.3*cos(\x)},{0.3*sin(\x)-0.335});
\draw [->,domain=170:205,red,-triangle 45] plot ({0.3*cos(\x)},{0.3*sin(\x)-0.335});
\draw [->,domain=205:240,blue,-triangle 45] plot ({0.3*cos(\x)},{0.3*sin(\x)-0.335});
\draw [->,gray,dashed,domain=240:270,-triangle 45] plot ({0.3*cos(\x)},{0.3*sin(\x)-0.335});
\draw [->,domain=45:10,red,-triangle 45] plot ({0.3*cos(\x)},{0.3*sin(\x)-0.335});
\draw [->,domain=10:-25,blue,-triangle 45] plot ({0.3*cos(\x)},{0.3*sin(\x)-0.335});
\draw [->,domain=-25:-60,red,-triangle 45] plot ({0.3*cos(\x)},{0.3*sin(\x)-0.335});
\draw [->,gray,dashed,domain=-60:-90,-triangle 45] plot ({0.3*cos(\x)},{0.3*sin(\x)-0.335});
\draw [->,Green,-triangle 45] ({0.3*cos(-90)},{0.3*sin(-90)-0.335}) -- ({0.3*cos(-90)},{0.3*sin(-90)-0.335-0.16});
\draw [->,blue,-triangle 45] ({0.3*cos(-90)},{0.3*sin(-90)-0.335-0.16}) -- ({0.3*cos(-90)-0.16},{0.3*sin(-90)-0.335-0.16});
\draw [->,red,-triangle 45] ({0.3*cos(-90)},{0.3*sin(-90)-0.335-0.16}) -- ({0.3*cos(-90)+0.16},{0.3*sin(-90)-0.335-0.16});
\draw [->,Green,-triangle 45] ({0.3*cos(170)},{0.3*sin(170)-0.335}) -- ({0.2*cos(170)},{0.2*sin(170)-0.335});
\draw [->,Green,-triangle 45] ({0.3*cos(205)},{0.3*sin(205)-0.335}) -- ({0.2*cos(205)},{0.2*sin(205)-0.335});
\draw [->,Green,-triangle 45] ({0.3*cos(240)},{0.3*sin(240)-0.335}) -- ({0.2*cos(240)},{0.2*sin(240)-0.335});
\draw [->,Green,-triangle 45] ({0.3*cos(10)},{0.3*sin(10)-0.335}) -- ({0.2*cos(10)},{0.2*sin(10)-0.335});
\draw [->,Green,-triangle 45] ({0.3*cos(-25)},{0.3*sin(-25)-0.335}) -- ({0.2*cos(-25)},{0.2*sin(-25)-0.335});
\draw [->,Green,-triangle 45] ({0.3*cos(-60)},{0.3*sin(-60)-0.335}) -- ({0.2*cos(-60)},{0.2*sin(-60)-0.335});
\path 
   ({0.3*cos(10)+0.05},{0.3*sin(10)-0.335}) node {\footnotesize{$23$}}
    ({0.3*cos(-25)+0.06},{0.3*sin(-25)-0.335}) node {\footnotesize{$232$}}
    ({0.3*cos(-60)+0.03},{0.3*sin(-60)-0.335-0.06}) node {\footnotesize{$2323$}}
    ({-0.3*cos(10)-0.05},{0.3*sin(10)-0.335}) node {\footnotesize{$32$}}
    ({-0.3*cos(-25)-0.06},{0.3*sin(-25)-0.335}) node {\footnotesize{$323$}}
   ({-0.3*cos(-60)-0.03},{0.3*sin(-60)-0.335-0.06}) node {\footnotesize{$3232$}}
    ({-0.3*cos(-90)},{0.3*sin(-90)-0.335+0.05}) node {\footnotesize{$y$}}
      ({-0.3*cos(-90)},{0.3*sin(-90)-0.335+0.05-0.25}) node {\footnotesize{$y1$}}
        ({-0.3*cos(-90)+0.21},{0.3*sin(-90)-0.335-0.16}) node [gray] {\footnotesize{$313$}}
        ({-0.3*cos(-90)-0.21},{0.3*sin(-90)-0.335-0.16}) node [gray] {\footnotesize{$212$}}
       ({0.2*cos(10)-0.04},{0.2*sin(10)-0.335}) node  [gray] {\footnotesize{$31$}}
    ({0.2*cos(-25)-0.04},{0.2*sin(-25)-0.335}) node [gray] {\footnotesize{$21$}}
    ({0.2*cos(-60)-0.04+0.01},{0.2*sin(-60)-0.335+0.025})  node [gray] {\footnotesize{$31$}}
    ({-0.2*cos(10)+0.04},{0.2*sin(10)-0.335}) node [gray] {\footnotesize{$21$}}
    ({-0.2*cos(-25)+0.04},{0.2*sin(-25)-0.335}) node [gray] {\footnotesize{$31$}}
    ({-0.2*cos(-60)+0.04-0.01},{0.2*sin(-60)-0.335+0.025})  node [gray] {\footnotesize{$21$}};
\end{scope}
\end{tikzpicture}
\caption{Cannon automaton for Class~I triangle groups}\label{fig:I}
\end{minipage}\begin{minipage}{0.3\textwidth}
Figure~\ref{fig:I} shows the Cannon automaton for Class I triangle groups. The generators are labelled $1$, $2$, and~$3$, and the labels on the edges are indicated by colours, with $1=\text{green}$, $2=\text{blue}$, and $3=\text{red}$. The cone types are given by the base element of a representative cone of that type. The cone types in grey are duplicates, and the reader should instead imagine the arrow pointing to the corresponding cone type in black. Finally, $x=121\cdots$, $y=232\cdots$ and $z=131\cdots$ are the longest elements of the parabolic subgroups $W_{12}$, $W_{23}$, and $W_{13}$ respectively. \end{minipage}
\end{figure}
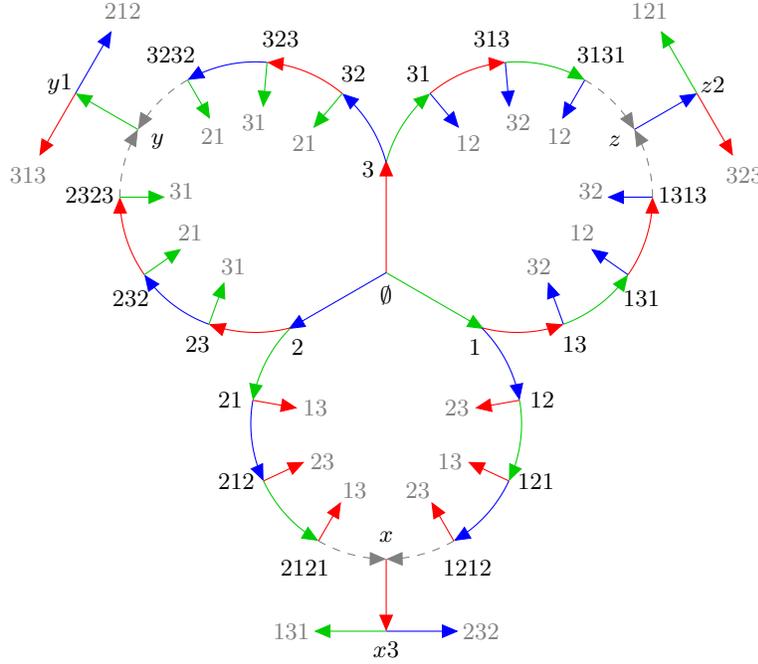

\subsection{Class II}

The proof of the following lemma, which determines the Cannon automata for the triangle groups in Class~II, is similar to the proof of Lemma~\ref{lem:automaton1} and the details are omitted. 

\begin{lemma}\label{lem:lem:automaton2} 
Let $(W,S)$ be a triangle group with $S=\{s,t,u\}$. Suppose that
$m_{st},m_{tu}\geq 4$ and that $m_{su}=2$. Let $x$ be the longest element of
$W_{st}$. Let $v\in W_{st}$ with $v\notin \{x,xs\}$, and write $v=s_{\ell}\cdots s_1$ for the unique reduced expression for~$v$. Then
\begin{align*}
T(vu)&=T(s_1u)&T((xs)ut)&=T(tut)&T(xutu)&=T(tutu)&T(xutsu)&=T(tutsu)\\
T(xutst)&=T(stst)&T(suts)&=T(sts)&T(sutu)&=T(utu).
\end{align*}
\end{lemma}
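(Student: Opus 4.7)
The proof follows the same template as Lemma~\ref{lem:automaton1}: for each asserted identity $T(Y)=T(Z)$, it suffices to show that $Yw$ is reduced if and only if $Zw$ is reduced for every $w\in W$. One direction is immediate because $Z$ is a suffix of a reduced expression for~$Y$, possibly after using $su=us$ to rewrite $Y$ (for example, $suts=usts$ has $sts$ as a suffix, and $sutu=ustu$ has $utu$ as a suffix). The reverse direction is by contradiction, and requires as a preliminary the Class~II analogue of Lemma~\ref{lem:reduction}: under $m_{st},m_{tu}\geq 4$ and $m_{su}=2$, each of the subgroups $\langle u,tst\rangle$, $\langle s,tut\rangle$, $\langle u,sts\rangle$, $\langle s,utu\rangle$, together with a few further pairs arising in the boundary identities, is infinite. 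Each is dihedral, generated by two involutions, and infinitude is verified either by exhibiting an unshortenable reduced word such as $(tstu)^n$ (the hypothesis $m_{st},m_{tu}\geq 4$ forbids any Coxeter relation from shortening it, and the only commutation $su=us$ never creates an adjacent pair in this word) or geometrically, by observing that in the standard reflection representation the relevant two walls are parallel and their product is a nontrivial translation.

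For the family $T(vu)=T(s_1u)$ with $v=s_\ell\cdots s_1\in W_{st}\setminus\{x,xs\}$, the argument is verbatim that of Lemma~\ref{lem:automaton1}.1, with $\langle u,tst\rangle$ playing the role of the infinite subgroup used to close the contradiction. The exclusion of $xs$ is precisely what guarantees $\ell(s_{k-1}\cdots s_1)\leq m_{st}-2$, so that the word $s_{k-1}\cdots s_1\cdot ts$ remains reduced in $W_{st}$ and~(\ref{eq:startexpression}) may be applied as in the Class~I proof. The four boundary identities $T((xs)ut)=T(tut)$, $T(xutu)=T(tutu)$, $T(xutsu)=T(tutsu)$ and $T(xutst)=T(stst)$ are handled in the spirit of Lemma~\ref{lem:automaton1}.2: one assumes the shorter expression is reduced while the longer is not, locates the first obstructing letter, and uses~(\ref{eq:startexpression}) together with the two reduced expressions of~$x$ (one beginning with~$s$, one with~$t$) to produce a wall-nesting configuration ruled out by the preliminary lemma combined with Lemma~\ref{lem:intersect}.

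For the commutation identities $T(suts)=T(sts)$ and $T(sutu)=T(utu)$ (we treat the first, the second being symmetric), suppose $stsw$ is reduced but $sutsw=ustsw$ is not. Then $\ell(ustsw)=\ell(stsw)-1$, so $u\in L(stsw)$, and together with $s\in L(stsw)$ we have $\{s,u\}\subseteq L(stsw)$ and therefore $stsw\in H_u^-\cap H_{sts}^-$. Since $\langle u,sts\rangle$ is infinite, Lemma~\ref{lem:intersect} implies that the walls $H_u$ and $H_{sts}$ are disjoint in~$\mathbb{H}^2$, so one of the half-planes $H_u^-,H_{sts}^-$ must contain the other. The inclusion $H_u^-\subseteq H_{sts}^-$ forces $u\in H_{sts}^-$, i.e.\ $\ell(stsu)<\ell(u)=1$ and thus $stsu=1$, which is absurd since $sts$ and~$u$ have different lengths; the reverse inclusion $H_{sts}^-\subseteq H_u^-$ forces $sts\in H_u^-$, i.e.\ $\ell(usts)<\ell(sts)=3$, contradicting the direct computation $\ell(usts)=4$ (for $m_{st}\geq 4$, $sts$ admits the unique reduced expression $s\cdot t\cdot s$, so $u\notin L(sts)$). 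The identity $T(sutu)=T(utu)$ follows by swapping $s\leftrightarrow u$ and using that $\langle s,utu\rangle$ is likewise infinite.

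The main technical obstacle is the boundary family: since $x$ admits two distinct reduced expressions, the descent-set rewriting after the first Coxeter shortening move branches into several subcases depending on which initial letter of~$x$ is involved, each requiring its own wall-nesting contradiction. A secondary obstacle is verifying the preliminary infiniteness lemma uniformly across all reflection pairs that arise (notably those involving longer conjugates such as $ststs$ or $tutut$): one must check case by case that the commutation $su=us$ does not produce hidden reductions in the candidate unshortenable words, which is ultimately ensured by the bound $m_{st},m_{tu}\geq 4$.
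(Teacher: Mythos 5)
Your preliminary ``Class~II analogue of Lemma~\ref{lem:reduction}'' is false for two of the four pairs you list: $\langle u,sts\rangle$ and $\langle s,utu\rangle$ are \emph{finite} dihedral groups. Using $su=us$ one checks by induction that $(u\cdot sts)^k = us\,(tu)^{k-1}\,ts$, so $(u\cdot sts)^{m_{tu}} = us\cdot(tu)^{-1}\cdot ts = us\cdot ut\cdot ts = 1$, and $\langle u,sts\rangle$ has order $2m_{tu}$ (symmetrically $\langle s,utu\rangle$ has order $2m_{st}$). Both verifications you offer fail here, for exactly the reason you flag as a potential pitfall at the end: in $(stsu)^n$ the letters $s$ and $u$ \emph{are} adjacent, and indeed $(stsu)^2 = st(su)stsu = st(us)stsu = stu(ss)tsu = stutsu$ is already shorter; and since the reflections generate a finite group, Lemma~\ref{lem:intersect} says the walls $H_u$ and $H_{sts}$ \emph{do} intersect, so they are neither parallel nor ultraparallel. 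With intersecting walls, $H_u^-\cap H_{sts}^-$ is a nonempty wedge and neither half-plane contains the other, so the contradiction you derive for $T(suts)=T(sts)$ and $T(sutu)=T(utu)$ does not exist.

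The identity is still true, but you have chosen the wrong pair of walls; the relevant infinite group is again $\langle u,tst\rangle$, the same one that (correctly) drives $T(vu)=T(s_1u)$. Concretely: if $stsw$ is reduced and $u\in L(stsw)$, then $\{s,u\}\subseteq L(stsw)$, so by~(\ref{eq:startexpression}) $stsw = us\cdot v'$ is reduced; cancelling the leading $s$ and using $su=us$ gives $tsw = uv'$ reduced, whence $\{t,u\}\subseteq L(tsw)$, and again by~(\ref{eq:startexpression}) $tsw = ut\cdot v''$ is reduced. Reading the two minimal galleries $1,t,ts,\ldots,tsw$ and $1,u,ut,\ldots,tsw$, one sees $tsw\in H_{tst}^-\cap H_u^-$, and now $\langle tst,u\rangle$ \emph{is} infinite (in $(tstu)^n$ the letters $s,u$ are never adjacent, so no relation applies), so Lemma~\ref{lem:intersect} gives disjoint walls and the nesting contradiction closes exactly as in Lemma~\ref{lem:automaton1}. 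You should re-examine the ``further pairs arising in the boundary identities'' with the same caution, since the same collapse of a candidate dihedral pair to a finite group via $su=us$ can recur; your treatment of the easy direction, of the family $T(vu)=T(s_1u)$ via $\langle u,tst\rangle$, and your remark on the exclusion $v\neq xs$ are fine.
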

The resulting automaton has $a+b+c+4$ vertices, given by $T(w)$ with $w\in\bW$, where
$$
\bW=W_{st}\cup W_{tu}\cup W_{us}\cup \{sut\}\cup x\cdot\{su,u,ut,uts\}\cup y\cdot\{us,s,st,stu\},
$$
where $x=sts\cdots$ is the longest element of $W_{st}$ and $y=tut\cdots$ is the longest element of $W_{tu}$. See Figure~\ref{fig:II} for an illustration.

\subsection{Class III}

The proof of the following lemma, which determines the Cannon automaton for the triangle groups in Class~III, is similar to the proof of Lemma~\ref{lem:automaton1} and the details are omitted. 

\begin{lemma}\label{lem:automaton3} Let $(W,S)$ be a triangle group with
  $S=\{s,t,u\}$. Suppose that $m_{st}\geq 6$, $m_{tu}=3$, and $m_{su}=2$. Let
  $x$ be the longest element element of $W_{st}$. Let $v\in W_{st}$ with
  $v\notin \{1,x,xs,xt,xts\}$, and let $v=s_{\ell}\cdots s_1$ be the unique reduced expression for~$v$. Then
\begin{align*}
T(vu)&=T(s_1u)&T(xutstst)&=T(ststst)&T((xs)ut)&=T(tut)\\
T(xutstst)&=T(ststst)&T(xutststut)&=T(stststut)&T(xutststuts)&=T(stststuts)\\
T(xutststutstu)&=T(stststutstu)&T((xt)utstutststs)&=T(tststs)&T((xts)ut)&=T(tut)\\
T((xt)utsts)&=T(ststs)&T((xt)utstsu)&=T(ststsu)&T(tutstst)&=T(tstst)\\
T(utstuts)&=T(tuts)&T(utstut)&=T(tut)&T(utst)&=T(tst)\\
T(stut)&=T(tut)&T(ustst)&=T(stst)&T(stuts)&=T(tuts)
\end{align*}
\end{lemma}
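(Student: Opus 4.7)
The proof will follow the template established in Lemma~\ref{lem:automaton1}: for each claimed identity $T(w_1)=T(w_2)$ the plan is to show that, for every $w\in W$, the expression $w_1w$ is reduced if and only if $w_2w$ is reduced. One direction is always immediate because $w_2$ is a terminal truncation of $w_1$ (reading right to left), and truncations of reduced expressions are reduced. The substance lies in the converse direction, which we handle by a wall/hyperplane argument as in Lemma~\ref{lem:automaton1}. Namely, assuming $w_2w$ reduced but $w_1w$ not reduced, we pick the minimal initial segment of the prefix where reducibility is first lost and deduce that two specific generators simultaneously lie in the left descent set of an intermediate element. Using~(\ref{eq:startexpression}) we then obtain two reduced factorisations of a common element, yielding an element $z$ that lies on the negative side of two distinct walls $H,H'$. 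By~(\ref{eq:halfspaces}) and elementary comparison of the associated reflections we rule out the nested half-space cases $H^-\subseteq (H')^-$ and $(H')^-\subseteq H^-$ by a length count, so Lemma~\ref{lem:intersect} forces $s_H$ and $s_{H'}$ to generate a finite subgroup of~$W$.

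The crux of the argument is therefore to identify, in each of the listed identities, the pair of reflections $(s_H,s_{H'})$ that appears and to verify that in $W=W(m_{st},m_{tu},m_{su})$ with $m_{st}\geq 6$, $m_{tu}=3$, $m_{su}=2$, this pair generates an infinite subgroup. Lemma~\ref{lem:reduction} no longer applies verbatim because $m_{su}=2$, so the main technical step is to prove a Class~III replacement: for each pair $(r_1,r_2)$ of reflections arising in the argument (of the form $(u,tst)$, $(u,tstst)$, $(t,utstu)$, $(s,utstutu)$, $(u,tstutst)$, $(t,stststuts)$, and the analogous conjugates appearing in the longer identities), exhibit an explicit family of words $(r_1r_2)^n$ or $r_1(r_2r_1)^n$ on which no sequence of Coxeter moves produces any cancellation, and therefore the subgroup is infinite. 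This is a finite case analysis, as there are only finitely many distinct reflection pairs across the nineteen identities, and in each case the bound $m_{st}\geq 6$ provides enough room to block all admissible braid relations.

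The identities naturally split into two families. The first family, $T(vu)=T(s_1u)$ for $v\in W_{st}\setminus\{1,x,xs,xt,xts\}$, is handled by essentially the same proof as in Lemma~\ref{lem:automaton1}.1, with the infinite subgroup generated by $u$ and $s_1ts_1^{-1}$ (either $tst$ or $sts$) checked from the Class~III parameters. The excluded cases near the top $\{x,xs,xt,xts\}$ are precisely the elements where an initial $W_{st}$-prefix can wrap all the way around the longest element and induce a genuine new cone type; these are covered by the second family of identities, which deal with elements beginning with these ``exceptional'' prefixes. For each of these we apply the same wall argument, but now the candidate reflection pair involves longer conjugates (for example $(u,stststs)$ for $T(xutstst)=T(ststst)$), and the verification of infiniteness uses the fact that any braid reduction inside $W_{st}$ requires at least $m_{st}\geq 6$ adjacent $s,t$ letters, so a short word such as $ustststu$ admits no nontrivial reduction.

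The main obstacle I anticipate is the bookkeeping in the second family: the longer identities involve prefixes of length up to $m_{st}+5$ and the intermediate element to which~(\ref{eq:startexpression}) is applied sits deep inside~$W_{st}$, so some care is needed to ensure that the minimal-$k$ obstruction index falls on a letter alternating in $\{s,t\}$ rather than one of the letters from $\{u\}$. The remedy is to perform the induction not on the full length of $v$ but on its distance from the relevant exceptional prefix, and to treat the cases where the obstruction occurs at a $u$-letter by direct inspection: these are precisely the cases where the corresponding shorter identity in the first family (or an earlier identity in the second family) has already been established, so the induction closes. Once all reflection pairs have been dispatched, the nineteen identities follow by the uniform wall argument described above.
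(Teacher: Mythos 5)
Your overall strategy matches what the paper intends (wall argument as in Lemma~\ref{lem:automaton1}, plus a Class~III substitute for Lemma~\ref{lem:reduction}), but there is a concrete gap in the key case analysis, and in fact it shows up already in the very first family of identities.

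You claim that the reflection pair obtained in the first family, namely $(u,tst)$ (or $(u,sts)$), generates an infinite subgroup ``from the Class~III parameters,'' and that $m_{st}\geq 6$ ``provides enough room to block all admissible braid relations.'' This is false for $m_{st}=6$, i.e.\ for the affine $(6,3,2)$ triangle group, which the lemma explicitly covers (recall the paper includes the affine roots of each class). In $(6,3,2)$ the wall of $u$ is perpendicular to the $s$-wall, while the wall of $tst$ passes through the $st$-vertex at angle $2\pi/6=\pi/3$ to the $s$-wall; these two Euclidean lines are not parallel, hence intersect, and so $\langle u, tst\rangle$ is a finite dihedral group of order $12$ (the product $u\cdot tst$ is a rotation by $\pi/3$). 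Concretely, $(u\,tst)^6=1$, so your proposed family $(r_1r_2)^n$ certainly collapses. The wall argument as you set it up then terminates at ``the walls intersect,'' which is no contradiction at all, and the proof stalls.

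The repair is exactly what the extra exclusions in the hypothesis of the lemma are for, and it is a genuinely different phenomenon from Class~I rather than the same argument with a different Lemma~\ref{lem:reduction}. Because $v\notin\{1,x,xs,xt,xts\}$ one has $\ell(v)\leq m_{st}-2$ (and in the $s_1=t$ case, in effect $\leq m_{st}-3$), which gives enough room inside $W_{st}$ to push the descent-set step one letter further: instead of stopping at $uw=tsv'$ and the pair $(u,tst)$, one writes $uw=tstv'$ (resp.\ $stst\,v'$) and obtains the pair $(u,tstst)$. The wall of $tstst$ sits at angle $3\pi/m_{st}$, which for $m_{st}=6$ equals $\pi/2$, so it is \emph{parallel} to the $u$-wall and $\langle u,tstst\rangle$ is infinite; for $m_{st}\geq 7$ the walls are ultraparallel and the conclusion persists. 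This also explains the asymmetric exclusion: $xts$ (with $s_1=t$) is excluded because reaching $(u,tstst)$ from $uw=stst\,v'$ needs $\ell(v)\leq m_{st}-3$, whereas $xst$ (with $s_1=s$) is not excluded because the $s_1=s$ branch only needs $\ell(v)\leq m_{st}-2$. Your heuristic explanation of the exclusions (``prefix wrapping around the longest element'') misses this point, and without the corrected reflection pair the proof does not close even for the first family, let alone the longer second-family identities.
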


An illustration of the resulting automaton is given in Figure~\ref{fig:III}.

\subsection{Class IV}

The proof of the following lemma, which determines the Cannon automata for the triangle groups in Class~IV, is similar to the proof of Lemma~\ref{lem:automaton1} and the details are omitted. 

\begin{lemma}\label{lem:automaton4}
Let $(W,S)$ be a Fuchsian Coxeter system, and let $s,t,u\in S$ be pairwise distinct generators. 
\begin{enumerate}
\item If $2\leq m_{st},m_{tu}<\infty$ and $m_{us}=\infty$ then for all $v\in W_{st}$ we have
$$
T(vu)=\begin{cases}
T(u)&\textrm{if $\ell(vt)=\ell(v)+1$}\\
T(tu)&\textrm{if $\ell(vt)=\ell(v)-1$}.
\end{cases}
$$
\item If $2\leq m_{st}<\infty$ and $m_{tu}=m_{us}=\infty$ then for all $v\in W_{st}$ we have
$
T(vu)=T(u)
$
\end{enumerate}
\end{lemma}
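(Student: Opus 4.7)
The proof follows the template of Lemma~\ref{lem:automaton1}: for each claimed identity $T(\alpha) = T(\beta)$, it suffices to show that $\alpha w$ is a reduced expression if and only if $\beta w$ is, for every $w \in W$. The direction ``the longer concatenation reduced implies the shorter one reduced'' is always immediate, since suffixes of reduced expressions are themselves reduced, so the content lies in proving the converse.

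For \textbf{part~1}, first sub-case ($v \in W_{st}$ with $\ell(vt) = \ell(v)+1$, so $T(vu) = T(u)$ is to be shown), the case $v = 1$ is trivial. Otherwise write $v = s_\ell \cdots s_1$ reduced with $s_1 = s$ and $v \neq w_0 \in W_{st}$. Suppose for contradiction that $uw$ is reduced but $vuw$ is not. Pick the minimal $k$ with $\ell(s_k \cdots s_1 uw) < k + \ell(uw)$, and set $y := s_{k-1} \cdots s_1 uw$. The minimality of $k$, together with the length drop at step $k$, forces $\{s,t\} \subseteq L(y)$. If $k = 1$ then $s \in L(uw)$, and since the hypothesis ``$uw$ reduced'' also gives $u \in L(uw)$, we obtain $W_{\{s,u\}} \subseteq W_{L(uw)}$; by \cite[Cor.~2.18]{AB} the latter is finite, contradicting $m_{us} = \infty$.

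For $k \geq 2$, set $v^\circ := s_{k-1} \cdots s_1 \in W_{st}$ and apply~(\ref{eq:startexpression}) with $v' = t$ to obtain a reduced expression $y = t\, y'$. Translating back through $s_{k-1} \cdots s_1$, the element $uw$ lies on the negative side of both walls $H = H_u$ and $H' = v^\circ H_t (v^\circ)^{-1}$, exactly as in the endgame of the proof of Lemma~\ref{lem:automaton1}. The same three-way case analysis applies: the containment $H^- \subseteq (H')^-$ would force $\ell(s_{H'} u) < 1$, hence $s_{H'} = u$, impossible since $s_{H'} \in W_{st}$ while $u \notin W_{st}$; and $(H')^- \subseteq H^-$ would force $\ell(u \cdot v^\circ t) < \ell(v^\circ t) = k$, impossible since $u \notin L(v^\circ t) \subseteq \{s,t\}$ and therefore $\ell(u v^\circ t) = k+1$. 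Thus $H \cap H' \neq \emptyset$ and, by Lemma~\ref{lem:intersect}, $\langle u, s_{H'}\rangle$ is finite. To close the argument, I would iterate the wall construction: since the reflection $s_{H'} = v^\circ t (v^\circ)^{-1}$ lies in $W_{st}$, the finiteness of $\langle u, s_{H'}\rangle$ forces a new ``non-reduction'' pattern along a strictly shorter prefix $v'^\circ \in W_{st}$, allowing induction on $k$ to reduce back to the base case $k = 1$ and yield the contradiction with $m_{us} = \infty$. The second sub-case of part~1 (so $v$ ends in $t$ and $T(vu) = T(tu)$) is handled by the same template with the base element $u$ replaced by $tu$ throughout; the $k = 1$ obstruction in that analog yields $\{s, u\} \subseteq L(\cdot)$ after one extra step of manipulation, again contradicting $m_{us} = \infty$.

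\textbf{Part~2} is proved along identical lines, with the additional input $m_{tu} = \infty$ making the argument much cleaner: both potential sub-cases collapse into the single identity $T(vu) = T(u)$, because any putative non-reduction at the $k = 1$ level now forces $\{s, u\}$ \emph{or} $\{t, u\}$ into a left descent set and hence contradicts $m_{us} = \infty$ or $m_{tu} = \infty$ directly. \textbf{The main obstacle} is the intersection case $H \cap H' \neq \emptyset$ in part~1: unlike Lemma~\ref{lem:automaton1}, we cannot invoke Lemma~\ref{lem:reduction} (which required all three exponents to be at least~3), so one must carefully track the conjugation structure of $s_{H'}$ inside the dihedral group~$W_{st}$ and iterate the wall argument until the argument bottoms out at the $k = 1$ case, where the incompatibility $\{s, u\} \subseteq L(\cdot) \Rightarrow m_{us} < \infty$ applies directly.
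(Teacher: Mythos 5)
The paper itself omits the proof (``similar to the proof of Lemma~\ref{lem:automaton1} and the details are omitted''), and your template is indeed the right one. The $k=1$ base case and Part~2 are fine. But the $k\geq 2$ argument for Part~1, first sub-case, has two real problems.

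\textbf{The wall $H'$ is misidentified.} Following Lemma~\ref{lem:automaton1}, the factorisation to use is $y = (v^\circ ts)\,y'''$ (not just $y = t\,y'$), which after cancelling $v^\circ$ gives $uw = ts\,y'''$ reduced. The wall $uw$ lies negatively of is therefore $H'$ separating $t$ from $ts$, with $s_{H'} = tst$; it is \emph{not} $v^\circ H_t(v^\circ)^{-1}$. Your three-way case analysis is written for the wrong wall: for instance, in case $(H')^-\subseteq H^-$ the relevant element is $ts$, giving $\ell(uts)<2$, not $\ell(uv^\circ t)<k$. With the correct $H'$, cases (a) and (b) are disposed of exactly as in Lemma~\ref{lem:automaton1} ($tstu$ has length $4$ since $u\notin W_{st}$, and $uts$ has length $3$ since $u\notin L(ts)=\{t\}$).

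\textbf{The ``main obstacle'' is misdiagnosed and the proposed fix does not close the gap.} You claim Lemma~\ref{lem:reduction} cannot be invoked, but its hypothesis is $m_{st},m_{tu},m_{us}\geq 3$, and $m_{us}=\infty\geq 3$; so Lemma~\ref{lem:reduction} applies verbatim whenever $m_{st},m_{tu}\geq 3$, which is the generic case here. The only exponent values it omits are $m_{st}=2$ and $m_{tu}=2$, and each is handled by a one-line observation rather than by ``iterating the wall construction'': if $m_{st}=2$ then $\ell(v)\leq 1$ for $v$ with $\ell(vt)=\ell(v)+1$, so $k\geq 2$ never arises; if $m_{tu}=2$ then $ut=tu$ gives $utst = t(us)t$, hence $(utst)^n = t(us)^n t \neq 1$ since $m_{us}=\infty$, so $\langle u,tst\rangle$ is infinite directly. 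Your ``iteration'' sketch --- that finiteness of $\langle u, s_{H'}\rangle$ ``forces a new non-reduction pattern along a strictly shorter prefix'' --- does not follow: finiteness of a reflection subgroup is a global statement about $W$ and does not manufacture a length drop in the specific word $v^\circ uw$, so there is no induction to run. A further small remark: the second sub-case of Part~1 is actually \emph{easier} than the first and needs no wall argument at all, since with $s_1=t$ one can factor $y=(v^\circ s)y'$ (reduced in $W_{st}$) and cancel to get $uw=s\,y'$ reduced, giving $\{s,u\}\subseteq L(uw)$ and an immediate contradiction with $m_{us}=\infty$.
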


The resulting automaton has vertices given by $\{T(w)\mid w\in\bW\}$, where $\bW$ is the union of all finite parabolic subgroups:
$$
\bW=W_{1,2}\cup W_{2,3}\cup \cdots \cup W_{n-1,n}\cup W_{n,1},
$$
where the generators are labelled $1,2,\ldots,n$.

\subsection{Proof of Theorem~\ref{thm:stronglyconnected}}
 
\begin{proof}[Proof of Theorem~\ref{thm:stronglyconnected}]
Consider Class~I first. It is clear that the cone types $\emptyset$, $1$, $2$ and $3$ are not recurrent (in the notation of Figure~\ref{fig:I}, we will simply write $w$ for the cone type $T(w)$). 

Suppose, without loss of generality, that $m_{12}>3$. We first note that there is a cycle
$$
c=(12\to23\to31\to12\to 121\to13\to32\to 21\to 212\to 23\to31\to 12)
$$
containing all cone types $w$ with $\ell(w)=2$ (it is important here that $121$ and $212$ are not the longest words of $W_{12}$). 

Now let $w$ be a cone type other than $\emptyset$, $1$, $2$, or $3$. From any such $w$ there is a path $\gamma_1$ in the automaton to some cone type $w_{ij}k$ where $(i,j,k)$ is some permutation of the generating set $(1,2,3)$ and $w_{ij}=iji\cdots$ is the longest element of~$W_{ij}$. Then
\begin{align*}
w_{ij}k\to jkj\to\begin{cases}
ji&\text{if $m_{jk}>3$}\\
jkji\to jij\to jk&\text{if $m_{jk}=3$ and $m_{ij}>3$}\\
jkji\to jij\to ijik\to kik\to kj&\text{if $m_{jk}=m_{ij}=3$ and $m_{ik}>3$},
\end{cases}
\end{align*}
and so in all cases there is a path $\gamma_2$ from $w_{ij}k$ to a cone type $i'j'$ of length~$2$, and so there is a path $\gamma_1\gamma_2$ from $w$ to $i'j'$ in the automaton. Furthermore, there is a path $\gamma_3$ from some cone type $i''j''$ to $w$ (because every reduced path in $W$ must pass through a word of length~$2$). Thus, using~$c$, there is a loop $\gamma$ from $12$ to $12$ passing through $w$. This shows that $w$ is recurrent, and readily implies that the automaton is strongly connected.

Now consider Class~II. The set $\mathbf{R}$ of recurrent cone types is as follows:
\begin{align*}
\mathbf{R}=\begin{cases}
\{T(w)\mid w\in\bW\text{ with }\ell(w)\geq 2\}\backslash\{st,ut\}&\text{if $m_{st},m_{tu}>4$}\\
\{T(w)\mid w\in\bW\text{ with }\ell(w)\geq 2\}\backslash\{st,ut,tu\}&\text{if $m_{st}=4$ and $m_{tu}>4$}\\
\{T(w)\mid w\in\bW\text{ with }\ell(w)\geq 2\}\backslash\{st,ut,ts\}&\text{if $m_{st}>4$ and $m_{tu}=4$}\\
\{T(w)\mid w\in\bW\text{ with }\ell(w)\geq 2\}\backslash\{st,ut,ts,tu\}&\text{if $m_{st}=m_{tu}=4$}.
\end{cases}
\end{align*}
Direct inspection (see Figure~\ref{fig:II}) shows that the automaton is strongly connected as long as either $m_{st}>4$ or $m_{tu}>4$. We omit the full details, however for example consider the concrete case $m_{st}=4$ and $m_{tu}=5$ (thus in Figure~\ref{fig:II} we have $x=1212=2121$ and $y=23232=32323$). We have the following paths (in the notation of Figure~\ref{fig:II})
\begin{align*}
\gamma_1&=(13\to132\to 121\to 1212=x\to x3\to x32)\\
\gamma_2&=(x32\to x321\to 1212=x\to x3\to x32\to 2323\to 13)\\ 
\gamma_3&=(13\to132\to 323\to 3232=y3\to y31\to 212=x1\to x13\to 232\to 2323\to 13)\\
\gamma_4&=(x32\to 2323\to 23232=y\to y1\to y12\to y123\to 2323\to 13),
\end{align*}
and the concatenation $\gamma_1\gamma_2\gamma_3\gamma_1\gamma_4$ gives a loop containing all recurrent cone types. Thus the automaton is strongly connected. 

We omit the details of Class~III, and refer the reader to Lemma~\ref{lem:automaton3} and Figure~\ref{fig:III}. 

Finally, consider the groups in Class~IV. Let $1,2,\ldots,n$ be the generators of $W$, arranged cyclically around the fundamental chamber. If $n\geq 5$, then for each pair $(i,i+1)$ there is a generator $j$ with $m_{i,j}=\infty$ and $m_{{i+1},j}=\infty$, and thus $i\to j \to {i+1}$ in the automaton. Moreover, for any $w\in W_{i,i+1}$ we have $w\to j$. It follows that every node other than $\emptyset$ is recurrent, and moreover that the automaton is strongly connected. If $n=4$ then we may assume that $m_{12}\geq 3$ (for if $m_{ij}=2$ for all $i,j$ then $W$ is affine type $\tilde{A}_1\times\tilde{A}_1$, where $\tilde{A}_1$ is the infinite dihedral group). Then $21\to 3$ and $12\to 4$. Thus $1\to 12\to 4\to 2\to21\to3\to1$. Again it follows that every node other than $\emptyset$ is recurrent, and that the automaton is strongly connected.
\end{proof}

\begin{remark} The Cannon automata for the affine triangle groups $(3,3,3)$, $(4,4,2)$ and $(6,3,2)$ are not strongly connected. For example, for the $(4,4,2)$ group, note that the cone types $13$ and $1212$ are recurrent, yet there is no path from $1212$ to $13$ in the automaton. 
\end{remark}

\newpage

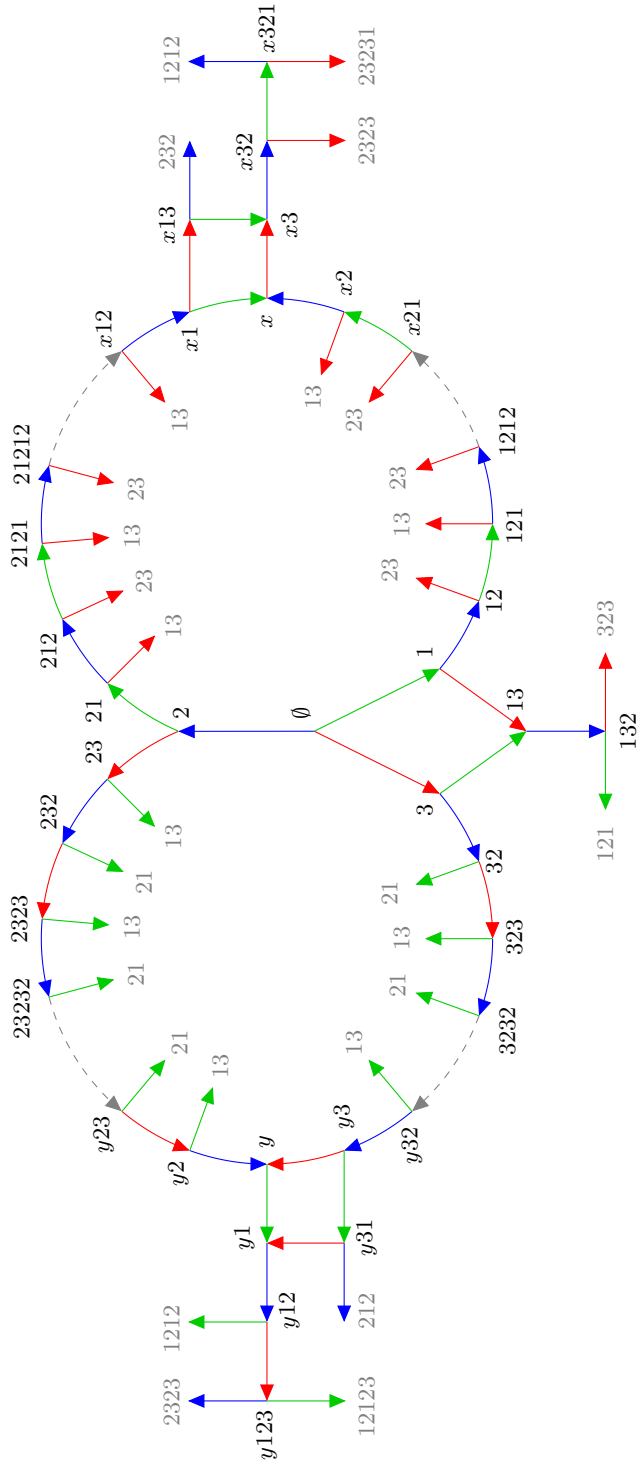
\begin{figure}[!h]
\centering
\begin{tikzpicture} [scale=2.7]
\node [rotate=90] {\begin{tikzpicture} [scale=3]
\draw [blue,->,-triangle 45] (0,-0.21) -- ({cos(23.15)-0.92},{sin(23.15)});
\draw [red,->,-triangle 45] (0,-0.21) -- ({cos(310)-0.92},{sin(310)});
\draw [Green,->,-triangle 45] (0,-0.21) -- ({-cos(310)+0.92},{sin(310)});
\draw [Green,->,-triangle 45] ({cos(310)-0.92},{sin(310)}) -- (0,-1.15);
\draw [red,->,-triangle 45] ({-cos(310)+0.92},{sin(310)}) -- (0,-1.15);
\draw [blue,->,-triangle 45] (0,-1.15) -- (0,-1.5);
\draw [Green,->,-triangle 45] (0,-1.5) -- (-0.35,-1.5);
\draw [red,->,-triangle 45] (0,-1.5) -- (0.35,-1.5);
\draw [red,->,domain=23.15:45,-triangle 45] plot ({cos(\x)-0.92},{sin(\x)});
\draw [blue,->,domain={45}:{45+20},-triangle 45] plot ({cos(\x)-0.92},{sin(\x)});
\draw [red,->,domain={45+1*20}:{45+20+1*20},-triangle 45] plot ({cos(\x)-0.92},{sin(\x)});
\draw [blue,->,domain={45+2*20}:{45+20+2*20},-triangle 45] plot ({cos(\x)-0.92},{sin(\x)});
\draw [blue,->,domain=160:180,-triangle 45] plot ({cos(\x)-0.92},{sin(\x)});
\draw [red,->,domain=140:160,-triangle 45] plot ({cos(\x)-0.92},{sin(\x)});
\draw [->,gray,dashed,domain=105:140,-triangle 45] plot ({cos(\x)-0.92},{sin(\x)});
\draw [Green,->,domain=156.85:135,-triangle 45] plot ({cos(\x)+0.92},{sin(\x)});
\draw [blue,->,domain={135}:{135-20},-triangle 45] plot ({cos(\x)+0.92},{sin(\x)});
\draw [Green,->,domain={135-1*20}:{135-20-1*20},-triangle 45] plot ({cos(\x)+0.92},{sin(\x)});
\draw [blue,->,domain={135-2*20}:{135-20-2*20},-triangle 45] plot ({cos(\x)+0.92},{sin(\x)});
\draw [Green,->,domain=20:0,-triangle 45] plot ({cos(\x)+0.92},{sin(\x)});
\draw [blue,->,domain=40:20,-triangle 45] plot ({cos(\x)+0.92},{sin(\x)});
\draw [->,gray,dashed,domain=75:40,-triangle 45] plot ({cos(\x)+0.92},{sin(\x)});
\draw [blue,->,domain={310}:{290},-triangle 45] plot ({cos(\x)-0.92},{sin(\x)});
\draw [red,->,domain={310-1*20}:{290-1*20},-triangle 45] plot ({cos(\x)-0.92},{sin(\x)});
\draw [blue,->,domain={310-2*20}:{290-2*20},-triangle 45] plot ({cos(\x)-0.92},{sin(\x)});
\draw [red,->,domain={200}:{180},-triangle 45] plot ({cos(\x)-0.92},{sin(\x)});
\draw [blue,->,domain={220}:{200},-triangle 45] plot ({cos(\x)-0.92},{sin(\x)});
\draw [->,gray,dashed,domain=250:220,-triangle 45] plot ({cos(\x)-0.92},{sin(\x)});
\draw [blue,->,domain={230}:{250},-triangle 45] plot ({cos(\x)+0.92},{sin(\x)});
\draw [Green,->,domain={230+1*20}:{250+1*20},-triangle 45] plot ({cos(\x)+0.92},{sin(\x)});
\draw [blue,->,domain={230+2*20}:{250+2*20},-triangle 45] plot ({cos(\x)+0.92},{sin(\x)});
\draw [blue,->,domain={-20}:{0},-triangle 45] plot ({cos(\x)+0.92},{sin(\x)});
\draw [Green,->,domain={-40}:{-20},-triangle 45] plot ({cos(\x)+0.92},{sin(\x)});
\draw [->,gray,dashed,domain={-70}:{-40},-triangle 45] plot ({cos(\x)+0.92},{sin(\x)});
\draw [Green,->,-triangle 45] (-1.92,0) -- ({-1.92-0.35},0);
\draw [blue,->,-triangle 45] ({-1.92-0.35},0) --  ({-1.92-2*0.35},0);
\draw [red,->,-triangle 45] ({-1.92-2*0.35},0) --  ({-1.92-3*0.35},0);
\draw [blue,->,-triangle 45] ({-1.92-3*0.35},0) -- ({-1.92-3*0.35},0.35);
\draw [Green,->,-triangle 45] ({-1.92-2*0.35},0) -- ({-1.92-2*0.35},0.35);
\draw [Green,->,-triangle 45] ({-1.92-3*0.35},0) -- ({-1.92-3*0.35},-0.35);
\draw [Green,->,-triangle 45] ({cos(200)-0.92},{sin(200)}) -- ({-1-0.92-0.35},{sin(200)});
\draw [red,->,-triangle 45] ({-1-0.92-0.35},{sin(200)}) -- ({-1.92-0.35},0);
\draw [blue,->,-triangle 45] ({-1-0.92-0.35},{sin(200)}) -- ({-1-0.92-2*0.35},{sin(200)});
\draw [red,->,-triangle 45] (1.92,0) -- ({1.92+0.35},0);
\draw [blue,->,-triangle 45] ({+1.92+0.35},0) --  ({+1.92+2*0.35},0);
\draw [Green,->,-triangle 45] ({+1.92+2*0.35},0) --  ({+1.92+3*0.35},0);
\draw [blue,->,-triangle 45] ({+1.92+3*0.35},0) -- ({+1.92+3*0.35},0.35);
\draw [red,->,-triangle 45] ({+1.92+3*0.35},0) -- ({+1.92+3*0.35},-0.35);
\draw [red,->,-triangle 45] ({+1.92+2*0.35},0) -- ({+1.92+2*0.35},-0.35);
\draw [red,->,-triangle 45] ({-cos(160)+0.92},{sin(160)}) -- ({1+0.92+0.35},{sin(160)});
\draw [Green,->,-triangle 45] ({+1+0.92+0.35},{sin(160)}) -- ({+1.92+0.35},0);
\draw [blue,->,-triangle 45] ({+1+0.92+0.35},{sin(160)}) -- ({+1+0.92+2*0.35},{sin(160)});
\draw [Green,->,-triangle 45] ({cos(45)-0.92},{sin(45)}) -- ({0.7*cos(45)-0.92},{0.7*sin(45)});
\draw [Green,->,-triangle 45] ({cos(65)-0.92},{sin(65)}) -- ({0.7*cos(65)-0.92},{0.7*sin(65)});
\draw [Green,->,-triangle 45] ({cos(85)-0.92},{sin(85)}) -- ({0.7*cos(85)-0.92},{0.7*sin(85)});
\draw [Green,->,-triangle 45] ({cos(105)-0.92},{sin(105)}) -- ({0.7*cos(105)-0.92},{0.7*sin(105)});
\draw [Green,->,-triangle 45] ({cos(140)-0.92},{sin(140)}) -- ({0.7*cos(140)-0.92},{0.7*sin(140)});
\draw [Green,->,-triangle 45] ({cos(160)-0.92},{sin(160)}) -- ({0.7*cos(160)-0.92},{0.7*sin(160)});
\draw [Green,->,-triangle 45] ({cos(220)-0.92},{sin(220)}) -- ({0.7*cos(220)-0.92},{0.7*sin(220)});
\draw [Green,->,-triangle 45] ({cos(250)-0.92},{sin(250)}) -- ({0.7*cos(250)-0.92},{0.7*sin(250)});
\draw [Green,->,-triangle 45] ({cos(270)-0.92},{sin(270)}) -- ({0.7*cos(270)-0.92},{0.7*sin(270)});
\draw [Green,->,-triangle 45] ({cos(290)-0.92},{sin(290)}) -- ({0.7*cos(290)-0.92},{0.7*sin(290)});
\draw [red,->,-triangle 45] ({-cos(45)+0.92},{sin(45)}) -- ({-0.7*cos(45)+0.92},{0.7*sin(45)});
\draw [red,->,-triangle 45] ({-cos(65)+0.92},{sin(65)}) -- ({-0.7*cos(65)+0.92},{0.7*sin(65)});
\draw [red,->,-triangle 45] ({-cos(85)+0.92},{sin(85)}) -- ({-0.7*cos(85)+0.92},{0.7*sin(85)});
\draw [red,->,-triangle 45] ({-cos(105)+0.92},{sin(105)}) -- ({-0.7*cos(105)+0.92},{0.7*sin(105)});
\draw [red,->,-triangle 45] ({-cos(140)+0.92},{sin(140)}) -- ({-0.7*cos(140)+0.92},{0.7*sin(140)});
\draw [red,->,-triangle 45] ({-cos(200)+0.92},{sin(200)}) -- ({-0.7*cos(200)+0.92},{0.7*sin(200)});
\draw [red,->,-triangle 45] ({-cos(220)+0.92},{sin(220)}) -- ({-0.7*cos(220)+0.92},{0.7*sin(220)});
\draw [red,->,-triangle 45] ({-cos(250)+0.92},{sin(250)}) -- ({-0.7*cos(250)+0.92},{0.7*sin(250)});
\draw [red,->,-triangle 45] ({-cos(270)+0.92},{sin(270)}) -- ({-0.7*cos(270)+0.92},{0.7*sin(270)});
\draw [red,->,-triangle 45] ({-cos(290)+0.92},{sin(290)}) -- ({-0.7*cos(290)+0.92},{0.7*sin(290)});
\path (0.08,-0.15) node {\footnotesize{$\emptyset$}}
(0.35,-0.7) node {\footnotesize{$1$}}
(-0.35,-0.7) node {\footnotesize{$3$}}
(0.08,0.38) node {\footnotesize{$2$}}
(0.14,-1.1) node {\footnotesize{$13$}}
(0,-1.6) node {\footnotesize{$132$}}
(-0.5,-1.5) node [gray] {\footnotesize{$121$}}
(+0.5,-1.5) node [gray] {\footnotesize{$323$}}
;
\path ({cos(45)-0.92+0.12},{sin(45)+0.06}) node {\footnotesize{$23$}}
({cos(65)-0.92+0.14},{sin(65)+0.06}) node {\footnotesize{$232$}}
({cos(85)-0.92},{sin(85)+0.1}) node {\footnotesize{$2323$}}
({cos(105)-0.92-0.05},{sin(105)+0.12}) node {\footnotesize{$23232$}}
({cos(140)-0.92-0.1},{sin(140)+0.08}) node {\footnotesize{$y23$}}
({cos(160)-0.92-0.1},{sin(160)+0.05}) node {\footnotesize{$y2$}}
({cos(180)-0.92+0.1},{sin(180)}) node {\footnotesize{$y$}}
({cos(200)-0.92+0.15},{sin(200)}) node {\footnotesize{$y3$}}
({cos(220)-0.92-0.15},{sin(220)}) node {\footnotesize{$y32$}}
({cos(255)-0.92-0.15},{sin(255)-0.1}) node {\footnotesize{$3232$}}
({cos(275)-0.92-0.08},{sin(275)-0.1}) node {\footnotesize{$323$}}
({cos(295)-0.92-0.08},{sin(295)-0.1}) node {\footnotesize{$32$}}
({cos(180)-0.92-0.325},{sin(180)+0.1}) node {\footnotesize{$y1$}}
({cos(180)-0.92-2*0.325},{sin(180)-0.1}) node {\footnotesize{$y12$}}
({cos(180)-0.92-4*0.325+0.1},{sin(180)}) node {\footnotesize{$y123$}}
({cos(180)-0.92-0.325},{sin(200)-0.1}) node {\footnotesize{$y31$}}
;
\path [gray] 
({0.5*cos(45)-0.92+0.1},{0.5*sin(45)+0.06}) node {\footnotesize{$13$}}
({0.6*cos(65)-0.92},{0.6*sin(65)}) node {\footnotesize{$21$}}
({0.6*cos(85)-0.92},{0.6*sin(85)}) node {\footnotesize{$13$}}
({0.6*cos(105)-0.92},{0.6*sin(105)}) node {\footnotesize{$21$}}
({0.6*cos(140)-0.92},{0.6*sin(140)}) node {\footnotesize{$21$}}
({0.6*cos(160)-0.92},{0.6*sin(160)}) node {\footnotesize{$13$}}
({0.6*cos(220)-0.92},{0.6*sin(220)}) node {\footnotesize{$13$}}
({0.6*cos(255)-0.92-0.05},{0.6*sin(255)}) node {\footnotesize{$21$}}
({0.6*cos(275)-0.92-0.05},{0.6*sin(275)}) node {\footnotesize{$13$}}
({0.6*cos(295)-0.92-0.05},{0.6*sin(295)}) node {\footnotesize{$21$}}
({cos(180)-0.92-3*0.325-0.1},{sin(200)-0.1}) node {\footnotesize{$12123$}}
({cos(180)-0.92-3*0.325-0.08},{sin(160)+0.08}) node {\footnotesize{$2323$}}
({cos(180)-0.92-2*0.325-0.08},{sin(160)+0.08}) node {\footnotesize{$1212$}}
({cos(180)-0.92-2*0.325},{sin(200)-0.1}) node {\footnotesize{$212$}}
;
\path ({-cos(45)+0.92-0.12},{sin(45)+0.06}) node {\footnotesize{$21$}}
({-cos(65)+0.92-0.14},{sin(65)+0.06}) node {\footnotesize{$212$}}
({-cos(85)+0.92},{sin(85)+0.1}) node {\footnotesize{$2121$}}
({-cos(105)+0.92+0.05},{sin(105)+0.12}) node {\footnotesize{$21212$}}
({-cos(140)+0.92+0.1},{sin(140)+0.08}) node {\footnotesize{$x12$}}
({-cos(160)+0.92-0.12},{sin(160)}) node {\footnotesize{$x1$}}
({-cos(180)+0.92-0.1},{sin(180)}) node {\footnotesize{$x$}}
({-cos(200)+0.92+0.13},{sin(200)}) node {\footnotesize{$x2$}}
({-cos(220)+0.92+0.15},{sin(220)}) node {\footnotesize{$x21$}}
({-cos(255)+0.92+0.15},{sin(255)-0.1}) node {\footnotesize{$1212$}}
({-cos(275)+0.92+0.08},{sin(275)-0.1}) node {\footnotesize{$121$}}
({-cos(295)+0.92+0.08},{sin(295)-0.1}) node {\footnotesize{$12$}}
({-cos(180)+0.92+0.325},{sin(180)-0.1}) node {\footnotesize{$x3$}}
({-cos(180)+0.92+2*0.325},{sin(180)+0.1}) node {\footnotesize{$x32$}}
({-cos(180)+0.92+4*0.325-0.1},{sin(180)}) node {\footnotesize{$x321$}}
({-cos(180)+0.92+0.325},{sin(160)+0.1}) node {\footnotesize{$x13$}}
;
\path [gray] ({-0.5*cos(45)+0.92-0.1},{0.5*sin(45)+0.06}) node {\footnotesize{$13$}}
({-0.6*cos(65)+0.92},{0.6*sin(65)}) node {\footnotesize{$23$}}
({-0.6*cos(85)+0.92},{0.6*sin(85)}) node {\footnotesize{$13$}}
({-0.6*cos(105)+0.92},{0.6*sin(105)}) node {\footnotesize{$23$}}
({-0.6*cos(140)+0.92},{0.6*sin(140)}) node {\footnotesize{$13$}}
({-0.6*cos(200)+0.92},{0.6*sin(200)}) node {\footnotesize{$13$}}
({-0.6*cos(220)+0.92},{0.6*sin(220)}) node {\footnotesize{$23$}}
({-0.6*cos(255)+0.92+0.05},{0.6*sin(255)}) node {\footnotesize{$23$}}
({-0.6*cos(275)+0.92+0.05},{0.6*sin(275)}) node {\footnotesize{$13$}}
({-0.6*cos(295)+0.92+0.05},{0.6*sin(295)}) node {\footnotesize{$23$}}
({-cos(180)+0.92+3*0.325+0.1},{sin(200)-0.1}) node {\footnotesize{$23231$}}
({-cos(180)+0.92+3*0.325+0.08},{sin(160)+0.08}) node {\footnotesize{$1212$}}
({-cos(180)+0.92+2*0.325},{sin(160)+0.1}) node {\footnotesize{$232$}}
({-cos(180)+0.92+2*0.325+0.05},{sin(200)-0.1}) node {\footnotesize{$2323$}}
;\end{tikzpicture}};
\end{tikzpicture}
\caption{Cannon automata for Class~II triangle groups, with $a$ even and $b$ odd}\label{fig:II}
\end{figure}

\newpage

\begin{figure}[!h]
\centering
\begin{tikzpicture} [scale=4]
\node [rotate=90] {\begin{tikzpicture} [scale=3.9]
\draw [blue,->,domain=180:165,-triangle 45] plot ({cos(\x)},{sin(\x)});
\draw [Green,->,domain=165:150,-triangle 45] plot ({cos(\x)},{sin(\x)});
\draw [blue,->,domain=150:135,-triangle 45] plot ({cos(\x)},{sin(\x)});
\draw [Green,->,domain=135:120,-triangle 45] plot ({cos(\x)},{sin(\x)});
\draw [blue,->,domain=120:105,-triangle 45] plot ({cos(\x)},{sin(\x)});
\draw [Green,->,domain=180:195,-triangle 45] plot ({cos(\x)},{sin(\x)});
\draw [blue,->,domain=195:210,-triangle 45] plot ({cos(\x)},{sin(\x)});
\draw [Green,->,domain=210:225,-triangle 45] plot ({cos(\x)},{sin(\x)});
\draw [blue,->,domain=225:240,-triangle 45] plot ({cos(\x)},{sin(\x)});
\draw [Green,->,domain=240:255,-triangle 45] plot ({cos(\x)},{sin(\x)});
\draw [Green,->,domain=15:0,-triangle 45] plot ({cos(\x)},{sin(\x)});
\draw [blue,->,domain=30:15,-triangle 45] plot ({cos(\x)},{sin(\x)});
\draw [Green,->,domain=45:30,-triangle 45] plot ({cos(\x)},{sin(\x)});
\draw [blue,->,domain=60:45,-triangle 45] plot ({cos(\x)},{sin(\x)});
\draw [Green,->,domain=75:60,-triangle 45] plot ({cos(\x)},{sin(\x)});
\draw [blue,->,domain=345:360,-triangle 45] plot ({cos(\x)},{sin(\x)});
\draw [Green,->,domain=330:345,-triangle 45] plot ({cos(\x)},{sin(\x)});
\draw [blue,->,domain=315:330,-triangle 45] plot ({cos(\x)},{sin(\x)});
\draw [Green,->,domain=300:315,-triangle 45] plot ({cos(\x)},{sin(\x)});
\draw [blue,->,domain=285:300,-triangle 45] plot ({cos(\x)},{sin(\x)});
\draw [->,gray,dashed,domain=105:75,-triangle 45] plot ({cos(\x)},{sin(\x)});
\draw [->,gray,dashed,domain=255:285,-triangle 45] plot ({cos(\x)},{sin(\x)});
\draw [red,->,-triangle 45] (-1,0) -- (-1.2,0);
\draw [red,->,-triangle 45] ({cos(195)},{sin(195)}) -- (-1.2,{sin(195)});
\draw [red,->,-triangle 45] ({cos(165)},{sin(165)}) -- (-1.2,{sin(165)});
\draw [blue,->,-triangle 45] (-1.2,0) -- (-1.45,0);
\draw [blue,->,-triangle 45] (-1.2,{sin(165)}) -- (-1.45,{sin(165)});
\draw [red,->,-triangle 45] (-1.45,0) -- (-1.45,{sin(165)});
\draw [Green,->,-triangle 45] (-1.2,0) -- (-1.2,{sin(195)});
\draw [Green,->,-triangle 45] (-1.45,{sin(165)}) -- (-1.7,{sin(165)});
\draw [blue,->,-triangle 45] (-1.7,{sin(165)}) -- (-1.95,{sin(165)});
\draw [red,->,-triangle 45] (-1.95,{sin(165)}) -- (-1.95,{sin(165)+0.25});
\draw [Green,->,-triangle 45] (-1.95,{sin(165)}) -- (-2.2,{sin(165)});
\draw [red,->,-triangle 45] (-2.2,{sin(165)}) -- (-2.2,{sin(165)+0.25});
\draw [blue,->,-triangle 45] (-2.2,{sin(165)}) -- (-2.2,{sin(165)-0.25});
\draw [Green,->,-triangle 45] (-1.45,{sin(180)}) -- (-1.7,{sin(180)});
\draw [blue,->,-triangle 45] (-1.7,{sin(180)}) -- (-1.95,{sin(180)});
\draw [red,->,-triangle 45] (-1.7,{sin(180)}) -- (-1.7,{sin(165)});
\draw [blue,->,-triangle 45] (-1.2,{sin(195)}) -- (-1.45,{sin(195)});
\draw [Green,->,-triangle 45] (-1.45,{sin(195)}) -- (-1.7,{sin(195)});
\draw [blue,->,-triangle 45] (-1.7,{sin(195)}) -- (-1.95,{sin(195)});
\draw [red,->,-triangle 45] (-1.45,{sin(195)}) -- (-1.45,{sin(195)-0.25});
\draw [red,->,-triangle 45] (-1.7,{sin(195)}) -- (-1.7,{sin(195)-0.25});
\draw [red,->,-triangle 45] (1,{sin(0)}) -- (1.2,{sin(0)});
\draw [red,->,-triangle 45] ({cos(15)},{sin(15)}) -- (1.2,{sin(15)});
\draw [red,->,-triangle 45] ({cos(-15)},{sin(-15)}) -- (1.2,{sin(-15)});
\draw [red,->,-triangle 45] ({cos(-30)},{sin(-30)}) -- (1.2,{sin(-30)});
\draw [blue,->,-triangle 45] (1.2,{sin(0)}) -- (1.45,{sin(0)});
\draw [blue,->,-triangle 45] (1.2,{sin(-15)}) -- (1.45,{sin(-15)});
\draw [Green,->,-triangle 45] (1.45,{sin(0)}) -- (1.7,{sin(0)});
\draw [Green,->,-triangle 45] (1.45,{sin(-15)}) -- (1.7,{sin(-15)});
\draw [blue,->,-triangle 45] (1.7,{sin(0)}) -- (1.95,{sin(0)});
\draw [blue,->,-triangle 45] (1.7,{sin(-15)}) -- (1.95,{sin(-15)});
\draw [red,->,-triangle 45] (1.95,{sin(0)}) -- (2.2,{sin(0)});
\draw [red,->,-triangle 45] (1.95,{sin(-15)}) -- (2.2,{sin(-15)});
\draw [Green,->,-triangle 45] (1.95,{sin(0)}) -- (1.95,{sin(15)});
\draw [Green,->,-triangle 45] (2.2,{sin(0)}) -- (2.2,{sin(15)});
\draw [Green,->,-triangle 45] (1.2,{sin(15)}) -- (1.2,{sin(0)});
\draw [Green,->,-triangle 45] (1.2,{sin(-30)}) -- (1.2,{sin(-15)});
\draw [red,->,-triangle 45] (1.45,{sin(-15)}) -- (1.45,{sin(0)});
\draw [red,->,-triangle 45] (1.7,{sin(-15)}) -- (1.7,{sin(0)});
\draw [blue,->,-triangle 45] (2.2,{sin(-15)}) -- (2.2,{sin(0)});
\draw [red,->,-triangle 45] (1.95,{sin(15)}) -- (2.2,{sin(15)});
\draw [blue,->,-triangle 45] (2.2,{sin(15)}) -- (2.2,{sin(15)+0.25});
\draw [Green,->,-triangle 45] (2.2,{sin(15)+0.25}) -- (2.2,{sin(15)+2*0.25});
\draw [blue,->,-triangle 45] (2.2,{sin(15)+2*0.25}) -- (2.2,{sin(15)+3*0.25});
\draw [red,->,-triangle 45] (2.2,{sin(15)+3*0.25}) -- (2.2,{sin(15)+4*0.25});
\draw [blue,->,-triangle 45] (2.2,{sin(15)+4*0.25}) -- (1.95,{sin(15)+4*0.25});
\draw [Green,->,-triangle 45] (2.2,{sin(15)+4*0.25}) -- (2.45,{sin(15)+4*0.25});
\draw [Green,->,-triangle 45] (2.2,{sin(15)+3*0.25}) -- (2.45,{sin(15)+3*0.25});
\draw [red,->,-triangle 45] (2.2,{sin(15)+2*0.25}) -- (2.45,{sin(15)+2*0.25});
\draw [red,->,-triangle 45] (2.2,{sin(15)+1*0.25}) -- (2.45,{sin(15)+1*0.25});
\draw [blue,->,-triangle 45] (1.95,{sin(15)}) -- (1.7,{sin(15)});
\draw [Green,->,-triangle 45] (2.2,{sin(-15)}) -- (2.2,{sin(-15)-0.25});
\draw [Green,->,-triangle 45] (1.95,{sin(-15)}) -- (1.95,{sin(-15)-0.25});
\draw [blue,->,-triangle 45] (1.2,{sin(15)}) -- (1.45,{sin(15)});
\draw [blue,->,-triangle 45] (1.2,{sin(-30)}) -- (1.45,{sin(-30)});
\draw [Green,->,-triangle 45] (-1.2,{sin(180-15)}) -- (-1.2,{sin(180-15)+0.25});
\draw [->,red,-triangle 45] ({cos(30)},{sin(30)}) -- ({0.8*cos(30)},{0.8*sin(30)});
\draw [->,red,-triangle 45] ({cos(45)},{sin(45)}) -- ({0.8*cos(45)},{0.8*sin(45)});
\draw [->,red,-triangle 45] ({cos(60)},{sin(60)}) -- ({0.8*cos(60)},{0.8*sin(60)});
\draw [->,red,-triangle 45] ({cos(75)},{sin(75)}) -- ({0.8*cos(75)},{0.8*sin(75)});
\draw [->,red,-triangle 45] ({cos(180-2*15)},{sin(180-2*15)}) -- ({0.8*cos(180-2*15)},{0.8*sin(180-2*15)});
\draw [->,red,-triangle 45] ({cos(180-3*15)},{sin(180-3*15)}) -- ({0.8*cos(180-3*15)},{0.8*sin(180-3*15)});
\draw [->,red,-triangle 45] ({cos(180-4*15)},{sin(180-4*15)}) -- ({0.8*cos(180-4*15)},{0.8*sin(180-4*15)});
\draw [->,red,-triangle 45] ({cos(180-5*15)},{sin(180-5*15)}) -- ({0.8*cos(180-5*15)},{0.8*sin(180-5*15)});
\draw [->,red,-triangle 45] ({cos(180+2*15)},{sin(180+2*15)}) -- ({0.8*cos(180+2*15)},{0.8*sin(180+2*15)});
\draw [->,red,-triangle 45] ({cos(180+3*15)},{sin(180+3*15)}) -- ({0.8*cos(180+3*15)},{0.8*sin(180+3*15)});
\draw [->,red,-triangle 45] ({cos(180+4*15)},{sin(180+4*15)}) -- ({0.8*cos(180+4*15)},{0.8*sin(180+4*15)});
\draw [->,red,-triangle 45] ({cos(180+5*15)},{sin(180+5*15)}) -- ({0.8*cos(180+5*15)},{0.8*sin(180+5*15)});
\draw [->,red,-triangle 45] ({cos(-45)},{sin(-45)}) -- ({0.8*cos(-45)},{0.8*sin(-45)});
\draw [->,red,-triangle 45] ({cos(-60)},{sin(-60)}) -- ({0.8*cos(-60)},{0.8*sin(-60)});
\draw [->,red,-triangle 45] ({cos(-75)},{sin(-75)}) -- ({0.8*cos(-75)},{0.8*sin(-75)});
\path (-0.95,0) node {\footnotesize{$\emptyset$}}
({cos(180+15)+0.07},{sin(180+15)}) node {\footnotesize{$1$}}
({cos(180-15)+0.07},{sin(180-15)}) node {\footnotesize{$2$}}
(-1.2,{sin(180-15)-0.07}) node {\footnotesize{$23$}}
(-1.2,{sin(180-15)+0.3}) node [gray] {\footnotesize{$(13)$}}
(-1.2,{sin(180)+0.07}) node {\footnotesize{$3$}}
(-1.2,{sin(180)-0.25-0.07}) node {\footnotesize{$13$}}
(-1.45,{sin(180)-0.25+0.07}) node {\footnotesize{$132$}}
(-1.7,{sin(180)-0.25+0.07}) node {\footnotesize{$1321$}}
(-1.7,{sin(180)-0.55}) node [gray] {\footnotesize{$(2321)$}}
(-1.45,{sin(180)-0.55}) node [gray] {\footnotesize{$(232)$}}
(-2.1,{sin(180)-0.257}) node [gray] {\footnotesize{$(1212)$}}
(-1.45,{sin(180)-0.07}) node {\footnotesize{$32$}}
(-1.7,{sin(180)-0.07}) node {\footnotesize{$321$}}
(-1.95,{sin(180)+0.075}) node [gray] {\footnotesize{$(212)$}}
(-1.45,{sin(180-15)+0.07}) node {\footnotesize{$232$}}
(-1.7,{sin(180-15)+0.07}) node {\footnotesize{$2321$}}
(-1.95,{sin(180-15)-0.07}) node {\footnotesize{$23212$}}
(-2.35,{sin(180-15)}) node {\footnotesize{$232121$}}
(-2.2,{sin(180-15)+0.3}) node [gray] {\footnotesize{$(2321)$}}
(-2.22,{sin(180-15)-0.3}) node [gray] {\footnotesize{$(21212)$}}
(-1.95,{sin(180-15)+0.3}) node [gray] {\footnotesize{$(232)$}}
({cos(180-30)-0.09},{sin(180-30)+0.02}) node {\footnotesize{$21$}}
({cos(180-45)-0.11},{sin(180-45)+0.02}) node {\footnotesize{$212$}}
({cos(180-60)-0.11},{sin(180-60)+0.06}) node {\footnotesize{$2121$}}
({cos(180-75)-0.09},{sin(180-75)+0.08}) node {\footnotesize{$21212$}}
({cos(180+30)-0.09},{sin(180+30)-0.02}) node {\footnotesize{$12$}}
({cos(180+45)-0.14},{sin(180+45)-0.02}) node {\footnotesize{$121$}}
({cos(180+60)-0.11},{sin(180+60)-0.06}) node {\footnotesize{$1212$}}
({cos(180+75)-0.09},{sin(180+75)-0.08}) node {\footnotesize{$12121$}}
({0.8*cos(180-30)+0.07},{0.8*sin(180-30)}) node [gray] {\footnotesize{$(13)$}}
({-0.8*cos(180-30)-0.07},{0.8*sin(180-30)}) node [gray] {\footnotesize{$(23)$}}
({0.8*cos(180-45)+0.05},{0.8*sin(180-45)-0.03}) node [gray] {\footnotesize{$(23)$}}
({-0.8*cos(180-45)-0.05},{0.8*sin(180-45)-0.03}) node [gray] {\footnotesize{$(13)$}}
({0.8*cos(180-60)+0.02},{0.8*sin(180-60)-0.06}) node [gray] {\footnotesize{$(13)$}}
({-0.8*cos(180-60)-0.02},{0.8*sin(180-60)-0.06}) node [gray] {\footnotesize{$(23)$}}
({0.8*cos(180-75)+0.02},{0.8*sin(180-75)-0.05}) node [gray] {\footnotesize{$(23)$}}
({-0.8*cos(180-75)-0.02},{0.8*sin(180-75)-0.05}) node [gray] {\footnotesize{$(13)$}}
({0.8*cos(180+30)+0.07},{0.8*sin(180+30)}) node [gray] {\footnotesize{$(23)$}}
({0.8*cos(180+45)+0.05},{0.8*sin(180+45)+0.03}) node [gray] {\footnotesize{$(13)$}}
({0.8*cos(180+60)+0.02},{0.8*sin(180+60)+0.06}) node [gray] {\footnotesize{$(23)$}}
({0.8*cos(180+75)+0.02},{0.8*sin(180+75)+0.05}) node [gray] {\footnotesize{$(13)$}}
({-0.8*cos(180+75)-0.02},{0.8*sin(180+75)+0.05}) node [gray] {\footnotesize{$(23)$}}
({-0.8*cos(180+60)-0.02},{0.8*sin(180+60)+0.06}) node [gray] {\footnotesize{$(13)$}}
({-0.8*cos(180+45)-0.05},{0.8*sin(180+45)+0.03}) node [gray] {\footnotesize{$(23)$}}
({cos(0)-0.08},{sin(0)}) node {\footnotesize{$x$}}
({1.2},{-0.05}) node {\footnotesize{$x3$}}
({1.2},{-0.175}) node {\footnotesize{$x23$}}
({1.2},{-0.57}) node {\footnotesize{$x213$}}
({1.57},{-0.5}) node [gray] {\footnotesize{$(232)$}}
({1.45},{0.075}) node {\footnotesize{$x32$}}
({1.45},{-0.33}) node {\footnotesize{$x232$}}
({1.7},{-0.33}) node {\footnotesize{$x2321$}}
({1.95},{-0.19}) node {\footnotesize{$x23212$}}
({2.4},{sin(-15)}) node {\footnotesize{$x232123$}}
({2.4},{sin(-15)-0.25}) node [gray] {\footnotesize{$(121213)$}}
({2.4},{0}) node {\footnotesize{$x32123$}}
({2.4},{sin(15)}) node {\footnotesize{$x321231$}}
({2.43},{sin(15)+0.32}) node [gray] {\footnotesize{$(12121232)$}}
({2.45},{sin(15)+0.25+0.32}) node [gray] {\footnotesize{$(121212321)$}}
({2.4},{sin(15)+2*0.25+0.32}) node [gray] {\footnotesize{$(121212)$}}
({1.89},{sin(15)+2*0.25+0.32+0.1}) node [gray] {\footnotesize{$(12121232123)$}}
({2.42},{sin(15)+2*0.25+0.32+0.1}) node [gray] {\footnotesize{$(1212123)$}}
({1.98},{sin(15)+0.25}) node {\footnotesize{$x3212312$}}
({1.965},{sin(15)+2*0.25}) node {\footnotesize{$x32123121$}}
({1.94},{sin(15)+3*0.25}) node {\footnotesize{$x321231212$}}
({2.2},{sin(15)+4*0.25+0.1}) node {\footnotesize{$x321231212$}}
({1.95},{-0.57}) node [gray] {\footnotesize{$(12121)$}}
({1.7},{0.075}) node {\footnotesize{$x321$}}
({1.73},{sin(15)-0.085}) node [gray] {\footnotesize{$(121212)$}}
({1.95},{-0.065}) node {\footnotesize{$x3212$}}
({1.95},{0.32}) node {\footnotesize{$x32121$}}
({cos(15)-0.1},{sin(15)}) node {\footnotesize{$x1$}}
({1.2},{sin(15)+0.075}) node {\footnotesize{$x13$}}
({1.45},{sin(15)-0.085}) node [gray] {\footnotesize{$(232)$}}
({cos(30)+0.12},{sin(30)}) node {\footnotesize{$x12$}}
({cos(45)+0.14},{sin(45)+0.02}) node {\footnotesize{$x121$}}
({cos(60)+0.14},{sin(60)+0.06}) node {\footnotesize{$x1212$}}
({cos(75)+0.13},{sin(75)+0.07}) node {\footnotesize{$x12121$}}
({cos(-15)-0.1},{sin(-15)}) node {\footnotesize{$x2$}}
({cos(-30)-0.125},{sin(-30)}) node {\footnotesize{$x21$}}
({cos(-45)+0.125},{sin(-45)}) node {\footnotesize{$x212$}}
({cos(-60)+0.1},{sin(-60)-0.05}) node {\footnotesize{$x2121$}}
({cos(-75)+0.13},{sin(-75)-0.07}) node {\footnotesize{$x21212$}}
;\end{tikzpicture}};
\end{tikzpicture}
\caption{Cannon automata for Class~III triangle groups with $a$ even}\label{fig:III}
\end{figure}

\end{appendix}

\medskip

\noindent\begin{minipage}{0.5\textwidth}
Lorenz Gilch\newline
Institut f\"{u}r Mathematische Strukturtheorie\newline
Technische Universit\"{a}t Graz\newline
Steyrergasse 30\newline
8010 Graz, Austria\newline
\texttt{gilch@TUGraz.at}
\end{minipage}
\begin{minipage}{0.5\textwidth}
\noindent Sebastian M\"{u}ller\newline
Aix Marseille Universit\'e\newline
CNRS  Centrale Marseille\newline 
Institut de Math\'ematiques de Marseille (I2M)\newline
UMR 7373\newline 13453 Marseille France\newline
\texttt{mueller@cmi.univ-mrs.fr}
\end{minipage}
\bigskip

\noindent James Parkinson\newline
School of Mathematics and Statistics\newline
University of Sydney\newline
Carslaw Building, F07\newline
NSW, 2006, Australia\newline
\texttt{jamesp@maths.usyd.edu.au}

\end{document}